\documentclass[12pt, reqno]{amsart}

\usepackage[utf8]{inputenc}
\usepackage{amsfonts}
\usepackage{bm}
\usepackage{latexsym}
\usepackage{amssymb}
\usepackage{amsmath}
\usepackage{mathtools}
\usepackage{graphicx} % Required for inserting images
\usepackage[dvipsnames]{xcolor}
\usepackage{tikz}
\usetikzlibrary{shapes, arrows, positioning, backgrounds, calc, math}
\usepackage{ifthen}
\usepackage{enumitem}
\usepackage{multirow}

%%%% FONT
\usepackage{mathpazo}
\usepackage{domitian}
\usepackage[T1]{fontenc}

%%%%
\usepackage{mathrsfs}
\usepackage{amsthm}
\usepackage{adjustbox}
\usepackage{comment} %%%%%%%%%%% cmmenting out sections

\usepackage{hyperref} 
\hypersetup{
    colorlinks,
    linkcolor=Mahogany,
    citecolor=Mahogany,
    urlcolor=Mahogany
}

\usepackage{graphicx}

\usepackage{caption}
\usepackage{subcaption}
\numberwithin{equation}{section}

\definecolor{pd}{rgb}{0.8,0.1,0.1}% Piotr

\usepackage{forest}
\forestset{
  default preamble={
   for tree={circle, draw, 
            minimum size=2.1em, % <-- added
            inner sep=1pt} 
  }
}

\usetikzlibrary{positioning}

\usepackage[left=1.5cm, top=1.8cm,bottom=1.8cm,right=1.5cm]{geometry}

%commands to move text
\makeatletter
\newcommand{\leqnomode}{\tagsleft@true\let\veqno\@@leqno}
\newcommand{\reqnomode}{\tagsleft@false}
\makeatother

%%%%%%%%%%%%%%%%%%%%%%%%%%%

\DeclareMathOperator{\sgn}{sgn}

%\widebar command
\makeatletter
\let\save@mathaccent\mathaccent
\newcommand*\if@single[3]{%
  \setbox0\hbox{${\mathaccent"0362{#1}}^H$}%
  \setbox2\hbox{${\mathaccent"0362{\kern0pt#1}}^H$}%
  \ifdim\ht0=\ht2 #3\else #2\fi
  }
%The bar will be moved to the right by a half of \macc@kerna, which is computed by amsmath:
\newcommand*\rel@kern[1]{\kern#1\dimexpr\macc@kerna}
%If there's a superscript following the bar, then no negative kern may follow the bar;
%an additional {} makes sure that the superscript is high enough in this case:
\newcommand*\widebar[1]{\@ifnextchar^{{\wide@bar{#1}{0}}}{\wide@bar{#1}{1}}}
%Use a separate algorithm for single symbols:
\newcommand*\wide@bar[2]{\if@single{#1}{\wide@bar@{#1}{#2}{1}}{\wide@bar@{#1}{#2}{2}}}
\newcommand*\wide@bar@[3]{%
  \begingroup
  \def\mathaccent##1##2{%
%Enable nesting of accents:
    \let\mathaccent\save@mathaccent
%If there's more than a single symbol, use the first character instead (see below):
    \if#32 \let\macc@nucleus\first@char \fi
%Determine the italic correction:
    \setbox\z@\hbox{$\macc@style{\macc@nucleus}_{}$}%
    \setbox\tw@\hbox{$\macc@style{\macc@nucleus}{}_{}$}%
    \dimen@\wd\tw@
    \advance\dimen@-\wd\z@
%Now \dimen@ is the italic correction of the symbol.
    \divide\dimen@ 3
    \@tempdima\wd\tw@
    \advance\@tempdima-\scriptspace
%Now \@tempdima is the width of the symbol.
    \divide\@tempdima 10
    \advance\dimen@-\@tempdima
%Now \dimen@ = (italic correction / 3) - (Breite / 10)
    \ifdim\dimen@>\z@ \dimen@0pt\fi
%The bar will be shortened in the case \dimen@<0 !
    \rel@kern{0.6}\kern-\dimen@
    \if#31
      \overline{\rel@kern{-0.6}\kern\dimen@\macc@nucleus\rel@kern{0.4}\kern\dimen@}%
      \advance\dimen@0.4\dimexpr\macc@kerna
%Place the combined final kern (-\dimen@) if it is >0 or if a superscript follows:
      \let\final@kern#2%
      \ifdim\dimen@<\z@ \let\final@kern1\fi
      \if\final@kern1 \kern-\dimen@\fi
    \else
      \overline{\rel@kern{-0.6}\kern\dimen@#1}%
    \fi
  }%
  \macc@depth\@ne
  \let\math@bgroup\@empty \let\math@egroup\macc@set@skewchar
  \mathsurround\z@ \frozen@everymath{\mathgroup\macc@group\relax}%
  \macc@set@skewchar\relax
  \let\mathaccentV\macc@nested@a
%The following initialises \macc@kerna and calls \mathaccent:
  \if#31
    \macc@nested@a\relax111{#1}%
  \else
%If the argument consists of more than one symbol and if the first token is
%a letter, use that letter for the computations:
    \def\gobble@till@marker##1\endmarker{}%
    \futurelet\first@char\gobble@till@marker#1\endmarker
    \ifcat\noexpand\first@char A\else
      \def\first@char{}%
    \fi
    \macc@nested@a\relax111{\first@char}%
  \fi
  \endgroup
}

\newcommand*\other[1]{\widebar{#1}}
\newcommand{\torus}[1]{\mathbb{T}_{#1}}

\newcommand{\indic}[1]{\mathrm{1}_{#1}}

\newcommand{\URD}{\mathrm{URD}}
\newcommand{\IP}[2]{\left\langle #1, #2 \right\rangle}
\newcommand{\sigmabar}{\other{\sigma}}
\newcommand{\URDbar}{\overline{\URD}}
\newcommand{\relsigma}{\sim_{\sigma}}

%% samcommands
\newcommand{\os}{\other{\sigma}}

\newcommand{\rightbox}[1][1]{%
\hspace{0.5pt}
\begin{tikzpicture}[scale = #1, baseline = 0pt]%
%hash
\tikzmath{\w = 0.02ex;}
\draw[line width = 0.001ex, fill = black] (-0.9ex + \w,-0.08ex) -- (-0.715ex + \w,1.1ex) -- (-0.715ex,1.1ex) -- (-0.90ex,-0.08ex) -- (-0.90ex + \w,-0.08ex);
\draw[xshift = 0.34ex, line width = 0.001ex, fill = black] (-0.9ex + \w,-0.08ex) -- (-0.715ex + \w,1.1ex) -- (-0.715ex,1.1ex) -- (-0.90ex,-0.08ex) -- (-0.90ex + \w,-0.08ex);
\draw[line width = 0.001ex, fill = black] (-1ex,0.3ex) rectangle (-0.26ex,0.3ex + \w);
\draw[yshift = 0.04ex, line width = 0.001ex, fill = black] (-0.96ex,0.64ex) rectangle (-0.225ex,0.64ex + \w);  
%box
\draw (0,0) -- (1ex,0) -- (1ex, 1ex) -- (0,1ex) -- (0,0) -- (1ex, 0);
\draw[->] (0.5ex,0.5ex) --++ (1.2ex,0);

\end{tikzpicture}%
}

\newcommand{\upbox}[1][1]{%
\hspace{0.5pt}
\begin{tikzpicture}[scale = #1, baseline = 0pt]%
%hash
\tikzmath{\w = 0.02ex;}
\draw[line width = 0.001ex, fill = black] (-0.9ex + \w,-0.08ex) -- (-0.715ex + \w,1.1ex) -- (-0.715ex,1.1ex) -- (-0.90ex,-0.08ex) -- (-0.90ex + \w,-0.08ex);
\draw[xshift = 0.34ex, line width = 0.001ex, fill = black] (-0.9ex + \w,-0.08ex) -- (-0.715ex + \w,1.1ex) -- (-0.715ex,1.1ex) -- (-0.90ex,-0.08ex) -- (-0.90ex + \w,-0.08ex);
\draw[line width = 0.001ex, fill = black] (-1ex,0.3ex) rectangle (-0.26ex,0.3ex + \w);
\draw[yshift = 0.04ex, line width = 0.001ex, fill = black] (-0.96ex,0.64ex) rectangle (-0.225ex,0.64ex + \w);  
%box
\draw (0,0) -- (1ex,0) -- (1ex, 1ex) -- (0,1ex) -- (0,0) -- (1ex, 0);
\draw[->] (0.5ex,0.5ex) --++ (0,1.2ex);

\end{tikzpicture}%
}

\newcommand{\downbox}[1][1]{%
\hspace{0.5pt}
\begin{tikzpicture}[scale = #1, baseline = 0ex]%
%hash
\tikzmath{\w = 0.02ex;}
\draw[line width = 0.001ex, fill = black] (-0.9ex + \w,-0.08ex) -- (-0.715ex + \w,1.1ex) -- (-0.715ex,1.1ex) -- (-0.90ex,-0.08ex) -- (-0.90ex + \w,-0.08ex);
\draw[xshift = 0.34ex, line width = 0.001ex, fill = black] (-0.9ex + \w,-0.08ex) -- (-0.715ex + \w,1.1ex) -- (-0.715ex,1.1ex) -- (-0.90ex,-0.08ex) -- (-0.90ex + \w,-0.08ex);
\draw[line width = 0.001ex, fill = black] (-1ex,0.3ex) rectangle (-0.26ex,0.3ex + \w);
\draw[yshift = 0.04ex, line width = 0.001ex, fill = black] (-0.96ex,0.64ex) rectangle (-0.225ex,0.64ex + \w);  
%box
\draw (0,0) -- (1ex,0) -- (1ex, 1ex) -- (0,1ex) -- (0,0) -- (1ex, 0);
\draw[->] (0.5ex,0.5ex) --++ (0,-1.2ex);

\end{tikzpicture}%
}

\newcommand{\fixedbox}[1][1]{%
\hspace{0.5pt}
\begin{tikzpicture}[scale = #1, baseline = 0pt]%
%hash
\tikzmath{\w = 0.02ex;}
\draw[line width = 0.001ex, fill = black] (-0.9ex + \w,-0.08ex) -- (-0.715ex + \w,1.1ex) -- (-0.715ex,1.1ex) -- (-0.90ex,-0.08ex) -- (-0.90ex + \w,-0.08ex);
\draw[xshift = 0.34ex, line width = 0.001ex, fill = black] (-0.9ex + \w,-0.08ex) -- (-0.715ex + \w,1.1ex) -- (-0.715ex,1.1ex) -- (-0.90ex,-0.08ex) -- (-0.90ex + \w,-0.08ex);
\draw[line width = 0.001ex, fill = black] (-1ex,0.3ex) rectangle (-0.26ex,0.3ex + \w);
\draw[yshift = 0.04ex, line width = 0.001ex, fill = black] (-0.96ex,0.64ex) rectangle (-0.225ex,0.64ex + \w);  
%box
\draw (0,0) -- (1ex,0) -- (1ex, 1ex) -- (0,1ex) -- (0,0) -- (1ex, 0);
\end{tikzpicture}%
}

%%TikZ Snake diagrams

\newcommand{\myrightarrow}[3][]{%
\begin{scope}[on background layer]
  \fill[RoyalBlue!50] (#2 + 0.05, #3 +0.05) rectangle (#2 + 0.95, #3 +0.95);
\end{scope}
\draw[->, thick, #1] (#2 + 0.5, #3 + 0.5) -- ++(0.92,0);
 %   \node[circle,inner sep=1.5pt, fill = RoyalBlue] at (#2 + 0.5,#3 + 0.5){};
}

\newcommand{\mydownarrow}[3][]{%
\begin{scope}[on background layer]
  \fill[SkyBlue!50] (#2 + 0.05, #3 +0.05) rectangle (#2 + 0.95, #3 +0.95);
\end{scope}
    \draw[->, thick, #1] (#2 + 0.5, #3 + 0.5) -- ++(0,-0.92);
  %  \node[circle,inner sep=1.5pt, fill = SkyBlue] at (#2 + 0.5,#3 + 0.5){};
}

\newcommand{\myuparrow}[3][]{%
\begin{scope}[on background layer]
  \fill[Blue!50] (#2 + 0.05, #3 +0.05) rectangle (#2 + 0.95, #3 +0.95);
\end{scope}
    \draw[->, thick, #1] (#2 + 0.5, #3 + 0.5) -- ++(0,0.92);
  %  \node[circle,inner sep=1.5pt, fill = Blue] at (#2 + 0.5, #3 + 0.5){};
}

\newcommand{\wraprightarrow}[3][]{%
\begin{scope}[on background layer]
  \fill[RoyalBlue!50] (#2 + 0.05, #3 +0.05) rectangle (#2 + 0.95, #3 +0.95);
\end{scope}
    \draw[-, thick, #1] (#2 + 0.5,#3 + 0.5) -- ++(0.5,0);
    \draw[->, thick, #1] (0, #3 + 0.5) -- ++ (0.42,0);
  %  \node[circle,inner sep=1.5pt, fill = RoyalBlue!50] at (#2 + 0.5, #3 + 0.5){};
}

\newcommand{\wrapuparrow}[3][]{%
\begin{scope}[on background layer]
  \fill[Blue!50] (#2 + 0.05, #3 +0.05) rectangle (#2 + 0.95, #3 +0.95);
\end{scope}
    \draw[-, thick, #1] (#2 + 0.5, #3 + 0.5) -- ++(0,0.5);
    \draw[->, thick, #1] (#2 + 0.5, 0) -- ++ (0,0.42);
  %  \node[circle,inner sep=1.5pt, fill = Blue] at (#2 + 0.5, #3 + 0.5){};
}

\newcommand{\wrapdownarrow}[3][]{%
\begin{scope}[on background layer]
  \fill[SkyBlue!50] (#2 + 0.05, #3 +0.05) rectangle (#2 + 0.95, #3 +0.95);
\end{scope}
    \draw[-, thick, #1] (#2 + 0.5, #3 + 0.5) -- ++(0,-0.5);
    \draw[->, thick, #1] (#2 + 0.5, \ycoord + 1) -- ++ (0,-0.42);
}

\newcommand{\domino}[3][]{%
    \ifthenelse{#3 = \ycoord}{
    \draw[->, thick, #1] (#2 + 0.5,#3 + 1) -- ++(0,-0.42);
    \draw[->, thick, #1] (#2 + 0.5,0) -- ++(0,0.42);
    \begin{scope}[on background layer]
      \fill[Red!50] (#2 + 0.05, #3 +0.05) rectangle (#2 + 0.95, #3 +0.95);
    \end{scope}
    \begin{scope}[on background layer]
      \fill[Red!50] (#2 + 0.05, 0.05) rectangle (#2 + 0.95, 0.95);
    \end{scope}
   % \node[circle, inner sep = 1.5pt, fill = Red] at (#2 + 0.5, #3 + 0.5){};
%    \node[circle, inner sep = 1.5pt, fill = Red] at (#2 + 0.5, 0.5){};
    }
    {
    \draw[<->, thick, #1] (#2 + 0.5,#3 + 0.58) -- ++(0,0.84);
    \begin{scope}[on background layer]
      \fill[Red!50] (#2 + 0.05, #3 +0.05) rectangle (#2 + 0.95, #3 +0.95);
    \end{scope}
    \begin{scope}[on background layer]
      \fill[Red!50] (#2 + 0.05, #3 +1.05) rectangle (#2 + 0.95, #3 +1.95);
    \end{scope}
  %  \node[circle, inner sep = 1.5pt, fill = Red] at (#2 + 0.5, #3 + 0.5){};
 %   \node[circle, inner sep = 1.5pt, fill = Red] at (#2 + 0.5, #3 + 1.5){};
    }
}

%%%%%%%%%%%%%%%%%%ArrowSnakeDiagram

\newcommand{\makeURDsnake}[8]{%

\begin{center}
\begin{tikzpicture}[scale = #7]

\draw (#1,0) -- ++(0,#2);
\draw (0,#2) -- ++(#1,0);

\pgfmathtruncatemacro\xcoord{(#1)-1}
\pgfmathtruncatemacro\ycoord{(#2)-1}

\foreach \x in {0,1,...,\xcoord}
    \draw (\x,0) -- ++(0,#2);
    
\foreach \y in {0,1,...,\ycoord}
    \draw (0,\y) -- ++(#1,0);

\foreach \n/\m in {#3}{
    \ifthenelse{\n = \xcoord}{\wraprightarrow{\n}{\m}}{\myrightarrow{\n}{\m}}
}

\foreach \n/\m in {#4}{
    \ifthenelse{\m = \ycoord}{\wrapuparrow{\n}{\m}}{\myuparrow{\n}{\m}}
}

\foreach \n/\m in {#5}{
    \ifthenelse{\m = 0}{\wrapdownarrow{\n}{\m}}{\mydownarrow{\n}{\m}}
}

\foreach \n/\m in {#6}{
    \domino{\n}{\m}
}

#8

\end{tikzpicture}
\end{center}

}

%%%%%%%%%%%%%%%%%%%%%%%%%%%%%%%%%%%% DIAGRAM 2

\newcommand{\redright}[3]{%
    \draw[-, line width = 1mm, Mahogany] (#1+0.5, #2+0.5) -- ++(#3 + 0.01,0);
}

\newcommand{\dasheddown}[3]{%
    \draw[dashed] (#1 + 0.5, #2 + 0.5) -- ++(0,-#3);
    \node[circle, inner sep=1.5pt, draw = black, fill = white] at (#1+0.5, #2+0.5){};
}

\newcommand{\dashedup}[3]{%
    \draw[dashed] (#1+0.5, #2+0.5) -- ++(0,#3);
    \node[circle, inner sep=1.5pt, draw = black, fill = white] at (#1+0.5, #2+0.5){};
}

\newcommand{\wrapredright}[3]{%
    \pgfmathsetmacro\xone{\xcoord - #1 + 0.5}
    \pgfmathsetmacro\xtwo{#3 - \xone}
    \draw[-, Mahogany, line width = 1mm] (#1+0.5,#2+0.5) -- ++(\xone,0);
    \draw[-, Mahogany, line width = 1mm] (0, #2+0.5) -- ++ (\xtwo + 0.01,0);
}

\newcommand{\wrapdashedup}[3]{%
    \pgfmathsetmacro\yone{\ycoord - #2 + 0.5}
    \pgfmathsetmacro\ytwo{#3 - \yone}
    \draw[dashed] (#1+0.5, #2+0.5) -- ++(0,\yone);
    \draw[dashed] (#1+0.5, 0) -- ++ (0,\ytwo);
    \node[circle, inner sep=1.5pt, draw = black, fill = white] at (#1+0.5, #2+0.5){};
}

\newcommand{\wrapdasheddown}[3]{%
    \pgfmathsetmacro\yone{#2 + 0.5}
    \pgfmathsetmacro\ytwo{#3 - \yone}
    \draw[dashed] (#1 + 0.5,#2 + 0.5) -- ++(0,-\yone);
    \draw[dashed] (#1 + 0.5, \ycoord + 1) -- ++ (0,-\ytwo);
    \node[circle, inner sep=1.5pt, draw = black, fill = white] at (#1 + 0.5, #2 + 0.5){};
}

%%%%%%%%%%%%%%%%%%LinedSnakeDiagram

\newcommand{\makeURDsnakeII}[5]{%

\begin{center}
\begin{tikzpicture}

\draw (#1,0) -- ++(0,#2);
\draw (0,#2) -- ++(#1,0);

\pgfmathtruncatemacro\xcoord{(#1)-1}
\pgfmathtruncatemacro\ycoord{(#2)-1}

\foreach \x in {0,1,...,\xcoord}
    \draw (\x,0) -- ++(0,#2);
    
\foreach \y in {0,1,...,\ycoord}
    \draw (0,\y) -- ++(#1,0);

\foreach \n/\m/\l in {#3}{
    \pgfmathtruncatemacro\loopright{\n + \l}
    \ifthenelse{\loopright > \xcoord}{\wrapredright{\n}{\m}{\l}}{\redright{\n}{\m}{\l}}
}

\foreach \n/\m/\l in {#4}{
    \pgfmathtruncatemacro\loopup{\m + \l}
    \ifthenelse{\loopup > \ycoord}{\wrapdashedup{\n}{\m}{\l}}{\dashedup{\n}{\m}{\l}}
}

\foreach \n/\m/\l in {#5}{
    \pgfmathtruncatemacro\loopdown{\m - \l}
    \ifthenelse{\loopdown < 0}{\wrapdasheddown{\n}{\m}{\l}}{\dasheddown{\n}{\m}{\l}}
}

\end{tikzpicture}
\end{center}
}

%%%%%%%%%%%%
\newtheorem{thm}{Theorem}[section]

\newtheorem{cor}[thm]{Corollary}
\newtheorem{lemma}[thm]{Lemma}

\newtheorem{proposition}[thm]{Proposition}

\newtheorem{rem}{Remark}

\title{The Integrable Snake Model}
\author{Samuel G. G. Johnston and Rohan Shiatis}
\subjclass{Primary: 82B20, 82B21, 60K35. Secondary: 60J27.}
\keywords{Dimer model, Domino tiling, Kasteleyn matrix, lozenge tiling, bead model, ASEP}

\begin{document}

\begin{abstract}

A pure snake configuration is a bijection $\sigma:\mathbb{Z}^2 \to \mathbb{Z}^2$ containing no two-cycles and such that for each $x \in \mathbb{Z}^2$ we have
\begin{align*}
\sigma(x) \in \{ x , x+ \mathbf{e}^1, x+\mathbf{e}^2 , x- \mathbf{e}^2 \}.
\end{align*}
The non-trivial cycles of a pure snake configuration may be regarded as a collection of non-intersecting paths in $\mathbb{Z}^2$ that may travel right, up, or down (but not left) from a given vertex. 
Pure snake configurations are a generalisation of lozenge tilings, which are in natural correspondence with paths that only travel right or up.
We introduce a partition function on a finite version of this model and study the probabilistic properties of random pure snake configurations chosen according to their contribution to this partition function.
Under a suitable weighting, the model is integrable in the sense that 
we have access to explicit formulas for its partition function and correlation function. We utilise the integrable structure of this model in several  applications through its various scaling limits, such as to prove a traffic representation of ASEP on the ring, generalising the analogous result for TASEP by the first author.

    \end{abstract}
\maketitle

\begin{figure}[h!t]
   \makeURDsnake{12}{12}{
%RIGHTS
0/0, 6/0, 1/1, 2/2, 3/4, 4/4, 5/4, 7/0, 8/0, 9/0, 3/11, 4/9, 5/9, 6/10, 7/10, 8/8, 9/5, 10/5, 11/6, 0/6, 1/6, 2/6, 3/7, 4/7, 5/5, 6/5, 7/3, 8/3, 9/2, 10/2, 11/2, 0/3, 1/4, 2/4, 10/0, 11/0}
% UPS/DOWNS
{1/0, 2/1, 6/9, 11/5, 3/6, 0/2, 1/3}
{3/2, 3/1, 3/0, 4/11, 4/10,8/10, 8/9, 9/8, 9/7, 9/6, 5/7, 5/6, 7/5, 7/4, 9/3, 6/4, 6/3, 6/2, 6/1}{}{0.5}%<- this number is the scale of the picture
{}
%  \caption{A snake configuration. Points $x$ where $\sigma(x) = x + \mathbf{e}^1, x + \mathbf{e}^2, x - \mathbf{e}^2$ are coloured blue, dark blue and light blue respectively.}
%  \label{fig: defsnakeconfig}
\end{figure}

%%%%%%%%%%%%%%%%%%%%%%%%%%%%%%%%%%%%%%%%%%%%%%
%%%%%%%%%%%%%%%%%%%%%%%%%%%%%%%%%%%%%%%%%%%%%%
%%%%%%%%%%%%%%%%%%%%%%%%%%%%%%%%%%%%%%%%%%%%%%
%%%%%%%%%
%%%%%%%%%             SECTION 1
%%%%%%%%%
%%%%%%%%%%%%%%%%%%%%%%%%%%%%%%%%%%%%%%%%%%%%%%
%%%%%%%%%%%%%%%%%%%%%%%%%%%%%%%%%%%%%%%%%%%%%%
%%%%%%%%%%%%%%%%%%%%%%%%%%%%%%%%%%%%%%%%%%%%%%

\section{Introduction and overview}
%%%%                             %%%%
%%%%                             %%%%
%%%%       Section 1.1           %%%%
%%%%                             %%%%
%%%%                             %%%%

%\subsection{Kasteleyn theory and integrable probability}
Consider the classical problem of counting $T_n$, the number of ways of tiling an $n \times n$ square with $2 \times 1$ dominoes. This seemingly innocuous combinatorial problem is far more challenging than one might initially expect, with attempts at elementary inductive or bijective arguments falling short. 
It was not until the 1960s work of Kasteleyn \cite{kasteleyn} and Temperley and Fischer \cite{TF} that the necessary tools were developed to tackle this problem.
%Not until the work of Kasteleyn \cite{kasteleyn} and Temperley and Fischer \cite{TF} in the 1960s did mathematicians develop the tools necessary to tackle this problem. 
The `Kasteleyn theory' approach hinges on an ingenious idea: one may associate each pair of domino tilings of the $n \times n$ square with cycles of a bijection that maps lattice points in the $n \times n$ square to their neighbours. One can consequently use this correspondence to show that the number of domino tilings of the $n \times n$ square coincides with the square root of the determinant of a certain adjacency operator with complex weights. By diagonalising this operator and evaluating its determinant as a product of eigenvalues, one can derive an exact expression for $T_n$. Namely, provided $n$ is even, we have that
\begin{equation*}
T_n = \prod_{j=1}^n \prod_{k=1}^n \left| 2 \cos \left( \frac{\pi j}{n+1} \right) + 2i \cos \left( \frac{\pi k}{n+1}\right) \right|^{1/2}.
\end{equation*}
With the hindsight of such a mysterious formula for $T_n$, perhaps it is unsurprising that the number of domino tilings cannot be easily counted by elementary means.

The related problem of counting the number of ways of tiling a hexagon with side lengths $A,B,C$ with lozenges was first studied by MacMahon over a century ago \cite{macmahon}, with MacMahon conjecturing --- and later proving --- his eponymous product formula $\prod_{a=1}^A \prod_{b=1}^B\prod_{c=1}^C \frac{a+b+c-1}{a+b+c-2}$ for the number of such tilings. Again, lozenge tilings correspond bijectively with the cycles of certain bijections on the hexagon and by using this relation, MacMahon's formula can be proved rapidly using the tools of Kasteleyn theory. We refer the reader to the excellent recent book of Gorin \cite{gorin} for further information on this calculation, as well as to Kenyon and Okounkov's survey article \cite{KO} on dimer models. 

Beyond its power in computing partition functions for statistical mechanics models, a remarkable feature of the Kasteleyn theory approach is its ability to provide explicit determinantal formulas for the local correlations of random objects, selected based on their contribution to a partition function. These correlations can be expressed as determinants of the inverse Kasteleyn operator, situating such models within the framework of determinantal processes—a class of stochastic processes whose correlations are governed by determinants \cite{HKPV, soshnikov}. These formulas have significant applications, such as in establishing a variational principle for domino tilings \cite{CKP}
%explaining the celebrated arctic circle phenomenon in the tiling of the Aztec diamond, 
and in connecting the asymptotic behaviour of large dimer models with the spectral curve of the Kasteleyn operator \cite{KOS}.
%In addition to its power in computing partition functions for statistical mechanics models, a striking by-product of the Kasteleyn theory approach is its ability to offer explicit determinantal formulas for the local correlations of random objects chosen according to their contribution to a partition function. These local correlations may be cast in terms of determinants of the inverse Kasteleyn operator, placing such models within the framework of \emph{determinantal processes}, a class of stochastic processes whose correlations are governed by determinants \cite{HKPV, soshnikov}. These correlation formulas may be used, for instance, to establish a variational principle for domino tilings and the celebrated arctic circle effect for domino tilings of the Aztec diamond. Kenyon, Okounkov and Sheffield \cite{KOS} showed that the asymptotic behaviour of large dimer models is governed by the spectral curve of the Kasteleyn operator, with the amoebae of this curve determining the phase diagram.
% Furthermore, Kasteleyn theory also offers a toolkit to tackle the macroscopic shape of random tilings, such as a variational principle for domino tilings \cite{CKP}, the related arctic circle effect for domino tilings of the Aztec diamond \cite{JPS, CEP, CJY} and the connections with amoebae in algebraic geometry \cite{KOS}.

In the wider integrable probability literature there are a host of models that are exactly solvable in the sense that we have access to explicit formulas for their partition functions and correlation structure. The motivations for understanding these models arise from both mathematical physics and from subfields of pure mathematics such as representation theory. %Providing a list of exactly solvable models and references here that is by no means exhaustive, 
Such models include Schur and Macdonald processes \cite{OR, BC, BGC} and other models arising in representation theory \cite{BG}, square ice {models} and alternating sign matrices \cite{gorinASM, AR}, 
the Ising model and 
six-vertex models \cite{baxter}, random polymers \cite{BCF} and connections with the RSK correspondence \cite{OSZ, COSZ}, longest increasing subsequences in uniform permutations \cite{BDJ, romik}, uniform spanning trees \cite{pemantle, schramm}, self-avoiding walks \cite{D-CS}, TASEP and pushTASEP-like interactions on Gelfand-Tsetlin patterns \cite{assiotis} and a myriad of connections with random matrix theory \cite{ACJ, johannsonR}. This list, of course, is by no means exhaustive. We refer the reader to Baxter's monograph \cite{baxter} and Borodin and Gorin's lecture notes \cite{BGlectures} {for further examples.}
%\vspace{3mm}
%\subsection{Lozenge tilings}

We now home in on \textbf{lozenge tilings}, the exactly solvable model most relevant to our analysis. We choose to take a slightly unconventional perspective on lozenge tilings, which facilitates a more transparent development of their Kasteleyn theory. Namely, a standard coordinate change (see e.g.\ Chapter 2 of Gorin's book \cite{gorin}) tells us that a lozenge tiling on the plane may be regarded as a bijection $\sigma:\mathbb{Z}^2 \to \mathbb{Z}^2$ with the property that
\begin{align} \label{eq:bijloz}
\sigma(x) \in \{ x, x+\mathbf{e}^1, x+\mathbf{e}^2\} \quad \text{for all $x \in \mathbb{Z}^2$}.
\end{align}
Here $\mathbf{e}^1 := (1,0)$ and $\mathbf{e}^2 := (0,1)$ denote the standard basis of $\mathbb{Z}^2$. Under this association one may also consider lozenge tilings of not just the plane but also the semi-infinite discrete cylinder or discrete torus. One can `draw' a bijection of the form in \eqref{eq:bijloz} by drawing an arrow from the point $x \in \mathbb{Z}^2$ to the {neighbouring point} %square
$\sigma(x)$ and subsequently associate with the bijection a collection of bi-infinite non-intersecting paths in the plane. The three possibilities for the value of $\sigma(x)$ correspond to the three types of lozenge which tile the plane. We can associate these three possibilities with `fixed points', `right points' and `up points'.

A notable result in the theory of lozenge tilings is that one may define a probability measure on tilings of the plane, with positive weights $\alpha,\beta,\gamma > 0$ which satisfy the triangle inequality controlling the preference for occurrences of fixed, right and up points respectively in that, loosely speaking, the probability of choosing a lozenge tiling associated with a bijection $\sigma$ is given by 
\begin{align} \label{eq:propto}
\mathbf{P}^{\alpha,\beta,\gamma}(\sigma) \propto \alpha^{\# \{ x: \sigma(x) = x \} }  \beta^{\# \{ x : \sigma(x) = x+\mathbf{e}^1 \} }  \gamma^{\# \{ x : \sigma(x) = x-\mathbf{e}^2 \} } \qquad \alpha,\beta,\gamma \geq 0,
\end{align}
where it is possible to make \eqref{eq:propto} precise %to establish \eqref{eq:propto} rigorously 
by {first} restricting the model to a finite torus and then letting the dimensions of this torus tend to infinity.

It transpires that correlations {of a tiling chosen randomly according to}
%of a randomly chosen tiling 
$\mathbf{P}^{\alpha,\beta,\gamma}$ have an explicit integrable structure: the local correlations of the values taken by $\sigma$ are captured in terms of determinants involving the celebrated (generalized) discrete sine kernel. Namely, for such a bijection chosen randomly with these weights, given any distinct spatial locations $x_1,\ldots,x_k$ in $\mathbb{Z}^2$, we have
\begin{align} \label{eq:lozcorr}
\mathbf{P}^{\alpha,\beta,\gamma} \left( \bigcap_{i=1}^k \{ \sigma x^i = x^i + \mathbf{e}^1 \} \right) = \det_{i,j=1}^k \left( - \int_{\overline{w_{\alpha,\beta,\gamma}} }^{w_{\alpha,\beta,\gamma}} \frac{\mathrm{d}w}{2\pi \iota w } \frac{ w^{ - (x^j_2-x^i_2)}}{ (1 + \gamma w )^{ x^j_1 - x^i_1}} \right),
\end{align}
where the contour integral $\int_{\overline{w_{\alpha,\beta,\gamma}} }^{w_{\alpha,\beta,\gamma}}$ travels from $\overline{w_{\alpha,\beta,\gamma}}$ to $w_{\alpha,\beta,\gamma}$, going left of the origin if $x_1' \leq x_1$ and to the right of the origin if $x_1' > x_1$. Here $w_{\alpha,\beta,\gamma}$ is the unique {complex} solution with positive imaginary part to the equation $|\alpha+ \gamma w | = |\beta|$. 
The equation \eqref{eq:lozcorr} appears (up to a change of parameter) in Section 2.2 of \cite{GP}, though also follows from the special case $\delta = 0$ of Theorem \ref{thm:m1m2infinity} of the present article. 

The correlation kernel in \eqref{eq:lozcorr} is sometimes called the extended discrete sine kernel for the reason that if all of the $x^i$ lie on the same vertical line then \eqref{eq:lozcorr} reduces to
\begin{align} \label{eq:lozcorr2}
\mathbf{P}^{\alpha,\beta,\gamma} \left( \bigcap_{i=1}^k \{ \sigma x^i = x^i + \mathbf{e}^1 \} \right) = \det_{i,j=1}^k \left( \frac{ \sin( \pi \tau (h^j - h^i) }{ \pi (h^j - h^i) } \right), \qquad  \text{$x^i = (0,h^i), h^i \in \mathbb{Z}, 1 \leq i \leq k$},
\end{align}
where $\pi \tau$ is the angle between $w_{\alpha,\beta,\gamma}$ and the negative real axis.
The equation \eqref{eq:lozcorr2} may be obtained from \eqref{eq:lozcorr} after a brief calculation.

%These  
    The correlation structure appearing in \eqref{eq:lozcorr} and \eqref{eq:lozcorr2} is  ubiquitous as a limit for countless models emerging not just in statistical physics \cite{GP, petrov} but also in representation theory. Okounkov and Reshetikhin \cite{OR} used tools from representation theory to show that the discrete sine kernel describes the local correlations of random three-dimensional Young diagrams. From a probabilistic perspective, Gorin and Petrov \cite{GP} showed that this model can be regarded as a limit of independent discrete time walkers on $\mathbb{Z}$, that jump up with probability $\beta$, or stay at the same level with probability $1-\beta$ and are thereafter conditioned to never collide. The discrete sine kernel itself appears in Borodin, Okounkov and Olshanski \cite{BOO} as a descriptor of the bulk cycle length structure of a Plancherel measure on the symmetric group.

Boutillier \cite{boutillier} used a scaling limit of the lozenge tiling model to obtain an explicit correlation function for the bead model. Roughly speaking, this follows from studying in \eqref{eq:lozcorr} the scaling limit $\gamma = \varepsilon T$ and $x_i = \lfloor s_i/\varepsilon \rfloor$, so that the rate of `up' squares becomes small, but horizontal space is rescaled accordingly so that the occurrences of these "up"-squares, which occur continuously in space in the scaling limit, occur with a frequency $O(1)$. 

In \cite{johnston}, the first author takes a local version of Boutillier's bead model on the semi-discrete torus to obtain explicit correlation functions for the model and relate these to a stochastic process introduced by Gordenko in \cite{gordenko}. This stochastic process of Gordenko involves $\ell$ walkers on the ring $\mathbb{Z}_n = \{0,1,\dots,n-1\}$, each independently jumping from $x$ to $x+1$ for $0 \leq x < n-1$, or from $n-1$ to $0$, at rate 1 and are subsequently conditioned to avoid collisions.

% This stochastic process of Gordenko involves $\ell$ walkers on the ring $\mathbb{Z}_n = \{0,1,\ldots,n-1\}$ that independently jump from $x$ to $x+1$ for $0 \leq x < n-1$, or from $n-1$ to $0$, in $\mathbb{Z}_n$ at rate $1$ and are thereafter conditioned to not collide.
\vspace{3mm}

%\subsection{The integrable snake model} \label{sec:isnake}
In the article at hand, we study a natural generalisation of lozenge tiling models, which we call the \textbf{integrable snake model}. The integrable snake model consists of bijections
$\sigma\colon\mathbb{T} \to \mathbb{T}$ on one of the sets
\begin{align*}
\mathbb{T} = \mathbb{Z}^2, \quad \mathbb{T} = \mathbb{Z} \times \mathbb{Z}_{m_2}, \quad \mathbb{T} = \mathbb{Z}_{m_1} \times \mathbb{Z} \quad \text{or} \quad \mathbb{T} =  \mathbb{Z}_{m_1}\times \mathbb{Z}_{m_2}
\end{align*}
satisfying the relaxation of \eqref{eq:bijloz} given by 
\begin{align} \label{eq:URDcond}
\sigma(x) \in \{ x, x+\mathbf{e}^1, x+\mathbf{e}^2 , x -\mathbf{e}^2 \} \qquad \text{for $x \in \mathbb{T}$.}
\end{align}
We call any bijection $\sigma\colon\mathbb{T} \to \mathbb{T}$ satisfying \eqref{eq:URDcond} a \textbf{generalised snake configuration on $\mathbb{T}$.} We say a generalised snake configuration is a \textbf{pure snake configuration} if it contains no two-cycles, i.e.\ for which
\begin{align*}
\text{$\sigma$ is pure} \iff \text{there does not exist $x \in \mathbb{T}$ such that $\sigma(x) = x+\mathbf{e}^2$ and  $\sigma(x+\mathbf{e}^2) = x$}.
\end{align*}

Let us close this introductory section with a summarised verbal account of our foundational main results.

\begin{thm} \label{thm:premain}
A suitably weighted version of the pure snake configuration model on the discrete torus $\mathbb{T}_{\mathbf{m}} := \mathbb{Z}_{m_1}\times \mathbb{Z}_{m_2}$ is exactly solvable in that it is a pushforward of a signed determinantal process. This provides an explicit formula for its partition function and correlation functions. 
\end{thm}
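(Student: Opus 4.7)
The plan is to adapt the Kasteleyn--Temperley--Fischer approach (which handles lozenge tilings via bijections with three allowed moves) to accommodate the fourth move $x \mapsto x-\mathbf{e}^2$. First, I would introduce the weighted partition function
\begin{equation*}
Z^{\alpha,\beta,\gamma,\delta}_{\mathbf{m}} := \sum_{\sigma \text{ pure}} \alpha^{\#\{x:\sigma(x)=x\}} \beta^{\#\{x:\sigma(x)=x+\mathbf{e}^1\}} \gamma^{\#\{x:\sigma(x)=x+\mathbf{e}^2\}} \delta^{\#\{x:\sigma(x)=x-\mathbf{e}^2\}},
\end{equation*}
and a Kasteleyn-type matrix $K = K^{\alpha,\beta,\gamma,\delta}$ indexed by $\mathbb{T}_{\mathbf{m}} \times \mathbb{T}_{\mathbf{m}}$, supported on the four allowed transitions and equipped with carefully chosen phase factors.

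The central technical step is to express $Z^{\alpha,\beta,\gamma,\delta}_{\mathbf{m}}$ as a signed linear combination
\begin{equation*}
Z^{\alpha,\beta,\gamma,\delta}_{\mathbf{m}} = \tfrac{1}{2}\sum_{(\varepsilon_1,\varepsilon_2) \in \{0,1\}^2} s_{\varepsilon_1,\varepsilon_2}\, \det K_{\varepsilon_1,\varepsilon_2},
\end{equation*}
where $K_{\varepsilon_1,\varepsilon_2}$ is the matrix $K$ twisted by periodic/antiperiodic boundary conditions in the two directions and $s_{\varepsilon_1,\varepsilon_2}\in\{\pm 1\}$. In the Leibniz expansion of each $\det K_{\varepsilon_1,\varepsilon_2}$, each bijection contributes the product of its edge weights times the sign of the induced permutation; I would choose the phases so that (i) every pure configuration contributes with a sign depending only on its homology class, (ii) the product of phases around any $2$-cycle is $-1$, ensuring $2$-cycle contributions cancel against their counterparts in other sectors, and (iii) the standard four-way homological sorting recovers the unsigned sum over pure configurations, thereby justifying the identity above.

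Once $Z$ is written in this form, each $K_{\varepsilon_1,\varepsilon_2}$ is translation-invariant on $\mathbb{T}_{\mathbf{m}}$ and is therefore simultaneously diagonalised by the discrete Fourier basis, with eigenvalues given by an explicit symbol function evaluated at the twisted lattice $\{(2\pi(j_1+\varepsilon_1/2)/m_1,\, 2\pi(j_2+\varepsilon_2/2)/m_2)\}$. This yields explicit product formulas for the four determinants, and the inverse matrices $K^{-1}_{\varepsilon_1,\varepsilon_2}$ become explicit double sums. Each of the four summands defines a \emph{signed} determinantal process on pairs $\{(x,\sigma(x))\} \subset \mathbb{T}_{\mathbf{m}}\times\mathbb{T}_{\mathbf{m}}$; pushing the linear combination forward along the map ``edge multiset of $\sigma$ $\mapsto$ pure snake configuration'' yields a genuine probability measure, and correlation functions
\begin{equation*}
\mathbf{P}\left(\bigcap_{i=1}^k \{\sigma(x^i)=y^i\}\right) = \det\bigl(K^{-1}(x^i,y^j)\bigr)_{i,j=1}^k
\end{equation*}
follow from the standard determinantal identity applied sector by sector.

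The principal obstacle I anticipate is the Kasteleyn sign design: one must exhibit a single coherent choice of phase factors on the torus that simultaneously accomplishes the $2$-cycle cancellation and the correct homological sorting into the four boundary-condition sectors. On a planar graph the latter is classical, but on the torus both tasks must be arranged together, and the bidirectional vertical edges of the snake model substantially complicate the winding-number bookkeeping; this is also precisely what forces the resulting process to be \emph{signed} rather than a genuine (unsigned) determinantal process. Once this sign analysis is in place, the Fourier diagonalisation and the extraction of the correlation kernel are routine.
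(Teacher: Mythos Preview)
Your overall architecture (four twisted Kasteleyn operators, Fourier diagonalisation, determinantal correlations) matches the paper, but there is a genuine gap at the heart of the sign analysis. You propose to choose phases so that ``the product of phases around any $2$-cycle is $-1$, ensuring $2$-cycle contributions cancel against their counterparts in other sectors.'' This cannot work: a snakelet $(x,x+\mathbf{e}^2)$ is homologically trivial, so its contribution to $\det K_{\varepsilon_1,\varepsilon_2}$ is the \emph{same} in all four sectors, and since $\sum_{\varepsilon}s_\varepsilon\neq 0$ these terms do not cancel. In the Leibniz expansion, each snakelet contributes the factor $\mathrm{sgn}(\text{transposition})\cdot\gamma\delta=-\gamma\delta$, and the signed combination of the four determinants therefore computes
\[
\sum_{\bar\sigma\text{ generalised}}(-1)^{N(\bar\sigma)}\alpha^{\fixedbox}\beta^{\rightbox}\gamma^{\upbox}\delta^{\downbox},
\]
a sum over \emph{all} generalised snake configurations, not your $Z^{\alpha,\beta,\gamma,\delta}_{\mathbf m}=\sum_{\sigma\text{ pure}}w(\sigma)$. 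The paper states explicitly that the latter is, as far as the authors can tell, \emph{not} integrable.

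What the paper does instead is embrace the snakelets: grouping the generalised configurations by their shape $\mathrm{sh}(\bar\sigma)=\sigma$ and summing the $(-\gamma\delta/\alpha^2)^{N(\bar\sigma)}$ factors over all nests compatible with $\sigma$ yields a multiplicative ``Fibonacci weighting'' $J(\sigma)=\prod_{g\in G_\sigma}f_{|g|}(-\gamma\delta/\alpha^2)$ over the vertical gaps of $\sigma$, where $f_n$ satisfies $f_n=f_{n-1}+\lambda f_{n-2}$. The integrable partition function is $Z_{\mathbf m}=\sum_{\sigma\text{ pure}}w(\sigma)J(\sigma)$, and the condition $\alpha^2\ge 4\gamma\delta$ guarantees $J(\sigma)>0$ so that one obtains a genuine probability measure. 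The pushforward you allude to is exactly the shape map $\mathrm{sh}$, but its effect is to \emph{introduce} $J(\sigma)$, not to leave $w(\sigma)$ unchanged. A secondary consequence you miss is that correlations of the form $\{\sigma x=x\pm\mathbf{e}^2\}$ do not reduce to a single determinant: one must first rewrite the indicator $\mathrm{1}\{\sigma x=x+\mathbf{e}^2\}=\mathrm{1}\{\bar\sigma x=x+\mathbf{e}^2\}(1-\mathrm{1}\{\bar\sigma(x+\mathbf{e}^2)=x\})$ and expand, obtaining a signed sum of determinants; only the all-$\mathbf{e}^1$ case has the clean form you wrote.
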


The precise mathematical content of Theorem \ref{thm:premain} can be found across Theorem \ref{thm:pf} and Theorem \ref{thm:cf} (or its generalisation, Theorem~\ref{thm:cffull0}).
\subsection{Overview of the article}
We now overview the remainder of the article:
%Full details about all of these models are contained in the next section, but before this we overview the remainder of the article.
\begin{itemize}
\item In Section \ref{sec:statements} we state our main results in full, giving an explicit description of the integrable structure of the pure snake model as well as its applications through various scaling limits.
%. In particular, we discuss the source of the Fibonacci weighting and give details on our results concerning scaling limits of the integrable snake model.} 
\item In Section \ref{sec: fibonacci} we develop our Kasteleyn theory for the integrable snake model, ultimately leading to proofs of our foundational results, Theorem \ref{thm:pf} and Theorem \ref{thm:cf}.
\item In Section \ref{sec:m1infinity} we study the $m_1 \to \infty$ and $m_1,m_2 \to \infty$ limits of our model, proving Theorem \ref{thm:m1infinity} and Theorem \ref{thm:m1m2infinity}.
\item In Section \ref{sec:noncoll} we study a scaling limit with $m_1 \to \infty$ and $\gamma,\delta$ taken to zero with a subsequent scaling limit of horizontal space. Thereafter, we study basic probabilistic aspects of the model in the scaling limit.
\item In Section \ref{sec:equiv}, we use the cyclic Karlin-McGregor formula and techniques from the theory of $h$-transforms to show that the model of the previous section coincides with that of asymmetric Poisson walkers on $\mathbb{Z}_n$ conditioned never to collide. 
\item In Section \ref{sec:traffic} we prove Theorem \ref{thm:traffic}, which connects the asymmetric exclusion process {(ASEP)} on $\mathbb{Z}_n$ with walkers conditioned to never collide {through an exponential martingale change of measure involving the `traffic' of a configuration of particles.}
%\item In the final Section \ref{sec:boutillier}, we send $m_2 \to \infty$ in the previous model to develop our generalisation of Boutillier's bead model.
\end{itemize}

\section{Statements of our main results}  \label{sec:statements}

\subsection{Foundational results}

\begin{figure}[h!t]
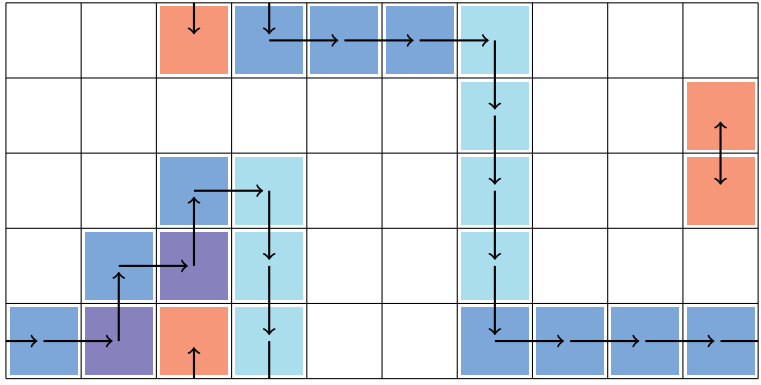

  \makeURDsnake{10}{5}{0/0, 6/0, 1/1, 2/2, 3/4, 4/4, 5/4, 7/0, 8/0, 9/0}{1/0, 2/1}{3/2, 3/1, 3/0, 6/4, 6/3, 6/2, 6/1}{9/2, 2/4}{1}{}
  \caption{A generalised snake configuration. The snakelets are highlighted in red. If the two-cycles corresponding to snakelets were removed and replaced with fixed points, then we would obtain a pure snake configuration.}
  \label{fig: defgreatsnakeconfig}
\end{figure}

Let us take $\mathbb{T} = \torus{\mathbf{m}} \coloneqq \mathbb{Z}_{m_1} \times \mathbb{Z}_{m_2}$. Recall that a snake configuration is simply a bijection $\sigma\colon  \torus{\mathbf{m}} \to \torus{\mathbf{m}}$ satisfying \eqref{eq:URDcond}. We say that a snake configuration is \textbf{pure} if it contains no two-cycles. We refer to these two-cycles as \textbf{snakelets}. Consequently, we can define the sets
\begin{align*}
\URDbar_\mathbf{m} &:= \{ \sigma \text{ generalised snake configuration on $\torus{\mathbf{m}}$} \},\\
\URD_{\mathbf{m}} &:= \{ \sigma \text{ pure snake configuration on $\torus{\mathbf{m}}$} \}.
\end{align*}
Of course, $\mathrm{URD}_{\mathbf{m}} \subseteq \URDbar_{\mathbf{m}}$. We will generally notate generalised snake configurations with $\os$ and pure snake configurations with $\sigma$. 

We begin by constructing a partition function on pure snake configurations on the discrete torus $\mathbb{T}_{\mathbf{m}}$ in the vein of \eqref{eq:propto}. To this end, given a generalised snake configuration $\os:\mathbb{T}_{\mathbf{m}} \to \mathbb{T}_{\mathbf{m}}$ we define 
\begin{align*}
\fixedbox(\os) &:= \# \{ x : \os(x) = x \}, & \rightbox(\os) &:= \# \{ x : \os(x) = x + \mathbf{e}^1 \}, \\
\upbox(\os) &:= \# \{ x : \os(x) = x + \mathbf{e}^2 \}, & \downbox(\os) &:= \# \{ x : \os(x) = x - \mathbf{e}^2 \}.
\end{align*}
Since $\mathbb{T}_{\mathbf{m}}$ has $m_1m_2$ elements, for any generalised snake configuration $\os$ we have
\begin{align*}
\fixedbox(\os)  + \rightbox(\os) + \upbox(\os)   +  \downbox(\os)  = m_1m_2
\end{align*}
Given parameters $\alpha,\beta,\gamma,\delta \geq 0$, define the weighting of a pure snake configuration by 
\begin{align} \label{eq:wdef}
w(\sigma) = w_{\alpha,\beta,\gamma,\delta}(\sigma) = \alpha^{\fixedbox(\sigma)} \beta^{\rightbox (\sigma)}\gamma^{\upbox (\sigma)}\delta^{\downbox (\sigma)}.
\end{align}
Ideally, we would like to compute the partition function $Z' := \Sigma_{\sigma \in \URD_{\mathbf{m}}} w(\sigma)$ and thereafter go about choosing a random pure snake configuration according to the probability measure $P'(\sigma) = w(\sigma)/Z'$. However, as far as the authors can tell, this model is not integrable, in that there are no simple closed formulas for the partition function or the correlation functions.

However, by adjusting the weighting of our model, we can make the process fully integrable, meaning that we have access to explicit determinantal formulas for the partition function and correlation functions. Given a pure snake configuration on $\mathbb{T}_\mathbf{m}$, let $G_\sigma$ denote the collection of vertical gaps between the non-trivial cycles of $\sigma$; these are maximal sets of the form $g = \{ x, x +\mathbf{e}^2, x+2\mathbf{e}^2,\ldots, x+(k-1)\mathbf{e}^2\}$ with the property that $\sigma(z) = z$ for each $z \in g$. Let $|g| = k$ denote the size of a gap. We define the \textbf{Fibonacci weighting} of a pure snake configuration $\sigma$ to be the quantity
  \begin{equation} \label{eq:J1}
    J(\sigma) := J_{\alpha,\beta,\gamma,\delta}(\sigma) = \prod_{g \in G_\sigma} f_{|g|}\left(-\frac{\gamma \delta}{\alpha^2}\right)
  \end{equation}
  where, when $\lambda \neq \frac{1}{4}$, %the product is taken over the collection $G_\sigma$ of vertical gaps $g$ of size $|g|$ between consecutive long cycles in a pure snake configuration and
  \begin{align} \label{eq:fdef}
  f_n( - \lambda) = \frac{1}{c_\lambda} \left\{ \left( \frac{1+c_\lambda}{2} \right)^{n+1} - \left( \frac{1-c_\lambda}{2} \right)^{n+1} \right\}, \qquad c_\lambda := \sqrt{1-4\lambda},
  \end{align}
  and $f_n(-1/4) = (n+1)2^{-n}$.
  
Observe that $f_n(-\lambda)$ is real-valued, even when $1 - 4\lambda < 0$ (i.e. $c_\lambda$ is purely imaginary). In fact, $f_n(-\lambda)$ is {a polynomial with real coefficients}. Our motivation behind calling this the Fibonacci weighting is that the function $f$ satisfies the following Fibonacci-like recurrence relation 
\begin{equation} \label{eq:recc}
f_n(\lambda) = f_{n-1}(\lambda) + \lambda f_{n-2}(\lambda).
\end{equation}
In particular, the numbers $\{ f_n(1):n=1,2,\ldots\}$ are the Fibonacci numbers. Moreover, the fact that $f_n$ is a polynomial follows directly by induction on the recurrence \eqref{eq:recc}, since $f_1$ and $f_2$ are linear in $\lambda$.
    
In any case, the Fibonacci weighting $J(\sigma)$ of a pure snake configuration arises naturally from counting the number of ways of tiling a vertical gap between consecutive long cycles of snakes with dominoes. Each tiling of the dominoes corresponds to a specific arrangement of snakelets; see Section \ref{sec: fibonacci} for further discussion.

We now define a partition function on pure snake configurations on the $m_1 \times m_2$ discrete torus $\mathbb{T}_{\mathbf{m}} = \mathbb{Z}_{m_1}\times \mathbb{Z}_{m_2}$ by setting
\begin{align} \label{eq:PF}
Z_{\mathbf{m}} = Z_{\mathbf{m}}(\alpha,\beta,\gamma,\delta) := \sum_{ \sigma:\mathbb{T}_\mathbf{m} \to \mathbb{T}_{\mathbf{m}} \text{ pure} } {w}(\sigma)J(\sigma),
\end{align}
where ${w}(\sigma)$ is as in \eqref{eq:wdef}, $J(\sigma)$ is as in \eqref{eq:J1} and the sum in \eqref{eq:PF} is taken over all pure snake configurations on $\mathbb{T}_{\mathbf{m}}$.

We are now equipped to state our foundational results on the exactly solvable structure of the pure snake configuration model with weightings 
$\sigma \mapsto {w}(\sigma)J(\sigma)$. The first of these results is an exact formula for the partition function in \eqref{eq:PF}. We will show that the partition function may be expressed as a signed sum of determinants of four operators, each of which admits an explicit closed-form expression. These operators $K_{\theta,\mathbf{m}}:\mathbb{T}_{\mathbf{m}} \times \mathbb{T}_{\mathbf{m}} \to \mathbb{C}$ are defined for each of the four values of $\theta =(\theta_1,\theta_2) \in \{0,1\}^2$ and are given by 
\begin{align}\label{defK0}
 K_{\theta, \mathbf{m}}(x,y) := \alpha \indic{y = x} + \beta e^{\pi \iota \frac{\theta_1}{m_1}} \indic{y = x + \mathbf{e}^1} + \gamma e^{\pi \iota \frac{\theta_2}{m_2}} \indic{y = x + \mathbf{e}^2} + \delta e^{-\pi \iota \frac{\theta_2}{m_2}} \indic{y = x - \mathbf{e}^2}.
\end{align}
Our first result is a Kasteleyn theorem for the integrable snake model with Fibonacci weightings, stating that the partition function can be written as a sum of determinants:

\begin{thm} \label{thm:pf}
The partition function $Z_{\mathbf{m}}$ in \eqref{eq:PF} may be written as a sum of four determinants:
\begin{align} \label{eq:pf1}
Z_{\mathbf{m}} = \sum_{ \theta \in \{0,1\}^2 } C_{\theta,\mathbf{m}} \det(K_{\theta,\mathbf{m}}),
\end{align}
where $C_{\theta,\mathbf{m}} = \frac{1}{2}(-1)^{(\theta_1 + m_1 + 1)(\theta_2 + m_2 + 1)}$.
Moreover, the determinant of $K_{\theta,\mathbf{m}}$ is given by 
  \begin{equation}\label{detKevalsequation0}
    \det(K_{\theta,\mathbf{m}}) = \prod_{z^{m_1}=(-1)^{\theta_1}, w^{m_2} = (-1)^{\theta_2}} (\alpha + \beta z + \gamma w + \delta w^{-1} ),
  \end{equation}
where the product is taken over all pairs $(z,w)$ of complex numbers where $z^{m_1} = (-1)^{\theta_1}$ and $w^{{m_2}} = (-1)^{\theta_2}$. 
\end{thm}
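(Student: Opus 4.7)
The plan is to expand each $\det(K_{\theta,\mathbf{m}})$ permutation-wise, recognise it as a signed, phase-weighted sum over generalised snake configurations, regroup by underlying pure snake configuration to produce the Fibonacci factors, and finally combine the four values of $\theta$ via Kasteleyn cancellation. Since $K_{\theta,\mathbf{m}}(x,y)$ is nonzero only when $y \in \{x, x+\mathbf{e}^1, x\pm\mathbf{e}^2\}$, the Leibniz expansion is supported exactly on $\URDbar_{\mathbf{m}}$, giving $\det(K_{\theta,\mathbf{m}}) = \sum_{\os \in \URDbar_\mathbf{m}} \sgn(\os)\, w(\os)\, \Phi_\theta(\os)$ where $\Phi_\theta(\os)$ is the product of phase factors $e^{\pm\pi\iota\theta_j/m_j}$ accumulated along the moves of $\os$.

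Each $\os$ has a unique underlying pure $\sigma \in \URD_{\mathbf{m}}$ (erase every snakelet, leaving two fixed points); conversely $\os$ is recovered from $\sigma$ by choosing, in each gap $g \in G_\sigma$, a set of non-overlapping snakelets placed inside $g$. Inserting a snakelet contributes a factor of $-1$ to $\sgn(\os)/\sgn(\sigma)$ (an added transposition), a factor of $\gamma\delta/\alpha^2$ to $w(\os)/w(\sigma)$ (replacing two $\alpha$'s by $\gamma\delta$), and a factor of $e^{\pi\iota\theta_2/m_2}\cdot e^{-\pi\iota\theta_2/m_2}=1$ to the phase (including wraparound snakelets). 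Summing over snakelet placements in a gap of size $n$ therefore produces the monomer--domino tiling generating polynomial for a length-$n$ strip with monomer weight $1$ and domino weight $-\gamma\delta/\alpha^2$, which satisfies the Fibonacci recurrence \eqref{eq:recc} and hence equals $f_n(-\gamma\delta/\alpha^2)$. Multiplying over all gaps in $G_\sigma$ produces exactly $J(\sigma)$, so
\[
\det(K_{\theta,\mathbf{m}}) = \sum_{\sigma \in \URD_{\mathbf{m}}} \sgn(\sigma)\, w(\sigma)\, J(\sigma)\, \Phi_\theta(\sigma).
\]

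The main obstacle is the Kasteleyn identity $\sum_{\theta\in\{0,1\}^2} C_{\theta,\mathbf{m}}\, \sgn(\sigma)\,\Phi_\theta(\sigma) = 1$ for every pure $\sigma$, which combined with the display above proves \eqref{eq:pf1}. Decomposing $\sigma$ into its fixed points and non-trivial cycles, each cycle $C$ has well-defined winding numbers $(h_1(C), h_2(C)) \in \mathbb{Z}^2$; tallying the right/up/down phase contributions shows $\Phi_\theta(\sigma) = \prod_C (-1)^{\theta_1 h_1(C)+\theta_2 h_2(C)}$, while $\sgn(\sigma)$ factorises as a product of cycle-length signs. A parity analysis then shows that the four terms indexed by $\theta \in \{0,1\}^2$, weighted by $C_{\theta,\mathbf{m}} = \tfrac{1}{2}(-1)^{(\theta_1+m_1+1)(\theta_2+m_2+1)}$, combine to $+1$ independently of $\sigma$. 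This is the delicate bookkeeping adapted from the classical four-determinant Kasteleyn formula for dimer models on the torus, the only new feature being that the generalised snake model allows both up and down moves, which simply doubles the allowable cycle types without disturbing the underlying parity combinatorics.

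Finally, to derive \eqref{detKevalsequation0}, one diagonalises $K_{\theta,\mathbf{m}}$ by Fourier analysis: the operator is translation-invariant, so its eigenfunctions on $\mathbb{T}_{\mathbf{m}}$ are $\phi_{z,w}(x_1,x_2)=z^{x_1}w^{x_2}$ subject to $z^{m_1}=w^{m_2}=1$, with eigenvalue $\alpha+\beta e^{\pi\iota\theta_1/m_1}z+\gamma e^{\pi\iota\theta_2/m_2}w+\delta e^{-\pi\iota\theta_2/m_2}w^{-1}$. Substituting $z\mapsto e^{\pi\iota\theta_1/m_1}z$ and $w\mapsto e^{\pi\iota\theta_2/m_2}w$ converts the periodicity constraint to $z^{m_1}=(-1)^{\theta_1}$, $w^{m_2}=(-1)^{\theta_2}$ and the eigenvalue to $\alpha+\beta z+\gamma w+\delta w^{-1}$, yielding the stated product formula.
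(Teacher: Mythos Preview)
Your overall architecture matches the paper's: expand each determinant over $\URDbar_{\mathbf{m}}$, push forward to $\URD_{\mathbf{m}}$ via the shape map to produce the Fibonacci weighting $J(\sigma)$, then combine the four determinants with the Kasteleyn coefficients, and finally diagonalise by Fourier analysis. The first, second and fourth steps are fine and essentially identical to the paper's treatment (Lemma~\ref{lem:shapepush}, Lemma~\ref{lem:eigensystem}).

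The gap is in the third step, the identity $\sum_{\theta}C_{\theta,\mathbf{m}}\,\sgn(\sigma)\,\Phi_\theta(\sigma)=1$. You write $\Phi_\theta(\sigma)=\prod_C(-1)^{\theta_1 h_1(C)+\theta_2 h_2(C)}$ with per-cycle winding numbers $(h_1(C),h_2(C))$ and then assert that ``a parity analysis'' finishes the job, calling it routine bookkeeping inherited from classical torus dimer theory. But the parity analysis does not go through for arbitrary per-cycle windings: the key structural input, which you never state, is that the non-trivial cycles of a pure snake configuration form a \emph{torus link}, so all cycles share a \emph{common} winding pair $(q_1,q_2)$, and moreover this pair is \emph{coprime} (Proposition~\ref{prop:decomp}). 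With $r$ cycles one then has $\sgn(\sigma)=(-1)^{r(m_1q_1+m_2q_2+1)}$ and $\Phi_\theta(\sigma)=(-1)^{\theta_1 rq_1+\theta_2 rq_2}$; the coprimality forces $(q_1+1)(q_2+1)\equiv 0\pmod 2$, and only with this does the sum over $\theta$ collapse to $+1$ (see the proof of Lemma~\ref{lemmasignpermutations}). Your remark that the only new feature over dimers is ``both up and down moves'' understates the issue: the new feature is that individual cycles can have arbitrary winding, not just $(0,\pm1)$ or $(\pm1,0)$, and it is precisely the torus-link constraint that rescues the cancellation.
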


We would now like to define a probability measure $P_{\mathbf{m}}$ on pure snake configurations $\sigma$ on $\mathbb{T}_{\mathbf{m}}$ by sampling a random $\sigma$ according to its contribution to the partition function $Z_{\mathbf{m}}$. Before this however, we need to consider the question of whether the Fibonacci weighting $J(\sigma)$ is non-negative. 

We say that our four non-negative parameters $\alpha,\beta,\gamma,\delta \geq 0$ lie in the \textbf{probabilistic regime} if they satisfy the positive discriminant equation
\begin{align} \label{eq:probregime}
{\alpha^2 - 4 \gamma \delta \geq 0.}
\end{align}
Of course, the value of $\beta$ does not affect whether or not we are in the probabilistic regime. The equation \eqref{eq:probregime} {(including the case of equality)} implies the strict positivity of $f_n(- \gamma \delta/\alpha^2)$ for all $n \geq 1$. In particular, we have the following remark:
  \begin{rem} \label{rem:positivity}
      If \eqref{eq:probregime} holds, then $J(\sigma) > 0$ for all $\sigma$. 
  \end{rem}

It is also possible to have $J(\sigma)$ non-negative for all pure snake configurations without \eqref{eq:probregime} holding. However, the precise conditions depend on the value of $m_2$ in a rather complicated way; see Section~\ref{sec:m1infinity} for more details.

 It follows that, in the probabilistic regime, we can define a probability measure $P_\mathbf{m}$ on the set of pure snake configurations by {setting}
\begin{align} \label{eq:Pdef}
P_{\mathbf{m}}(\sigma) \coloneqq \frac{ {w}(\sigma)J(\sigma)}{Z_{\mathbf{m}}}.
\end{align}
%By Remark \ref{rem:positivity}, in the probabilistic regime \eqref{eq:probregime}, $\mathbf{P}_m(\cdot)$ defines a probability measure on the set of pure snake configurations on the torus $\mathbb{T}_{\mathbf{m}}$.

Our second foundational result describes the local correlations of pure snake configurations under $P_{\mathbf{m}}$.

\begin{thm} \label{thm:cf}
In the probabilistic regime \eqref{eq:probregime}, $P_{\mathbf{m}}$ defines a probability measure on pure snake configurations. Moreover, for any distinct elements $x^1,\ldots,x^k$ of $\mathbb{T}_{\mathbf{m}}$ we have the determinantal correlation formula
  \begin{align} \label{eq:cf}
P_\mathbf{m}( \bigcap_{i=1}^k \{ \sigma x^i = x^i + \mathbf{e}^1 \} ) &=  \sum_{\theta \in \{0,1\}^2 } \lambda_{\theta,\mathbf{m}}\det_{i,j = 1}^k  \left( G_{\theta,\mathbf{m}}(x^i,x^j) \right),
  \end{align}
  where $\lambda_{\theta,\mathbf{m}} = C_{\theta,\mathbf{m}}\det(K_{\theta,\mathbf{m}})/Z_{\mathbf{m}}$ and the operator $G_{\theta,\mathbf{m}}: \mathbb{Z} \times \mathbb{Z} \to \mathbb{C}$ is given by
\begin{align} \label{eq:Jdef}
   G_{\theta,\mathbf{m}}(x,y)   &= \frac{\beta}{m_1 m_2} \sum_{\substack{z^{m_1} = (-1)^{\theta_1}\\ w^{m_2} = (-1)^{\theta_2}} } \frac{ z^{ - (y_1-x_1-1) } w^{ - (y_2 - x_2) } }{\alpha + \beta z + \gamma w + \delta \overline{w}}.
  \end{align}
\end{thm}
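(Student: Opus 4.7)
The plan is to compute the partial partition function
\[
Z_{\mathbf{m}}(x^1,\ldots,x^k) := \sum_{\substack{\sigma \in \URD_{\mathbf{m}} \\ \sigma(x^i) = x^i + \mathbf{e}^1 \,\forall i}} w(\sigma) J(\sigma),
\]
recognise $P_{\mathbf{m}}\bigl(\bigcap_i \{\sigma x^i = x^i + \mathbf{e}^1\}\bigr) = Z_{\mathbf{m}}(x^1,\ldots,x^k)/Z_{\mathbf{m}}$, and identify the right-hand side with the claimed determinantal expression. That $P_{\mathbf{m}}$ is a genuine probability measure in the probabilistic regime is immediate: Remark~\ref{rem:positivity} gives $J(\sigma) > 0$, so every summand of $Z_{\mathbf{m}}$ is non-negative, and strict positivity follows from any single configuration of non-zero weight.

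The first substantive step is to rerun the Kasteleyn proof of Theorem~\ref{thm:pf} on the modified operator $\tilde K_{\theta,\mathbf{m}}$ obtained from $K_{\theta,\mathbf{m}}$ by replacing, for each $i = 1, \ldots, k$, the row indexed by $x^i$ with a row supported only at column $x^i + \mathbf{e}^1$ and carrying the entry $\beta e^{\pi \iota \theta_1/m_1}$ there. In the Leibniz expansion of $\det(\tilde K_{\theta,\mathbf{m}})$ only permutations $\os$ with $\os(x^i) = x^i + \mathbf{e}^1$ survive. Since fixing $\sigma(x^i) = x^i + \mathbf{e}^1$ forbids snakelets at or directly below $x^i$ but otherwise leaves the vertical gap structure untouched, the snakelet-to-Fibonacci collapse of Section~\ref{sec: fibonacci}, being local in the vertical gaps of the pure configuration, still yields
\[
Z_{\mathbf{m}}(x^1,\ldots,x^k) = \sum_{\theta \in \{0,1\}^2} C_{\theta,\mathbf{m}} \det(\tilde K_{\theta,\mathbf{m}}).
\]

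Next I apply the matrix determinant lemma. Writing $\tilde K_{\theta,\mathbf{m}} = K_{\theta,\mathbf{m}} + UV^T$ as a rank-$k$ perturbation, with $U$ the $x^i$-indexed standard basis columns and $V^T$ the differences between the new and original rows, one obtains
\[
\det(\tilde K_{\theta,\mathbf{m}}) = \det(K_{\theta,\mathbf{m}}) \cdot \det_{i,j=1}^k \bigl( \beta e^{\pi \iota \theta_1/m_1} K_{\theta,\mathbf{m}}^{-1}(x^i + \mathbf{e}^1, x^j) \bigr).
\]
The inverse $K_{\theta,\mathbf{m}}^{-1}$ is computed explicitly via Fourier diagonalisation: conjugating $K_{\theta,\mathbf{m}}$ by the phase $\phi(x) = e^{\pi \iota \theta_1 x_1/m_1 + \pi \iota \theta_2 x_2/m_2}$ reduces it to a translation-invariant operator on the genuine characters of $\mathbb{T}_{\mathbf{m}}$, with eigenvalues $\alpha + \beta z + \gamma w + \delta w^{-1}$ over the $\theta$-twisted roots $z^{m_1} = (-1)^{\theta_1}, w^{m_2} = (-1)^{\theta_2}$, in agreement with \eqref{detKevalsequation0}. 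Inverting and undoing the conjugation yields
\[
K_{\theta,\mathbf{m}}^{-1}(a, b) = \phi(a)^{-1} \phi(b) \cdot \frac{1}{m_1 m_2} \sum_{z,w} \frac{z^{a_1-b_1} w^{a_2-b_2}}{\alpha + \beta z + \gamma w + \delta w^{-1}}.
\]
Substituting $a = x^i + \mathbf{e}^1$, $b = x^j$ one finds $\beta e^{\pi \iota \theta_1/m_1} K_{\theta,\mathbf{m}}^{-1}(x^i + \mathbf{e}^1, x^j) = \phi(x^i)^{-1} \phi(x^j) \cdot G_{\theta,\mathbf{m}}(x^i, x^j)$. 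The row factor $\phi(x^i)^{-1}$ and column factor $\phi(x^j)$ cancel in the determinant since $i$ and $j$ range over the common index set $\{x^1,\ldots,x^k\}$, so
\[
\det(\tilde K_{\theta,\mathbf{m}}) = \det(K_{\theta,\mathbf{m}}) \cdot \det_{i,j=1}^k G_{\theta,\mathbf{m}}(x^i, x^j).
\]
Dividing by $Z_{\mathbf{m}}$ yields \eqref{eq:cf} with $\lambda_{\theta,\mathbf{m}} = C_{\theta,\mathbf{m}} \det(K_{\theta,\mathbf{m}})/Z_{\mathbf{m}}$.

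The principal obstacle is the first step: verifying that the Kasteleyn sign structure and the Fibonacci-weighting trick that together produce the identity $\sum_\theta C_{\theta,\mathbf{m}} \det(K_{\theta,\mathbf{m}}) = Z_{\mathbf{m}}$ pass unchanged to the restricted sum. This hinges on the snakelet-to-domino correspondence being purely local in the vertical gaps of the complement of the fixed arrows, so that zeroing out the $k$ selected rows of $K_{\theta,\mathbf{m}}$ simply removes the corresponding sites from the pool of potential gap locations without otherwise disturbing the sign-and-multiplicity bookkeeping of Theorem~\ref{thm:pf}.
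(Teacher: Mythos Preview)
Your argument is correct and is essentially the paper's own proof, reorganised: the paper uses Jacobi's identity (Lemma~\ref{lem:signedcorr}) where you use the equivalent matrix determinant lemma, and the paper packages the restricted sum via the signed measures $\other{P}_{\theta,\mathbf{m}}$ and the pushforward machinery rather than via a modified operator $\tilde K_{\theta,\mathbf{m}}$. The ``principal obstacle'' you flag at the end is in fact trivial in this $\mathbf{e}^1$-only case and needs no local-gap reasoning: since snakelets never involve $\mathbf{e}^1$-moves, the indicator $\mathrm{1}\{\os(x^i)=x^i+\mathbf{e}^1\}$ is invariant under the shape projection (this is exactly \eqref{eq:ev2b0}), so $\sum_{\os:\,\os(x^i)=x^i+\mathbf{e}^1} w(\os) = \sum_{\sigma:\,\sigma(x^i)=x^i+\mathbf{e}^1} w(\sigma)J(\sigma)$ follows immediately from Lemma~\ref{lem:shapepush}, and the sign identity \eqref{eq:weightsum0pre} holds termwise for every $\os$, hence survives restriction to any subset.
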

In a moment, we outline how it is also possible to provide determinantal formulas for probabilities of more complicated events involving up and down-moves too, such as $P_{\mathbf{m}}( \sigma x^1 = x^1 + \mathbf{e}^1 , \sigma x^2 = x^2 + \mathbf{e}^2, \sigma x^3 = x^3 - \mathbf{e}^2)$. Each of these probabilities may be written in terms of a signed sum of determinants; see Theorem \ref{thm:cffull0}.

\subsection{Proof ideas: torus links, Kasteleyn theory and pushforwards}

In this section we take a moment to expound on some of the ideas behind the proofs of Theorem \ref{thm:pf} and Theorem \ref{thm:cf}. Let us begin by extending \eqref{eq:wdef} from pure snake configurations to generalised snake configurations by defining 
\begin{align} \label{eq:weightsnake}
{w}(\other{\sigma}) := {w}_{\alpha,\beta,\gamma,\delta}(\other{\sigma}) :=  (-1)^{N(\other{\sigma})}\alpha^{\fixedbox(\other{\sigma})}\beta^{\rightbox(\other{\sigma})}\gamma^{\upbox(\other{\sigma})}\delta^{\downbox(\other{\sigma})}
\end{align}
where, as before, $\fixedbox,\rightbox,\upbox,\downbox \,$ denote the number of fixed, right, up and down points associated with $\other{\sigma}$ respectively and $N(\other{\sigma})$ denotes the number of snakelets, that is
\begin{equation*}
  N(\other{\sigma}) = \frac{1}{2}\#\{x : \other{\sigma}^2 (x) = x\}.
\end{equation*}
We emphasise that if $\overline{\sigma}$ is a pure snake configuration, then $N(\overline {\sigma}) = 0$ and \eqref{eq:weightsnake} and \eqref{eq:wdef} coincide.

Given a generalised snake configuration $\other{\sigma}$, consider the decomposition of $\other{\sigma}$ into cycles. We may write $\other{\sigma} = C_1 \ldots C_r \tau_1 \ldots \tau_s$, where $C_i$ are cycles of length $\geq 3$, referred to as \textbf{long cycles}, and $\tau_i$ are transpositions. We refer to the pure snake configuration $\mathrm{sh}(\other{\sigma}) = \sigma := C_1\ldots C_r$ obtained from $\os$ by deleting all of its two-cycles as the \textbf{shape of $\other{\sigma}$}. Note that this procedure gives rise to a projection
\begin{align*}
\mathrm{sh}:\URDbar_\mathbf{m} \to \URD_\mathbf{m}
\end{align*}
sending every generalised snake configuration to its shape. 

For $\mathrm{sh}(\other{\sigma}) = \sigma$, consider the relationship between ${w}(\other{\sigma})$ and ${w}(\sigma)$. 
The projection $\mathrm{sh}(\os)$ deletes $N(\overline{\sigma})$ snakelets from $\overline{\sigma}$. Since replacing a snakelet by two fixed squares replaces an instance of $\gamma \delta$ with $\alpha^2$ in the weighting \eqref{eq:weightsnake}, we have 
\begin{align} \label{eq:creat}
{w}(\other{\sigma}) = (-\gamma \delta/\alpha^2)^{N(\other{\sigma})} {w}(\sigma) \qquad \sigma = \mathrm{sh}(\other{\sigma}).
\end{align}
The following result outlines the source of the Fibonacci weighting defined in \eqref{eq:J1}.

\begin{lemma} \label{lem:shapepush}
Write $\mathrm{sh}^{-1}(\sigma) := \{ \other{\sigma} : \mathrm{sh}(\other{\sigma}) = \sigma \}$. Then with $J(\sigma)$ as in \eqref{eq:J1} we have
\begin{align} \label{eq:shapepush}
{w}(\sigma)J(\sigma) := \sum_{ \other{\sigma} \in \mathrm{sh}^{-1}(\sigma) } {w}(\other{\sigma}).
\end{align}
\end{lemma}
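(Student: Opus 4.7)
The plan is to enumerate $\mathrm{sh}^{-1}(\sigma)$ by recording the snakelets added to $\sigma$. Any $\other{\sigma} \in \mathrm{sh}^{-1}(\sigma)$ shares the long cycles of $\sigma$ exactly and differs from $\sigma$ only by the presence of a collection of pairwise disjoint snakelets (two-cycles of the form $(x, x+\mathbf{e}^2)$) occupying pairs of adjacent fixed points of $\sigma$. Conversely, any collection of pairwise disjoint vertically-adjacent pairs of fixed points of $\sigma$ determines an element of $\mathrm{sh}^{-1}(\sigma)$. Hence $\mathrm{sh}^{-1}(\sigma)$ is in bijection with the set of such disjoint collections of vertical fixed-point pairs.

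Next, since any two distinct vertical gaps in $G_\sigma$ within the same column are by definition separated by at least one point belonging to a long cycle of $\sigma$, no snakelet can straddle two gaps, and a snakelet placement thus decomposes as an independent choice within each gap $g \in G_\sigma$. Combining this gap-wise decomposition with the weight identity $w(\other{\sigma}) = (-\gamma\delta/\alpha^2)^{N(\other{\sigma})} w(\sigma)$ from \eqref{eq:creat}, one obtains
\begin{equation*}
\sum_{\other{\sigma} \in \mathrm{sh}^{-1}(\sigma)} w(\other{\sigma}) \;=\; w(\sigma) \prod_{g \in G_\sigma} T_{|g|}\!\left( -\frac{\gamma \delta}{\alpha^2} \right),
\end{equation*}
where $T_n(\lambda)$ denotes the total weight of monomino-domino tilings of a linear $1 \times n$ strip, with each domino carrying weight $\lambda$ and each monomino weight $1$.

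It then remains to verify that $T_n = f_n$. Conditioning on whether the bottom cell of the strip is a monomino or the lower half of a domino yields the two-term recursion $T_n(\lambda) = T_{n-1}(\lambda) + \lambda T_{n-2}(\lambda)$ with $T_1 = 1$ and $T_2 = 1 + \lambda$, matching \eqref{eq:recc} and the initial conditions implicit in \eqref{eq:fdef}. Solving this linear recurrence via its characteristic equation $t^2 - t - \lambda = 0$, whose roots are $(1 \pm \sqrt{1-4\lambda})/2$ (with the confluent case $\lambda = 1/4$ handled separately), reproduces the closed form in \eqref{eq:fdef}. Assembling these ingredients gives \eqref{eq:shapepush}.

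The main subtlety I expect to need to address carefully is the degenerate case in which an entire column of $\torus{\mathbf{m}}$ is fixed under $\sigma$: on the cyclic torus the resulting gap has wrap-around adjacency, and so a priori permits an additional ``wrap-around'' snakelet that is absent from the linear tiling count $f_{|g|}$. I would verify that either the definition of $G_\sigma$ excludes this configuration from its scope (by requiring gaps to be bounded above and below by points belonging to long cycles), or else that the appropriate cyclic analogue of $f_n$ must be substituted in these special columns; in either case the proof above applies verbatim away from this edge case.
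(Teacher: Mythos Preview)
Your approach is essentially the same as the paper's: enumerate $\mathrm{sh}^{-1}(\sigma)$ via nests of snakelets, factor the resulting generating function over vertical gaps using \eqref{eq:creat}, and identify the per-gap contribution with $f_n$ via the Fibonacci recurrence (the paper passes through the binomial form $\tilde f_n(\lambda)=\sum_k\binom{n-k}{k}\lambda^k$, which is just your tiling count $T_n$). The edge case you flag---a column entirely fixed by $\sigma$, where the gap is cyclic rather than linear---is not addressed in the paper's proof either; its argument implicitly treats every gap as a linear strip.
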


In light of \eqref{eq:creat}, an alternative formation of \eqref{eq:shapepush} is 
\begin{align}\label{eq:Jaltform}
J(\sigma) = \sum_{ \other{\sigma} \in \mathrm{sh}^{-1}(\sigma) } (-\gamma \delta/\alpha^2)^{N(\other{\sigma})}.
\end{align}
%We will also prove the following alternative formula for $f_n(-\lambda)$. 
%\begin{lemma} \label{lem:TT}
%We have $f_n(-\lambda) = \prod_{k=1}^n \left( 1  %+ \sqrt{2 \lambda } \cos \left( \frac{ \pi k }{ %n + 1 } \right) \right)$. In particular, $f_n(-%\lambda) > 0$ whenever $\lambda \leq 1/4$. 
%\end{lemma}

Lemma \ref{lem:shapepush} %and Lemma \ref{lem:TT}
is proven in Section \ref{sec: fibonacci}. %As mentioned above, the proof of Lemma \ref{lem:shapepush} relies on writing $f_n(-\lambda)$ as the determinant of a tridiagonal Toeplitz matrix.

One consequence of Lemma \ref{lem:shapepush} is that the partition function $Z_{\mathbf{m}} := \sum_{ \sigma \in \URD_{\mathbf{m}}} J(\sigma)w(\sigma)$ defined in \eqref{eq:PF} may alternatively be written
\begin{align} \label{eq:pfnew}
Z_{\mathbf{m}}(\alpha,\beta,\gamma,\delta) = \sum_{ \other{\sigma} \in \URDbar_{\mathbf{m}}} {w}(\other{\sigma}),
\end{align}
where the sum is over all generalised snake configurations (recall that \eqref{eq:weightsnake} generalises $w$ to $\URDbar$). It is in fact this representation which we exploit to calculate the partition function. 

Given a generalised snake configuration $\os$, the long cycles of $\os$ may be associated with a \textbf{torus link}, namely, a collection of disjoint knots on the (continuous) two-dimensional torus $S^1 \times S^1$. In developing our Kasteleyn theory for generalised snake configurations, we appeal to the following well-known result in elementary knot theory.

\begin{proposition} \label{prop:links}
Let $K_1,\ldots,K_r$ be a collection of disjoint knots on the two-dimensional torus. Then the winding numbers of each of these knots coincide. Moreover, the horizontal and vertical winding numbers $q_1$ and $q_2$ are coprime.
\end{proposition}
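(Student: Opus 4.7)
The plan is to translate the statement into classical surface topology on the continuous 2-torus $T^2 = \mathbb{R}^2 / \mathbb{Z}^2$, where each knot $K_i$ becomes an oriented simple closed curve. Such a curve represents a homology class $[K_i] = (p_i, q_i) \in H_1(T^2; \mathbb{Z}) \cong \mathbb{Z}^2$, and these integers $p_i, q_i$ are precisely the horizontal and vertical winding numbers of $K_i$.

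The first main step is to show that the pairs $(p_i, q_i)$ are pairwise parallel. This uses the fact that the intersection pairing on $H_1(T^2; \mathbb{Z})$ is the determinant form, $((a, b), (a', b')) \mapsto ab' - a'b$. Since the $K_i$ are literally disjoint, their algebraic intersection number vanishes, yielding $p_i q_j - p_j q_i = 0$ for all $i, j$. Thus all the vectors $(p_i, q_i)$ lie on a common line through the origin in $\mathbb{Q}^2$.

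The second main step is to show that each nonzero $(p_i, q_i)$ is primitive, i.e.\ $\gcd(p_i, q_i) = 1$. For this I would pass to the universal cover $\mathbb{R}^2 \to T^2$: a single traversal of $K_i$ lifts to a simple path whose endpoints differ by exactly $(p_i, q_i)$. If one could write $(p_i, q_i) = d(a, b)$ with $d \geq 2$, then the deck translate of this lifted path by $(a, b)$ would project to the same loop $K_i$, forcing it to already close up after $1/d$ of a traversal and contradicting the fact that the original traversal was a single injective loop. The degenerate case $(p_i, q_i) = (0,0)$ corresponds to a contractible loop, which in our snake setting is excluded since no closed cycle of length $\geq 3$ using only right, up, and down moves can have zero net horizontal displacement and zero net vertical displacement without wrapping the torus.

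Combining proportionality with primitivity forces $(p_i, q_i) = \pm(p_j, q_j)$ for all $i, j$. In our snake setting the horizontal winding $p_i$ is necessarily $\geq 0$ since $\sigma$ never moves a point left, and this pins down the common sign and delivers a single coprime pair $(q_1, q_2) = (p_i, q_i)$ as claimed. I expect the primitivity step to be the main subtlety, since it is the one place where embeddedness of the knots is genuinely used; the intersection number calculation is essentially mechanical once the homological framework is in place.
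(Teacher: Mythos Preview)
Your approach is correct and takes a genuinely different route from the paper. The paper treats the proposition as a known fact from knot theory, citing Murasugi, and later (Lemmas~\ref{lem:coprime} and~\ref{lem:link}) gives sketch proofs that rely on \emph{straightening}: every torus knot is isotopic to a linear curve $s \mapsto (q_1 s, q_2 s) \bmod [0,1]^2$, and both injectivity (coprimality) and disjointness (equality of winding numbers) are then read off from this explicit normal form. Your argument instead stays homological: the intersection pairing on $H_1(T^2;\mathbb{Z})$ gives $p_i q_j - p_j q_i = 0$ directly from disjointness, which is arguably cleaner and more conceptual than the paper's geometric normalisation.

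One small sharpening for the primitivity step: the contradiction is not exactly that the curve ``closes up after $1/d$ of a traversal.'' The translate $\tilde\gamma + (a,b)$ is a \emph{different} connected component of the full preimage (since $(a,b) \notin \mathbb{Z}\cdot d(a,b)$ for $d \geq 2$), so it does project to $K_i$ but does not force early closure. The actual contradiction is a Jordan--separation argument: each component of the lift is a properly embedded line separating $\mathbb{R}^2$ into two half-planes; the $d$ translates $\tilde\gamma, \tilde\gamma + (a,b), \ldots, \tilde\gamma + (d-1)(a,b)$ are pairwise disjoint and hence strictly nested, yet one further translate by $(a,b)$ must return to $\tilde\gamma$, which is impossible. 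You already flag this as the delicate step, so this is just a matter of stating the mechanism precisely. Your exclusion of the null-homotopic case via the snake structure is appropriate, since the proposition as literally stated does fail for contractible curves on the torus.
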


See, e.g., Chapter 7 of Murasugi \cite{murasugi}. We discuss the knot-theoretic aspects of our approach further in Section \ref{sec:knots}.

By exploiting Proposition \ref{prop:links} we are able to establish that with the operators $K_{\theta,\mathbf{m}}$ defined in \eqref{defK0} and coefficients $C_{\theta,\mathbf{m}}$ as in Theorem \ref{thm:pf}, for any generalised snake configuration we have
\begin{align} \label{eq:weightsum0pre}
\sum_{\theta \in \{0,1\}^2 } C_{\theta,\mathbf{m}} \mathrm{sgn}(\os) \prod_{x \in \mathbb{T}_{\mathbf{m}}} K_{\theta,\mathbf{m}}(x ,\os x ) = w(\os).
\end{align}
By summing \eqref{eq:weightsum0pre} over all generalised snake configurations and using \eqref{eq:pfnew}, we obtain the first equation, \eqref{eq:pf1}, of Theorem \ref{thm:cf}. 
The latter equation, \eqref{detKevalsequation0}, follows from diagonalising the operator $K_{\theta,\mathbf{m}}$; see Lemma \ref{lem:eigensystem}.

We now turn to the question of how the correlation structure of pure snake configurations arises in relation to the pushforward map. To this end, provided the determinant $\det(K_{\theta,\mathbf{m}})$ is non-zero, for each $\theta \in \{0,1\}^2$, we may define a \emph{complex} probability measure $P_{\theta,\mathbf{m}}:\URDbar_{\mathbf{m}} \to \mathbb{C}$ on generalised snake configurations by setting
\begin{align*}
P_{\theta,\mathbf{m}}( \os) := \frac{1}{\det(K_{\theta,\mathbf{m}})}  \mathrm{sgn}(\os) \prod_{x \in \mathbb{T}_{\mathbf{m}}} K_{\theta,\mathbf{m}}(x ,\os x ).
\end{align*}
A complex probability measure on a finite set $V$ is simply a function $P:V \to \mathbb{C}$ satisfying $\sum_{v \in V} P(v) = 1$, which we extend to a function on any subset $W \subset V$ by $P(W) = \sum_{v \in W} P(v)$. We will see shortly that $P_{\theta,\mathbf{m}}$ has highly tractable correlation structure. 
%Moreover, if we define the pushforward $P_{\theta,\mathbf{m}}$ of $\other{P}_{\theta,\mathbf{m}}$ under the shape map to be the signed probability measure on $\URD$ given by 
%\begin{align*}
%\end{align*} 
Moreover, using \eqref{eq:shapepush} in conjunction with \eqref{eq:weightsum0pre} and the definition of the genuine probability measure $P_{\mathbf{m}}$ on pure snake configurations, for any pure snake configuration $\sigma$ we have 
\begin{align*}
P_{\mathbf{m}}(\sigma) = \sum_{ \os \in \mathrm{sh}^{-1}(\sigma) } \sum_{\theta \in \{0,1\}^2 } \lambda_{\theta,\mathbf{m}} P_{\theta,\mathbf{m}}(\os).
\end{align*}
We now seek to describe in full the determinantal correlation structure of the probability measure $P_{\mathbf{m}}$. Given $x \in \mathbb{T}_{\mathbf{m}}$ and $\mathbf{f} \in \{0,\mathbf{e}^1,\mathbf{e}^2,-\mathbf{e}^2\}$, define the indicator function $R_x^{\mathbf{f}}:\URDbar_{\mathbf{m}} \to \{0,1\}$ on generalised snake configurations by 
\begin{align*}
R_x^{\mathbf{f}}(\os) := \mathrm{1} \{ \os x = x + \mathbf{f} \}.
\end{align*}
It is paramount to our program to understand how the indicator $R_x^{\mathbf{f}}$ interacts with the shape projection, $\mathrm{sh}:\URDbar_{\mathbf{m}} \to \URD_{\mathbf{m}}$. We are able to prove the following representations for the pushforwards of $R_x^{\mathbf{f}}$ under the shape map:
\begin{align}
T^0_x(\os) := R^0_x(\mathrm{sh}(\os)) &= R^0_x(\os) + R^{\mathbf{e}^2}_x(\os) R^{-\mathbf{e}^2}_{x+\mathbf{e}^2}(\os) + R^{-\mathbf{e}^2}_x(\os) R^{\mathbf{e}^2}_{x-\mathbf{e}^2}(\os) \label{eq:ev1b0} \\
T^{\mathbf{e}^1}_x(\os) := R^{\mathbf{e}^1}_x(\mathrm{sh}(\os)) &= R^{\mathbf{e}^1}_x(\os) \label{eq:ev2b0} \\
T^{\mathbf{e}^2}_x(\os) := R^{\mathbf{e}^2}_x(\mathrm{sh}(\os)) &= R^{\mathbf{e}^2}_x(\os) ( 1 - R^{-\mathbf{e}^2}_{x+\mathbf{e}^2}(\os) ) \label{eq:ev3b0} \\
T^{ - \mathbf{e}^2}_x(\os) := 
R^{-\mathbf{e}^2}_x(\mathrm{sh}(\os)) &= R^{-\mathbf{e}^2}_x(\os) ( 1 - R^{\mathbf{e}^2}_{x-\mathbf{e}^2}(\os) ). \label{eq:ev4b0}
\end{align}
The pertinent fact here is that every product $\prod_{i=1}^k T_{x^i}^{\mathbf{f}^i}$ has an expression
\begin{align} \label{eq:wordy0}
\prod_{i=1}^k T_{x_i}^{\mathbf{f}_i} = \sum_{\kappa \in \mathcal{K}} a_\kappa \prod_{i=1}^{r_\kappa} R_{x^{\kappa,i}}^{\mathbf{f}^{\kappa,i}}, 
\end{align}
as a sum of words in the variables $(R_x^{\mathbf{f}})$. Here, we have indexed the words by $\kappa$ in some indexing set $\mathcal{K}$ and the $\alpha_\kappa$ are integer coefficients. 

With \eqref{eq:wordy0} at hand, we are able to give our full description of the correlation structure of the probability measure $P_{\mathbf{m}}$:

\begin{thm} \label{thm:cffull0}
Suppose that the product $\prod_{i=1}^k T_{x_i}^{\mathbf{f}_i}$ has an expression $\prod_{i=1}^k T_{x_i}^{\mathbf{f}_i} = \sum_{\kappa \in \mathcal{K}} a_\kappa \prod_{i=1}^{r_\kappa} R_{x^{\kappa,i}}^{\mathbf{f}^{\kappa,i}}$. Then we may write
\begin{align} \label{eq:corrstructure}
    P_{\mathbf{m}}\left( \bigcap_{i=1}^k \{ \sigma x^i = x^i + \mathbf{f}^i \} \right) =  \sum_{\kappa \in \mathcal{K}} a_\kappa \sum_{\theta \in  \{0,1\}^2 } \lambda_{\theta,\mathbf{m}} P_{\theta,\mathbf{m}} \left[\prod_{i=1}^{r_\kappa} R_{x^{\kappa,i}}^{\mathbf{f}^{\kappa,i}}  \right],
\end{align}
where $\lambda_{\theta,\mathbf{m}} := C_{\theta,\mathbf{m}} \det(K_{\theta,\mathbf{m}})/Z_{\theta,\mathbf{m}}$ and the signed correlations are given by 
\begin{align} \label{eq:dform}
P_{\theta,\mathbf{m}} \left[\prod_{i=1}^r R_{x^i}^{\mathbf{f}^i} \right] = \prod_{i=1}^r K_{00,\mathbf{m}}(x^i,x^i+\mathbf{f}^i) \det_{i,j=1}^r \left[ H_{\theta,\mathbf{m}}( x^i+\mathbf{f}^i, x^j ) \right],
\end{align}
where $H_{\theta,\mathbf{m}}$ is given by
\begin{align} \label{eq:nform0}
H_{\theta,\mathbf{m}}(x,x') := \frac{1}{m_1m_2}\sum_{z^{m_1}=(-1)^{\theta_1},w^{m_2}=(-1)^{\theta_2}} \frac{ z^{-(x'_1-x_1)} w^{ - (x_2'-x_2) } }{ \alpha +\beta z + \gamma w + \delta \overline{w} }.
\end{align}

\end{thm}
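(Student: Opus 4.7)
The plan is to combine three ingredients already in place: the shape pushforward of Lemma \ref{lem:shapepush}, the Kasteleyn decomposition of Theorem \ref{thm:pf}, and the classical Jacobi/Cramer identity expressing correlations of signed bijection measures through the inverse Kasteleyn operator, stitched together with a gauge cancellation. First, Lemma \ref{lem:shapepush} combined with \eqref{eq:pfnew} and the definition of $\lambda_{\theta,\mathbf{m}}$ yields the pushforward representation $P_{\mathbf{m}}(\sigma) = \sum_{\os \in \mathrm{sh}^{-1}(\sigma)} \sum_\theta \lambda_{\theta,\mathbf{m}} P_{\theta,\mathbf{m}}(\os)$, so that for any function $F$ on $\URD_{\mathbf{m}}$,
\begin{align*}
E_{P_{\mathbf{m}}}[F(\sigma)] = \sum_{\theta \in \{0,1\}^2} \lambda_{\theta,\mathbf{m}} \, E_{P_{\theta,\mathbf{m}}}[F(\mathrm{sh}(\os))].
\end{align*}
Taking $F = \prod_i R_{x^i}^{\mathbf{f}^i}$, using the defining relation $R_x^{\mathbf{f}} \circ \mathrm{sh} = T_x^{\mathbf{f}}$, and then inserting the algebraic expansion \eqref{eq:wordy0} gives the outer structure of \eqref{eq:corrstructure} by linearity of the signed expectation.

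To establish the internal determinantal formula \eqref{eq:dform}, fix $\theta$ and write $K = K_{\theta,\mathbf{m}}$. Restricting the Leibniz expansion of $\det K$ to those bijections $\os$ satisfying $\os x^i = x^i + \mathbf{f}^i$ for $i=1,\ldots,r$, factoring out $\prod_i K(x^i, x^i+\mathbf{f}^i)$, and invoking Jacobi's identity (equivalently, iterating Cramer's rule starting from the elementary case $r=1$) produces the standard signed-dimer correlation identity
\begin{align*}
P_{\theta,\mathbf{m}}\!\left[\prod_{i=1}^r R_{x^i}^{\mathbf{f}^i}\right] = \prod_{i=1}^r K(x^i, x^i + \mathbf{f}^i) \; \det\!\left[K^{-1}(x^i + \mathbf{f}^i, x^j)\right]_{i,j=1}^r.
\end{align*}
This step is purely linear-algebraic and uses no probabilistic content beyond the Leibniz-like definition of $P_{\theta,\mathbf{m}}$.

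Finally, I pass from $(K_{\theta,\mathbf{m}}, K_{\theta,\mathbf{m}}^{-1})$ to $(K_{00,\mathbf{m}}, H_{\theta,\mathbf{m}})$ via a gauge cancellation. Diagonalising $K_{\theta,\mathbf{m}}$ in the Fourier basis $\psi_{j}(x) = e^{2\pi \iota (j_1 x_1/m_1 + j_2 x_2/m_2)}$ (cf.\ Lemma \ref{lem:eigensystem}) with spectral variables $z = e^{\pi \iota (\theta_1 + 2j_1)/m_1}$, $w = e^{\pi \iota (\theta_2 + 2j_2)/m_2}$, which satisfy $z^{m_1} = (-1)^{\theta_1}$, $w^{m_2} = (-1)^{\theta_2}$ with eigenvalues $\alpha + \beta z + \gamma w + \delta \overline{w}$, a direct Fourier inversion gives
\begin{align*}
K_{\theta,\mathbf{m}}^{-1}(a,b) = e^{\pi \iota \theta_1 (b_1 - a_1)/m_1 + \pi \iota \theta_2 (b_2 - a_2)/m_2} \, H_{\theta,\mathbf{m}}(a,b).
\end{align*}
The prefactor factors as a product of a function of $a$ and a function of $b$, so extracting these phases row- and column-wise from $\det[K^{-1}(x^i+\mathbf{f}^i, x^j)]$ yields a total factor $\prod_i e^{-\pi \iota(\theta_1 f_1^i/m_1 + \theta_2 f_2^i/m_2)}$, which exactly cancels the analogous phase in $\prod_i K_{\theta,\mathbf{m}}(x^i, x^i + \mathbf{f}^i) = \prod_i e^{\pi \iota(\theta_1 f_1^i/m_1 + \theta_2 f_2^i/m_2)} K_{00,\mathbf{m}}(x^i, x^i + \mathbf{f}^i)$, producing \eqref{eq:dform}. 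The only delicate step is this final cancellation: one must verify that the phase in $K_{\theta,\mathbf{m}}^{-1}$ is of the separated form $\phi(a)^{-1}\phi(b)$ so that row/column extraction from the determinant is legitimate and the $x^j$-phases cancel among themselves, leaving exactly the $\mathbf{f}^i$-phases to absorb those in the Kasteleyn weights. Everything else is algebraic bookkeeping on top of results already proved.
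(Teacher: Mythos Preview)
Your proposal is correct and follows essentially the same route as the paper: the pushforward identity (your first display) is the paper's Lemma~\ref{lem:pushprob} combined with \eqref{eq:barsum2}, the Jacobi step is Lemma~\ref{lem:signedcorr}, and the gauge cancellation is precisely Corollary~\ref{cor:nform}. The only organisational difference is that the paper first packages the argument for the intermediate measure $P_{\theta_2,\mathbf{m}}$ (Theorem~\ref{thm:cffull1}) and then sums over $\theta_2$, whereas you go straight to $P_{\mathbf{m}}$; this changes nothing of substance.
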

We note that by \eqref{defK0}, the terms $K_{00,\mathbf{m}}(x^i,x^i+\mathbf{f}^i)$ occurring in \eqref{eq:dform} are equal to $\alpha,\beta,\gamma,\delta$ according to whether $\mathbf{f}^i = 0,\mathbf{e}^1,\mathbf{e}^2,-\mathbf{e}^2$ respectively.

By \eqref{eq:ev2b0}, the case of \eqref{eq:wordy0} is particularly simple when $\mathbf{f}^i = \mathbf{e}^1$ for each $i$. Namely, we simply have the relation $\prod_{i=1}^k T_{x_i}^{\mathbf{f}_i} = \prod_{i=1}^k R_{x_i}^{\mathbf{f}_i}$. This accounts for the simpler structure given in Theorem \ref{thm:cf}, which is merely this special case of Theorem \ref{thm:cffull0}.

\subsection{Applications}

Although we believe Theorem \ref{thm:pf} and Theorem \ref{thm:cf} to be of independent interest, our initial motivation for establishing these results was to provide a foundation to study pure snake configurations on infinite lattice structures, e.g. $\mathbb{T} = \mathbb{Z}^2$ or $\mathbb{R} \times \mathbb{Z}_n$, via the various scaling limits involving the parameters $\alpha, \beta, \gamma, \delta, m_1, m_2$ underpinning the model. 

\subsubsection{The integrable snake model on the infinite cylinder $\mathbb{Z} \times \mathbb{Z}_n$}
The first scaling limit we consider is fixing $m_2 = n$ and sending $m_1 \to \infty$ with all other parameters remaining fixed. For the sake of simplicity, we set $\alpha = 1$. We find that in the asymptotic regime where $m_1 \to \infty$,
for given $\gamma,\delta$, the parameter $\beta$ controls the correlation structure of the process through a series of phase transitions, related to the number of $n^{\text{th}}$ roots of $\pm 1$ for which the modulus of $1 + \gamma w + \delta w^{-1}$ exceeds $\beta$. To describe this correlation structure, for integers $0 \leq \ell \leq n$, define the root sets
\begin{align}
\mathcal{L} &:= \text{The $\ell$ elements of $\{ w \in \mathbb{C} : w^n = (-1)^{n-\ell+1} \}$ with least real part}, \label{eq:rootset1}\\
\mathcal{R} &:= \text{The $n - \ell$ elements of $\{ w \in \mathbb{C} : w^n = (-1)^{n-\ell+1} \}$ with greatest real part}. \label{eq:rootset2}
\end{align}

We show the following.
\begin{thm}\label{thm:m1infinity}
    Let $\gamma,\delta \geq 0$ be parameters satisfying $4 \gamma \delta \leq 1$ and let $0 \leq \ell \leq n$ be an integer. Suppose also that the product $\prod_{i=1}^k T_{x_i}^{\mathbf{f}_i}$ has an expression $\prod_{i=1}^k T_{x_i}^{\mathbf{f}_i} = \sum_{\kappa \in \mathcal{K}} a_\kappa \prod_{i=1}^{r_\kappa} R_{x^{\kappa,i}}^{\mathbf{f}^{\kappa,i}}$. Then there is a probability measure $P_{\ell,n}^{\gamma, \delta}$ on pure snake configurations on $\mathbb{Z}\times \{0,\dots,n-1\}$ with correlations
    \begin{align}\label{eq:prem1}
    P_{\ell,n}^{\gamma, \delta} \left( \bigcap_{i=1}^k \{ \sigma x^i = x^i + \mathbf{f}^i \} \right) := \sum_{\kappa \in K} a_\kappa \other{P}_{\ell,n}^{\gamma, \delta} \left[\prod_{i=1}^{r_\kappa} R_{x^{\kappa,i}}^{\mathbf{f}^{\kappa,i}}  \right],
    \end{align}
    where
      \begin{align} \label{eq:dform2b}
    \other{P}_{\ell,n}^{\gamma, \delta}  \left[\prod_{i=1}^r R_{x^i}^{\mathbf{f}^i} \right] = \det_{i,j=1}^r \left[ (-1)^{\mathrm{1}\{ \mathbf{f}^i = \mathbf{e}^1 \}} \gamma^{\mathrm{1}\{ \mathbf{f}^i = \mathbf{e}^2 \}} \delta^{\mathrm{1}\{ \mathbf{f}^i = -\mathbf{e}^2 \}} 
    H_{\ell,n}^{\gamma, \delta}( x^i+\mathbf{f}^i, x^j ) \right],
    \end{align}
    and
    \begin{align} \label{eq:dform3bi}
        H_{\ell,n}^{\gamma, \delta}(x,y) = \indic{y_1 \geqslant x_1}\frac{1}{n} \sum_{w \in \mathcal{R}} \frac{w^{-(y_2 - x_2)}}{(1 + \gamma w + \delta w^{-1})^{y_1 - x_1 + 1}} - \indic{y_1 < x_1}\frac{1}{n} \sum_{w \in \mathcal{L}} \frac{w^{-(y_2 - x_2)}}{(1 + \gamma w + \delta w^{-1})^{y_1 - x_1 + 1}}.
    \end{align}
\end{thm}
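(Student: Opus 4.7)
The plan is to derive Theorem \ref{thm:m1infinity} by passing to the $m_1 \to \infty$ limit in Theorem \ref{thm:cffull0} applied with $\mathbf{m} = (m_1, n)$ and $\alpha = 1$. Since the word-expansion $\prod_{i=1}^k T_{x_i}^{\mathbf{f}_i} = \sum_{\kappa} a_\kappa \prod_i R_{x^{\kappa,i}}^{\mathbf{f}^{\kappa,i}}$ is identical in finite and infinite volume, \eqref{eq:prem1} is inherited termwise from \eqref{eq:corrstructure}, so it suffices to compute the limit of $\sum_\theta \lambda_{\theta,\mathbf{m}} P_{\theta,\mathbf{m}}[\prod_i R_{x^i}^{\mathbf{f}^i}]$. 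The two ingredients to control are the prefactors $\lambda_{\theta,\mathbf{m}}$ and the kernel $H_{\theta,\mathbf{m}}$ from \eqref{eq:nform0}.

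For the prefactors, I first regroup the product \eqref{detKevalsequation0} so that the inner product over $z^{m_1} = (-1)^{\theta_1}$ telescopes, obtaining
\begin{equation*}
\det(K_{\theta,\mathbf{m}}) = \prod_{w^n = (-1)^{\theta_2}}\bigl[(1+\gamma w + \delta w^{-1})^{m_1} - (-1)^{\theta_1+m_1}\beta^{m_1}\bigr].
\end{equation*}
Let $\ell$ denote the number of $w$ with $w^n = (-1)^{\theta_2}$ and $|1+\gamma w + \delta w^{-1}| < \beta$. Each such ``inside'' $w$ contributes a dominant factor $-(-1)^{\theta_1+m_1}\beta^{m_1}$, producing the combined sign $(-1)^{\ell(\theta_1+m_1+1)}$. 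Multiplying by $C_{\theta,\mathbf{m}} = \tfrac{1}{2}(-1)^{(\theta_1+m_1+1)(\theta_2+n+1)}$ and summing over $\theta_1 \in \{0,1\}$ yields cancellation unless $\ell + \theta_2 + n + 1$ is even, i.e.\ $(-1)^{\theta_2} = (-1)^{n-\ell+1}$---exactly the parity built into the root sets $\mathcal{L}, \mathcal{R}$. For this surviving $\theta_2$, both values of $\theta_1$ contribute equally so $\lambda_{\theta,\mathbf{m}} \to \tfrac{1}{2}$. Elementary differentiation using $4\gamma\delta \le 1$ shows $|1 + \gamma e^{\iota\phi} + \delta e^{-\iota\phi}|$ is monotone in $\phi \in [0,\pi]$, so the ``inside'' roots coincide with those of least real part, identifying them with $\mathcal{L}$.

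For the kernel, equidistribution of $m_1$-th roots on the unit circle converts $\tfrac{1}{m_1}\sum_{z^{m_1}=(-1)^{\theta_1}}$ into $\tfrac{1}{2\pi\iota}\oint_{|z|=1}\tfrac{dz}{z}$, independently of $\theta_1$, so
\begin{equation*}
H_{\theta,\mathbf{m}}(x,y) \longrightarrow \frac{1}{n}\sum_{w^n = (-1)^{\theta_2}} w^{-(y_2-x_2)}\cdot \frac{1}{2\pi\iota}\oint_{|z|=1}\frac{z^{-(y_1-x_1)-1}}{1+\beta z+\gamma w+\delta w^{-1}}\,dz.
\end{equation*}
The $z$-integrand has a simple pole at $z_0(w) := -(1+\gamma w + \delta w^{-1})/\beta$, which lies inside the unit disk iff $w \in \mathcal{L}$, together with a pole of order $y_1 - x_1 + 1$ at $z=0$ when $y_1 \ge x_1$. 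A residue computation then gives $(-\beta)^{y_1-x_1}/(1+\gamma w + \delta w^{-1})^{y_1-x_1+1}$ when $y_1 \ge x_1, w \in \mathcal{R}$, minus the same expression when $y_1 < x_1, w \in \mathcal{L}$, and zero in the remaining two cases, reproducing $(-\beta)^{y_1-x_1} H^{\gamma,\delta}_{\ell,n}(x,y)$ for $H^{\gamma,\delta}_{\ell,n}$ as in \eqref{eq:dform3bi}.

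To eliminate the spurious $(-\beta)^{y_1-x_1}$ factor, I use multilinearity of the determinant in \eqref{eq:dform}: writing $x^j_1 - x^i_1 - (\mathbf{f}^i)_1$ as row plus column contributions, the column product $\prod_j(-\beta)^{x^j_1}$ cancels the row product $\prod_i(-\beta)^{-x^i_1}$, leaving only the per-row factor $(-\beta)^{-(\mathbf{f}^i)_1}$. Multiplied against the entry-wise prefactor $K_{00,\mathbf{m}}(x^i, x^i + \mathbf{f}^i) \in \{1, \beta, \gamma, \delta\}$, this yields exactly $(-1)^{\mathrm{1}\{\mathbf{f}^i=\mathbf{e}^1\}}\gamma^{\mathrm{1}\{\mathbf{f}^i=\mathbf{e}^2\}}\delta^{\mathrm{1}\{\mathbf{f}^i=-\mathbf{e}^2\}}$, matching \eqref{eq:dform2b}. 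Weak compactness of probability measures on the compact product space $\{0,\mathbf{e}^1,\mathbf{e}^2,-\mathbf{e}^2\}^{\mathbb{Z}\times\mathbb{Z}_n}$, together with pointwise convergence of all cylinder correlations, promotes the limit to a bona fide probability measure $P_{\ell,n}^{\gamma,\delta}$. I expect the main difficulty to lie in the sign-parity bookkeeping of the first step: verifying that only the $\theta_2$-parity compatible with $\mathcal{L} \cup \mathcal{R}$ survives summation over $\theta_1$, so that the surviving $\theta_2$ automatically selects the root set featured in the limiting formula.
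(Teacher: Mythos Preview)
Your contour-integral computation of the kernel limit, the diagonal conjugation eliminating $(-\beta)^{y_1-x_1}$, and the monotonicity argument identifying $\mathcal{L}$ with the least-real-part roots are all correct and match the paper's treatment (the paper packages the last point geometrically as Lemma~\ref{lem:arc}).

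The substantive divergence is in how the prefactors are handled. You take the $m_1\to\infty$ limit of the full measure $P_{\mathbf{m}}$ and argue, via leading-order sign parity of $C_{\theta}\det K_{\theta}$, that one value of $\theta_2$ cancels upon summing over $\theta_1$ and the other yields $\lambda_{\theta,\mathbf{m}}\to\tfrac12$. The paper instead fixes $\theta_2$ \emph{a priori} to match the parity $\theta_2\equiv n-\ell+1\pmod 2$, works with the restricted probability measure $P_{\theta_2,(m,n)}$ and the coefficients $\mu_{\theta,(m,n)}$ (which sum to $1$ over $\theta_1$ alone), and uses a direct sign analysis (Lemma~\ref{lem:sgnCdetK}) to show $\mu_{\theta,(m,n)}>0$; combined with the $\theta_1$-independence of the kernel limit, this immediately gives Theorem~\ref{thm:m1infinity2}. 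It then \emph{chooses} a generic $\beta$ so that the associated root set has exactly the target cardinality $\ell$, handling the endpoints $\ell=0$ and $\ell=n$ as separate cases.

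Your approach has two gaps relative to this. First, you never explain how $\beta$ is selected to realise the prescribed $\ell$ in the theorem statement; your $\ell$ is defined from $\beta$ and $\theta_2$, not the other way round. Second, your cancellation mechanism is fragile: writing $\ell_{\theta_2}$ for the number of ``inside'' roots for each $\theta_2$, the parity condition $\ell_{\theta_2}+\theta_2+n+1\equiv 0$ is satisfied by \emph{both} or \emph{neither} $\theta_2$ whenever $|\ell_0-\ell_1|=1$, so the argument does not in general single out one $\theta_2$. In such cases the limit of $P_{\mathbf{m}}$ is a mixture of two measures with different occupation numbers, and further magnitude comparisons between the $\theta_2$-contributions to $Z_{\mathbf{m}}$ would be needed. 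The paper's decoupling via $P_{\theta_2,\mathbf{m}}$ sidesteps this entirely, at the modest cost of the three-case analysis for $\beta$.
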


Again, we have a simpler form when we let $\mathbf{f}^i = \mathbf{e}^1$ for each $i$. It holds still that $\prod_{i=1}^k T_{x_i}^{\mathbf{f}_i} = \prod_{i=1}^k R_{x_i}^{\mathbf{f}_i}$ and so we can write
\begin{align}\label{eq:simpledetformula}
    P_{\ell,n}^{\gamma, \delta}  \left( \bigcap_{i=1}^k \{ \sigma x^i = x^i + \mathbf{f}^i \} \right) = \det_{i,j=1}^r \left[ - 
    H_{\ell,n}^{\gamma, \delta}( x^i+\mathbf{f}^i, x^j ) \right].
\end{align}

We mention that the case $n = 2$ may be used to recover a determinantal point process appearing in Soshnikov \cite{soshnikov} and studied in further detail by Lyons and Steif \cite{LS}. Following \cite[Example 1.7]{LS}, consider a renewal process on $\mathbb{Z}$ where the gaps between consecutive renewals have the law of a negative binomial distribution with parameters $n=2$ and $p \in (0,1)$. That is, the interarrivals have the same law as the number of flips required to obtain two heads from a coin with probability $p$ of heads. It has been shown that the subset of $\mathbb{Z}$ consisting of arrivals is a determinantal point process with correlation kernel $K(x,y) = \frac{1-p}{1+p} p^{|y-x|}$ for $x,y \in \mathbb{Z}$. But also, from \eqref{eq:simpledetformula}, as one might expect, under $P_{1,2}^{\gamma,\delta}$, it can be shown that the collection of points $\{ x \in \mathbb{Z} \times \{0\} : \sigma x = x + \mathbf{e}^2 \}$ has the same distribution with parameter $p = \frac{\gamma+\delta}{1+\gamma + \delta}$. Thus $P_{\ell,n}^{\gamma,\delta}$ governs the law of a determinantal process that may be regarded as a (rather substantial) generalisation of the aforementioned determinantal process appearing in \cite{soshnikov} and \cite{LS}.

\subsubsection{The integrable snake model on the infinite cylinder $\mathbb{Z}^2$}
We may now study a subsequent limit as $n \to \infty$ of the probability measure $P_{\ell, n}^{\gamma,\delta}$ to obtain a probability measure $P_\tau^{\gamma,\delta}$ on pure snake configurations on $\mathbb{Z}^2$; see Section~\ref{subsec:m1m2infinity}. The analogue of \eqref{eq:prem1} holds, in the following form.

\begin{thm}\label{thm:m1m2infinity}
    Given parameters $\tau \in [0,1]$ and $\gamma,\delta \geq 0$ satisfying $4 \gamma \delta \leq 1$, there is a probability measure $P^{\gamma,\delta}_\tau$ on pure snake configurations on $\mathbb{Z}^2$ with the following correlations.
   
    Suppose that the product $\prod_{i=1}^k T_{x_i}^{\mathbf{f}_i}$ has an expression $\prod_{i=1}^k T_{x_i}^{\mathbf{f}_i} = \sum_{\kappa \in \mathcal{K}} a_\kappa \prod_{i=1}^{r_\kappa} R_{x^{\kappa,i}}^{\mathbf{f}^{\kappa,i}}$. 
    
    \begin{align*}
    P^{\gamma,\delta}_\tau \left( \bigcap_{i=1}^k \{ \sigma x^i = x^i + \mathbf{f}^i \} \right) := \sum_{\kappa \in K} a_\kappa \other{P}^{\gamma,\delta}_\tau \left[\prod_{i=1}^{r_\kappa} R_{x^{\kappa,i}}^{\mathbf{f}^{\kappa,i}}  \right],
    \end{align*}
    where
      \begin{align} \label{eq:dform5}
    \other{P}^{\gamma,\delta}_\tau \left[\prod_{i=1}^r R_{x^i}^{\mathbf{f}^i} \right] = \det_{i,j=1}^r \left[ (-1)^{\mathrm{1}\{ \mathbf{f}^i = \mathbf{e}^1 \}} \gamma^{\mathrm{1}\{ \mathbf{f}^i = \mathbf{e}^2 \}} \delta^{\mathrm{1}\{ \mathbf{f}^i = -\mathbf{e}^2 \}} H_\tau^{\gamma,\delta}( x^i+\mathbf{f}^i, x^j ) \right],
    \end{align}
    and 
    \begin{align} \label{eq:dform6}
      H_\tau^{\gamma,\delta}(x,y) = \int_{\overline{w_\tau} }^{w_\tau} \frac{\mathrm{d}w}{2\pi \iota w } \frac{ w^{ - (y_2-x_2)}}{ (1 + \gamma w + \delta w^{-1})^{ y_1 - x_1 + 1}},
    \end{align}
where $w_\tau = - e^{ - 2 \pi \iota \tau}$ and the contour travels to the right of the origin if $y_1 \geq x_1$ and to the left of the origin if $y_1 < x_1$.
\end{thm}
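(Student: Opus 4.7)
The plan is to realise $P_\tau^{\gamma,\delta}$ as a weak limit of the finite measures $P_{\ell_n,n}^{\gamma,\delta}$ from Theorem \ref{thm:m1infinity}, with $\ell_n$ chosen so that the boundary roots of the sets $\mathcal{L}, \mathcal{R}$ of \eqref{eq:rootset1}--\eqref{eq:rootset2} converge on the unit circle to $w_\tau$ and $\overline{w_\tau}$; for $\tau \in [0,1/2]$ this is achieved by $\ell_n = \lfloor 2n\tau \rfloor$, with a mirror-image choice for $\tau \in [1/2,1]$. Since Theorem \ref{thm:m1infinity} already packages the finite-$n$ correlations through the kernel $H_{\ell_n,n}^{\gamma,\delta}$ and the universal algebraic decomposition $\prod_i T_{x^i}^{\mathbf{f}^i} = \sum_{\kappa \in \mathcal{K}} a_\kappa \prod_i R^{\mathbf{f}^{\kappa,i}}_{x^{\kappa,i}}$, the task reduces to (i) pointwise kernel convergence $H_{\ell_n,n}^{\gamma,\delta}(x,y) \to H_\tau^{\gamma,\delta}(x,y)$, and (ii) promoting the resulting convergence of cylinder probabilities to the existence of a genuine limiting probability measure on pure snake configurations on $\mathbb{Z}^2$.

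For (i), the sum in \eqref{eq:dform3bi} is a Riemann sum: $\mathcal{R}$ consists of $n - \ell_n$ equally spaced points on the unit circle with angular spacing $2\pi/n$, and under the parametrisation $w = e^{\iota\phi}$ we have $\frac{dw}{2\pi\iota w} = \frac{d\phi}{2\pi}$, so $\frac{1}{n}\sum_{w \in \mathcal{R}} F(w) \to \int_{\overline{w_\tau}}^{w_\tau} F(w) \frac{dw}{2\pi\iota w}$ along the arc through $+1$, for any $F$ continuous on a neighbourhood of the arc. Applied to $F(w) = w^{-(y_2-x_2)}/(1+\gamma w + \delta w^{-1})^{y_1-x_1+1}$ this yields \eqref{eq:dform6} when $y_1 \geq x_1$; the case $y_1 < x_1$ is handled symmetrically via $\mathcal{L}$, with the minus sign in \eqref{eq:dform3bi} absorbed into a reversal of contour orientation so that both cases unify under the convention beneath \eqref{eq:dform6}. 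Continuity of the $r \times r$ determinant in its entries then promotes pointwise kernel convergence to $\other{P}_{\ell_n,n}^{\gamma,\delta}\bigl[\prod_i R^{\mathbf{f}^i}_{x^i}\bigr] \to \other{P}_\tau^{\gamma,\delta}\bigl[\prod_i R^{\mathbf{f}^i}_{x^i}\bigr]$, and summing the finitely many terms in the fixed $\mathcal{K}$-decomposition coming from \eqref{eq:ev1b0}--\eqref{eq:ev4b0} delivers convergence of every cylinder probability to the right-hand side of the claimed formula.

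For (ii), I would regard each $P_{\ell_n,n}^{\gamma,\delta}$, after embedding $\mathbb{Z} \times \mathbb{Z}_n$ into $\mathbb{Z}^2$ via $\{0,\dots,n-1\}$ for the second coordinate and setting $\sigma(x) = x$ outside the strip, as a Borel probability measure on the compact product space $\{0, \mathbf{e}^1, \mathbf{e}^2, -\mathbf{e}^2\}^{\mathbb{Z}^2}$. Tightness is automatic by Tychonoff, Prokhorov supplies a weak subsequential limit, and the convergence of \emph{all} cylinder probabilities established in (i) identifies this limit uniquely as a measure $P_\tau^{\gamma,\delta}$ satisfying \eqref{eq:dform5}--\eqref{eq:dform6}. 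The defining properties of a pure snake configuration on $\mathbb{Z}^2$ (injectivity, surjectivity, absence of two-cycles) form a countable intersection of local closed conditions, each of which is satisfied by the pre-limit measures for all sufficiently large $n$, so they are preserved in the weak limit by portmanteau.

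The hardest step is the kernel convergence in the presence of potential singularities: the denominator $1 + \gamma w + \delta w^{-1}$ has real zeros $w_\pm = (-1 \pm \sqrt{1 - 4\gamma\delta})/(2\gamma)$, and in the boundary case $4\gamma\delta = 1$ these coalesce into a double zero which may lie on or adjacent to the arc of integration (for instance at $w = -1$ when $\gamma = \delta = 1/2$). Although the discrete sums in \eqref{eq:dform3bi} naturally avoid these roots, Riemann sum convergence of the continuous integral is delicate near a pole. This is handled by first deforming the contour slightly off the unit circle, which is permissible since the integrand is meromorphic on $\mathbb{C} \setminus \{0\}$, and then applying uniform bounds on the deformed contour to exchange limit and sum.
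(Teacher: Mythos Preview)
Your approach—obtaining $P_\tau^{\gamma,\delta}$ as the $n \to \infty$ limit of $P_{\ell_n,n}^{\gamma,\delta}$ by showing Riemann-sum convergence of the kernel $H_{\ell_n,n}^{\gamma,\delta}$ to the contour integral $H_\tau^{\gamma,\delta}$—is essentially the paper's own argument. The only cosmetic difference is that the paper parametrises the finite-$n$ measures by the auxiliary variable $\beta$ (via Theorem~\ref{thm:m1infinity2} and Remark~\ref{rem:arc}) rather than directly by $\ell_n$, and then reaches all $\tau \in [0,1]$ by a density-plus-continuity argument in $\beta$; your direct choice $\ell_n = \lfloor 2n\tau \rfloor$ achieves the same end. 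The paper's treatment of your step (ii)—existence of the limiting probability measure supported on pure snake configurations—is far more terse than yours (it is essentially taken for granted), and it does not discuss the boundary singularity at $\gamma = \delta = \tfrac12$ at all.

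One small wrinkle in your step (ii): the embedding of a configuration on $\mathbb{Z} \times \mathbb{Z}_n$ into $\mathbb{Z}^2$ by placing it in the strip $\mathbb{Z}\times\{0,\dots,n-1\}$ and declaring $\sigma(x) = x$ outside does not in general produce a bijection of $\mathbb{Z}^2$, since a snake that wraps vertically (e.g.\ $\sigma(x_1, n-1) = (x_1, 0)$ on the torus) has no consistent interpretation as a neighbour step in the strip. This does not affect your convergence of cylinder probabilities—for any fixed $x \in \mathbb{Z}^2$ the relevant local constraints lie in the interior of the strip once $n$ is large—but the portmanteau step for pure-snake support should be phrased accordingly: each local bijectivity or no-two-cycle condition at a fixed site is satisfied by the embedded pre-limit measures once $n$ is large enough relative to that site, which is all you need. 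Alternatively, bypass the embedding altogether and apply Kolmogorov extension directly to the limiting cylinder probabilities.
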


We note that, setting $\delta = 0$, we recover the extended sine kernel \eqref{eq:lozcorr}.

\subsubsection{The continuous-time integrable snake model on $\mathbb{R} \times \mathbb{Z}_n$}
We will be most interested however in studying a different subsequent scaling limit of the probability measures $P_{\ell,n}^{\gamma,\delta}$. Namely, for fixed parameters {$T, T' \geq 0$} we study the asymptotics of $P_{\ell,n}^{\varepsilon T, \varepsilon T' }$ as $\varepsilon \downarrow 0$ and find that we can recover the law of a stochastic process that is closely related to the asymmetric exclusion process on the ring.

To define this process, as above, let $\mathbb{Z}_n := \{0,1,\ldots,n-1\}$ denote the ring with $n$ elements. We use the word \textit{ring} here to refer to a discrete circle with $n$ points, not an algebraic structure. Let $T,T' \geq 0$ and consider $\ell$ independent indistinguishable Poisson walkers on $\mathbb{Z}_n$ undergoing the following dynamics:
\begin{itemize}
\item A walker at $x$ jumps to $x+1$ at rate $T$.
\item A walker at $x$ jumps to $x-1$ at rate $T'$.
\end{itemize}
Here $x+1$ and $x-1$ are taken mod $n$.

Let $x = (x_1,\ldots,x_\ell)$ denote a set of distinct starting locations and write $P_x$ for the law of the process starting from $x$. Further let $(\mathcal{F}_t)_{t\geq 0}$ be the natural filtration of
this process and $\mathcal{F}_{\infty} \coloneqq \sigma\left(\bigcup_{t \geq 0} \mathcal{F}_{t}\right)$. Then consider the stopping time
\begin{align*}
\tau := \inf \{ t \geq 0 : \text{two walkers occupy the same location at time $t$} \}.
\end{align*}

We study the dynamics of this process conditioned on non-collision. Indeed, we define a second probability measure by defining, for $A \in \bigcup_{t \geq 0} \mathcal{F}_{t}$,
\begin{align*}
\mathbf{P}_x^{\ell,n}(A) := \lim_{t \uparrow \infty} P_x(A | \tau > t ),
\end{align*}
and by noting that we can extend this definition of $\mathbf{P}_x^{\ell,n}$ to events in $\mathcal{F}_\infty$ uniquely by Carath\'eodory's extension theorem. We call the measures $\{ \mathbf{P}_x^{\ell,n} : x \subseteq \mathbb{Z}_n , \# x = \ell \}$ the laws of $\ell$ walkers with starting positions $x$ and conditioned to never collide.

We will be interested in studying the correlations of the following events under $\mathbf{P}_x^{\ell,n}$.
\begin{align*}
E(t,h,0) &:= \{ h \notin X_t \}\\
E(t,h,\mathbf{e}^1) &:= \{ h \in X_t\}\\
E(t,h,\pm \mathbf{e}^2) &:= \{ \text{A particle jumps from $h$ to $h \pm 1$ in $[t,t+\mathrm{d}t)$} \}.
\end{align*}

By using a cyclic version of the Karlin-McGregor formula, which was inspired by Liechty and Wang's \cite{LW} formula for the transition density of non-intersecting Brownian motion on the circle (the kernel of the idea also appears in Fulmek \cite{fulmek}), we are able to show that a certain scaling limit of the integrable snake model matches exactly with the Poissonian walkers on the ring conditioned never to collide. With this in hand, we prove the following result, which is a generalisation of works by Gordenko \cite{gordenko} and the first author \cite{johnston} in the special case $T'=0$, stating that $\mathbf{P}^{\ell,n}$ has an explicit determinantal structure:

\begin{thm}\label{thm:noncoll}
Let $\mathbf{P}^{\ell,n}$ denote the law of $\ell$ independent $(T,T')$ Poissonian walkers conditioned to never collide and started at stationarity. Then
\begin{align} \label{eq:light}
\mathbf{P}^{\ell,n} \left( \bigcap_{i=1}^k E(t^i,h^i,\mathbf{f}^i) \right) = \det_{i,j=1}^r \left[ (-1)^{\mathrm{1}\{ \mathbf{f}^i = \mathbf{e}^1 \} }  T^{\mathrm{1}\{ \mathbf{f}^i = \mathbf{e}^2 \}}{T'}^{\mathrm{1}\{ \mathbf{f}^i = -\mathbf{e}^2 \}} 
H_{\ell,n}(\mathbf{f}^i, (s^i,h^i), (s^j,h^j) \right] \prod_{i : \mathbf{f}^i= \pm \mathbf{e}^2 } \mathrm{d}t^i,
\end{align}
where
\begin{align} \label{eq:dform3bii}
    H_{\ell,n}(\mathbf{f},(s,h),(s',h')) =
    \begin{cases}
         \frac{1}{n} \sum_{w \in \mathcal{R}} w^{-(h' - h + \mathrm{1}\{\mathbf{f} = -\mathbf{e}^2\} -\mathrm{1}\{\mathbf{f} = \mathbf{e}^2\})} e^{ - (Tw + Tw^{-1})(s'-s)} \qquad &\text{if $s' \succeq s + \mathbf{f}$}\\
    -\frac{1}{n} \sum_{w \in \mathcal{L}} w^{-(h' - h + \mathrm{1}\{\mathbf{f} = -\mathbf{e}^2\} -\mathrm{1}\{\mathbf{f} = \mathbf{e}^2\})} e^{ - (Tw + Tw^{-1})(s'-s)}  &\text{if $s \prec s + \mathbf{f}$}.         
    \end{cases}
    \end{align}
    Here we have $\{ s' \succeq s + \mathbf{f}\} = \{ s' > s \} \cup \{ s' = s, \mathbf{f} \neq \mathbf{e}^1 \}$ and $\{ s' \prec s + \mathbf{f}\}$ is simply the complement of this event. The sets $\mathcal{R}$ and $\mathcal{L}$ are as in \eqref{eq:rootset1} and \eqref{eq:rootset2}.
\end{thm}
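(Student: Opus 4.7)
The plan is to derive Theorem \ref{thm:noncoll} by combining two ingredients: (i) a small-parameter scaling limit of the determinantal formulas in Theorem \ref{thm:m1infinity}, specifically setting $\gamma = \varepsilon T$ and $\delta = \varepsilon T'$ and taking $\varepsilon \downarrow 0$ together with a rescaling of the first spatial coordinate by $x_1 = \lfloor s/\varepsilon \rfloor$, yielding a continuous-time process on $\mathbb{R} \times \mathbb{Z}_n$ whose correlations are explicit; and (ii) a probabilistic identification, via the cyclic Karlin--McGregor formula and a Doob $h$-transform, of this scaling-limit process with the law $\mathbf{P}^{\ell,n}$ of $(T,T')$-Poissonian walkers on $\mathbb{Z}_n$ conditioned never to collide. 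This is in the spirit of Boutillier's passage from lozenge tilings to the bead model \cite{boutillier}, but with both up and down jumps retained.

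For ingredient (i), the indicated rescaling turns the discrete kernel \eqref{eq:dform3bi} into the continuous kernel \eqref{eq:dform3bii}, because
\begin{align*}
\bigl(1 + \varepsilon T w + \varepsilon T' w^{-1}\bigr)^{-(\lfloor (s'-s)/\varepsilon \rfloor + 1)} \longrightarrow e^{-(Tw + T'w^{-1})(s'-s)}
\end{align*}
as $\varepsilon \downarrow 0$, while the prefactors $\gamma^{\mathrm{1}\{\mathbf{f}^i = \mathbf{e}^2\}} \delta^{\mathrm{1}\{\mathbf{f}^i = -\mathbf{e}^2\}}$ in \eqref{eq:dform2b} become $T^{\mathrm{1}\{\mathbf{f}^i = \mathbf{e}^2\}} {T'}^{\mathrm{1}\{\mathbf{f}^i = -\mathbf{e}^2\}}$ after extracting a factor of $\varepsilon$ for each $\pm \mathbf{e}^2$ event, which is absorbed into the continuum differentials $\mathrm{d}t^i$ on the right-hand side of \eqref{eq:light}. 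Moreover, since each $R_x^{\pm \mathbf{e}^2}$ is of order $\varepsilon$, the product $R_x^{\mathbf{e}^2} R_{x+\mathbf{e}^2}^{-\mathbf{e}^2}$ appearing in \eqref{eq:ev1b0} is of order $\varepsilon^2$ and therefore vanishes in the limit, and similarly for the nonlinear corrections in \eqref{eq:ev3b0}--\eqref{eq:ev4b0}. Asymptotically $T_x^{\mathbf{f}} = R_x^{\mathbf{f}}$, so the signed sum over $\kappa$ in Theorem \ref{thm:m1infinity} collapses to a single determinant, which accounts for the form of \eqref{eq:light}.

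For ingredient (ii), I would invoke a cyclic Karlin--McGregor formula, in the spirit of Liechty--Wang \cite{LW} and Fulmek \cite{fulmek}, expressing the transition density of $\ell$ independent $(T,T')$-Poissonian walkers on $\mathbb{Z}_n$ killed at the first pairwise collision. Because $\mathbb{Z}_n$ is cyclic, the classical Karlin--McGregor determinant must be summed over the common winding number of the $\ell$ non-intersecting trajectories, which by Proposition \ref{prop:links} is indeed the same for every trajectory. The winding-resolved transition density then admits a Fourier-type diagonalisation in which the summands are indexed by $n^{\text{th}}$ roots of $\pm 1$ and are weighted by $e^{-(Tw + T'w^{-1})t}$, matching the kernel \eqref{eq:dform3bii} exactly. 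The splitting of the roots into the sets $\mathcal{R}$ and $\mathcal{L}$ corresponds, via a Doob $h$-transform involving the principal eigenfunction of the killed semigroup, to the conditioning on $\{\tau > t\}$ as $t \to \infty$; the stationary starting distribution for $\mathbf{P}^{\ell,n}$ then arises naturally as the $h^2$-weighted invariant measure on non-colliding configurations. Identifying each factor on the scaling-limit side with its cyclic Karlin--McGregor counterpart then produces \eqref{eq:light}.

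The main obstacle I anticipate is the rigorous implementation of ingredient (ii) in the asymmetric case $T \neq T'$. When $T = T'$ the single-walker semigroup on $\mathbb{Z}_n$ is self-adjoint and the eigenfunction analysis essentially mirrors the Brownian case of \cite{LW}; but for $T \neq T'$ the kernel is non-symmetric and the eigenvalues $e^{-(Tw + T'w^{-1})t}$ are generically complex, so one must work with biorthogonal left and right eigenfunction systems and justify the convergence $\lim_{t \to \infty} P_x(\,\cdot \mid \tau > t)$ via a quasi-stationary argument. The partition of the $n^{\text{th}}$ roots into $\mathcal{L}$ and $\mathcal{R}$, together with the $s' \succeq s + \mathbf{f}$ dichotomy in \eqref{eq:dform3bii}, should then emerge from the sign of the real part of $Tw + T'w^{-1}$, which controls the direction of analytic continuation required to ensure absolute convergence of the kernel as a sum of $\ell$ exponentially decaying terms.
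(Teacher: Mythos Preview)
Your overall two-ingredient strategy is exactly the paper's: Proposition \ref{prop:longer} carries out ingredient (i) precisely as you describe, including the collapse $T_x^{\mathbf{f}} \to R_x^{\mathbf{f}}$ from the $O(\varepsilon^2)$ suppression of snakelet correction terms, and Section \ref{sec:equiv} handles ingredient (ii) via cyclic Karlin--McGregor and a Doob-type conditioning argument.

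The one substantive difference is in the execution of ingredient (ii). You propose to match the determinantal correlation kernel \eqref{eq:dform3bii} directly against a kernel produced on the non-colliding-walker side, and you correctly anticipate that this runs into trouble for $T \neq T'$ because the single-walker generator is non-self-adjoint. The paper sidesteps this entirely: rather than matching kernels, it first proves (Theorem \ref{thm:MarkovProperty}) that the scaling-limit process $P^{\ell,n}$ is Markov and computes its transition rates explicitly (Lemma \ref{lem: transitionrates}), obtaining rates $T\,\Delta(\mathbf{h}')/\Delta(\mathbf{h})$ and $T'\,\Delta(\mathbf{h}')/\Delta(\mathbf{h})$; then it computes the transition rates of the conditioned walker process $\mathbf{P}^{\ell,n}_x$ via the Doob transform of the cyclic Karlin--McGregor density (Lemma \ref{lem:nocol2} and \eqref{eq:Qtran}); and finally it checks these rates agree. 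This reduces the identification to elementary small-$t$ asymptotics of determinants, and no biorthogonal eigenfunction analysis or quasi-stationary convergence argument is needed. In particular, the $\mathcal{L}/\mathcal{R}$ split and the $s' \succeq s + \mathbf{f}$ dichotomy are inherited purely from the scaling-limit side (ingredient (i)), not reconstructed from the non-colliding-walker side as you suggest.
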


For an $\ell$-tuple $x = ( x_1,\ldots,x_\ell )$ of distinct elements of $\mathbb{Z}_n$, we define
\begin{align*}
\Delta(x) \coloneqq \prod_{1 \leq i < j \leq \ell} |e^{2 \pi \iota x_j/n} - e^{2 \pi \iota x_i/n} |.
\end{align*}
Using the previous result, we are able to prove in the sequel that the measure $\mathbf{P}_x^{\ell,n}$ governs the law of a continuous time Markov chain with stationary distribution given by
\begin{align*}
\lim_{t \to \infty} \mathbf{P}_x^{\ell,n}(X_t = y) = \Delta(y)^2/n^\ell,
\end{align*}
for any $\ell$-tuple $y = (y_1,\ldots,y_\ell)$ of distinct elements in $\mathbb{Z}_n$.

We now define a third probability measure on the ring associated with the asymmetric exclusion process. Namely, under $\mathbf{P}^{\text{ASEP}, \ell,n}$ we have $\ell$ Markovian walkers on the ring who interact according to the following rules:
\begin{itemize}
\item If a walker is at a location $h \in \mathbb{Z}_n$ at time $t$ and $h+1$ is unoccupied, then the walker jumps to $h+1$ at rate $T$.
\item If a walker is at a location $h \in \mathbb{Z}_n$ at time $t$ and $h-1$ is unoccupied, then the walker jumps to $h-1$ at rate $T'$.
\end{itemize}
There are no other forms of jumps.

We call $\mathbf{P}^{\text{ASEP}, \ell,n}$ the asymmetric exclusion process (ASEP) on the ring. The final result of this section is a generalisation from TASEP to ASEP of the traffic representation of TASEP given in Theorem 1.12 of \cite{johnston}. Given a subset $E$ of $\mathbb{Z}_n$, we define the \textbf{traffic} of $E$ to be the number of pairs of adjacent elements of $E$, i.e.
\begin{align*}
\mathrm{Traffic}(E) := \# \{ h,h' \in E : h'-h \equiv 1 \text{ mod $n$} \}. 
\end{align*}
It turns out that ASEP is an exponential martingale change of measure of the asymmetric Poisson walkers conditioned to never collide:

\begin{thm}[Generalisation of Theorem 1.12 of \cite{johnston}]  \label{thm:traffic}
We have 
        \begin{equation*}
            \left.\frac{d\mathbf{P}^{\text{ASEP}, \ell,n}}{d\mathbf{P}^{\ell,n}}\right|_{\mathcal{F}_t} = \frac{\Delta(X_0)}{\Delta(X_t)}\exp\left((T + T')\int_{0}^t (\operatorname{Traffic}(X_s) -c_{\ell,n}) \; ds\right)
        \end{equation*}
        where $c_{\ell,n} = \ell - \sin(\pi \ell/n)/\sin(\pi/n)$.

\end{thm}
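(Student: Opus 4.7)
\textbf{Proof plan for Theorem~\ref{thm:traffic}.} The strategy is to read off the Radon--Nikodym derivative directly from the Dol\'eans--Dade change-of-measure formula for two pure-jump Markov chains that share a state space. Both $\mathbf{P}^{\ell,n}$ and $\mathbf{P}^{\text{ASEP},\ell,n}$ are laws of continuous-time Markov chains on the set of $\ell$-element subsets of $\mathbb{Z}_n$. Writing $q_{\mathrm{nc}}(x,y)$, $q_{\mathrm{A}}(x,y)$ for the corresponding jump rates and $\lambda_{\mathrm{nc}}(x)$, $\lambda_{\mathrm{A}}(x)$ for their total outgoing rates, the change-of-measure formula reads
\begin{equation*}
\left.\frac{d\mathbf{P}^{\text{ASEP},\ell,n}}{d\mathbf{P}^{\ell,n}}\right|_{\mathcal{F}_t} = \exp\!\left(\int_0^t \bigl(\lambda_{\mathrm{nc}}(X_s) - \lambda_{\mathrm{A}}(X_s)\bigr)\,ds\right) \prod_{\substack{s\leq t\,:\\ X_{s^-} \neq X_s}} \frac{q_{\mathrm{A}}(X_{s^-},X_s)}{q_{\mathrm{nc}}(X_{s^-},X_s)}.
\end{equation*}

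By the results of Section~\ref{sec:equiv}, $\mathbf{P}^{\ell,n}$ is realised as the Doob $h$-transform of the free $(T,T')$ Poissonian walkers by the positive function $h(x) = \Delta(x)$, extended by $0$ on collisions. The non-colliding rates are therefore $q_{\mathrm{nc}}(x, x^{(i,\pm)}) = T^{\pm}\,\Delta(x^{(i,\pm)})/\Delta(x)$, with the convention $T^+ = T$, $T^- = T'$, and these rates automatically vanish whenever the target is a collision. For ASEP the rates are simply $T^\pm$ for allowed moves and $0$ for moves into an occupied site. At every actual jump $X_{s^-}\to X_s$ we therefore have $q_{\mathrm{A}}/q_{\mathrm{nc}} = \Delta(X_{s^-})/\Delta(X_s)$, and the product over jumps in the Dol\'eans--Dade formula telescopes to give $\Delta(X_0)/\Delta(X_t)$, the first factor in the theorem.

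It remains to identify the drift term. A walker in $X$ is blocked in the $+$ direction exactly when its right neighbour is also in $X$, and similarly for $-$, and these blocked pairs are counted in each direction by $\mathrm{Traffic}(X)$; hence $\lambda_{\mathrm{A}}(x) = (T+T')(\ell - \mathrm{Traffic}(x))$. On the other hand, as $\mathbf{P}^{\ell,n}$ is the $h$-transform by $\Delta$, the total outgoing rate equals $\lambda_{\mathrm{nc}}(x) = \ell(T+T') + (L_0 \Delta)(x)/\Delta(x)$, where $L_0$ is the free generator. Consequently, the calculation reduces to the eigenvalue identity
\begin{equation*}
L_0 \Delta \;=\; -(T+T')\,c_{\ell,n}\,\Delta.
\end{equation*}
Granting this, $\lambda_{\mathrm{nc}}(x) - \lambda_{\mathrm{A}}(x) = (T+T')\bigl(\mathrm{Traffic}(x) - c_{\ell,n}\bigr)$, which yields precisely the exponential factor in the theorem.

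The eigenvalue identity is the main technical point, and the most delicate part is verifying that the symmetric function $\Delta$ itself (not merely a signed Slater determinant) is the relevant eigenfunction for $L_0$ on the ring. The approach is to use the single-walker eigenfunctions $\chi_k(x) = e^{2\pi\iota k x/n}$ with eigenvalues $\mu_k = T(e^{2\pi\iota k/n}-1) + T'(e^{-2\pi\iota k/n}-1)$, and to identify $\Delta$ with the Perron eigenfunction of $L_0$ restricted to non-colliding configurations (with absorbing boundary on collisions); this Perron eigenvalue is obtained by summing $\mu_k$ over the symmetric character window $k \in \{-\lfloor \ell/2\rfloor,\dots,\lceil \ell/2\rceil - 1\}$, which is real and equals $(T+T')\bigl(\sin(\pi\ell/n)/\sin(\pi/n) - \ell\bigr) = -(T+T')c_{\ell,n}$. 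The Perron identification rests on the cyclic Karlin--McGregor apparatus of Section~\ref{sec:equiv}, and some care with parity of $\ell$ and with the phase of the Vandermonde is required in the verification. Once the eigenvalue identity is established, the assembly above delivers the stated identity on each $\mathcal{F}_t$, and the claim on all of $\mathcal{F}_\infty$ then follows by the standard tower / extension argument.
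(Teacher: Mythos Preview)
Your overall architecture is correct and is essentially equivalent to the paper's, just packaged from the other side. The paper computes $\mathcal{G}^{\text{ASEP},\ell,n}\Delta = (T+T')(\operatorname{Traffic} - c_{\ell,n})\Delta$ directly, deduces that $M_t = \frac{\Delta(X_t)}{\Delta(X_0)}\exp\bigl(-(T+T')\int_0^t(\operatorname{Traffic}(X_s)-c_{\ell,n})\,ds\bigr)$ is a unit-mean $\mathbf{P}^{\text{ASEP},\ell,n}$-martingale, checks that the resulting $h$-transform has exactly the rates $T^{\pm}\Delta(y)/\Delta(x)$ of $\mathbf{P}^{\ell,n}$, and then reciprocates. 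Your Dol\'eans--Dade route and the paper's $h$-transform route are the same computation read in opposite directions; your identity $L_0\Delta = -(T+T')c_{\ell,n}\Delta$ is equivalent to the paper's $\mathcal{G}^{\text{ASEP}}\Delta = (T+T')(\operatorname{Traffic}-c_{\ell,n})\Delta$, since the two generators differ only on blocked moves, where $\Delta$ vanishes on the target.

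The one place your sketch is not yet a proof is the eigenvalue identity itself. Your proposed mechanism---summing $\mu_k$ over the integer window $k\in\{-\lfloor\ell/2\rfloor,\dots,\lceil\ell/2\rceil-1\}$---does not give $-(T+T')c_{\ell,n}$ when $\ell$ is even: that window is not symmetric, the sum of $e^{2\pi\iota k/n}$ is not real, and the corresponding Slater determinant is not (a phase times) $\Delta$. The correct frequencies are half-integers when $\ell$ is even, i.e.\ the roots $z_k$ lie in $\{w:w^n=(-1)^{n-\ell+1}\}$, which is precisely the Kasteleyn twist you allude to. The paper handles this not via a Perron argument but by a direct calculation (its Lemma~\ref{lem: computationallemma}): it works with the complex determinant $\phi(\mathbf{h})=\det_{j,k}z_k^{h_j}$, proves $\sum_j(-1)^{\theta\mathrm{1}\{h_j=n-1\}}\phi(\mathbf{h}+\mathbf{e}_j)=-\mu_{\ell,n}\phi(\mathbf{h})$, and then uses Lemma~\ref{lem: vandermonde} to track the phase relating $\phi$ to $\Delta$ (including the sign flip at wraparound) to deduce $\sum_j\Delta(\mathbf{h}\pm\mathbf{e}_j)=\mu_{\ell,n}\Delta(\mathbf{h})$. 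That bookkeeping is exactly the ``care with parity and phase'' you flagged; once you carry it out your proof is complete and coincides with the paper's.
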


That completes the statements of our main results. In the next section, we develop our Kasteleyn theory for the integrable snake model, ultimately leading to proofs of our foundation results, Theorem \ref{thm:pf} and Theorem \ref{thm:cf}.

\section{Kasteleyn theory for the integrable snake model}\label{sec: fibonacci}

\subsection{Definitions} \label{sec:def}
We recall from the introduction that a generalised snake configuration on the semi-discrete torus $\torus{\mathbf{m}}:= \mathbb{Z}_{m_1} \times \mathbb{Z}_{m_2}$ is a bijection $\sigma: \torus{\mathbf{m}} \to \torus{\mathbf{m}}$ with the properties that, for all $x \in \torus{\mathbf{m}}$,
\begin{align*}
  \sigma(x) \in \{x, x+\mathbf{e}^1, x + \mathbf{e}^2, x - \mathbf{e}^2\}.
\end{align*}
In the case that $m_2 = 2$, $\mathbf{e}^2 = -\mathbf{e}^2$. So WLOG, we will consider down jumps not to exist.

A pure snake configuration is a generalised snake configuration with no two-cycles. $\URDbar_\mathbf{m}$ represents the set of generalised snake configurations on $\torus{\mathbf{m}}$, while $\URD_{\mathbf{m}}$ represents the set of pure snake configurations. Since primarily we will have $m_1$ and $m_2$ for the dimensions of our discrete torus, we will often relax the subscript $\mathbf{m}$ when this is the case. There is a natural projection $\mathrm{sh}\colon \URDbar \to \URD$ from the set of generalised snake configurations to the set of pure snake configurations obtained by deleting all two-cycles. 

For a pure snake configuration $\sigma \in \URD_{\mathbf{m}}$, define the set of possible snakelets compatible with $\sigma$ by
\begin{align*}
  S_\sigma := \{(x,x+\mathbf{e}^2) \in \operatorname{Sym}(\torus{\mathbf{m}}): (x,x+\mathbf{e}^2) \text{ is a cycle disjoint to } \sigma\}.
\end{align*}
Equivalently, $S_\sigma$ is the set of pairs $(x,x+\mathbf{e}^2)$ in $\torus{\mathbf{m}}$ such that $\sigma(x) = x$ and $\sigma(x+\mathbf{e}^2) = x+\mathbf{e}^2$.
Define the set of nests of $\sigma$ as the set of possible collections of snakelets, i.e.
\begin{align*}
  N_\sigma := \{N \in \mathcal{P}(S_\sigma): \text{all 2-cycles in } N \text{ are pairwise disjoint} \},
\end{align*}
where $\mathcal{P}(S_\sigma)$ is the collection of subsets of $S_\sigma$. Figure \ref{fig: nest} demonstrates one possible nest.

\begin{rem} \label{rem:corr}
There is a one-to-one correspondence between generalised snake configurations $\other{\sigma}$ and pairs $(\sigma,N)$, where $\sigma \in \URD$ is a pure snake configuration and $N \in N_\sigma$ is a nest of %set of possible snakelets compatible with 
$\sigma$. 
\end{rem}
Rephrasing this correspondence, we have a natural bijection between $\mathrm{sh}^{-1}(\sigma)$ and $\{ (\sigma,N) : N \in N_\sigma \}$. We use this correspondence freely through the remainder of the article. 

\begin{figure}[h!t]
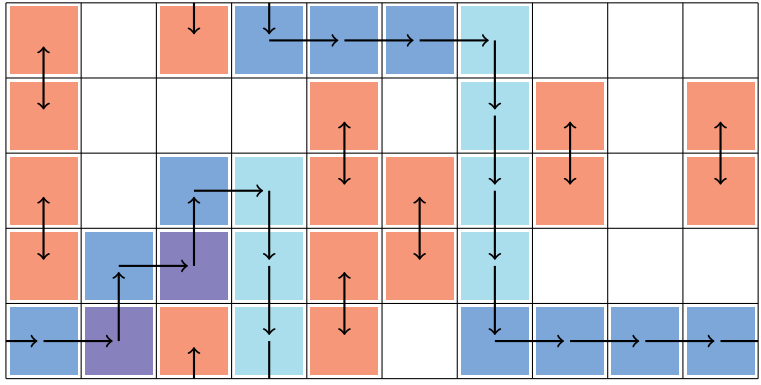

  \makeURDsnake{10}{5}{0/0, 6/0, 1/1, 2/2, 3/4, 4/4, 5/4, 7/0, 8/0, 9/0}{1/0, 2/1}{3/2, 3/1, 3/0, 6/4, 6/3, 6/2, 6/1}{9/2, 2/4, 0/1, 0/3, 4/0, 4/2, 5/1, 7/2}{1}{}
  \caption{The diagram illustrates one possible nest, $N$. The underlying snake configuration $\sigma$ is marked by the blue points and the snakelets are marked with red points.}
  \label{fig: nest}
\end{figure}

\subsection{Proof of Lemma \ref{lem:shapepush} and non-negativity of Fibonacci weightings} \label{sec:nonneg}

Recall from \eqref{eq:weightsnake} that the weight of a generalized snake configuration is given by ${w}(\other{\sigma}) = (-1)^{N(\other{\sigma})}\alpha^{\fixedbox(\other{\sigma})}\beta^{\rightbox(\other{\sigma})}\gamma^{\upbox(\other{\sigma})}\delta^{\downbox(\other{\sigma})}$. 
In this section we prove Lemma \ref{lem:shapepush}, which related the weights of generalised snake configurations and their underlying pure snake configuration in terms of the Fibonacci weighting. 

Let us begin here by giving a slightly more precise definition of the Fibonacci weighting.

For $\sigma$ a snake configuration, consider the equivalence relation on $\{x \in \torus{\mathbf{m}}:\sigma x = x\}$, the set of points in $\torus{\mathbf{m}}$ fixed by $\sigma$, given by $x \relsigma y$ if and only if there exists $k \in \mathbb{Z}_{\geqslant 0}$ such that $y = x + k \mathbf{e}^2$ and $x + i \mathbf{e}^2$ is fixed by $\sigma$ for all $0 \leqslant i \leqslant k$, or $y = x - k \mathbf{e}^2$ and $x - i \mathbf{e}^2$ is fixed by $\sigma$ for all $0 \leqslant i \leqslant k$. We call the equivalence classes under this relation `vertical gaps', shown pictorially in Figure~\ref{fig: vertgaps}.

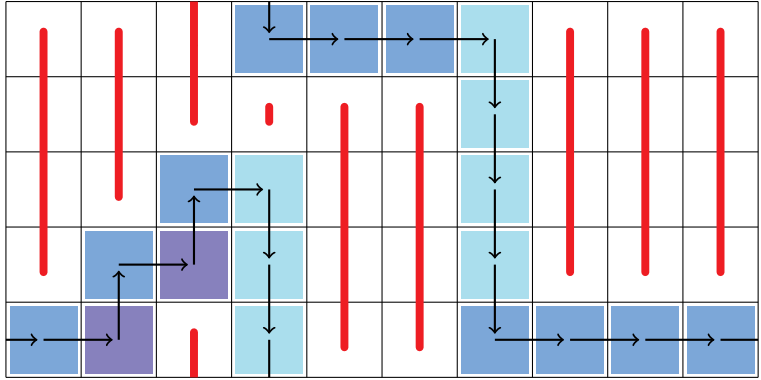
\begin{figure}[h!t]
  \begin{center}
    \begin{tikzpicture}
    
    \draw (10,0) -- ++(0,5);
    \draw (0,5) -- ++(10,0);
    
    \pgfmathtruncatemacro\xcoord{(10)-1}
    \pgfmathtruncatemacro\ycoord{(5)-1}
    
    \foreach \x in {0,1,...,\xcoord}
        \draw (\x,0) -- ++(0,5);
        
    \foreach \y in {0,1,...,\ycoord}
        \draw (0,\y) -- ++(10,0);
    
    \foreach \n/\m in {0/0, 6/0, 1/1, 2/2, 3/4, 4/4, 5/4, 7/0, 8/0, 9/0}{
        \ifthenelse{\n = \xcoord}{\wraprightarrow{\n}{\m}}{\myrightarrow{\n}{\m}}
    }
    
    \foreach \n/\m in {1/0, 2/1}{
        \ifthenelse{\m = \ycoord}{\wrapuparrow{\n}{\m}}{\myuparrow{\n}{\m}}
    }
    
    \foreach \n/\m in {3/2, 3/1, 3/0, 6/4, 6/3, 6/2, 6/1}{
        \ifthenelse{\m = 0}{\wrapdownarrow{\n}{\m}}{\mydownarrow{\n}{\m}}
    }
    
    \draw[line cap = round,line width = 3pt ,Red] (0.5,1.4) -- ++(0,3.2);
    \draw[line cap = round,line width = 3pt ,Red] (1.5,2.4) -- ++(0,2.2);
    \draw[line cap = round,line width = 3pt ,Red] (2.5,3.4) -- ++(0,1);
    \draw[-,line width = 3pt ,Red] (2.5,4.4) -- ++(0,0.6);
    \draw[-,line width = 3pt ,Red] (2.5,0) -- ++(0,0.5);
    \draw[line cap = round,line width = 3pt ,Red] (2.5,0.5) -- ++(0,0.1);
    \draw[line cap = round, line width = 3pt, Red] (3.5, 3.4) -- ++ (0,0.2);
    \draw[line cap = round,line width = 3pt ,Red] (4.5,0.4) -- ++(0,3.2);
    \draw[line cap = round,line width = 3pt ,Red] (5.5,0.4) -- ++(0,3.2);
    \draw[line cap = round,line width = 3pt ,Red] (7.5,1.4) -- ++(0,3.2);
    \draw[line cap = round,line width = 3pt ,Red] (8.5,1.4) -- ++(0,3.2);
    \draw[line cap = round,line width = 3pt ,Red] (9.5,1.4) -- ++(0,3.2);
    \end{tikzpicture}
    \end{center}
    \caption{The red lines join the points in the same equivalence class, i.e. they represent the vertical gaps.}
    \label{fig: vertgaps}
\end{figure}

For a pure snake configuration $\sigma$, we will here define $J$ by the sum 
\begin{align*}
J(\sigma) := \sum_{N \in N_{\sigma}}(- \delta \gamma/\alpha^2)^{|N|},
\end{align*}
so proving Lemma~\ref{lem:shapepush} amounts to showing that \eqref{eq:J1} is an equivalent definition. The following lemma tells us that we can recast $J(\sigma)$ in terms of a product over the vertical gaps of $\sigma$ -- c.f.\ \eqref{eq:J1} and the preceding discussion.

\begin{lemma}\label{lem: Jformula}
  Let $G_\sigma$ be the set of all vertical gaps for a given snake configuration $\sigma$. Then 
  \begin{equation*}
    J(\sigma) = \prod_{g \in G_\sigma} \tilde{f}_{|g|}\left(-\frac{\gamma \delta}{\alpha^2}\right),
  \end{equation*}
  where
  \begin{equation*}
    \tilde{f}_n(\lambda) := \sum_{0 \leqslant k \leqslant n/2} \binom{n-k}{k}\lambda^k.
  \end{equation*}  
\end{lemma}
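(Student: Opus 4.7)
The plan is to show that the two expressions for $J(\sigma)$ factor identically as products over vertical gaps. I would proceed in three steps.

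First, I would observe that any snakelet $(x, x+\mathbf{e}^2)$ compatible with $\sigma$ necessarily has both endpoints fixed by $\sigma$ and vertically adjacent; hence both endpoints lie in the same vertical gap of $\sigma$. Conversely, any vertically adjacent pair of fixed points within some gap $g$ constitutes a valid snakelet. Consequently, a nest $N \in N_\sigma$ decomposes canonically as a disjoint union $N = \bigsqcup_{g \in G_\sigma} N_g$ where each $N_g$ is a collection of pairwise disjoint snakelets sitting inside the gap $g$, and this decomposition yields a bijection $N_\sigma \cong \prod_{g \in G_\sigma} \mathcal{N}_g$, where $\mathcal{N}_g$ denotes the set of collections of pairwise disjoint snakelets supported in $g$. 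Setting $\lambda = -\gamma\delta/\alpha^2$, this yields
\begin{equation*}
J(\sigma) = \sum_{N \in N_\sigma} \lambda^{|N|} = \prod_{g \in G_\sigma} \Bigl(\sum_{M \in \mathcal{N}_g} \lambda^{|M|}\Bigr).
\end{equation*}

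Second, I would reduce the problem to a combinatorial identity on a single gap: for a gap $g$ of size $n$, one must show
\begin{equation*}
\sum_{M \in \mathcal{N}_g} \lambda^{|M|} = \sum_{0 \leq k \leq n/2} \binom{n-k}{k}\lambda^k = \tilde{f}_n(\lambda).
\end{equation*}
Since snakelets in $g$ correspond exactly to choices of $k$ pairwise disjoint consecutive pairs of vertices on a path of length $n$, this reduces to the classical fact that the number of such placements equals $\binom{n-k}{k}$. I would prove this either by a direct stars-and-bars bijection (encode a placement as a word in $\{\text{domino}, \text{singleton}\}$ using $k$ dominoes and $n-2k$ singletons, giving $\binom{(n-2k)+k}{k}$ arrangements) or, more cleanly, by showing both sides satisfy the recurrence $a_n = a_{n-1} + \lambda a_{n-2}$ with matching initial conditions $a_0 = a_1 = 1$: the combinatorial side by conditioning on whether the first vertex of the gap belongs to a domino, and $\tilde{f}_n$ by Pascal's identity $\binom{n-k}{k} = \binom{n-k-1}{k} + \binom{n-k-1}{k-1}$.

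The main subtlety I anticipate is handling a gap that wraps all the way around the torus (i.e., the entire column is fixed by $\sigma$), in which case the "path" of $n$ points becomes a cycle, and the domino-placement count changes to $\tfrac{n}{n-k}\binom{n-k}{k}$ for $k \geq 1$ rather than $\binom{n-k}{k}$. I would either argue this case does not arise because a wrapped vertical gap would conflict with the cycle structure of the pure snake configuration (no long cycle intersects such a column), or treat it separately and observe that the stated formula needs to be interpreted with this cyclic convention. Assuming the generic (non-wrapping) case, combining the factorization from step one with the combinatorial identity from step two yields the claim. Finally, comparing with \eqref{eq:fdef} and the explicit expansion of $\tilde{f}_n$, one checks $\tilde{f}_n = f_n$ by verifying both satisfy the same Fibonacci recurrence \eqref{eq:recc} with matching initial values, which reconciles this lemma with the definition \eqref{eq:J1}.
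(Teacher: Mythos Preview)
Your proposal is correct and follows essentially the same approach as the paper: factorize $J(\sigma)$ over vertical gaps by independence of snakelet placements, then count placements of $k$ disjoint dominoes in a path of $n$ vertices as $\binom{n-k}{k}$ via the word/stars-and-bars bijection you describe. The paper's proof is in fact terser than yours and uses only the stars-and-bars argument, not the recurrence.

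Two remarks. First, your concern about a vertical gap wrapping all the way around a column is legitimate: in that case the domino-placement count is indeed the Lucas-type quantity rather than $\binom{n-k}{k}$, and the stated formula would require modification. The paper does not address this case either; it implicitly treats all gaps as linear segments. Second, your final paragraph establishing $\tilde{f}_n = f_n$ via the common recurrence is not part of this lemma in the paper --- it is exactly the content of the subsequent proof of Lemma~\ref{lem:shapepush}, so you have folded two arguments into one.
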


\begin{proof}
  Since placing snakelets in a vertical gap does not affect the placement of snakelets in other vertical gaps, counting snakelets can be done independently in each vertical gap. The number of ways of placing snakelets in a vertical gap of size $n$ is the same, wherever the vertical gap is, so we can denote $\tilde{f}_n(\lambda)$ as the polynomial such that the coefficient of $\lambda^k$ counts the number of ways we can place $k$ snakelets in a vertical gap of size $n$. So the total number of ways of adding $k$ snakelets to $\sigma$ is given by the coefficient of $\lambda^k$ in $\prod_{g \in G_\sigma} \tilde{f}_{|g|}(\lambda)$.

  To count the number of ways of placing $k$ snakelets in a vertical gap of size $n$, we can reframe the problem as ordering $k$ snakelets and $n-2k$ points, of which there are $\binom{n-k}{k}$ ways. This gives the form of $\tilde{f}_n$ as required.
\end{proof}

We are now ready to give a proof of Lemma \ref{lem:shapepush}.

\begin{proof}[Proof of Lemma \ref{lem:shapepush}]
In light of Lemma \ref{lem: Jformula} and \eqref{eq:creat}, it is sufficient to establish $\tilde{f}_n(\lambda) = f_n(\lambda)$ for all $n \geq 1$, where 
\begin{align*}
\tilde{f}_n(\lambda) &:= \sum_{0 \leq k \leq n/2} \binom{n-k}{k} \lambda^k\\
f_n(\lambda) &:= \frac{1}{\sqrt{1+4\lambda}} \left\{ \left( \frac{1 + \sqrt{1+4\lambda}}{2} \right)^{n+1} - \left( \frac{1 - \sqrt{1+4\lambda}}{2} \right)^{n+1}  \right\}.
\end{align*}
A brief calculation tells us that we have $f_n(\lambda) = \tilde{f}_n(\lambda) = 1$ for both $n = 0$ and $ n =1$. As for larger values of $n$, one may verify quickly that both $\tilde{f}_n(\lambda)$ and $f_n(\lambda)$ satisfy the recurrence relation
\begin{align*}
h_n(\lambda) = h_{n-1}(\lambda) + \lambda h_{n-2}(\lambda).
\end{align*}
It follows that $f_n(\lambda)=\tilde{f}_n(\lambda)$ for all $n \geq 0$.
\end{proof}

To close this subsection, recall that we say that $\alpha, \beta, \gamma, \delta$ lie in the probabilistic regime when
\begin{equation}\label{eq:probregime2}
    \alpha^2 - 4\gamma \delta \geq 0
\end{equation}
Under this regime, $\tilde{f}_n\left(-\frac{\alpha^2}{\gamma \delta}\right) = f_n\left(-\frac{\alpha^2}{\gamma \delta}\right)$ is positive for all $n$, and so by Lemma~\ref{lem: Jformula}, we have confirmed Remark~\ref{rem:positivity} and that \eqref{eq:probregime2} is a sufficient condition for $P_\mathbf{m}$ (see \eqref{eq:Pdef}) to be a probability measure.

\subsection{Kasteleyn theory for snake configurations}

Recall from \eqref{eq:weightsnake} that we defined the weight of a generalised snake configuration $\other{\sigma}:\mathbb{T}_\mathbf{m} \to \mathbb{T}_\mathbf{m}$ by 
\begin{align*}
{w}(\other{\sigma}) := {w}_{\alpha,\beta,\gamma,\delta}(\other{\sigma}) :=  (-1)^{N(\other{\sigma})}\alpha^{\fixedbox(\other{\sigma})}\beta^{\rightbox(\other{\sigma})}\gamma^{\upbox(\other{\sigma})}\delta^{\downbox(\other{\sigma})}.
\end{align*}
As observed previously in \eqref{eq:creat}, if $\other{\sigma} = (\sigma,N)$, where $\sigma$ is a pure snake configuration and $N$ is a nest of $\sigma$, then ${w}(\sigma,N) = (- \delta \gamma/\alpha^2)^{|N|} {w}(\sigma,\varnothing)$. Hence, we can define the \textbf{coarse weight} of a pure snake configuration as
\begin{align} \label{eq:reweight}
  w(\sigma)J(\sigma) = {w}(\sigma,\varnothing)  \sum_{N \in N_{\sigma}}(- \delta \gamma/\alpha^2)^{|N|} = \sum_{N \in N_{\sigma}} {w}((\sigma, N)).
\end{align}
Note that, by virtue of Remark \ref{rem:corr} together with \eqref{eq:reweight} we have
\begin{align} \label{eq:pf0}
Z(\alpha,\beta,\gamma,\delta) := \sum_{ \sigma \in \URD_{\mathbf{m}}} w(\sigma)J(\sigma) = \sum_{ \other{\sigma} \in \URDbar_\mathbf{m} } {w}(\other{\sigma}).
\end{align}
We would now like to connect this quantity to the determinants of the operators $K_{\theta,\mathbf{m}}:\mathbb{T}_{\mathbf{m}} \times \mathbb{T}_{\mathbf{m}} \to \mathbb{C}$ defined in \eqref{defK0} for $\theta \in \{0,1\}^2$ and given by 
\begin{align}\label{defK01}
 K_{\theta, \mathbf{m}}(x,y) := \alpha \indic{y = x} + \beta e^{\pi \iota \frac{\theta_1}{m_1}} \indic{y = x + \mathbf{e}^1} + \gamma e^{\pi \iota \frac{\theta_2}{m_2}} \indic{y = x + \mathbf{e}^2} + \delta e^{-\pi \iota \frac{\theta_2}{m_2}} \indic{y = x - \mathbf{e}^2}.
\end{align}
Note that we may alternatively write 
\begin{align}\label{eq:weightsnakeK}
{w}(\other{\sigma}) =(-1)^{N(\other{\sigma})} \prod_{x \in \mathbb{T}_\mathbf{m}} K_{00,\mathbf{m}}(x ,\os(x) ) 
\end{align} 
In particular, the partition function $Z(\alpha,\beta,\gamma,\delta)$ resembles the determinant of $K_{00,\mathbf{m}}$ in that 
\begin{align*}
\det(K_{00,\mathbf{m}}) &= \sum_{ \os\colon \mathbb{T}_{\mathbf{m}} \to \mathbb{T}_{\mathbf{m}}} \mathrm{sgn}(\os)\prod_{x \in \mathbb{T}_m} K_{00,\mathbf{m}}(x ,\os(x) ) 
\\
&= \sum_{ \other{\sigma} \in \URDbar_\mathbf{m} } \mathrm{sgn}(\os) (-1)^{N(\os)} {w}(\os)\\
&= \sum_{ \other{\sigma} \in \URDbar_\mathbf{m} } \mathrm{sgn}(\operatorname{sh}(\os)) {w}(\os);
\end{align*}
c.f.\ \eqref{eq:pf0}. 
This sets out the task we undertake in the following sections: to study how the sign of a pure snake configuration on $\mathbb{T}_{\mathbf{m}}$ is related to its topological properties. Namely, its entries and exits on the boundaries of the torus.

\subsection{Snake configurations and torus links}\label{sec:knots}

In this section we investigate how the topological properties of pure snake configurations are related to their sign structure as permutations. More specifically, consider a pure snake configuration $\sigma:\mathbb{T}_\mathbf{m} \to \mathbb{T}_\mathbf{m}$. We can decompose $\sigma$ into cycles $\sigma = C_1\ldots C_r$ of length at least $3$. Each cycle $C_i$ is itself a pure snake configuration.

Take a cyclic pure snake configuration $C$. Then we can write $C = (x_1,\ldots,x_r)$ where for each $1 \leq i \leq r$, $x_{i+1} \in \{ x_i + \mathbf{e}^1, x_i + \mathbf{e}^2, x_i - \mathbf{e}^2 \}$, where we are taking $x_{r+1} := x_1$. It is easy to check that there exist integers $q_1,q_2$, with $q_1 \geq 0$ such that
\begin{align*}
\rightbox ( C) = q_1(C) m_1 \quad \text{and} \quad \upbox(C) - \downbox(C) = q_2(C) m_2.
\end{align*}
We call the pair $(q_1(C),q_2(C))$ the winding number of the cycle $C$.

Recall that we say two integers are coprime if they share no common prime divisor. By convention, zero is divisible by every prime number, thus zero is coprime to only $+1$ and $-1$. 

The main result of this subsection is the following:

\begin{proposition} \label{prop:decomp}
    Let $\sigma = C_1\ldots C_r$ be a decomposition of a pure snake configuration into cycles. Then there exist coprime integers $(q_1,q_2)$ with $q_1 \geq 0$ such that $q_i(C_j) = q_i$ for all $1 \leq j \leq r$.
\end{proposition}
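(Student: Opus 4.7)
I would prove this by realising each long cycle as a simple closed curve on the continuous torus and appealing to Proposition~\ref{prop:links}. Let $\mathbb{T}^2 := \mathbb{R}^2/(m_1\mathbb{Z} \times m_2\mathbb{Z})$ denote the continuous torus into which $\mathbb{T}_\mathbf{m}$ embeds as a grid. Given a cycle $C = (x_1,\ldots,x_\ell)$ of $\sigma$ with $\ell \geq 3$, I would associate the closed curve $\gamma_C$ on $\mathbb{T}^2$ obtained by concatenating, for $i = 1,\ldots,\ell$, the unique unit-length straight segment from $x_i$ to $x_{i+1}$ (indices mod $\ell$); this is well-defined because $x_{i+1} - x_i \in \{\mathbf{e}^1,\mathbf{e}^2,-\mathbf{e}^2\}$.

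The first key step is to verify that each $\gamma_C$ is an embedded circle. Since $\sigma$ is a bijection, the points $x_1,\ldots,x_\ell$ are distinct, and two unit grid segments can only intersect at grid vertices; such a vertex would be visited twice by $C$ --- a contradiction unless the two segments are consecutive in $C$, in which case the intersection is their shared endpoint. The second key step is to show that $\gamma_{C_1},\ldots,\gamma_{C_r}$ are pairwise disjoint. Vertex-disjointness of the cycles follows from $\sigma$ being a bijection. For edge-disjointness I would argue as follows: a horizontal grid edge from $x$ to $x+\mathbf{e}^1$ is used only when $\sigma(x) = x+\mathbf{e}^1$, and so lies in a unique cycle; a vertical grid edge between $x$ and $x+\mathbf{e}^2$ is used iff $\sigma(x) = x+\mathbf{e}^2$ or $\sigma(x+\mathbf{e}^2) = x$, and these two conditions cannot hold simultaneously by purity of $\sigma$, so the edge also lies in a unique cycle.

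Finally, I would compute winding numbers via the lift to $\mathbb{R}^2$: starting from any $x \in C$ and following $\gamma_C$ once around, the total horizontal displacement is $\rightbox(C)$ and the total vertical displacement is $\upbox(C) - \downbox(C)$, which must be integer multiples of $m_1$ and $m_2$ respectively, yielding the winding numbers $q_1(C) = \rightbox(C)/m_1$ (non-negative since $C$ admits no left moves) and $q_2(C) = (\upbox(C) - \downbox(C))/m_2$. Applying Proposition~\ref{prop:links} to the disjoint link $\{\gamma_{C_j}\}_{j=1}^r$ then gives that these pairs all coincide and that the common pair $(q_1,q_2)$ is coprime.

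The step I expect to be the main obstacle is the edge-disjointness argument, where the purity hypothesis enters in an essential way: without it, a snakelet would force a single vertical segment to appear in two different cycles (once as an up-move, once as a down-move), giving $\gamma_{C_i} \cap \gamma_{C_j} \neq \emptyset$ and invalidating the hypothesis of Proposition~\ref{prop:links}. A secondary technicality is confirming that at each vertex of $\gamma_C$ only two edges meet (so the curve is a genuine circle rather than a figure-eight), which is immediate from the fact that every vertex has exactly one incoming and one outgoing edge under $\sigma$.
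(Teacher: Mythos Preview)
Your proof is correct and follows essentially the same approach as the paper: associate each long cycle with a closed curve on the continuous torus to obtain a torus link, then invoke Proposition~\ref{prop:links} (which the paper splits into Lemmas~\ref{lem:coprime} and~\ref{lem:link}). You actually supply more detail than the paper does---in particular, you make explicit where purity enters (ruling out a vertical edge being traversed in both directions), which the paper leaves implicit.
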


To prove this result, we begin by recalling some basic notions from knot theory \cite{CF}. A \textbf{knot} is an embedding of the unit circle $S^1$ into three-dimensional Euclidean space $\mathbb{R}^3$. Two knots are \textbf{equivalent} if there is a homeomorphism of $\mathbb{R}^3$ mapping one to the other. A \textbf{torus knot} is a knot that is a subset of the torus in $\mathbb{R}^3$. A \textbf{link} (resp.\ \textbf{torus link}) is a collection of disjoint knots (resp.\ torus knots). 

There is a natural association between the torus $T$ embedded in $\mathbb{R}^3$ and the unit square $[0,1]^2$. With this, each torus knot can be naturally associated with a torically continuous function $\phi:[0,1] \to [0,1]^2$ satisfying $\phi(0)=\phi(1)$, but injective when restricted to $[0,1)$. We define the \textbf{winding number} of the knot $K$ to be the integers $(q_1,q_2)$ given by 
\begin{align*}
q_1 &= \text{Number of exits rightwards out of $[0,1]^2$} - \text{Number of exits leftwards out of $[0,1]^2$}\\
q_2 &= \text{Number of exits upwards out of $[0,1]^2$} - \text{Number of exits downwards out of $[0,1]^2$}.
\end{align*}
Given two different torically continuous functions from $[0,1]$ to $[0,1]^2$ that are injective on $[0,1)$, if they have the same image, their winding numbers $(q_1,q_2)$ and $(q_1',q_2')$ satisfying $(q_1',q_2') := \pm (q_1,q_2)$. Thus we will always suppose without loss of generality that $q_1 \geq 0$ and that $q_2 \geq 0$ if $q_1 = 0$.

We give a sketch proof of two well-known lemmas for torus knots and links. For further information, the reader might consult Chapter 7 of Murasugi \cite{murasugi}. 

\begin{lemma} \label{lem:coprime}
If $K$ is a torus knot, its winding number takes the form $(q_1,q_2)$ where $q_1 \geq 0$ and $(q_1,q_2)$ are coprime.
\end{lemma}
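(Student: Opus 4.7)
My plan is to pass to the universal cover $\pi\colon\mathbb{R}^2 \to T^2 = \mathbb{R}^2/\mathbb{Z}^2$ and exploit periodicity of the lift of the knot. First I would parametrize the knot as a continuous injection $\phi\colon\mathbb{R}/\mathbb{Z} \to T^2$, and lift to a continuous $\tilde\phi\colon\mathbb{R}\to\mathbb{R}^2$ which, by the definition of winding number used in the paper, satisfies $\tilde\phi(t+1) = \tilde\phi(t) + (q_1, q_2)$. Choosing one of the two orientations of $\phi$ arranges $q_1 \geq 0$ (and $q_2 \geq 0$ when $q_1 = 0$), which handles the sign convention. Assuming $(q_1, q_2) \neq (0,0)$, the translation property also forces $\tilde\phi$ to be globally injective on $\mathbb{R}$.

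The heart of the argument is to rule out $d := \gcd(q_1, q_2) > 1$. Supposing such a $d$ exists and setting $(p_1, p_2) := (q_1/d, q_2/d) \in \mathbb{Z}^2$, I would consider the image $X := \tilde\phi(\mathbb{R})$ together with its translates $X_k := X + k(p_1, p_2)$ for $k \in \mathbb{Z}$. Each $X_k$ is a lift of $\phi(\mathbb{R}/\mathbb{Z}) \subset T^2$, so by the embeddedness of $\phi$ any two distinct $X_k$ must be disjoint. But $X_d = X_0$, since $d(p_1, p_2) = (q_1, q_2)$ is already a translational period of $X$. Iterating the translation by $(p_1, p_2)$ therefore cannot produce an infinite family of pairwise disjoint translates, and this contradiction forces $X + (p_1, p_2) = X$.

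From this translation invariance I would extract a unique orientation-preserving homeomorphism $s\colon\mathbb{R}\to\mathbb{R}$ satisfying $\tilde\phi(s(t)) = \tilde\phi(t) + (p_1, p_2)$, whose $d$-th iterate works out to be $t \mapsto t+1$. Descending to $\bar s\colon\mathbb{R}/\mathbb{Z} \to \mathbb{R}/\mathbb{Z}$, this would be a cyclic permutation of order $d \geq 2$ satisfying $\phi \circ \bar s = \phi$, contradicting the injectivity of $\phi$. The main obstacle I expect is the careful verification that $s$ is well-defined, continuous, and monotone: this relies on the global injectivity of $\tilde\phi$ together with the fact that $s$ must commute with translation by $1$, so that it truly descends to a nontrivial rotation of $\mathbb{R}/\mathbb{Z}$ rather than collapsing to the identity.
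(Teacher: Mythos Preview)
Your argument has a genuine gap at the claim that the finiteness of the orbit $\{X_k : k \in \mathbb{Z}\}$ ``forces $X + (p_1, p_2) = X$''. There is no contradiction in having $X_0, X_1, \ldots, X_{d-1}$ be $d$ pairwise disjoint lifts with $X_d = X_0$. In fact, the reasoning of your own final paragraph (applied to an arbitrary $m \in \mathbb{Z}^2$ with $X+m=X$ in place of $(p_1,p_2)$) shows that for an embedded $\phi$ the translation stabilizer of $X$ is \emph{exactly} $(q_1,q_2)\mathbb{Z}$: one extracts $s$ with $\tilde\phi\circ s = \tilde\phi + m$, checks $s(t+1)=s(t)+1$, and injectivity of $\phi$ then forces $s(t)=t+k$ and $m=k(q_1,q_2)$. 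Hence $X_1 \neq X_0$ whenever $d > 1$, and you are assuming precisely the property that a hypothetical counterexample would fail to have.

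What is missing is a genuine topological obstruction to the existence of these $d$ disjoint translates. One way to supply it: pass to the cylinder $\mathbb{R}^2/(p_1,p_2)\mathbb{Z}$, where (using the pairwise disjointness of $X_0,\ldots,X_{d-1}$) the image of $X$ becomes an \emph{embedded} circle of winding number $d$, impossible for $d \geq 2$ by the classification of simple closed curves on the annulus. The paper takes a different route entirely, first straightening the knot via a torus homeomorphism to the linear map $s \mapsto (q_1 s, q_2 s) \bmod [0,1]^2$, for which injectivity on $[0,1)$ is a one-line computation equivalent to $\gcd(q_1,q_2)=1$.
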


\begin{proof}[Sketch proof]
Recall that two knots $K$ and $K'$ are said to be equivalent if $K'$ is equal to $K$ under a homeomorphism of $\mathbb{R}^3$. Regarding a knot $K$ as a subset of the torus $[0,1]^2$, it is possible to show that every knot is equivalent to a knot $K'$ which is the image of a function $\phi:[0,1] \to [0,1]^2$ taking the form 
\begin{align*}
\phi(s) := (q_1s, q_2s) \quad \text{ mod}  \quad [0,1]^2
\end{align*}
where $(q_1,q_2)$ is the winding number of the knot $K$. Note that if $\phi(s) = \phi(s')$ for some distinct $0 \leq s < s' < 1$, there for some integers $k_1$ and $k_2$ we have 
\begin{align*}
q_1(s'-s) = k_1 \qquad \text{and}\qquad  q_2(s'-s) = k_2.
\end{align*}
This equation is solvable with some $s'-s \neq 0$ if and only if $q_1$ and $q_2$ share a common prime factor. Thus the injectivity of $\phi$ on $[0,1)$ is equivalent to $(q_1,q_2)$ being coprime.
\end{proof}

\begin{lemma} \label{lem:link}
Let $K$ be a torus link consisting of disjoint knots $K_1,\ldots,K_r$. Then the winding numbers of the distinct knots $K_1,\ldots,K_r$ are the same.
\end{lemma}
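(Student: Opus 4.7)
The plan is to reduce the statement to a standard topological fact on the torus $T^2$: any essential simple closed curve $\gamma \subset T^2$ has an open annular complement, and any simple closed curve lying inside this annulus is either null-homotopic or isotopic to the core. The proof of Lemma~\ref{lem:link} would then follow by a short homotopy-type argument.

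By Lemma~\ref{lem:coprime}, each winding vector $\mathbf{q}^i = (q_1^i, q_2^i)$ is primitive in $\mathbb{Z}^2$, hence non-zero, so each knot $K_i$ is essential on $T^2$. Fix $K_1$. Bezout's identity applied to $\gcd(q_1^1, q_2^1) = 1$ produces $(r_1, r_2) \in \mathbb{Z}^2$ with $q_1^1 r_2 - q_2^1 r_1 = 1$, and using $\mathbf{q}^1$ and $(r_1, r_2)$ as a new $\mathbb{Z}$-basis for $\mathbb{Z}^2$ exhibits $K_1$ (up to the ambient self-homeomorphism of $T^2$ induced by this change of basis) as isotopic to a standard meridian, whose complement is visibly an open annulus $A \cong S^1 \times (0,1)$.

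Since $K_2$ is disjoint from $K_1$, it is a simple closed curve in $A$. If $K_2$ were null-homotopic in $A$, it would also be null-homotopic in $T^2$, contradicting $\mathbf{q}^2 \neq 0$. Hence $K_2$ is isotopic in $A$ to the core, so $K_2$ is isotopic in $T^2$ to a parallel copy of $K_1$, which gives $\mathbf{q}^2 = \pm \mathbf{q}^1$. The sign convention $q_1^i \geq 0$ (with $q_2^i \geq 0$ when $q_1^i = 0$) removes the $\pm$ ambiguity, so $\mathbf{q}^1 = \mathbf{q}^2$. Iterating over pairs $(K_1, K_j)$ for $j = 2, \ldots, r$ settles all winding numbers.

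The main obstacle is the topological input that a simple closed curve inside an open annulus is either null-homotopic or isotopic to the core. This is a standard consequence of the classification of homotopy classes of simple closed curves on surfaces (equivalently, of the fact that $\pi_1$ of the annulus is $\mathbb{Z}$ and primitive homotopy classes of loops on an annulus are represented by a unique isotopy class of embedded circles), and I would cite it from \cite{murasugi} rather than reproving it. A self-contained alternative would be to replace this entire argument with the intersection-pairing viewpoint: disjoint simple closed curves on $T^2$ have zero algebraic intersection in $H_1(T^2) \cong \mathbb{Z}^2$, so $q_1^1 q_2^2 - q_2^1 q_1^2 = 0$, and combined with primitivity and the sign convention this again forces $\mathbf{q}^1 = \mathbf{q}^2$.
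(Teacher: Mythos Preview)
Your proposal is correct and takes a genuinely different route from the paper's sketch. The paper asserts (without justification, as it is labelled a sketch) that all of $K_1,\ldots,K_r$ can be \emph{simultaneously} isotoped to linear curves $s\mapsto(q_1^i s,q_2^i s)+x^i$ on $[0,1]^2$, and then observes that two linear curves of different rational slopes must intersect. You instead straighten only $K_1$ (via the $\mathrm{SL}_2(\mathbb{Z})$ change of basis), identify its complement as an annulus, and invoke the classification of simple closed curves in an annulus to force $K_2$ to be parallel to $K_1$. Your argument is more rigorous as written, since the simultaneous-straightening step the paper relies on is precisely the nontrivial content of the lemma. The intersection-pairing alternative you mention at the end is the cleanest of all three approaches: it is entirely algebraic, avoids any isotopy statements, and could be made fully self-contained in a couple of lines. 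Either of your two arguments would serve well; the paper's version is more of a plausibility sketch than a proof.
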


\begin{proof}[Sketch proof]
There is a homeomorphism of the torus such that the distinct knots $K_1,\ldots,K_r$ are equivalent to the images $K_i$ of functions $\phi_i:[0,1] \to [0,1]^2$ where 
\begin{align*}
\phi_i(s) = (q^i_1s,q^i_2s) + x^i \quad \mathrm{mod} \quad [0,1]^2.
\end{align*}
for some $x^1,\ldots,x^r$ in $[0,1]^2$. We have written $(q^i_1,q^i_2)$ for the winding numbers of $K^i$.

In order for the images of the distinct functions $\phi_1,\ldots,\phi_r$ not to intersect, it must be the case, for one, that $x^i \neq (q^j_1s,q^j_2  s) + x^j$ for all $j \neq i$, $s \in [0,1)$, and for seconds that $q^1_1 = \ldots = q^r_1$ and $q^r_2 = \ldots = q^r_2$. In particular, the winding numbers of $K_1,\ldots,K_r$ coincide, completing the proof.
\end{proof}

We are now equipped to complete the proof of Proposition \ref{prop:decomp}. 
\begin{proof}[Proof of Proposition \ref{prop:decomp}]
If $\sigma = C_1\ldots C_r$ is a decomposition of a pure snake configuration into cycles, we can associate $\sigma$ with a torus link with $r$ component knots. By Lemma \ref{lem:link}, these knots have the same winding number. By Lemma \ref{lem:coprime}, this winding number takes the form $(q_1,q_2)$ for $(q_1,q_2)$ coprime.
\end{proof}

\subsection{The partition function as a sum of determinants} \label{sec:kast}

\begin{lemma}\label{lemmasignpermutations}
For $\os \in \URDbar_{\mathbf{m}}$ and $\theta_2 \in \{0,1\}$ we have
  \begin{align} \label{eq:signrep0}
    \sum_{\theta_1 \in \{0,1\}} C_{\theta,\mathbf{m}}\exp\left(\pi \iota \left[\frac{\theta_1 \rightbox(\os)}{m_1} + \frac{\theta_2 (\upbox(\os) - \downbox(\os))}{m_2} \right]\right) = \mathrm{1}_{\{\theta_2+m_2+1+rq_1\equiv 0\}}(-1)^{N(\os)}\mathrm{sgn}(\os),
  \end{align}
where $q_1$ and $q_2$ are the coprime winding numbers for the individual cycles of the pure snake configuration $\sigma = \mathrm{sh}(\os)$ as in Proposition~\ref{prop:decomp} and the indicator function denotes the event that $\theta_2+m_2+1+rq_1\equiv 0 \mod 2$, where $rq_1 =\rightbox(\os){m_1}$ is the horizontal winding number of the underlying pure snake configuration. In particular, taking the sum over all $\theta \in \{0,1\}^2$, we have
  \begin{align} \label{eq:signrep}
    \sum_{\theta \in \{0,1\}^2} C_{\theta,\mathbf{m}}\exp\left(\pi \iota \left[\frac{\theta_1 \rightbox(\os)}{m_1} + \frac{\theta_2 (\upbox(\os) - \downbox(\os))}{m_2} \right]\right) =(-1)^{N(\os)}\mathrm{sgn}(\os).
  \end{align}

\end{lemma}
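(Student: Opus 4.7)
The strategy is to reduce the identity to a statement about the pure snake configuration $\sigma := \mathrm{sh}(\os)$ and then invoke the decomposition supplied by Proposition~\ref{prop:decomp}. The first observation is that each snakelet $(x, x+\mathbf{e}^2)$ of $\os$ contributes exactly one up-move and one down-move but no right-moves, so $\rightbox(\os) = \rightbox(\sigma)$ and $\upbox(\os) - \downbox(\os) = \upbox(\sigma) - \downbox(\sigma)$. Thus the exponential on the left-hand side of \eqref{eq:signrep0} depends on $\os$ only through its shape $\sigma$. Writing $\sigma = C_1 \ldots C_r$ as a product of long cycles, Proposition~\ref{prop:decomp} furnishes coprime integers $(q_1, q_2)$ with $q_1 \geq 0$ (common to every $C_i$) satisfying $\rightbox(C_i) = q_1 m_1$ and $\upbox(C_i) - \downbox(C_i) = q_2 m_2$, so that the exponential collapses to $(-1)^{\theta_1 r q_1 + \theta_2 r q_2}$.

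Next I would perform the sum over $\theta_1 \in \{0,1\}$ by substituting the explicit constants $C_{\theta, \mathbf{m}} = \tfrac{1}{2}(-1)^{(\theta_1+m_1+1)(\theta_2+m_2+1)}$ and factoring $(-1)^{m_1(\theta_2+m_2+1)}$ out of the two resulting terms:
\begin{equation*}
\sum_{\theta_1 \in \{0,1\}} C_{\theta,\mathbf{m}} (-1)^{\theta_1 r q_1} = \tfrac{1}{2}(-1)^{m_1(\theta_2+m_2+1)}\Bigl[(-1)^{\theta_2+m_2+1} + (-1)^{r q_1}\Bigr].
\end{equation*}
The bracket vanishes unless $\theta_2 + m_2 + 1 + r q_1 \equiv 0 \pmod 2$, which produces the indicator function in \eqref{eq:signrep0}. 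When the bracket is nonzero, using $m_1(\theta_2+m_2+1) \equiv m_1 r q_1 \pmod 2$, the sum simplifies to $(-1)^{r q_1(m_1+1)}$, so including the remaining $(-1)^{\theta_2 r q_2}$ factor from the exponential, the whole left-hand side equals $\mathbf{1}_{\theta_2+m_2+1+rq_1\equiv 0}\,(-1)^{r q_1(m_1+1) + \theta_2 r q_2}$.

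To finish I must show this prefactor equals $(-1)^{N(\os)}\mathrm{sgn}(\os) = \mathrm{sgn}(\sigma)$, the second equality using that the snakelets removed by $\mathrm{sh}$ are transpositions. The length of each $C_i$ is $\rightbox(C_i)+\upbox(C_i)+\downbox(C_i)$, and since $\upbox(C_i) + \downbox(C_i) \equiv \upbox(C_i) - \downbox(C_i) \pmod 2$, the length is congruent to $q_1 m_1 + q_2 m_2 \pmod 2$. Hence $\mathrm{sgn}(\sigma) = (-1)^{r(q_1 m_1 + q_2 m_2 + 1)}$. Under the constraint $\theta_2 \equiv r q_1 + m_2 + 1 \pmod 2$, substituting for $\theta_2$ and expanding reduces the desired identity
\begin{equation*}
r q_1(m_1+1) + \theta_2 r q_2 \equiv r(q_1 m_1 + q_2 m_2 + 1) \pmod 2
\end{equation*}
to the single parity claim $r(q_1+1)(q_2+1) \equiv 0 \pmod 2$.

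The main obstacle is this final parity identity, and it is exactly where coprimality from Proposition~\ref{prop:decomp} enters: since $(q_1, q_2)$ share no common prime factor they cannot both be even, so $(q_1+1)(q_2+1)$ is automatically even (the $r=0$ case is trivial since $\sigma$ is then the identity on $\mathbb{T}_\mathbf{m}$). This completes \eqref{eq:signrep0}, and \eqref{eq:signrep} follows at once by summing over $\theta_2 \in \{0,1\}$, since the indicator $\mathbf{1}_{\theta_2+m_2+1+r q_1\equiv 0}$ is satisfied by exactly one value of $\theta_2$.
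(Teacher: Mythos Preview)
Your proof is correct and follows essentially the same approach as the paper: reduce to the pure shape $\sigma$, collapse the exponential to $(-1)^{\theta_1 r q_1 + \theta_2 r q_2}$ via Proposition~\ref{prop:decomp}, sum over $\theta_1$ to produce the indicator, compute $\mathrm{sgn}(\sigma)$ from the cycle lengths, and close the parity comparison using that coprimality forces $(q_1+1)(q_2+1)$ to be even. The only cosmetic difference is how the $\theta_1$-sum is organised (you factor out $(-1)^{m_1(\theta_2+m_2+1)}$ directly, whereas the paper applies the identity $\tfrac{1}{2}\sum_{\theta_1}(-1)^{\theta_1 k}=\mathbf{1}_{k\equiv 0}$ to the combined exponent), but the resulting prefactor and the subsequent parity bookkeeping are identical.
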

\begin{proof}
Note that if $\mathrm{sh}(\os) = \sigma$, then $\upbox(\sigma) - \downbox(\sigma) = \upbox(\os) - \downbox(\os)$, $\rightbox(\sigma) = \rightbox(\os)$ and $\mathrm{sgn}(\os) = (-1)^{N(\os)} \mathrm{sgn}(\sigma)$. Thus in order to prove the result it is sufficient to consider the case when $\sigma$ is pure, i.e.\ $N(\sigma) = 0$.

    Assuming $\sigma$ is a pure snake configuration, $\sigma$ has a decomposition into cycles $C_1,\ldots,C_r$. The number of elements of a cycle $C_i$ is given by $\rightbox(C_i) + \upbox(C_i) + \downbox(C_i)$. In particular, since the sign of a permutation coincides with $(-1)$ to the number of even cycles, it follows that
\begin{align} \label{eq:blox}
\mathrm{sgn}(\sigma) &= (-1)^{ \sum_{i=1}^r \left( \rightbox(C_i) + \upbox(C_i) + \downbox(C_i) + 1 \right) } \nonumber \\
&=(-1)^{ \sum_{i=1}^r \left( \rightbox(C_i) + \upbox(C_i) - \downbox(C_i) + 1 \right) } \nonumber \\
&= (-1)^{ \sum_{i=1}^r \left( m_1 q_1(C) + m_2 q_2(C) + 1  \right) } \nonumber \\
&= (-1)^{ r m_1 q_1 + r m_2 q_2 + r}.
\end{align}

Conversely, note that $rm_1q_1 = \rightbox(\sigma)$ and $rm_2q_2 = \upbox(\sigma) - \downbox(\sigma)$. In particular, it follows that
\begin{align} \label{eq:orange}
   W_\sigma &:=  \sum_{\theta_1 \in \{0,1\}} C_{\theta,\mathbf{m}}\exp\left(\pi \iota \left[\frac{\theta_1 \rightbox(\os)}{m_1} + \frac{\theta_2 (\upbox(\os) - \downbox(\os))}{m_2} \right]\right) \nonumber\\
    &= \frac{1}{2}\sum_{\theta_1 \in \{0,1\}} (-1)^{(\theta_1+m_1+1)(\theta_2+m_2+1)} (-1)^{\theta_1 rq_1 + \theta_2 r q_2 }.
    \end{align}
For any integer $k$ we have the simple identity $\frac{1}{2}\sum_{\theta_1\in \{0,1\}} (-1)^{\theta_1 k} = \mathrm{1}_{\{k \equiv 0 \text{ mod } 2\}}$. Using this fact in \eqref{eq:orange} we have
\begin{align} \label{eq:orange2}
   W_\sigma = \mathrm{1}_{\{\theta_2+m_2+1+rq_1 \equiv 0\}} (-1)^{(m_1+1)(\theta_2+m_2+1) + \theta_2 rq_2} . 
    \end{align}
Eliminating $\theta_2$ in the exponent on the event $\{ \theta_2+m_2+1+rq_1 \equiv 0 \}$ we have
\begin{align} \label{eq:orange3}
   W_\sigma = \mathrm{1}_{\{\theta_2+m_2+1+rq_1 \equiv 0\}} (-1)^{(m_1+1)rq_1 + rq_2(m_2+1+rq_1)} . 
    \end{align}
We now study the power of $-1$ in \eqref{eq:orange3}. Using the fact that $r^2 = r \mod{2}$ we have
\begin{align} \label{eq:orange4}
(m_1+1)rq_1 + rq_2(m_2+1+rq_1) \equiv r ( (m_1+1)q_1 + (m_2+1)q_2 + q_1q_2) \mod{2}.
\end{align}
Now $(q_1,q_2)$ are coprime. It follows that at least one of $q_1$ or $q_2$ is odd and in particular $(q_1+1)(q_2+1) \equiv 0$ mod $2$. Thus $q_1 + q_2 + q_1q_2 \equiv 1$ mod $2$ and using this fact in \eqref{eq:orange4} we obtain
\begin{align} \label{eq:orange5}
(m_1+1)rq_1 + rq_2(m_2+1+rq_1) \equiv r ( m_1q_1 + m_2 q_2 + 1) \mod{2}.
\end{align}
Using \eqref{eq:orange5} in \eqref{eq:orange3} and comparing with \eqref{eq:blox}, we complete the proof of \eqref{eq:signrep0}.

The equation \eqref{eq:signrep} follows immediately from summing \eqref{eq:signrep0} over $\theta_2 \in \{0,1\}$. 
\end{proof}

We now begin working towards our proof of Theorem \ref{thm:pf}, whilst making some useful definitions.  Recall that we have the operator
\begin{align} \label{eq:Ktheta}
 K_{\theta,\mathbf{m}}(x,y) := \alpha \indic{y = x} + \beta e^{\pi \iota \frac{\theta_1}{m_1}} \indic{y = x + \mathbf{e}^1} + \gamma e^{\pi \iota \frac{\theta_2}{m_2}} \indic{y = x + \mathbf{e}^2} + \delta e^{-\pi \iota \frac{\theta_2}{m_2}} \indic{y = x - \mathbf{e}^2}.
\end{align}

Let us define the function  $\other{Q}_{\theta,\mathbf{m}}:\URDbar_{\mathbf{m}} \to \mathbb{R}$ by setting
\begin{align} \label{eq:Qdef}
\other{Q}_{\theta,\mathbf{m}}(\os) := \mathrm{sgn}(\other{\sigma}) \prod_{x \in \mathbb{T}_{\mathbf{m}}} K_{\theta,\mathbf{m}}(x,\os x).
\end{align}
Clearly $\other{Q}_{\theta,\mathbf{m}}$ is constructed so that
\begin{align*}
\det(K_{\theta,\mathbf{m}}) = \sum_{ \other{\sigma} \in \URDbar_{\mathbf{m}} } \other{Q}_{\theta,\mathbf{m}}( \other{\sigma}).
\end{align*}

As in the proof of Lemma \ref{lemmasignpermutations} we have
\begin{align*}
\rightbox(\other{\sigma})/m_1 = r q_1 \qquad \text{and} \qquad (\upbox(\os) - \downbox(\os))/m_2 = rq_2,
\end{align*}
where $r$ is the number of long cycles of $\os$ and $(q_1,q_2)$ is the winding number of any long cycle. Since $\mathrm{sgn}(\os) = \mathrm{sgn}(\sigma)(-1)^{N(\os)}$ and recalling \eqref{eq:weightsnakeK}, we may rewrite \eqref{eq:Qdef} as 
\begin{align} \label{eq:Qdef2}
\other{Q}_{\theta,\mathbf{m}}(\os) := \mathrm{sgn}(\sigma)(-1)^{N(\os) + \theta_1 rq_1 + \theta_2 rq_2}  \prod_{x \in \mathbb{T}_{\mathbf{m}}} K_{00,\mathbf{m}}(x,\os x) = \mathrm{sgn}(\sigma)(-1)^{\theta_1 rq_1 + \theta_2 rq_2} {w}(\os).
\end{align}

Note that $q_1$ and $q_2$ are always integer, thus $\other{Q}_{\theta,\mathbf{m}}(\other{\sigma}) $ is always a real number. More importantly, we note by \eqref{eq:signrep0} that for every $\os$ in $\URDbar_{\mathbf{m}}$ and each fixed $\theta_2 \in \{0,1\}$ we have
\begin{align} \label{eq:weightsum00}
\other{Q}_{\theta_2,\mathbf{m}}(\os) := 
\sum_{\theta_1 \in \{0,1\}} C_{\theta,\mathbf{m}} \other{Q}_{\theta,\mathbf{m}}( \other{\sigma}) = \mathrm{1}_{\{\theta_2 + m_2 + 1 + rq_1 \equiv 0 \} }{w}(\os),
\end{align}
where $rq_1 = \rightbox(\os)/m_1$ is the horizontal winding number of $\os$. Summing \eqref{eq:weightsum00} over $\theta_2$ we have
\begin{align} \label{eq:weightsum0}
\sum_{\theta \in \{0,1\}^2} C_{\theta,\mathbf{m}} \other{Q}_{\theta,\mathbf{m}}( \other{\sigma}) = {w}(\os) 
\end{align}

Now let us make the following definition. We say the \textbf{occupation number} of a generalised snake configuration is given by 
\begin{align}\label{eq:occupationnumberdef}
L(\os) := rq_1.
\end{align}
It is easily verified that for any $x_1 \in \{0,\ldots,m_1-1\}$ we have
\begin{align*}
L(\os) := \# \{ 0 \leq x_2 \leq m_2 -1 : \sigma(x_1,x_2) = (x_1+1,x_2) \}.
\end{align*}
In other words, $L(\os)$ is the number of right moves in each vertical cross section. Note that the occupation number of $\os$ is the same as that of $\mathrm{sh}(\os)$. 

Thus, the equation \eqref{eq:weightsum00} reads $\other{Q}_{\theta_2,\mathbf{m}}(\os) =  \mathrm{1}_{\{ L(\os) \equiv \theta_2 + m_2 + 1  \} }{w}(\os)$. By summing over all $\other{\sigma}$ in $\URDbar_{\mathbf{m}}$ we then have
\begin{align} \label{eq:prefab}
Z_{\theta_2,\mathbf{m}}(\alpha,\beta,\gamma,\delta) :=\sum_{ \other{\sigma} \in \URDbar_{\mathbf{m}} } \mathrm{1}_{\{ L(\os) \equiv \theta_2 + m_2 + 1 \} } {w}(\other{\sigma}) = \sum_{ \theta_1 \in \{0,1\} } C_{\theta,\mathbf{m}} \det(K_{\theta,\mathbf{m}}).
\end{align}
Summing \eqref{eq:prefab} over $\theta_2 \in \{0,1\}$, we then obtain
\begin{align} \label{eq:pf1b}
Z_{\mathbf{m}}(\alpha,\beta,\gamma,\delta) := \sum_{ \other{\sigma} \in \URDbar_{\mathbf{m}} } {w}(\other{\sigma}) = \sum_{ \theta \in \{0,1\}^2 } C_{\theta,\mathbf{m}} \det(K_{\theta,\mathbf{m}}),
\end{align}
which amounts to one half of the statement of Theorem \ref{thm:pf}. To complete the proof of Theorem \ref{thm:pf}, we need to evaluate the determinants in question.
Our main task here is to understand the eigenvalues of the operator $K_{\theta,\mathbf{m}}$ when considered as a bilinear operator on the vector space, $\mathbb{C}\torus{\mathbf{m}} := \{f\colon\torus{\mathbf{m}} \to \mathbb{C}\}$ and for $\phi: \torus{\mathbf{m}} \to \mathbb{C}$, we can define $(K_{\theta,\mathbf{m}} \phi)(x) := \sum_{y \in \torus{\mathbf{m}}} K_{\theta,\mathbf{m}}(x,y)\phi(y)$, the corresponding `matrix multiplication'. 

The eigensystem is described by the following lemma:

\begin{lemma} \label{lem:eigensystem}
The operator $K_{\theta,\mathbf{m}}$ has a complete set of eigenvalues and corresponding eigenfunctions indexed by the $m_1m_2$ pairs $\{ (z,w) : z^{m_1} = 1, w^{m_2} = 1\}$. The eigenvalues are given by 
  \begin{equation}\label{eq: defeigenvalues}
    \lambda_{z,w} = \alpha + \beta z_\theta + \gamma w_\theta + \delta \overline{w_{\theta}},
  \end{equation}
  where $z_\theta := ze^{\pi \iota \theta_1/m_1}$ and $w_\theta := we^{\pi \iota \theta_2/m_2}$. The corresponding eigenfunctions are given by 
  \begin{equation}\label{eq: defeigenfns}
    \phi_{z,w}(x) = \frac{1}{\sqrt{m_1 m_2}} z^{x_1} w^{x_2}.
  \end{equation}
These eigenfunctions form an orthonormal basis with respect to the natural inner product \[\IP{\phi}{\psi} = \sum_{x \in \torus{\mathbf{m}}} \phi(x) \other{\psi(x)}\] on $\mathbb{C}\mathbb{T}_{\mathbf{m}}$.
\end{lemma}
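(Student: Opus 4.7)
The plan is a direct Fourier-style diagonalisation. The operator $K_{\theta,\mathbf{m}}$ is a translation-invariant linear combination of the identity and the three shift operators $x \mapsto x+\mathbf{e}^1$, $x \mapsto x \pm \mathbf{e}^2$ (each twisted by a phase $e^{\pm \pi \iota \theta_j/m_j}$), so it must be simultaneously diagonalised by the natural Fourier basis on the abelian group $\mathbb{T}_{\mathbf{m}} = \mathbb{Z}_{m_1}\times \mathbb{Z}_{m_2}$, namely the characters $\phi_{z,w}(x) = (m_1m_2)^{-1/2}\, z^{x_1}w^{x_2}$ with $z^{m_1}=w^{m_2}=1$.

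First I would verify the eigenvalue equation by direct substitution. Applying (\ref{defK01}) to $\phi_{z,w}$ gives
\begin{equation*}
(K_{\theta,\mathbf{m}}\phi_{z,w})(x) = \alpha\phi_{z,w}(x) + \beta e^{\pi\iota \theta_1/m_1}\phi_{z,w}(x+\mathbf{e}^1) + \gamma e^{\pi \iota \theta_2/m_2}\phi_{z,w}(x+\mathbf{e}^2) + \delta e^{-\pi\iota\theta_2/m_2}\phi_{z,w}(x-\mathbf{e}^2),
\end{equation*}
and since $\phi_{z,w}(x+\mathbf{e}^1)=z\phi_{z,w}(x)$ and $\phi_{z,w}(x\pm\mathbf{e}^2)=w^{\pm 1}\phi_{z,w}(x)$, the right-hand side factors as $(\alpha + \beta z_\theta + \gamma w_\theta + \delta w^{-1}e^{-\pi \iota \theta_2/m_2})\phi_{z,w}(x)$. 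Using $|w|=1$ (so $w^{-1}=\overline{w}$) and $\overline{w_\theta} = \overline{w}\,e^{-\pi\iota \theta_2/m_2}$, this is exactly $\lambda_{z,w}\phi_{z,w}(x)$. I would also remark that the constraints $z^{m_1}=1$ and $w^{m_2}=1$ are precisely what is needed to ensure $\phi_{z,w}$ is well-defined as a function on the torus.

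Next I would check orthonormality via the standard character computation. For two such eigenfunctions,
\begin{equation*}
\IP{\phi_{z,w}}{\phi_{z',w'}} = \frac{1}{m_1m_2}\sum_{x_1=0}^{m_1-1}\sum_{x_2=0}^{m_2-1}(z\overline{z'})^{x_1}(w\overline{w'})^{x_2},
\end{equation*}
which factors into two geometric sums. Since $z\overline{z'}$ is an $m_1$-th root of unity, the first sum equals $m_1$ if $z=z'$ and vanishes otherwise; similarly for the second. Hence $\IP{\phi_{z,w}}{\phi_{z',w'}} = \delta_{z,z'}\delta_{w,w'}$. Finally, the family $\{\phi_{z,w}\}$ is indexed by the $m_1 m_2$ pairs of roots of unity, matching $\dim \mathbb{C}\mathbb{T}_{\mathbf{m}}$, so orthonormality forces it to be an orthonormal basis; completeness then follows.

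There is no real obstacle here --- the argument is essentially bookkeeping with the phase twists $e^{\pm\pi\iota\theta_j/m_j}$ absorbed into the variables $z_\theta,w_\theta$. The only point warranting care is the reconciliation of the indexing: the eigenvalues are parametrised by $z^{m_1}=w^{m_2}=1$, but after the change of variables $z \mapsto z_\theta$, $w \mapsto w_\theta$ these become $z_\theta^{m_1}=(-1)^{\theta_1}$ and $w_\theta^{m_2}=(-1)^{\theta_2}$, which is the indexing that appears in the product formula (\ref{detKevalsequation0}) in Theorem \ref{thm:pf}. Thus computing $\det(K_{\theta,\mathbf{m}})$ as the product of the $\lambda_{z,w}$ and relabelling yields the stated determinant evaluation.
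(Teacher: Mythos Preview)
Your proof is correct and follows essentially the same approach as the paper's: a direct verification that $K_{\theta,\mathbf{m}}\phi_{z,w} = \lambda_{z,w}\phi_{z,w}$ by expanding the definition and using $\phi_{z,w}(x\pm\mathbf{e}^j) = z^{\pm 1}\phi_{z,w}(x)$ (resp.\ $w^{\pm 1}$), followed by orthonormality via the geometric-sum identity $\sum_{i=0}^{n-1} z_1^i\overline{z_2}^i = n\,\mathrm{1}_{z_1=z_2}$ for $n$-th roots of unity. Your version is slightly more explicit about the $\overline{w_\theta}$ bookkeeping and the dimension count for completeness, but there is no substantive difference.
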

\begin{proof}
By definition we have
  \begin{align*}
    (K_{\theta,\mathbf{m}} \phi_{z,w}) (x) &= \sum_{y \in \torus{\mathbf{m}}} K_{\theta,\mathbf{m}}(x,y)\phi_{z,w}(y)\\
    &= \alpha\phi_{z,w}(x) + \beta e^{\frac{\pi \iota \theta_1}{m_1}} \phi_{z,w}(x + \mathbf{e}^1) + \gamma e^{\frac{\pi \iota \theta_2}{m_2}} \phi_{z,w}(x + \mathbf{e}^2) + \delta e^{-\frac{\pi \iota \theta_2}{m_2}} \phi_{z,w}(x - \mathbf{e}^2)\\
    &= \left( \alpha + \beta z_\theta + \gamma w_\theta + \delta \overline{w_{\theta}} \right) \phi_{z,w}(x)=: \lambda_{z,w} \phi_{z,w}(x),
  \end{align*}
  which proves that $\phi_{z,w}$ are eigenfunctions with the stipulated eigenvalues. 
  
  The orthonormality is proved easily using the identity $\sum_{i=0}^{n-1} z_1^i \other{z_2}^i = n\mathrm{1}_{z_1=z_2}$ for any $n^{\text{th}}$ roots $z_1$ and $z_2$ of unity.

\end{proof}

We may now complete the proof of Theorem \ref{thm:pf}. 
\begin{proof}[Proof of Theorem \ref{thm:pf}]
We have proved above the equation \eqref{eq:pf1}. By taking the product of all the eigenvalues, we see that
  \begin{equation}\label{eq:detKevalsequation}
    \det(K_{\theta,\mathbf{m}}) = \prod_{z^{m_1}=1,w^{m_2}=1} \left(\alpha + \beta z_\theta + \gamma w_\theta + \delta \overline{w_{\theta}}\right).
  \end{equation}
  Note that we can rewrite this determinant in  Equation~\eqref{eq:detKevalsequation} as 
\begin{equation}\label{eq:detKniceproduct}
   \det(K_{\theta,\mathbf{m}}) = \prod_{\substack{z^{m_1} = (-1)^{\theta_1} \\ w^{m_2} = (-1)^{\theta_2}} } (\alpha + \beta z + \gamma w + \delta w^{-1}),
\end{equation}  
completing the proof of Theorem \ref{thm:pf}. 
\end{proof}

In the next section we begin working towards our proof of Theorem \ref{thm:cf} and Theorem \ref{thm:cffull0}, which describe the correlations of pure snake configurations under $P_{\mathbf{m}}$.

\subsection{Signed probability measures}

A signed measure $P:V \to \mathbb{R}$ on a finite set $V$ is simply a real-valued function. Given a subset $W$ of $V$ we write $P(W) := \sum_{w \in W} P(w)$. A signed measure $P:V \to \mathbb{R}$ is a signed probability measure on $V$ if $P(V) = \sum_{v \in V} P(v) = 1$. Given a signed measure $P:V \to \mathbb{R}$ and a function $f:V \to \mathbb{R}$, we define the expectation of $f$ to be
\begin{align*}
P[f] := \sum_{v \in V} P(v) f(v).
\end{align*}

If $S:V \to U$ is a function, then this induces by pushforward a signed measure $S_\# P:U \to \mathbb{R}$ on $U$ by setting
\begin{align*}
S_\# P(u) := \sum_{ v \in S^{-1}(u) } P(v).
\end{align*}
Note firstly that the pushforward of a signed probability measure is also a signed probability measure, and secondly that if $g:U \to \mathbb{R}$ and $f = g \circ S$, then
\begin{align} \label{eq:exppush}
S_\# P[ g ] = P [ f ].
\end{align}
Given signed measures $P_1$ and $P_2$ their sum $(P_1+P_2)(v) := P_1(v) + P_2(v)$ is also a signed measure. The affine sum $\lambda P_1 + (1-\lambda) P_2$ of two signed probability measures is also a signed probability measure. 

Pushforwards commute with sums, i.e.
\begin{align*}
S_\# (P_1 + P_2) = S_\#P_1 + S_\#P_2.
\end{align*}

With this in mind, recall the function $\other{Q}_{\theta,\mathbf{m}}:\URDbar_{\mathbf{m}} \to \mathbb{R}$ defined in \eqref{eq:Qdef}. Provided $\det(K_{\theta,\mathbf{m}})$ is non-zero, we may define a signed probability measure on $\URDbar_{\mathbf{m}}$ by setting
\begin{align} \label{eq:Pthetadef}
\other{P}_{\theta,\mathbf{m}}(\os) := \frac{1}{\det(K_{\theta,\mathbf{m}}) } \other{Q}_{\theta,\mathbf{m}}(\os)=\frac{1}{\det(K_{\theta,\mathbf{m}}) } \mathrm{sgn}(\other{\sigma}) \prod_{x \in \mathbb{T}_{\mathbf{m}}} K_{\theta,\mathbf{m}}(x,\other{\sigma} x ).
\end{align}
Recall from \eqref{eq:prefab} that for $\theta_2 = 0,1$ we have $Z_{\theta_2,\mathbf{m}} = \sum_{\os \in \URDbar_{\mathbf{m}}} \mathrm{1} \{ L(\os) \equiv \theta_2 + m_2 + 1 \} {w}(\os) = \sum_{\theta_1 \in \{0,1\} } C_{\theta,\mathbf{m}} \det(K_{\theta,\mathbf{m}})$. In particular, we may define a signed probability measure $\other{P}_{\theta_2,\mathbf{m}}$ by setting
\begin{align} \label{eq:barsum}
\other{P}_{\theta_2,\mathbf{m}}(\os) := \sum_{\theta_1 \in  \{0,1\}} \mu_{\theta,\mathbf{m}} \other{P}_{\theta,\mathbf{m}}( \os).
\end{align}
where $\mu_{\theta,\mathbf{m}} = C_{\theta,\mathbf{m}} \det(K_{\theta,\mathbf{m}})/  Z_{\theta_2,\mathbf{m}}  $. 

By \eqref{eq:weightsum00} we can also write
\begin{align} \label{eq:evens}
\other{P}_{\theta_2,\mathbf{m}}(\os) = \frac{1}{Z_{\theta_2,\mathbf{m}}} \mathrm{1} \{ L(\os) = \theta_2 + m_2 + 1 \} {w}(\os).
\end{align}
Define $\lambda_{\theta,\mathbf{m}} := C_{\theta,\mathbf{m}} \det(K_{\theta,\mathbf{m}})/Z_\mathbf{m} = \mu_{\theta,\mathbf{m}} Z_{\theta_2,\mathbf{m}}/Z_\mathbf{m}$. Then we may also define
\begin{align} \label{eq:barsum2}
\other{P}_{\mathbf{m}}(\sigma) := \sum_{\theta \in \{0,1\}^2} \lambda_{\theta,\mathbf{m}} \other{P}_{\theta,\mathbf{m}}(\os) = \sum_{\theta_2 \in \{0,1\}} \frac{Z_{\theta_2,\mathbf{m}}}{Z_{\mathbf{m}}} \other{P}_{\theta_2,\mathbf{m}}(\os).
\end{align}
By \eqref{eq:evens} we have 
\begin{align} \label{eq:evens2}
    \other{P}_{\mathbf{m}}(\sigma) = {w}(\os)/Z_{\mathbf{m}}.
\end{align}

Recall that we have the natural map $\mathrm{sh}:\URDbar_{\mathbf{m}} \to \URD_{\mathbf{m}}$ sending a generalised snake configuration to its underlying pure snake configuration by deleting all snakelets (a.k.a. two-cycles).

For each of the signed measures $\other{P}_{\theta,\mathbf{m}},\other{P}_{\theta_2,\mathbf{m}}, \other{Q}_{\theta,\mathbf{m}},\other{P}_\mathbf{m}$ we write $P_{\theta,\mathbf{m}},P_{\theta_2,\mathbf{m}},Q_{\theta,\mathbf{m}},P_{\mathbf{m}}$ for their associated pushforward signed measures on $\URD_{\mathbf{m}}$ under the map $\mathrm{sh}:\URDbar_{\mathbf{m}} \to \URD_{\mathbf{m}}$. Note that since pushforwards commute with sums of measures, the equation \eqref{eq:barsum} pushes forward to
\begin{align} \label{eq:barsum3}
P_{\theta_2,\mathbf{m}}(\sigma) := \sum_{\theta_1 \in \{0,1\}} \frac{ C_{\theta,\mathbf{m}} \det(K_{\theta,\mathbf{m}}) }{ Z_{\theta_2,\mathbf{m}} } P_{\theta,\mathbf{m}}( \sigma );
\end{align}
likewise with \eqref{eq:barsum2}. 

By Lemma \ref{lem:shapepush}, \eqref{eq:evens} and the definition of $P_{\theta_2,\mathbf{m}}$ being the pushforward of $\other{P}_{\theta_2,\mathbf{m}}$ under the map $\mathrm{sh}:\URDbar_{\mathbf{m}} \to \URD_{\mathbf{m}}$, we have
\begin{align}
P_{\theta_2,\mathbf{m}}(\sigma) = \mathrm{1}\{ L(\os) = \theta_2 + m_2+  1\} w(\sigma)J(\sigma)/Z_{\theta_2,\mathbf{m}}.
\end{align}
By Remark \ref{rem:positivity}, in the probabilistic regime $\alpha^2 - 4\gamma \delta \geq 0$, we have $J(\sigma) > 0$, and in this case since ${w}(\sigma)$ is also non-negative on pure snake configurations, $P_{\theta_2,\mathbf{m}}(\sigma)$ is a genuine probability measure. 

Let us summarize our work in this section in the following remark:
\begin{rem}
In the probabilistic regime, $P_{\theta_2,\mathbf{m}}$ and $P_{\mathbf{m}}$ are probability measures on the set of pure snake configurations. These measures are affine sums \eqref{eq:barsum} of pushforwards of signed determinantal probability measures.
\end{rem}

We will understand the value of this remark in the next section: namely, we will see that the correlations of determinantal product signed probability measures have explicit correlation structure.

\subsection{Correlation functions}
We would now like to consider the local correlation structures of signed probability measures. For reasons that will be apparent later, we will change convention and consider $\torus{\mathbf{m}}$ as the set $[m_1] \times [m_2]$, where $[n]$ denotes the set $\{0,\dots,n-1\}$ and write $\mathbf{e}^1 = (1,0)$ and $\mathbf{e}^2 = (0,1)$ as elements of $\mathbb{Z}^2$. Let $x^i \in \torus{\mathbf{m}}$ and take $\mathbf{f}^i \in \{\mathbf{0}, \mathbf{e}^1, \mathbf{e}^2, -\mathbf{e}^2\}$ for all $i = 1,\dots,k$. Now define the notation
\begin{align} \label{eq:Pbarprob}
\other{P}_{\mathbf{m}} \left( \bigcap_{i=1}^k \{\sigmabar x^i = x^i + \mathbf{f}^i\} \right) &:= \sum_{ \os \in \URDbar_{\mathbf{m}}} \mathrm{1} \left\{\bigcap_{i=1}^k \{\sigmabar x^i = x^i + \mathbf{f}^i \}  \right\} \other{P}_{\mathbf{m}}(\os),
\end{align}
and
\begin{align} \label{eq:Pprob}
P_{\mathbf{m}} \left( \bigcap_{i=1}^k \{\sigma x^i = x^i + \mathbf{f}^i\} \right) &:= \sum_{ \sigma \in \URD_{\mathbf{m}} } \mathrm{1} \left\{\bigcap_{i=1}^k \{\sigma x^i = x^i + \mathbf{f}^i \}  \right\} P_{\mathbf{m}}(\sigma)
\end{align}
and similar for $\other{P}_{\theta_2,\mathbf{m}}$, $P_{\theta_2,\mathbf{m}}$, $\other{P}_{\theta,\mathbf{m}}$ and $P_{\theta,\mathbf{m}}$. It is important to be meticulous about this convention for this subsection - when we write $x^i + \mathbf{f}^i$, we will consider this as addition in $\mathbb{Z}^2$, but the event $\{\sigma x^i = x^i + \mathbf{f}^i \}$ is merely shorthand for the event that "at $x^i$, the snake $\sigma$ makes a move in the $\mathbf{f}^i$ direction" (or similar for $\os$ instead of $\sigma$). Thus, $ \mathrm{1} \left\{\bigcap_{i=1}^k \{\sigma x^i = x^i + \mathbf{f}^i\} \right\}$ denotes the indicator function that $\sigma x^i \equiv x^i + \mathbf{f}^i \mod{(m_1,m_2)}$ for each $i = 1,\ldots,k$, and similar for the analogous indicator in \eqref{eq:Pbarprob}.

In the probabilistic regime, where $P_{\mathbf{m}}$  is a genuine probability measure rather than just a signed probability measure, the quantity in \eqref{eq:Pprob} coincides with the natural notion of probability of the event $\bigcap_{i=1}^k \{\sigma x^i = x^i + \mathbf{f}^i\}$ for a random $\sigma$ chosen according to $P_{\mathbf{m}}$. 

Herein lies the value of the aforementioned decomposition of $P_{\mathbf{m}}$: the correlation functions under $P_{\theta,\mathbf{m}}$ can be computed explicitly in terms of the inverse operator $K_{\theta,\mathbf{m}}^{-1}$.

First, we give an explicit expression for the inverse operator:
\begin{lemma}\label{lem: detKinverseformula}
We have 
  \begin{align}\label{eq: Kthetainv}
    K_{\theta, \mathbf{m}}^{-1}(x,y) &= \frac{1}{m_1m_2} \sum_{ z^{m_1} = 1, w^{m_2} = 1} \frac{ z^{-(y_1-x_1)} w^{ - (y_2-x_2) } }{ \alpha +\beta z_\theta + \gamma w_\theta + \delta \overline{w_\theta} }, 
    \end{align}
    where $z_\theta := z e^{\pi \iota \theta_1/m_1}$ and $w_\theta := we^{\pi \iota \theta_2/m_2}$. 
\end{lemma}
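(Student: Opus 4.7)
The plan is to read off the inverse directly from the spectral decomposition provided by Lemma~\ref{lem:eigensystem}. Since $\{\phi_{z,w} : z^{m_1}=1,\; w^{m_2}=1\}$ is a complete orthonormal eigenbasis of $\mathbb{C}\mathbb{T}_{\mathbf{m}}$ for $K_{\theta,\mathbf{m}}$, the operator is necessarily normal: assembling the $\phi_{z,w}$ as the columns of a unitary matrix $U$ and writing $\Lambda$ for the diagonal matrix of eigenvalues $\lambda_{z,w}$, we have $K_{\theta,\mathbf{m}} = U\Lambda U^{\ast}$. Assuming every $\lambda_{z,w} \ne 0$ (which is required for $K_{\theta,\mathbf{m}}^{-1}$ to even be defined), it follows that $K_{\theta,\mathbf{m}}^{-1} = U\Lambda^{-1}U^{\ast}$, and evaluating both sides at the pair of basis vectors $(e_x, e_y)$ yields the coordinate formula
\begin{equation*}
K_{\theta,\mathbf{m}}^{-1}(x,y) \;=\; \sum_{z^{m_1}=1,\, w^{m_2}=1} \lambda_{z,w}^{-1}\, \phi_{z,w}(x)\, \overline{\phi_{z,w}(y)}.
\end{equation*}

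Substituting the explicit expressions $\phi_{z,w}(x) = (m_1m_2)^{-1/2} z^{x_1} w^{x_2}$ from \eqref{eq: defeigenfns} and using the fact that $|z|=|w|=1$ (so that $\overline{z^{y_1}} = z^{-y_1}$ and $\overline{w^{y_2}}= w^{-y_2}$), the product $\phi_{z,w}(x)\overline{\phi_{z,w}(y)}$ collapses to $(m_1m_2)^{-1} z^{-(y_1-x_1)} w^{-(y_2-x_2)}$. Inserting the eigenvalue formula $\lambda_{z,w}= \alpha+\beta z_\theta+\gamma w_\theta + \delta \overline{w_\theta}$ from \eqref{eq: defeigenvalues} then produces exactly the expression \eqref{eq: Kthetainv}.

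There is no serious obstacle: the only delicate point is verifying that the spectral theorem applies despite $K_{\theta,\mathbf{m}}$ not being self-adjoint. This is immediate because a square matrix is normal if and only if it admits a complete orthonormal basis of eigenvectors, and Lemma~\ref{lem:eigensystem} furnishes precisely such a basis. If one wishes to avoid invoking the structural spectral theorem, one can alternatively verify the formula by direct computation: multiplying the right-hand side of \eqref{eq: Kthetainv} by $K_{\theta,\mathbf{m}}(u,x)$, summing in $x$, and using the orthogonality relation $\sum_{x_1=0}^{m_1-1} z^{x_1}\bar z'^{x_1} = m_1 \mathrm{1}\{z=z'\}$ (and its analogue in the second coordinate) reduces the $(u,y)$-entry to $\mathrm{1}\{u=y\}$, confirming the inversion. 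The spectral route is shorter, and it is the one I would present in the proof.
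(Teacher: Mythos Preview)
Your proposal is correct and mirrors the paper's own proof: both invoke the orthonormal eigenbasis from Lemma~\ref{lem:eigensystem} to write $K_{\theta,\mathbf{m}}^{-1}(x,y) = \sum_{z,w} \lambda_{z,w}^{-1}\phi_{z,w}(x)\overline{\phi_{z,w}(y)}$ and then substitute the explicit eigenfunctions and eigenvalues. Your additional remarks on normality and the alternative direct verification are extra detail, but the core argument is the same.
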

\begin{proof}
Recall from Lemma \ref{lem:eigensystem} that $K_{\theta,\mathbf{m}}$ has an orthonormal system of eigenfunctions $\phi_{z,w}$ with eigenvalues $\lambda_{z,w}$. Now we simply use the fact that $K_{\theta,\mathbf{m}}^{-1}(x,y) = \sum_{z^{m_1}=1,w^{m_2}=1} \lambda_{z,w}^{-1}\phi_{z,w}(x)\overline{\phi_{z,w}(y)}$, which amounts to \eqref{eq: Kthetainv}.

\end{proof}

\begin{lemma} \label{lem:signedcorr}
We have 
\begin{align*}
\other{P}_{\theta,\mathbf{m}}\left( \bigcap_{i=1}^k \{\sigma x^i = x^i + \mathbf{f}^i\} \right) = \prod_{i=1}^k K_{\theta,\mathbf{m}}( x^i, x^i + \mathbf{f}^i) \det_{i,j=1}^k \left( K_{\theta,\mathbf{m}}^{-1}( x^i + \mathbf{f}^i, x^j) \right),
\end{align*}
where $K_{\theta,\mathbf{m}}^{-1}$ is the inverse of the operator $K_{\theta,\mathbf{m}}$.
\end{lemma}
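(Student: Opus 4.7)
Write $y^i := x^i + \mathbf{f}^i$. If the tuple $(x^i)$ or the tuple $(y^i)$ contains a repetition, no bijection satisfies the constraints and the left-hand side vanishes; the right-hand side also vanishes, since the determinant $\det(K_{\theta,\mathbf{m}}^{-1}(y^i, x^j))$ then has two equal rows or columns. So assume distinctness and set $I = \{x^1, \dots, x^k\}$, $J = \{y^1, \dots, y^k\}$.

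The key reduction is to introduce a modified kernel $\tilde K\colon \mathbb{T}_{\mathbf{m}} \times \mathbb{T}_{\mathbf{m}} \to \mathbb{C}$ defined by $\tilde K(x, y) = K_{\theta,\mathbf{m}}(x, y)$ for $x \notin I$, and $\tilde K(x^i, y) = K_{\theta,\mathbf{m}}(x^i, y^i)\,\mathrm{1}\{y = y^i\}$ for each $x^i \in I$. Under the Leibniz expansion, only bijections $\sigmabar$ with $\sigmabar(x^i) = y^i$ for every $i$ yield a nonzero term, and on such bijections $\prod_x \tilde K(x, \sigmabar x) = \prod_x K_{\theta,\mathbf{m}}(x, \sigmabar x)$. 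Thus
\begin{equation*}
\det(\tilde K) \;=\; \det(K_{\theta,\mathbf{m}})\cdot \other{P}_{\theta,\mathbf{m}}\!\left(\bigcap_{i=1}^k \{\sigmabar x^i = y^i\}\right),
\end{equation*}
so it suffices to evaluate $\det(\tilde K)$.

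The plan is now to compute $\det(\tilde K)$ by a Laplace expansion along the rows in $I$ followed by Jacobi's identity for complementary minors. Since each row $x^i$ of $\tilde K$ has exactly one nonzero entry (at column $y^i$), only a single term of the generalised Laplace expansion survives, giving
\begin{equation*}
\det(\tilde K) \;=\; \varepsilon \Bigl(\prod_{i=1}^k K_{\theta,\mathbf{m}}(x^i, y^i)\Bigr) \det\bigl(K_{\theta,\mathbf{m}}|_{\hat I \times \hat J}\bigr)
\end{equation*}
for an explicit sign $\varepsilon = \pm 1$. Jacobi's identity then yields $\det(K_{\theta,\mathbf{m}}|_{\hat I \times \hat J}) = \varepsilon \det(K_{\theta,\mathbf{m}}) \det(K_{\theta,\mathbf{m}}^{-1}|_{J \times I})$, where crucially both signs arise from the same parity $(-1)^{\sigma(I) + \sigma(J)}$ and therefore cancel. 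Dividing by $\det(K_{\theta,\mathbf{m}})$ gives the claim, up to a final reindexing of the sorted-order minor $K_{\theta,\mathbf{m}}^{-1}|_{J \times I}$ into the unsorted $(y^i, x^j)$-ordering appearing in the lemma's statement.

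The main (and really only) obstacle is the sign bookkeeping: one must verify that the Laplace sign and the Jacobi sign are indeed equal, and that the two sign factors $\mathrm{sgn}(\alpha)\mathrm{sgn}(\beta)$ arising from the sorting permutations $\alpha, \beta$ of $I$ and $J$ appear symmetrically in the rewriting of $\det(\tilde K|_{I \times J})$ and of $\det(K_{\theta,\mathbf{m}}^{-1}|_{J\times I})$, so that they too cancel. This is routine once one is careful, but is the step that cannot simply be waved through.
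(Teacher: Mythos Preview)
Your proposal is correct and takes essentially the same approach as the paper: both rest on Jacobi's identity relating a complementary minor of $K_{\theta,\mathbf{m}}$ to a minor of $K_{\theta,\mathbf{m}}^{-1}$. The paper simply quotes the needed formula as a black box from Horn--Johnson and plugs in $A = K_{\theta,\mathbf{m}}$, whereas you unpack the same identity via the modified-kernel/Laplace-expansion route; your sign analysis is the content the paper delegates to the citation.
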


There is a slight subtlety here, where we note that $K_{\theta, \mathbf{m}}$ and $K_{\theta,\mathbf{m}}^{-1}$ technically cannot necessarily take $x^i + \mathbf{f}^i$ as an input (e.g. if $x^i = (0,m_2-1), \mathbf{f}^i = (0,1)$) but we note that $K_{\theta, \mathbf{m}}$ and $K_{\theta,\mathbf{m}}^{-1}$ naturally extend to operators on $(\mathbb{Z}^2)^2$ in such a way that they take the same value on each residue class modulo ${(m_1,m_2)}$, and so we have implicitly extended the operators in the statement of Lemma~\ref{lem:signedcorr}.
\begin{proof}
According to Jacobi's identity (see e.g.\ Horn and Johnson \cite[Section 0.8.4]{HJ}), given an invertible matrix $(A_{x,y})_{1 \leq x,y \leq n}$ and any distinct $x^1,\ldots,x^k$ and distinct $y^1,\ldots,y^k$ in $\{1,\ldots,n\}$, we have
\begin{align*}
\sum_{ \sigma \in \mathcal{S}_n } \prod_{i=1}^k \mathrm{1} \{ \sigma x^i = x^i \} \mathrm{sgn}(\sigma) \prod_{x=1}^n A_{x,\sigma(x)} = \prod_{i=1}^k A_{x_i,y_i} \det_{x,y=1}^n(A_{x,y}) \det_{i,j = 1}^k ( A^{-1}_{y^i,x^j} ). 
\end{align*}
Now use \eqref{eq:Pthetadef} and let $A_{x,y}$ be the operator $\{ K_{\theta,\mathbf{m}}(x,y) : x,y \in \mathbb{T}_{\mathbf{m}} \}$ with $y^i$ being the unique elements of $\torus{\mathbf{m}}$ such that $y^i \equiv x^i + \mathbf{f}^i \mod{(m_1,m_2)}$, to find that
\begin{align*}
    \other{P}_{\theta,\mathbf{m}}\left( \bigcap_{i=1}^k \{\sigma x^i = x^i + \mathbf{f}^i\} \right) = \prod_{i=1}^k K_{\theta,\mathbf{m}}( x^i, y^i) \det_{i,j=1}^k \left( K_{\theta,\mathbf{m}}^{-1}( y^i, x^j) \right)
\end{align*}
Finally, use that  $K_{\theta, \mathbf{m}}$ and $K_{\theta,\mathbf{m}}^{-1}$ evaluate identically within residue classes modulo $(m_1,m_2)$. 
\end{proof}

Combining the previous two results, we can slightly simplify the form of this determinantal structure.

\begin{cor} \label{cor:nform}
We have \[\other{P}_{\theta,\mathbf{m}} \left( \bigcap_{i=1}^k \{\sigma x^i = x^i + \mathbf{f}^i\} \right) = \prod_{i=1}^k K_{00,\mathbf{m}}( x^i, x^i + \mathbf{f}^i) \det_{i,j=1}^k \left( H_{\theta,\mathbf{m}}(x^i+\mathbf{f}^i, x^j) \right),\] where 
\begin{align} \label{eq:nform}
H_{\theta,\mathbf{m}}(x,x') := \frac{1}{m_1m_2}\sum_{z^{m_1}=(-1)^{\theta_1},w^{m_2}=(-1)^{\theta_2}} \frac{ z^{-(x'_1-x_1)} w^{ - (x_2'-x_2) } }{ \alpha +\beta z + \gamma w + \delta \overline{w} }
\end{align}
\end{cor}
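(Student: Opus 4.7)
The plan is to derive the claimed identity from Lemma \ref{lem:signedcorr} and Lemma \ref{lem: detKinverseformula} by carefully extracting phase factors and showing that they cancel. The calculation is entirely bookkeeping; there is no genuine obstacle beyond tracking indices.

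First, I would record the elementary factorisation
\begin{align*}
K_{\theta,\mathbf{m}}(x, x+\mathbf{f}) = K_{00,\mathbf{m}}(x, x+\mathbf{f}) \cdot \exp\!\left( \pi \iota \left[\tfrac{\theta_1 f_1}{m_1} + \tfrac{\theta_2 f_2}{m_2}\right] \right)
\end{align*}
for each $\mathbf{f} = (f_1,f_2) \in \{\mathbf{0},\mathbf{e}^1,\mathbf{e}^2,-\mathbf{e}^2\}$, which is verified by inspecting \eqref{defK01} case by case. Taking the product over $i = 1,\dots,k$ pulls out a single global phase
\begin{align*}
\prod_{i=1}^k K_{\theta,\mathbf{m}}(x^i, x^i+\mathbf{f}^i) = \prod_{i=1}^k K_{00,\mathbf{m}}(x^i, x^i+\mathbf{f}^i) \cdot \exp\!\left( \pi \iota \sum_{i=1}^k \left[\tfrac{\theta_1 f^i_1}{m_1} + \tfrac{\theta_2 f^i_2}{m_2}\right] \right).
\end{align*}

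Next I would rewrite the inverse operator formula \eqref{eq: Kthetainv} by performing the change of summation variables $u = ze^{\pi \iota \theta_1/m_1}$, $v = we^{\pi \iota \theta_2/m_2}$. Under this substitution the index set $\{ z^{m_1}=1, w^{m_2}=1\}$ becomes exactly $\{u^{m_1} = (-1)^{\theta_1}, v^{m_2} = (-1)^{\theta_2}\}$, the denominator $\alpha + \beta z_\theta + \gamma w_\theta + \delta \overline{w_\theta}$ becomes $\alpha + \beta u + \gamma v + \delta \overline{v}$, and the factor $z^{-(y_1-x_1)}w^{-(y_2-x_2)}$ picks up phase $\exp(\pi \iota \theta_1(y_1-x_1)/m_1 + \pi \iota \theta_2(y_2-x_2)/m_2)$. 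Comparing with \eqref{eq:nform}, this gives the identity
\begin{align*}
K_{\theta,\mathbf{m}}^{-1}(y, x) = H_{\theta,\mathbf{m}}(y, x) \cdot \exp\!\left( \pi \iota \left[\tfrac{\theta_1(y_1 - x_1)}{m_1} + \tfrac{\theta_2(y_2 - x_2)}{m_2}\right] \right).
\end{align*}

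Substituting $y = x^i + \mathbf{f}^i$ and $x = x^j$, the exponent splits as a sum of a term depending only on $i$ and a term depending only on $j$. Consequently the matrix $K_{\theta,\mathbf{m}}^{-1}(x^i+\mathbf{f}^i, x^j)$ equals $H_{\theta,\mathbf{m}}(x^i+\mathbf{f}^i, x^j)$ multiplied by a rank-one phase of the form $a_i b_j$, and the determinant satisfies
\begin{align*}
\det_{i,j=1}^k\!\left( K_{\theta,\mathbf{m}}^{-1}(x^i+\mathbf{f}^i, x^j) \right) = \left(\prod_i a_i\right)\!\left(\prod_j b_j\right) \det_{i,j=1}^k\!\left( H_{\theta,\mathbf{m}}(x^i+\mathbf{f}^i, x^j) \right).
\end{align*}
A direct check shows that $\prod_i a_i \prod_j b_j = \exp(-\pi \iota \sum_i [\theta_1 f^i_1/m_1 + \theta_2 f^i_2/m_2])$, since the $x^i_1, x^i_2, x^j_1, x^j_2$ contributions telescope out when $\{x^i\} = \{x^j\}$ as sets. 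Multiplying this against the phase extracted earlier from $\prod_i K_{\theta,\mathbf{m}}(x^i, x^i+\mathbf{f}^i)$ yields exact cancellation, and substituting into Lemma \ref{lem:signedcorr} delivers the claim. The only step requiring care is verifying that the row and column phase factors conspire to cancel the prefactor phase, but this is immediate from the splitting $y_i - x_j = (x^i_i - x^j_i) + f^i_i$.
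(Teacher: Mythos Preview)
Your approach is essentially the same as the paper's: both relate $K_{\theta,\mathbf{m}}^{-1}$ to $H_{\theta,\mathbf{m}}$ via a multiplicative phase, factor this rank-one phase out of the determinant, and observe that it converts the prefactor $\prod_i K_{\theta,\mathbf{m}}(x^i,x^i+\mathbf{f}^i)$ into $\prod_i K_{00,\mathbf{m}}(x^i,x^i+\mathbf{f}^i)$. One small slip to fix: in your displayed identity for $K_{\theta,\mathbf{m}}^{-1}(y,x)$ you swapped the argument order from the preceding derivation without flipping the sign in the exponent --- it should read $\exp\!\big(\pi\iota[\theta_1(x_1-y_1)/m_1 + \theta_2(x_2-y_2)/m_2]\big)$ --- though your stated value for $\prod_i a_i \prod_j b_j$ and the final cancellation are correct.
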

Note that by reindexing $z$ and $w$, we must take care in that, for example, $z^x$ is no longer well-defined for $x \in \mathbb{Z}_m$.
\begin{proof}
This follows from the fact that 
\begin{align*}
\det_{i,j=1}^k \left( H_{\theta,\mathbf{m}}(x^i+\mathbf{f}^i, x^j) \right) &= \det_{i,j=1}^k \left( e^{\pi \iota \frac{\theta_1}{m_1}[(x^i + \mathbf{f}^i)_1-x^j_1]_{m_1} + \pi \iota \frac{\theta_2}{m_2}[ (x^i + \mathbf{f}^i)_2 - x^j_2]_{m_2} } K_{\theta,\mathbf{m}}^{-1}(x^i + \mathbf{f}^i,x^j) \right)\\
&= \prod_{i=1}^k \frac{K_{\theta,\mathbf{m}}(x^i,x^i + \mathbf{f}^i)}{K_{00,\mathbf{m}}(x^i,x^i + \mathbf{f}^i)} \det_{i,j=1}^k \left( K_{\theta,\mathbf{m}}^{-1}(x^i + \mathbf{f}^i,x^j) \right),
\end{align*}
where we simply used the definition \eqref{eq:Ktheta} to obtain the final equality above.
\end{proof}

Thus we have seen that the correlation functions of the signed probability measure $P_{\theta,\mathbf{m}}$ on $\URDbar_{\mathbf{m}}$ have %tractable 
determinantal representations in terms of an operator that can be computed explicitly. We now seek to tackle the question of how these correlations push forward onto pure snake configurations. 

Let $\os = (\sigma,N)$ be a generalised snake configuration with underlying pure snake configuration $\mathrm{sh}(\os) = \sigma$. We want would like to relate events of the form 
$\bigcap_{i=1}^k \{\sigma x^i = x^i + \mathbf{f}^i\}$ to those of the form $\bigcap_{i=1}^k \{\os x^i = x^i + \mathbf{f}^i\}$. 

Note that these events do not coincide; for example, consider that we in fact have the disjoint union 
\begin{align*}
\{ \os x = x + \mathbf{e}^2 \} &= \{ \sigma x = x+\mathbf{e}^2 \} \cup \{ (x, x+\mathbf{e}^2) \text{ is a snakelet} \}.
\end{align*}
However, the following lemma gives a recipe for inverting such relationships, expressing all $\sigma$ events in terms of $\os$. 

\begin{lemma} \label{lem:events}
Let $\mathrm{sh}(\os) = \sigma$. Then we have the following equalities of events
\begin{align}
\{ \sigma x = x \} &= \{ \os x = x \} \cup \{ \os x = x+\mathbf{e}^2 , \os( x+ \mathbf{e}^2 ) = x \} \cup \{ \os x = x-\mathbf{e}^2 , \os( x- \mathbf{e}^2 ) = x \} \label{eq:ev1}\\
\{ \sigma x = x +\mathbf{e}^1 \} &= \{ \os x = x+ \mathbf{e}^1 \} \label{eq:ev2} \\
\{ \sigma x = x+ \mathbf{e}^2 \} &= \{ \os x  = x + \mathbf{e}^2 \} - \{ \os x = x + \mathbf{e}^2 , \os (x+\mathbf{e}^2 )  = x \} \label{eq:ev3} \\
\{ \sigma x = x - \mathbf{e}^2 \} &= \{ \os x  = x - \mathbf{e}^2 \} - \{ \os x = x - \mathbf{e}^2 , \os (x- \mathbf{e}^2 )  = x \}. \label{eq:ev4}
\end{align}
\end{lemma}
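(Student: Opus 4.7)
My plan is to prove each of the four equalities by directly unpacking the definition of the shape map $\mathrm{sh}$ and performing a case analysis on the behaviour of $\os$ at the point $x$. The key input is Remark~\ref{rem:corr}, which identifies each generalised snake configuration $\os$ with a pair $(\sigma,N)$, where $\sigma = \mathrm{sh}(\os)$ is pure and $N$ is a nest of $\sigma$ consisting of pairwise disjoint vertical two-cycles $(y,y+\mathbf{e}^2)$; the shape map precisely removes these snakelets, turning each two-cycle into two fixed points.

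For \eqref{eq:ev1}, I will show that $\sigma x = x$ happens in exactly three disjoint scenarios. Either $x$ is already a fixed point of $\os$, or $x$ belongs to a snakelet in $N$. Since any snakelet is of the form $(y,y+\mathbf{e}^2)$, the only snakelets containing $x$ are $(x,x+\mathbf{e}^2)$, characterized by $\os x = x+\mathbf{e}^2$ together with $\os(x+\mathbf{e}^2) = x$, and $(x-\mathbf{e}^2,x)$, characterized by $\os x = x-\mathbf{e}^2$ together with $\os(x-\mathbf{e}^2) = x$. These three events are mutually exclusive because the value of $\os x$ is different in each case, so their union gives \eqref{eq:ev1}.

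For \eqref{eq:ev2}, I will observe that every snakelet is a vertical two-cycle, so no right move is ever part of a snakelet; hence $\mathrm{sh}$ preserves right moves exactly and the two events coincide as sets. For \eqref{eq:ev3}, I will split the event $\{\os x = x+\mathbf{e}^2\}$ according to the value of $\os(x+\mathbf{e}^2)$. If $\os(x+\mathbf{e}^2)=x$, then $(x,x+\mathbf{e}^2)$ is a snakelet which $\mathrm{sh}$ deletes, so $\sigma x = x$; otherwise the up-move from $x$ survives the removal and $\sigma x = x+\mathbf{e}^2$. Since $\{\os x = x+\mathbf{e}^2,\,\os(x+\mathbf{e}^2)=x\}$ is contained in $\{\os x = x+\mathbf{e}^2\}$, the set-difference in \eqref{eq:ev3} is well-defined and extracts exactly the $\os$-configurations whose up-move at $x$ survives $\mathrm{sh}$. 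Equation \eqref{eq:ev4} is identical after reflecting $\mathbf{e}^2 \mapsto -\mathbf{e}^2$.

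Since the lemma is essentially a bookkeeping exercise about the shape projection, there is no substantial obstacle; the main care needed is to verify the disjointness of the three cases in \eqref{eq:ev1} and to confirm that the subtractions in \eqref{eq:ev3} and \eqref{eq:ev4} are genuine set-differences. One minor subtlety is the degenerate case $m_2 = 2$ where $\mathbf{e}^2 = -\mathbf{e}^2$, but this is handled by the convention introduced in Section~\ref{sec:def} that treats down-jumps as non-existent in that case, so the argument applies uniformly.
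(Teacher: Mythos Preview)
Your proposal is correct and takes essentially the same approach as the paper's own proof: a direct case analysis on whether $x$ is a fixed point of $\os$ or belongs to a snakelet, using the correspondence of Remark~\ref{rem:corr}. Your write-up is simply more explicit than the paper's terse version, including the verification of disjointness in \eqref{eq:ev1} and the remark on the $m_2=2$ convention.
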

\begin{proof}
For \eqref{eq:ev1} note that $\sigma x = x$, then either $\os x = x$ or $x$ is part of a snakelet. 
For \eqref{eq:ev2} note that $\sigma x = x + \mathbf{e}^1$ if and only if $\os x = x + \mathbf{e}^1$. For \eqref{eq:ev3} (resp.\ \eqref{eq:ev4}), note that $\sigma x = x+\mathbf{e}^2$ if and only if $\os x =  x +\mathbf{e}^2$ (resp.\ $\os x = x - \mathbf{e}^2$) and $x$ is not part of a snakelet. 
\end{proof}
 For $\mathbf{f} \in  \{0,\mathbf{e}^1,\mathbf{e}^2,-\mathbf{e}^2\}$ and a pure or generalised snake configuration $\os$, let 
 \begin{align*}
 R_x^{\mathbf{f}}(\os) := \mathrm{1} \{ \os x = x + \mathbf{f} \} 
 \end{align*}
be the indicator function of the event $ \{ \os x = x + \mathbf{f} \} $.

Then in this notation, Lemma \ref{lem:events} reads
\begin{align}
T^0_x(\os) := R^0_x(\mathrm{sh}(\os)) &= R^0_x(\os) + R^{\mathbf{e}^2}_x(\os) R^{-\mathbf{e}^2}_{x+\mathbf{e}^2}(\os) + R^{-\mathbf{e}^2}_x(\os) R^{\mathbf{e}^2}_{x-\mathbf{e}^2}(\os) \label{eq:ev1b} \\
T^{\mathbf{e}^1}_x(\os) := R^{\mathbf{e}^1}_x(\mathrm{sh}(\os)) &= R^{\mathbf{e}^1}_x(\os) \label{eq:ev2b} \\
T^{\mathbf{e}^2}_x(\os) := R^{\mathbf{e}^2}_x(\mathrm{sh}(\os)) &= R^{\mathbf{e}^2}_x(\os) ( 1 - R^{-\mathbf{e}^2}_{x+\mathbf{e}^2}(\os) ) \label{eq:ev3b} \\
T^{ - \mathbf{e}^2}_x(\os) := 
R^{-\mathbf{e}^2}_x(\mathrm{sh}(\os)) &= R^{-\mathbf{e}^2}_x(\os) ( 1 - R^{\mathbf{e}^2}_{x-\mathbf{e}^2}(\os) ), \label{eq:ev4b}
\end{align}
which is precisely \eqref{eq:ev1b0}-\eqref{eq:ev4b0} from the introduction.

 The following lemma offers a conversion for expressing correlations under the pushforward measures $P_{\mathbf{m}}, P_{\theta,\mathbf{m}}$ in terms of correlations under $\other{P}_{\mathbf{m}},\other{P}_{\theta,\mathbf{m}}$.

 \begin{lemma} \label{lem:pushprob}
 Let $\other{P}$ be a signed probability measure on $\URDbar_{\mathbf{m}}$ and let $P$ be its pushforward under $\mathrm{sh}:\URDbar_{\mathbf{m}} \to \URD_{\mathbf{m}}$. Then
 \begin{align*}
 P\left[ \prod_{i=1}^k R_{x^i}^{\mathbf{f}^i} \right] = \other{P} \left[ \prod_{i=1}^k T_{x^i}^{\mathbf{f}^i} \right].
 \end{align*}
 \end{lemma}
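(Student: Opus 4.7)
The plan is to observe that Lemma~\ref{lem:pushprob} is essentially a direct application of the pushforward expectation identity \eqref{eq:exppush}, once one recognises the defining equations \eqref{eq:ev1b}--\eqref{eq:ev4b} in the right light. Indeed, the reason the notation $T_x^{\mathbf{f}}$ was introduced was to record precisely the relation
\begin{align*}
T_x^{\mathbf{f}}(\os) = R_x^{\mathbf{f}}(\mathrm{sh}(\os)),
\end{align*}
valid for every $\mathbf{f} \in \{0, \mathbf{e}^1, \mathbf{e}^2, -\mathbf{e}^2\}$ and every generalised snake configuration $\os \in \URDbar_{\mathbf{m}}$. This was the point of Lemma~\ref{lem:events}, whose reformulation in indicator variables gave \eqref{eq:ev1b}--\eqref{eq:ev4b}.

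Taking the product over $i = 1,\ldots,k$, I would then write
\begin{align*}
\prod_{i=1}^k T_{x^i}^{\mathbf{f}^i}(\os) = \prod_{i=1}^k R_{x^i}^{\mathbf{f}^i}(\mathrm{sh}(\os)) = g(\mathrm{sh}(\os)),
\end{align*}
where $g \colon \URD_{\mathbf{m}} \to \mathbb{R}$ is the function $g(\sigma) := \prod_{i=1}^k R_{x^i}^{\mathbf{f}^i}(\sigma)$. In other words, $\prod_{i=1}^k T_{x^i}^{\mathbf{f}^i} = g \circ \mathrm{sh}$ as functions on $\URDbar_{\mathbf{m}}$.

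From here the conclusion is immediate by applying \eqref{eq:exppush} with $S = \mathrm{sh}$, $f = g \circ \mathrm{sh}$ and the signed measure $\other{P}$: we have $P = \mathrm{sh}_{\#} \other{P}$ by hypothesis, so
\begin{align*}
\other{P}\!\left[\prod_{i=1}^k T_{x^i}^{\mathbf{f}^i}\right] = \other{P}[g \circ \mathrm{sh}] = (\mathrm{sh}_{\#} \other{P})[g] = P[g] = P\!\left[\prod_{i=1}^k R_{x^i}^{\mathbf{f}^i}\right],
\end{align*}
which is the claimed identity. There is no genuine obstacle here: the content of the lemma is entirely bookkeeping, and the only non-trivial ingredient is the case-by-case verification of \eqref{eq:ev1b}--\eqref{eq:ev4b}, which has already been performed in Lemma~\ref{lem:events}. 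The role of the lemma is organisational, setting up the formalism needed to push expressions like \eqref{eq:wordy0} through the shape map in the proof of Theorem~\ref{thm:cffull0}.
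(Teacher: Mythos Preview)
Your proposal is correct and takes essentially the same approach as the paper: define $g(\sigma) = \prod_{i=1}^k R_{x^i}^{\mathbf{f}^i}(\sigma)$, observe that $\prod_{i=1}^k T_{x^i}^{\mathbf{f}^i} = g \circ \mathrm{sh}$ by the defining relations \eqref{eq:ev1b}--\eqref{eq:ev4b}, and apply the pushforward identity \eqref{eq:exppush}. The paper's proof is a two-line version of exactly this argument.
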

\begin{proof}
If $f(\os) := \prod_{i=1}^k T_{x^i}^{\mathbf{f}^i}(\os)$ and $g(\sigma) :=  \prod_{i=1}^k R_{x^i}^{\mathbf{f}^i}(\sigma)$ then $f(\os) = g(\mathrm{sh}(\sigma))$. Now use \eqref{eq:exppush}.
\end{proof}

It is also true that every product in the $(T_x^{\mathbf{f}})$ has an expression as a sum of products of the $(R_x^{\mathbf{f}})$ indicators. 

As noted in the introduction, this implies that every word $\prod_{i=1}^k T_{x^i}^{\mathbf{f}^i}$ has an expression
\begin{align} \label{eq:wordy}
\prod_{i=1}^k T_{x^i}^{\mathbf{f}_i} = \sum_{\kappa \in \mathcal{K}} a_\kappa \prod_{i=1}^{r_\kappa} R_{x^{\kappa,i}}^{\mathbf{f}^{\kappa,i}}, 
\end{align}
as a sum of words in the variables $(R_x^{\mathbf{f}})$. Here we have indexed the words by $\kappa$ in some indexing set $\mathcal{K}$, with coefficients $\alpha_\kappa \in \mathbb{Z}$. 

Let us highlight in particular that when $\mathbf{f}^i = \mathbf{e}^1$, by \eqref{eq:ev3b} we have the following particularly simple special case of \eqref{eq:wordy}:
\begin{align} \label{eq:wordyspecial}
\prod_{i=1}^k T_{x^i}^{\mathbf{e}^1} = \prod_{i=1}^k R_{x^i}^{\mathbf{e}^1}
\end{align}

With this in hand, we are now ready to prove Theorem \ref{thm:cffull0}, which is our foundational result on the correlation structure of the probability measure $P_{\mathbf{m}}$. We will prove it as a corollary of a more specific result, about the correlation structure of the probability measure $P_{\theta_2,\mathbf{m}}$.

\begin{thm} \label{thm:cffull1}
Suppose that the product $\prod_{i=1}^k T_{x^i}^{\mathbf{f}^i}$ has an expression $\prod_{i=1}^k T_{x^i}^{\mathbf{f}^i} = \sum_{\kappa \in \mathcal{K}} a_\kappa \prod_{i=1}^{r_\kappa} R_{x^{\kappa,i}}^{\mathbf{f}^{\kappa,i}}$. Then we can write that
\begin{align} \label{eq:corrstructuretheta2}
P_{\theta_2,\mathbf{m}}\left( \bigcap_{i=1}^k \{ \sigma x^i = x^i + \mathbf{f}^i \} \right) =  \sum_{\kappa \in K} a_\kappa \sum_{\theta_1 \in  \{0,1\} } \mu_{\theta,\mathbf{m}} P_{\theta,\mathbf{m}} \left[\prod_{i=1}^{r_\kappa} R_{x^{\kappa,i}}^{\mathbf{f}^{\kappa,i}}  \right].
\end{align}
\end{thm}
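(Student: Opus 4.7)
The plan is to chain together the ingredients already established into a telescoping identity, with no genuinely new ideas required. I would begin from the left-hand side $P_{\theta_2,\mathbf{m}}\bigl(\bigcap_{i=1}^k \{\sigma x^i = x^i + \mathbf{f}^i\}\bigr)$ and, by the notational convention \eqref{eq:Pprob}, rewrite this as $P_{\theta_2,\mathbf{m}}\bigl[\prod_{i=1}^k R_{x^i}^{\mathbf{f}^i}\bigr]$. Since $P_{\theta_2,\mathbf{m}}$ is by construction the pushforward of $\other{P}_{\theta_2,\mathbf{m}}$ under $\mathrm{sh}\colon \URDbar_{\mathbf{m}} \to \URD_{\mathbf{m}}$, Lemma~\ref{lem:pushprob} converts this at a single stroke into an expectation of a product of $T$-indicators against the signed measure on $\URDbar_{\mathbf{m}}$:
\begin{align*}
P_{\theta_2,\mathbf{m}}\Bigl[\prod_{i=1}^k R_{x^i}^{\mathbf{f}^i}\Bigr] \;=\; \other{P}_{\theta_2,\mathbf{m}}\Bigl[\prod_{i=1}^k T_{x^i}^{\mathbf{f}^i}\Bigr].
\end{align*}

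Next I would invoke the hypothesis that the product $\prod_{i=1}^k T_{x^i}^{\mathbf{f}^i}$ admits the pointwise expansion $\sum_{\kappa \in \mathcal{K}} a_\kappa \prod_{i=1}^{r_\kappa} R_{x^{\kappa,i}}^{\mathbf{f}^{\kappa,i}}$ on $\URDbar_{\mathbf{m}}$ — an identity that follows directly from expanding the relations \eqref{eq:ev1b}--\eqref{eq:ev4b} multiplicatively — and then use linearity of the signed-measure expectation to push the sum over $\kappa$ outside. Finally, the defining affine decomposition \eqref{eq:barsum}, namely $\other{P}_{\theta_2,\mathbf{m}} = \sum_{\theta_1 \in \{0,1\}} \mu_{\theta,\mathbf{m}}\, \other{P}_{\theta,\mathbf{m}}$, applied term-by-term in $\kappa$, produces
\begin{align*}
\other{P}_{\theta_2,\mathbf{m}}\Bigl[\prod_{i=1}^{r_\kappa} R_{x^{\kappa,i}}^{\mathbf{f}^{\kappa,i}}\Bigr] \;=\; \sum_{\theta_1 \in \{0,1\}} \mu_{\theta,\mathbf{m}}\, \other{P}_{\theta,\mathbf{m}}\Bigl[\prod_{i=1}^{r_\kappa} R_{x^{\kappa,i}}^{\mathbf{f}^{\kappa,i}}\Bigr].
\end{align*}
Combining the three steps and rearranging the sums yields \eqref{eq:corrstructuretheta2} verbatim.

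There is no hard step here: the entire argument is definitional, resting on Lemma~\ref{lem:pushprob} (which records how expectations transport along $\mathrm{sh}$), Lemma~\ref{lem:events} (which underlies the pointwise expansion of the $T$'s into $R$'s), and the construction \eqref{eq:barsum} of $\other{P}_{\theta_2,\mathbf{m}}$ as an affine combination. The one point that warrants a sentence of comment is the identification at the final step: the $P_{\theta,\mathbf{m}}[\prod R^{\mathbf{f}^{\kappa,i}}_{x^{\kappa,i}}]$ appearing on the right-hand side of \eqref{eq:corrstructuretheta2} is to be understood as the signed quantity $\other{P}_{\theta,\mathbf{m}}\bigl[\prod R^{\mathbf{f}^{\kappa,i}}_{x^{\kappa,i}}(\os)\bigr]$ — equivalently, the determinantal expression given by Corollary~\ref{cor:nform} — and not the pushforward expectation against pure configurations (which by Lemma~\ref{lem:pushprob} would re-introduce a $T$ and collapse the sum over $\kappa$). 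Interpreted this way, the identity is both internally consistent and matches the explicit formula \eqref{eq:dform} in the statement of Theorem~\ref{thm:cffull0}.
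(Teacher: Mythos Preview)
Your proposal is correct and takes essentially the same approach as the paper: both proofs chain together \eqref{eq:Pprob}, Lemma~\ref{lem:pushprob}, the expansion of $\prod T$ into $R$-words, and the affine decomposition \eqref{eq:barsum}, differing only in the trivial order in which the $\theta_1$-decomposition and the pushforward are applied. You also correctly flag the notational point that the $P_{\theta,\mathbf{m}}$ on the right-hand side of \eqref{eq:corrstructuretheta2} is to be read as $\other{P}_{\theta,\mathbf{m}}$, which the paper makes explicit only in the restatement \eqref{eq:nobarcorrstructure} inside its proof.
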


\begin{proof}
We recall from Corollary \ref{cor:nform} that each of the expressions $\other{P}_{\theta,\mathbf{m}} \left[\prod_{i=1}^r R_{x^i}^{\mathbf{f}^i} \right]$ occuring on the right-hand side of \eqref{eq:corrstructure} have an explicit determinantal form 
\begin{align} \label{eq:anotherdform}
\other{P}_{\theta,\mathbf{m}} \left[\prod_{i=1}^r R_{x^i}^{\mathbf{f}^i} \right] = \prod_{i=1}^r K_{00,m}(x^i,x^i+\mathbf{f}^i) \det_{i,j=1}^r \left[ H_{\theta,\mathbf{m}}((x^i,\mathbf{f}^i),(x^j,\mathbf{f}^j)) \right],
\end{align}
where $K_{00,m}(x^i,x^i+\mathbf{f})$ is $\alpha,\beta,\gamma,\delta$ according to whether $\mathbf{f}^i = 0,\mathbf{e}^1,\mathbf{e}^2,-\mathbf{e}^2$ and $H_{\theta,\mathbf{m}}$ is defined in \eqref{eq:nform}.

It follows that Theorem \ref{thm:cffull1} amounts to the statement that 
whenever $\prod_{i=1}^k T_{x^i}^{\mathbf{f}^i} = \sum_{\kappa \in \mathcal{K}} a_\kappa \prod_{i=1}^{r_\kappa} R_{x^{\kappa,i}}^{\mathbf{f}^{\kappa,i}}$ we have 
\begin{align} \label{eq:nobarcorrstructure}
P_{\theta_2,\mathbf{m}}\left( \bigcap_{i=1}^k \{ \sigma x^i = x^i + \mathbf{f}^i \} \right) =  \sum_{\kappa \in K} a_\kappa \sum_{\theta_1 \in \{0,1\} } \mu_{\theta,\mathbf{m}} \other{P}_{\theta,\mathbf{m}} \left[\prod_{i=1}^{r_\kappa} R_{x^{\kappa,i}}^{\mathbf{f}^{\kappa,i}}  \right],
\end{align}
where $\lambda_{\theta,\mathbf{m}} := C_{\theta,\mathbf{m}} \det(K_{\theta,\mathbf{m}})/Z_\mathbf{m}$. 

Using \eqref{eq:Pprob} to obtain the first equality below, the definitions of the indicators $R_x^\mathbf{f}$ to obtain the second, Lemma \ref{lem:pushprob} to obtain the third, we have 
\begin{align*}
P_{\theta_2,\mathbf{m}} \left( \bigcap_{i=1}^k \{ \sigma x^i = x^i + \mathbf{f}^i \} \right) &=\sum_{\theta_1 \in \{0,1\} } \mu_{\theta,\mathbf{m}} P_{\theta,\mathbf{m}} \left( \bigcap_{i=1}^k \{ \sigma x^i = x^i + \mathbf{f}^i \} \right)\\
&=  \sum_{\theta_1 \in \{0,1\} } \mu_{\theta,\mathbf{m}} P_{\theta,\mathbf{m}} \left[ \prod_{i=1}^k R_{x^i}^{\mathbf{f}^i} \right]\\
&=\sum_{\theta_1 \in \{0,1\} } \mu_{\theta,\mathbf{m}}\other{P}_{\theta,\mathbf{m}} \left[ \prod_{i=1}^k T_{x^i}^{\mathbf{f}^i} \right],
\end{align*}
where we recall that $P_{\theta,\mathbf{m}}$ is the pushforward under $\mathrm{sh}:\URDbar_{\mathbf{m}} \to \URD_{\mathbf{m}}$ of the signed probability measure $\other{P}_{\theta,\mathbf{m}}$ defined in \eqref{eq:Pthetadef}. Expressing $ \prod_{i=1}^k T_{x^i}^{\mathbf{f}^i}$ as a linear combination of words in the variables $(R_x^{\mathbf{f}})$ as using linearity we in turn obtain
\begin{align*}
P_{\theta_2,\mathbf{m}} \left( \bigcap_{i=1}^k \{ \sigma x^i = x^i + \mathbf{f}^i \} \right) &= \sum_{\theta_1 \in \{0,1\} } \mu_{\theta,\mathbf{m}}\sum_{\kappa \in K} a_\kappa \other{P}_{\theta,\mathbf{m}} \left[ \prod_{i=1}^{r_\kappa} R_{x^{\kappa,i}}^{\mathbf{f}^{\kappa,i}}  \right].
\end{align*}
Reordering the sums, we obtain \eqref{eq:nobarcorrstructure} as written. 
\end{proof}

\begin{proof}[Proof of Theorem~\ref{thm:cffull0}]
    Use \eqref{eq:barsum2} together with \eqref{eq:nobarcorrstructure}.
\end{proof}

We close this section with a proof of Theorem \ref{thm:cf}, which is simply a special case of Theorem \ref{thm:cffull0} when $\mathbf{f}^i = \mathbf{e}^1$ for every $i=1,\ldots,k$.

\begin{proof}[Proof of Theorem \ref{thm:cf}]
When $\mathbf{f}^i = \mathbf{e}^1$ for every $i$, we have $K_{00,\mathbf{m}}(x^i,x^i+\mathbf{f}^i) = \beta$. Moreover, when $\mathbf{f}^i = \mathbf{e}^1$ for every $i$ by \eqref{eq:ev2b} we have the equality of words $\prod_{i=1}^k T_{x^i}^{\mathbf{f}^i} = \prod_{i=1}^k R_{x^i}^{\mathbf{f}^i}$. In particular, combining Theorem~\ref{thm:cffull0} with \eqref{eq:anotherdform} we obtain
\begin{align} \label{eq:corrstructure3}
P_\mathbf{m} \left( \bigcap_{i=1}^k \{ \sigma x^i = x^i + \mathbf{f}^i \} \right) =  \sum_{\theta \in \{0,1\}^2 } \lambda_{\theta,\mathbf{m}} \det_{i,j=1}^k \left[ \beta H_{\theta,\mathbf{m}}(x^i+\mathbf{e}^1,x^j) \right].
\end{align}
To obtain \eqref{eq:cf} as written, simply note $\beta H_{\theta,\mathbf{m}}(x+\mathbf{e}^1,x') = G_{\theta,\mathbf{m}}(x,x')$. 
\end{proof}

%%%%%%%%%%%%%%%%%%%%%%%%%%%%%%%%%%%%%%%%%%%%%%
%%%%%%%%%%%%%%%%%%%%%%%%%%%%%%%%%%%%%%%%%%%%%%
%%%%%%%%%%%%%%%%%%%%%%%%%%%%%%%%%%%%%%%%%%%%%%
%%%%%%%%%
%%%%%%%%%   m_1 to infinity
%%%%%%%%%
%%%%%%%%%%%%%%%%%%%%%%%%%%%%
\section{ \texorpdfstring{The $m_1 \to \infty$ limit of our model}{The m1 to infinity limit of our model}} \label{sec:m1infinity}
%%%%%%%%%%%%%%%%%%
%%%%%%%%%%%%%%%%%%%%%%%%%%%%%%%%%%%%%%%%%%%%%%
%%%%%%%%%%%%%%%%%%%%%%%%%%%%%%%%%%%%%%%%%%%%%%

\subsection{\texorpdfstring{The integrable snake model on $\mathbb{Z} \times [n]$.}{The integrable snake model on Z x [n]}}

Let us take stock of what we have achieved in the last section. In the probabilistic regime $\alpha^2 - 4\gamma \delta \geq 0$ we showed that we can define a probability measure on pure snake configurations of the form
\begin{align*}
P_{\theta_2,\mathbf{m}}(\sigma) = J(\sigma)w(\sigma)\mathrm{1}\{ L(\sigma) = \theta_2 + m_2 + 1 \} / Z_{\theta_2,\mathbf{m}},
\end{align*}
and this probability measure has explicit determinantal correlations given by \eqref{eq:nobarcorrstructure} and \eqref{eq:anotherdform}. In this section we study the asymptotics of this measure as $m_1 \to \infty$ with all other parameters staying fixed.

Recall our notational convention that, for $n \in \mathbb{N}$, $[n]$ denotes the set $\{0,\dots,n-1\}$. Then, note that any pure snake configuration on the discrete torus \[\mathbb{T}_{m_1,m_2} = [m_1] \times [m_2] \] may be extended to a pure snake configuration on the semi-infinite discrete cylinder \[\mathbb{T}_{m_2} := \mathbb{Z} \times [m_2]\] simply by repeating periodically in $m_1$ (or even to $\mathbb{Z}^2$ by repeating periodically in $m_2$ as well). Thus we can equivalently treat $P_{\theta_2,\mathbf{m}}$ as a probability measure on pure snake configurations on $\mathbb{T}_{m_2}$. The main result of this section states that this probability measure converges in distribution as $m_1 \to \infty$. It transpires that the limiting behaviour of $P_{\theta_2,\mathbf{m}}$ undergoes several phase transitions which are related to the magnitude of the complex numbers $\{ \alpha + \gamma w + \delta w^{-1} : w^{m_2} = (-1)^{\theta_2} \}$ in relation to $\beta$.

Since multiplying all of the parameters by some scalar $\lambda > 0$ leaves the law of $P_{\theta_2,\mathbf{m}}$ unchanged, we may assume without loss of generality that $\alpha = 1$ (note the case $\alpha = 0$ is not possible in the probabilistic regime). So in summary, the probabilistic regime is given by
\begin{align*}
\alpha = 1 \quad \text{and} \quad \beta, \gamma, \delta \geq 0 \quad \text{ with } \quad 4\gamma \delta \leq 1.
\end{align*}
Also, to lighten the notation, for the remainder of this section we will write $m$ for $m_1$ and $n$ for $m_2$.
% \begin{align*}
% m_1 \eqqcolon m, \qquad m_2 =: n \qquad \text{and} \qquad \theta_2 =: \eta, \qquad \theta = (\theta_1,{\theta_2}).
% \end{align*}
Let us define the sets $\mathcal{L}_\eta = \{w\colon w^n = (-1)^{\eta},|1 + \gamma w + \delta w^{-1}| < \beta\}$ and $\mathcal{R}_{\eta} = \{w\colon w^n = (-1)^{\eta},|1 + \gamma w + \delta w^{-1}| \geqslant \beta\}$ for $\eta = 0,1$, c.f. \eqref{eq:rootset1} and \eqref{eq:rootset2}.

We say that $\beta,\gamma,\delta$ are \textbf{generic} with respect to $\eta = 0,1$ if $|1+\gamma w + \delta w^{-1}| \neq \beta$ for every $w$ with $w^{n} = (-1)^\eta$. It transpires that for generic $\beta,\gamma,\delta$, the parameter $\beta$ only affects the asymptotic behaviour of the probability measure $P_{\theta,(m,n)}$ through the sets $\mathcal{L}_{\theta_2}$ and $\mathcal{R}_{\theta_2}$. In particular, the size of $\mathcal{L}_{\theta_2}$ for a certain value of ${\theta_2}$ dictates the occupation number (recall the definition from \eqref{eq:occupationnumberdef}) in the asymptotic limit. This gives rise to some interesting geometrical conditions that relate to the problem at hand, which we will begin by outlining in advance of our analytic results.

Firstly, the "triangle inequality" conditions $\beta < 1 - \gamma - \delta$ (if possible) and $\beta > 1 + \gamma + \delta$ correspond to $\mathcal{L}_\eta$ (for both $\eta = 0,1$) having sizes $0$ and $n$ respectively and so, will also correspond to our snake configurations having occupation numbers $0$ and $n$ respectively. The asymptotic limit turns out to behave differently in these regimes.

Secondly, we note a geometric lemma that will later justify the appearance of $\mathcal{L}_{\theta_2}$ and $\mathcal{R}_{\theta_2}$. Consider the set $E_{\gamma,\delta} := \{ 1 + \gamma w + \delta w^{-1} : w \in S^1 \}$, which is parametrisable as $1 + (\gamma+\delta)\cos t + i (\gamma-\delta)\sin t$ as $t$ varies over $[0,2\pi)$. Writing this ellipse in terms of its foci, we may alternatively write
\begin{align} \label{eq:Eform}
E_{\gamma,\delta} = \{ z \in \mathbb{C} : |z - (1-\sqrt{4\gamma \delta})| + | z - (1+\sqrt{4\gamma \delta}) | = 2(\gamma+\delta) \} .
\end{align}
We use this representation to prove the following.

\begin{lemma} \label{lem:arc}
Suppose we are in the probabilistic regime where $4 \gamma \delta \leq 1$ and suppose the circle $C_\beta := \{ |z| = \beta \}$ and the ellipse $E_{\gamma,\delta}$ intersect. Then the set $A_\beta := \{ w \in S^1 : |1+\gamma w + \delta w^{-1}| > \beta \}$ is an arc of the form $A_\beta := \{ w = e^{i t} : t \in (-t_\beta,t_\beta) \}$. 
\end{lemma}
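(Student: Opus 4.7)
The plan is to reduce the problem to a monotonicity statement about a real quadratic function. I would parametrize the unit circle as $w = e^{it}$ for $t \in (-\pi, \pi]$; expanding directly yields
\begin{align*}
|1 + \gamma e^{it} + \delta e^{-it}|^2 = 1 + \gamma^2 + \delta^2 + 2(\gamma + \delta)\cos t + 2\gamma\delta \cos(2t).
\end{align*}
Using $\cos(2t) = 2\cos^2 t - 1$ and setting $u = \cos t \in [-1,1]$, this becomes the quadratic $f(u) := 4\gamma\delta\, u^2 + 2(\gamma+\delta) u + 1 + (\gamma-\delta)^2$, so the problem is now entirely about the behaviour of $f$ on the interval $[-1,1]$.

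Next, I would show that $f$ is monotonically non-decreasing on $[-1,1]$. Since the leading coefficient $4\gamma\delta$ is non-negative, $f$ is convex, so it is sufficient to verify that its vertex $u_0 = -(\gamma+\delta)/(4\gamma\delta)$ lies at or to the left of $-1$ (the degenerate case $\gamma\delta = 0$ reduces $f$ to a linear function with non-negative slope, which is handled trivially). Applying AM-GM, $\gamma + \delta \geq 2\sqrt{\gamma\delta}$, so $u_0 \leq -1/(2\sqrt{\gamma\delta})$, and since we are in the probabilistic regime $4\gamma\delta \leq 1$, this gives $u_0 \leq -1$ as required.

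Finally, since $u = \cos t$ is itself a strictly decreasing function of $|t|$ on $[0,\pi]$, the monotonicity of $f$ in $u$ transfers to show that $t \mapsto |1 + \gamma e^{it} + \delta e^{-it}|$ is non-increasing in $|t|$ and even in $t$. The hypothesis that $C_\beta$ and $E_{\gamma,\delta}$ intersect forces $\min_{[-1,1]} f \leq \beta^2 \leq \max_{[-1,1]} f$, so by continuity together with the monotonicity there is a $t_\beta \in [0,\pi]$ at which $f(\cos t_\beta) = \beta^2$; the arc description $A_\beta = \{e^{it} : |t| < t_\beta\}$ then follows at once from the strict monotonicity of $|t| \mapsto f(\cos t)$ (which holds away from the degenerate vertex case).

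The main obstacle is the second step: establishing that the vertex of $f$ lies at or to the left of $u = -1$. Both hypotheses $4\gamma\delta \leq 1$ and $\gamma, \delta \geq 0$ are genuinely needed here---without the bound on $4\gamma\delta$, the vertex can sit inside $(-1,1)$, $f$ fails to be monotone on $[-1,1]$, and the level set $A_\beta$ can split into two disjoint arcs instead of one (corresponding to the ellipse wrapping around the circle in such a way that $C_\beta$ cuts it in four points rather than two).
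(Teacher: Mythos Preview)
Your proof is correct but follows a genuinely different route from the paper's. You work directly with the function $t \mapsto |1+\gamma e^{it}+\delta e^{-it}|^2$, rewrite it as a quadratic $f(u)$ in $u=\cos t$, and then use AM--GM together with the hypothesis $4\gamma\delta \le 1$ to show the vertex of $f$ lies at or left of $u=-1$, giving monotonicity of $f$ on $[-1,1]$ and hence the single-arc conclusion. The paper instead argues geometrically: it uses the focal representation of the ellipse $E_{\gamma,\delta}$ and the triangle inequality applied at an intersection point $z\in C_\beta\cap E_{\gamma,\delta}$ to deduce the inequality $1-\gamma-\delta \ge -\beta$, i.e.\ that the leftmost point of the ellipse is not left of the leftmost point of the circle, and then asserts that this geometric configuration forces the arc structure.

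Your approach is more self-contained for the lemma as stated, and it immediately yields the monotonicity that underlies Remark~\ref{rem:arc} (that $t_\beta$ is continuously decreasing in $\beta$). The paper's approach, on the other hand, produces as a byproduct the explicit inequality \eqref{eq:leftmost}, namely $1-\gamma-\delta \ge -\beta$, which is invoked again in the proof of Lemma~\ref{lem:sgnCdetK}; if you adopt your argument in place of the paper's, you would want to note separately that your monotonicity of $f$ on $[-1,1]$ gives $f(-1)=(1-\gamma-\delta)^2 \le \beta^2$ at the lower endpoint, from which \eqref{eq:leftmost} also follows.
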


Notably, $E_{\gamma,\delta}$ cannot intersect $C_\beta$ four times.

\begin{proof}

\begin{figure}[h!t]
\begin{centering}
\begin{tikzpicture}[scale=2]
    % Define parameters for the ellipse
    \def\a{0.8} % Coefficient of z
    \def\b{0.3} % Coefficient of conjugate of z

    % Draw the ellipse
    \draw[thick,blue,samples=200,domain=0:360,smooth,variable=\t]
        plot ({1 + \a*cos(\t) + \b*cos(\t)}, {\a*sin(\t) - \b*sin(\t)});

    % Draw the circle of radius 1.5 centered at the origin
    \draw[thick,red,dashed] (0,0) circle (0.7);

    % Add axes for reference
    \draw[->] (-1,0) -- (2.5,0) node[right] {$\text{Re}(z)$};
    \draw[->] (0,-1) -- (0,1) node[above] {$\text{Im}(z)$};

    % Plot points for z^10 = -1 along the ellipse
    \foreach \k in {0,1,2,3,4,5,6,7,8,9} {
        \coordinate (P\k) at ({1 + \a*cos(198 + 36*\k) + \b*cos(198 + 36*\k)}, {\a*sin(198 + 36*\k) - \b*sin(198 + 36*\k)});
        \fill[black] (P\k) circle (1pt) node[above right] {};
    }

    % Label the ellipse
  %  \node[below right, blue] at (2,1.5) {$\{ 1 + 0.8z + 0.3\bar{z} \}$};
    % Label the circle
  %  \node[below left, red] at (-1.5,0) {Circle of radius 1.5};
\end{tikzpicture}    
\end{centering}
\caption{ \label{fig:ellcirc} The blue curve denotes the ellipse $E_{\gamma,\delta}$ and the dotted red curve is the circle $C_\beta$. The points on the blue curve of the form $1 + \gamma w + \delta w^{-1}$ where $w^{n} = (-1)^{\eta}$ are denoted in black. The points lying inside the red curve are $L_\eta$ and outside are $R_\eta$.}
\end{figure}
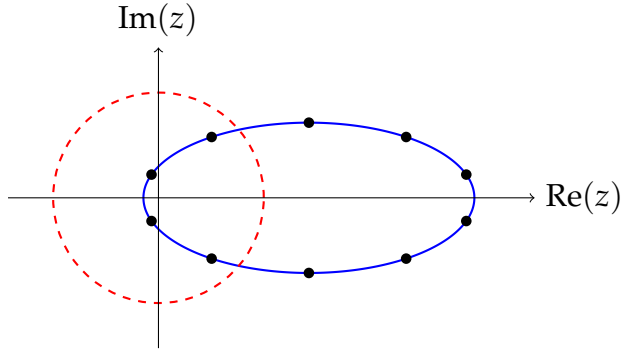

The statement of the lemma is tantamount to establishing that, either $C_\beta$ and $E_{\gamma,\delta}$ do not intersect, or the leftmost point of $E_{\gamma,\delta}$ does not lie to the left of the leftmost point of $C_\beta$; see Figure \ref{fig:ellcirc}. Equivalently, we need to show that if $C_\beta$ and $E_{\gamma,\delta}$ intersect, then
\begin{align} \label{eq:leftmost}
1 - \gamma - \delta \geq - \beta. 
\end{align}

To show that this is the case, suppose there exists some point $z \in C_\beta \cap E_{\gamma,\delta}$. Then using \eqref{eq:Eform} to obtain the first equality below, the triangle inequality to obtain the following inequality and then the fact that $|z| = \beta $ and $4 \gamma \delta \leq 1$ to obtain the final equality, we have  
\begin{align*}
2(\gamma+\delta ) = |z - (1-\sqrt{4\gamma \delta})| + | z - (1+\sqrt{4\gamma \delta}) |\leq 2|z| + | 1-\sqrt{4\gamma \delta}| + | 1+\sqrt{4\gamma \delta} | = 2\beta + 2.
\end{align*}
Rearranging, we obtain \eqref{eq:leftmost}, which completes the proof.
\end{proof}

Additionally, we have the following remark.

\begin{rem}\label{rem:arc}
    The angle $t_\beta$ defining the arc $A_\beta$ is continuously decreasing in $\beta$. 
\end{rem}

Now, a brief lemma that will prove useful for the upcoming analysis.

\begin{lemma}\label{lem:sgnCdetK}
  Suppose we are in the probabilistic regime where $4\gamma \delta \leq 1$. Fix $\theta_2$ and further suppose that $\beta > 0$ is generic with respect to $\theta_2$. Then
  \begin{equation*}
    \sgn\left(C_{\theta,(m,n)} \det(K_{\theta,(m,n)})\right) = \begin{cases}
      2C_{\theta,(m,n)} & \text{ if } 0 \leq \beta < 1- \gamma - \delta\\
      1 & \text{ if } 1- \gamma - \delta < \beta < 1 + \gamma + \delta\\
      (-1)^{(m+\theta_1 + 1)(\theta_2 + 1)} & \text{ if } \beta > 1 + \gamma + \delta
    \end{cases}
  \end{equation*}
\end{lemma}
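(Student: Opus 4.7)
The plan is to evaluate the product \eqref{detKevalsequation0} in closed form, and then read off its sign by separating the $n$th roots of $(-1)^{\theta_2}$ into real roots and complex conjugate pairs. Throughout I will normalise $\alpha = 1$, which is legitimate because the sign question is invariant under uniform rescaling of the four parameters.

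First I would apply the elementary identity $\prod_{z^m = c}(A + \beta z) = A^m - (-\beta)^m c$ to the inner product in \eqref{detKevalsequation0}. With the abbreviation $g(w) := 1 + \gamma w + \delta w^{-1}$, this gives
\[
  \det(K_{\theta,(m,n)}) = \prod_{w^n = (-1)^{\theta_2}} \bigl[\, g(w)^m - (-1)^{m + \theta_1}\beta^m \,\bigr].
\]
Since $g(\bar w) = \overline{g(w)}$ and the constant $(-1)^{m+\theta_1}\beta^m$ is real, every pair of non-real complex-conjugate roots contributes a factor $|g(w)^m - (-1)^{m+\theta_1}\beta^m|^2 \geq 0$, which by the genericity hypothesis is strictly positive (equality would in particular force $|g(w)| = \beta$). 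Hence the sign of $\det(K_{\theta,(m,n)})$ is determined entirely by the factors at the real roots $w \in \{\pm 1\}$, where $w = 1$ is a root iff $\theta_2 = 0$ and $w = -1$ is a root iff $\theta_2 + n$ is even.

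I would then split into the three regimes. For $0 \leq \beta < 1 - \gamma - \delta$ (which forces $\gamma + \delta < 1$), both $g(\pm 1) = 1 \pm (\gamma+\delta)$ are positive and strictly exceed $\beta$, so every real factor is strictly positive; hence $\det(K_{\theta,(m,n)}) > 0$ and the sign of $C_{\theta,(m,n)}\det(K_{\theta,(m,n)})$ is $\sgn(C_{\theta,(m,n)}) = 2C_{\theta,(m,n)}$. For $1-\gamma-\delta < \beta < 1+\gamma+\delta$, the factor at $w = 1$ is still positive since $g(1) = 1+\gamma+\delta > \beta$; when $w = -1$ is a root, Lemma~\ref{lem:arc} together with genericity yields $|g(-1)| < \beta$, so $g(-1)^m \in (-\beta^m, \beta^m)$ and the corresponding factor has sign $-(-1)^{m+\theta_1} = (-1)^{m+\theta_1+1}$. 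Packaging this with the indicator that $w = -1$ is a root gives $\sgn(\det K_{\theta,(m,n)}) = (-1)^{(m+\theta_1+1)(n+\theta_2+1)}$, which cancels $\sgn(C_{\theta,(m,n)})$ and returns $+1$. For $\beta > 1+\gamma+\delta$, every real root satisfies $|g(\pm 1)| \leq 1+\gamma+\delta < \beta$ and each contributes the sign $(-1)^{m+\theta_1+1}$; since the parity of the number of real roots equals the parity of $n$, we get $\sgn(\det K_{\theta,(m,n)}) = (-1)^{n(m+\theta_1+1)}$, and combining with $\sgn(C_{\theta,(m,n)}) = (-1)^{(m+\theta_1+1)(n+\theta_2+1)}$ and reducing the exponent modulo $2$ yields the stated $(-1)^{(m+\theta_1+1)(\theta_2+1)}$.

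The main obstacle is the middle range, where one must rule out the possibility $g(-1) < -\beta$; otherwise the factor at $w = -1$ would carry the sign $(-1)^m$ rather than $(-1)^{m+\theta_1+1}$. This exclusion is precisely the geometric content of Lemma~\ref{lem:arc}, which in the probabilistic regime $4\gamma\delta \leq 1$ ensures $1 - \gamma - \delta \geq -\beta$, and hence $|g(-1)| \leq \beta$, whenever the circle $C_\beta$ and the ellipse $E_{\gamma,\delta}$ intersect.
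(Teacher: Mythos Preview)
Your proof is correct and follows essentially the same approach as the paper: both reduce the sign of the product \eqref{detKevalsequation0} to its real factors by pairing complex-conjugate contributions, then run a case analysis on the three $\beta$-ranges and invoke Lemma~\ref{lem:arc} to handle the middle case. The only difference is organizational---you first collapse the inner $z$-product via $\prod_{z^m=c}(A+\beta z)=A^m-(-\beta)^m c$ and then pair conjugate $w$'s, whereas the paper keeps the double product and pairs conjugate $(z,w)$ simultaneously, arriving at the four linear factors $(1\pm\beta\pm(\gamma+\delta))$---but the substance is identical.
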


\begin{proof}
    Recall from \eqref{eq:detKniceproduct} the formula 
    \begin{align*}
      \det(K_{\theta,(m,n)}) &= \prod_{z^m = (-1)^{\theta_1}} \prod_{w^n = (-1)^{\theta_2}} \left(1 + \beta z + \gamma w + \delta w^{-1}\right).
    \end{align*} 
    Terms in this product with conjugate values of $z$ and $w$ can be paired up to product to a positive real value, so the sign is dictated by the signs of the remaining terms, i.e. whenever $z$ and $w$ are $\pm 1$. That is to say, $\det(K_{\theta,(m,n)})$ has the same sign as 
    \begin{equation}\label{eq:produtdeterminessign}
    (1+\beta + \gamma + \delta)^{\mathrm{1}\{\theta_1, \theta_2\}}(1 + \beta - \gamma -\delta)^{\mathrm{1}\{\theta_1, n + \theta_2\}}(1 - \beta + \gamma + \delta)^{\mathrm{1}\{m + \theta_1,\theta_2\}}(1 - \beta - \gamma - \delta)^{\mathrm{1}\{m + \theta_1, n + \theta_2\}}
    \end{equation}
    where the indicator $\mathrm{1}\{x,y\}$ is shorthand for $\mathrm{1}\{x \equiv y \equiv 0 \pmod{2}\}$. Note the identity that \[(-1)^{\mathrm{1}\{x,y\}} = (-1)^{(x+1)(y+1)}.\]
    Now let us work case by case. For $\beta < 1- \gamma - \delta$, all the terms in brackets in \eqref{eq:produtdeterminessign} are positive, so $\det(K_{\theta,(m,n)})$ is positive and we have the required sign for $C_{\theta,(m,n)} \det(K_{\theta,(m,n)})$.
    
    For $1- \gamma - \delta < \beta < 1 + \gamma + \delta$, one can determine immediately that $1 + \beta + \gamma + \delta$ and $1 - \beta + \gamma + \delta$ are positive whilst $1 - \beta - \gamma - \delta$ is negative. Furthermore, since $1 - \gamma - \delta < \beta < 1 + \gamma + \delta$, we have that $C_\beta$ and $E_{\gamma,\delta}$ intersect, so \eqref{eq:leftmost} holds and $1 + \beta - \gamma - \delta$ is also positive. Hence, $\det(K_{\theta,(m,n)})$ has sign $(-1)^{\mathrm{1}\{m + \theta_1, n + \theta_2\}} = 2C_{\theta,(m,n)}$ and $C_{\theta,(m,n)} \det(K_{\theta,(m,n)})$ is always positive.
    
    Finally, for $\beta > 1 + \gamma + \delta$, the only negative terms in brackets in \eqref{eq:produtdeterminessign} are $1 - \beta + \gamma + \delta$ and $1 - \beta - \gamma - \delta$. So the sign of $\det(K_{\theta,(m,n)})$ is \begin{align*}
        (-1)^{\mathrm{1}\{m + \theta_1, n +\theta_2\}} (-1)^{\mathrm{1}\{m + \theta_1,\theta_2\}} &= (-1)^{(m+\theta_1 + 1)(n+\theta_2 + 1) + (m+\theta_1 + 1)(\theta_2 + 1)}\\
        &= (-1)^{(m + \theta_1 + 1)n}
    \end{align*}
    which also gives the required sign for $C_{\theta,(m,n)} \det(K_{\theta,(m,n)})$.
\end{proof}

Now we are ready to prove our main results.

\begin{thm} \label{thm:m1infinity2}
Fix $\theta_2$ and suppose that $\alpha=1$, the parameters $\beta,\gamma,\delta$ are generic with respect to $\theta_2$, $1 - \gamma - \delta < \beta < 1 + \gamma + \delta$ and assume we are in the probabilistic regime $4\gamma \delta \leq 1$. Let $x^1,\ldots,x^k$ in $\mathbb{Z} \times [n]$ and any $\mathbf{f}^1,\ldots,\mathbf{f}^k \in \{0,\mathbf{e}^1,\mathbf{e}^2,-\mathbf{e}^2\}$. Suppose $\prod_{i=1}^k T_{x^i}^{\mathbf{f}^i} = \sum_{\kappa \in \mathcal{K}} a_\kappa \prod_{i=1}^{r_\kappa} R_{x^{\kappa,i}}^{\mathbf{f}^{\kappa,i}}$. 
Then we have the distributional convergence
\begin{align*}
P_{{\theta_2},n} \left( \bigcap_{i=1}^k \{ \sigma x^i = x^i + \mathbf{f}^i \} \right) := \lim_{m \to \infty} P_{{\theta_2},(m,n)} \left( \bigcap_{i=1}^k \{ \sigma x^i = x^i + \mathbf{f}^i \} \right) = \sum_{\kappa \in K} a_\kappa \other{P}_{{\theta_2},n} \left[\prod_{i=1}^{r_\kappa} R_{x^{\kappa,i}}^{\mathbf{f}^{\kappa,i}}  \right],
\end{align*}
where
  \begin{align} \label{eq:dform2}
\other{P}_{{\theta_2},n} \left[\prod_{i=1}^r R_{x^i}^{\mathbf{f}^i} \right] = \det_{i,j=1}^r \left[ (-1)^{\mathrm{1}\{ \mathbf{f}^i = \mathbf{e}^1 \}} \gamma^{\mathrm{1}\{ \mathbf{f}^i = \mathbf{e}^2 \}} \delta^{\mathrm{1}\{ \mathbf{f}^i = -\mathbf{e}^2 \}} 
H_{{\theta_2},n}( x^i+\mathbf{f}^i, x^j ) \right],
\end{align}
and
\begin{align} \label{eq:dform3}
    H_{{\theta_2},n}(x,y) = \indic{y_1 \geqslant x_1}\frac{1}{n} \sum_{w \in \mathcal{R}_{\theta_2}} \frac{w^{-(y_2 - x_2)}}{(1 + \gamma w + \delta w^{-1})^{y_1 - x_1 + 1}} - \indic{y_1 < x_1}\frac{1}{n} \sum_{w \in \mathcal{L}_{\theta_2}} \frac{w^{-(y_2 - x_2)}}{(1 + \gamma w + \delta w^{-1})^{y_1 - x_1 + 1}}.
\end{align}

\end{thm}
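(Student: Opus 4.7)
The plan is to take the $m \to \infty$ limit termwise in the finite-torus formula supplied by Theorem \ref{thm:cffull1} combined with Corollary \ref{cor:nform}, which gives
\begin{align*}
P_{\theta_2,(m,n)}\!\left(\bigcap_i \{\sigma x^i = x^i + \mathbf{f}^i\}\right) = \sum_{\kappa \in \mathcal{K}} a_\kappa \sum_{\theta_1 \in \{0,1\}} \mu_{\theta,(m,n)} \prod_i K_{00,(m,n)}(x^{\kappa,i}, x^{\kappa,i}+\mathbf{f}^{\kappa,i}) \det_{i,j}\!\bigl[H_{\theta,(m,n)}(x^{\kappa,i}+\mathbf{f}^{\kappa,i}, x^{\kappa,j})\bigr].
\end{align*}
Since the $\kappa$- and $\theta_1$-sums are finite, everything reduces to computing entrywise limits of $H_{\theta,(m,n)}$. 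For the weights, Lemma \ref{lem:sgnCdetK} in the regime $1-\gamma-\delta < \beta < 1+\gamma+\delta$ guarantees $\mu_{\theta,(m,n)} > 0$, and by \eqref{eq:prefab} one has $\sum_{\theta_1}\mu_{\theta,(m,n)} = 1$. I will show that the determinantal part has a limit independent of $\theta_1$, so this sum collapses to that common limit without any need to compute the individual weight asymptotics.

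Next I would attack the kernel $H_{\theta,(m,n)}(u,v)$. Setting $a = a(w) := 1 + \gamma w + \delta w^{-1}$ and splitting the sum in \eqref{eq:nform}, the inner $z$-average is a Riemann sum over the $m^{\mathrm{th}}$ roots of $(-1)^{\theta_1}$, which equidistribute on the unit circle, so it converges (independently of $\theta_1$) to the contour integral
\begin{align*}
I(k,w) := \oint_{|z|=1} \frac{z^{-k-1}}{a+\beta z}\,\frac{dz}{2\pi \iota}, \qquad k := v_1 - u_1.
\end{align*}
The integrand has a pole at $z=0$ of order $k+1$ (present only when $k \geq 0$) and a simple pole at $z = -a/\beta$, the latter lying inside $|z|=1$ precisely when $|a| < \beta$, i.e.\ $w \in \mathcal{L}_{\theta_2}$; genericity is what ensures these poles stay off the unit circle. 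A direct residue computation (or equivalently a Laurent expansion in the appropriate annulus) yields
\begin{align*}
I(k,w) = \begin{cases} (-\beta)^k/a^{k+1}, & k \geq 0 \text{ and } w \in \mathcal{R}_{\theta_2},\\ -(-\beta)^k/a^{k+1}, & k < 0 \text{ and } w \in \mathcal{L}_{\theta_2},\\ 0, & \text{otherwise}. \end{cases}
\end{align*}
Comparing with \eqref{eq:dform3}, in both nonzero cases this shows $H_{\theta,(m,n)}(u,v) \to (-\beta)^{v_1-u_1}\, H_{\theta_2,n}(u,v)$ as $m \to \infty$.

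The final step is a gauge cancellation. Writing $(-\beta)^{v_1-u_1} = (-\beta)^{v_1}/(-\beta)^{u_1}$, the prefactor reshuffles into row and column scalings of the matrix $\bigl[H_{\theta,(m,n)}(x^i+\mathbf{f}^i, x^j)\bigr]_{i,j}$. Because the two index sets coincide up to the row shifts $\mathbf{f}^i$, the column factors $\prod_j (-\beta)^{x^j_1}$ cancel the row factors $\prod_i (-\beta)^{x^i_1}$ when we take the determinant, leaving behind only $\prod_i (-\beta)^{-f^i_1}$. Combined with the easily computed limit $K_{00,(m,n)}(x^i, x^i+\mathbf{f}^i) \in \{1,\beta,\gamma,\delta\}$, the product $K_{00,(m,n)}(x^i, x^i+\mathbf{f}^i) \cdot (-\beta)^{-f^i_1}$ works out to $1, -1, \gamma, \delta$ according as $\mathbf{f}^i \in \{0,\mathbf{e}^1,\mathbf{e}^2,-\mathbf{e}^2\}$, matching exactly the factor appearing in row $i$ of the determinant in \eqref{eq:dform2}. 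Summing over $\theta_1$ (weights totalling $1$) and $\kappa$ then produces the stated formula.

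The main obstacle I anticipate is the bookkeeping in this last gauge step: at first glance the factor $\beta$ coming from $K_{00}$ for a right-move seems inconsistent with the $-1$ sign appearing in the target kernel, and the resolution is precisely that the $(-\beta)^{v_1-u_1}$ prefactor in the kernel limit must be absorbed as row/column rescalings in the determinant. A secondary technical issue is to justify, via genericity, that the Riemann-sum limit holds entrywise without boundary issues near the critical circle $|a| = \beta$.
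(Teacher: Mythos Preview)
Your proposal is correct and follows essentially the same approach as the paper's proof: take the Riemann-sum limit of $H_{\theta,(m,n)}$ (using genericity to avoid poles on the contour), evaluate the resulting contour integral by residues to obtain $(-\beta)^{y_1-x_1}H_{\theta_2,n}(x,y)$, absorb the $(-\beta)^{y_1-x_1}$ factor as a diagonal conjugation in the determinant, and then use Lemma~\ref{lem:sgnCdetK} together with $\sum_{\theta_1}\mu_{\theta,(m,n)}=1$ to collapse the $\theta_1$-sum. The bookkeeping concern you flag about the $\beta$ versus $-1$ discrepancy is exactly the point the paper resolves via the same gauge/conjugation observation.
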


\begin{proof}
With Theorem \ref{thm:cffull0} in mind, we begin by studying the asymptotics of the correlation kernel $H_{\theta,(m,n)}(x,y)$ as $m \to \infty$. Recalling that $\alpha = 1$, we have
\begin{align} \label{eq:tilda}
\tilde{H}_{{\theta_2},n}(x,y) &\coloneqq \lim_{m \to \infty} H_{\theta,(m,n)}(x,y) \nonumber \\
&= \frac{1}{n} \sum_{w^{n}=(-1)^{{\theta_2}}}w^{ - (y_2-x_2) } \cdot \lim_{m \to \infty}  \frac{1}{m}\sum_{z^{m}=(-1)^{\theta_1}} \frac{ z^{-(y_1-x_1)}  }{ 1 +\beta z + \gamma w + \delta w^{-1} } \nonumber \\
&= \frac{1}{n} \sum_{w^{n}=(-1)^{{\theta_2}}}w^{ - (y_2-x_2) } \int_{|z|=1} \frac{1}{2 \pi \iota z} \frac{ z^{-(y_1-x_1)}  }{ 1 +\beta z + \gamma w + \delta w^{-1} }.
\end{align}
where we have used that the parameters are generic for $\frac{ z^{-(y_1-x_1)}  }{ 1 +\beta z + \gamma w + \delta w^{-1} }$ to be continuous, hence Riemann integrable to justify the Riemann sum convergence. We highlight in particular that the limit in \eqref{eq:tilda} does not depend on whether $\theta_1 = 0$ or $\theta_1 = 1$. 

Using the residue theorem, we can evaluate the limiting integral exactly. The integrand has a simple pole at $z = -\frac{1 + \gamma w + \delta w^{-1}}{\beta}$ and (assuming $y_1 - x_1 \geqslant 0$) a pole of order $y_1 - x_1 + 1$ at $z = 0$. The residue at $z = -\frac{1 + \gamma w + \delta w^{-1}}{\beta}$ is $\frac{(-1)^{y_1 - x_1 +1}}{2\pi \iota} \frac{\beta^{y_1 - x_1}}{(1 + \gamma w + \delta w^{-1})^{y_1 - x_1+1}}$ and the residue at $z = 0$ is $\frac{(-1)^{y_1 - x_1}}{2\pi \iota} \frac{\beta^{y_1 - x_1}}{(1 + \gamma w + \delta w^{-1})^{y_1 - x_1 + 1}}$. So the integral evaluates as
\begin{align} \label{eq:contour}
    \oint_{C} \frac{dz}{2\pi \iota z} \frac{z^{-(y_1 - x_1)}}{1 + \beta z + \gamma w + \delta w^{-1}} &= \frac{(-\beta)^{y_1 - x_1}}{(1 + \gamma w + \delta w^{-1})^{y_1 - x_1 + 1}} \left( \indic{y_1 - x_1 \geqslant 0} - \indic{|1 + \gamma w + \delta w^{-1}| < \beta} \right).
\end{align}

Using \eqref{eq:contour} in \eqref{eq:tilda} we obtain $\tilde{H}_{{\theta_2},n}(x,y) = (-\beta)^{y_1-x_1}H_{{\theta_2},n}(x,y)$ where
\begin{align} \label{eq:dform4}
   H_{{\theta_2},n}(x,y) = \indic{y_1 \geqslant x_1}\frac{1}{n} \sum_{w \in \mathcal{R}_{\theta_2}} \frac{w^{-(y_2 - x_2)}}{(1 + \gamma w + \delta w^{-1})^{y_1 - x_1 + 1}} - \indic{y_1 < x_1}\frac{1}{n} \sum_{w \in \mathcal{L}_{\theta_2}} \frac{w^{-(y_2 - x_2)}}{(1 + \gamma w + \delta w^{-1})^{y_1 - x_1 + 1}}.
\end{align}
We are now ready to study the large $m$ asymptotics of $ \other{P}_{{\theta_2},(m,n)} \left[\prod_{i=1}^r R_{x^i}^{\mathbf{f}^i} \right]$. Using the definition \eqref{eq:dform} to obtain the second equality below we have 
\begin{align} \label{eq:dformb}
\other{P}_{{\theta_2},n}  \left[\prod_{i=1}^r R_{x^i}^{\mathbf{f}^i} \right] &\coloneqq \lim_{m \to \infty} \other{P}_{{\theta_2},(m,n)} \left[\prod_{i=1}^r R_{x^i}^{\mathbf{f}^i} \right] \nonumber \\
&= \lim_{m \to \infty} \prod_{i=1}^r K_{00,(m,n)}(x^i,x^i+\mathbf{f}^i) \det_{i,j=1}^r \left[ H_{\theta,(m,n)}( x^i+\mathbf{f}^i, x^j ) \right] \nonumber \\
&=  \prod_{i=1}^r K_{00,(m,n)}(x^i,x^i+\mathbf{f}^i) \lim_{m \to \infty} \left\{ \det_{i,j=1}^r \left[ H_{\theta,(m,n)}( x^i+\mathbf{f}^i, x^j ) \right]\right\} \nonumber \\
&= \prod_{i=1}^r K_{00,(m,n)}(x^i,x^i+\mathbf{f}^i) \det_{i,j=1}^r \left[ \tilde{H}_{{\theta_2},m_2}( x^i+\mathbf{f}^i, x^j ) \right].
\end{align}
Using $\tilde{H}_{{\theta_2},n}(x,y) = (-\beta)^{y_1-x_1}H_{{\theta_2},n}(x,y)$ and the fact that $\tilde{H}$ is a diagonal conjugation of $H_{{\theta_2},n}$, it follows that 
\begin{align} \label{eq:dformb2}
\other{P}_{{\theta_2},n}  \left[\prod_{i=1}^r R_{x^i}^{\mathbf{f}^i} \right] = \prod_{i=1}^r K_{00}(x^i,x^i+\mathbf{f}^i) \prod_{i=1}^r (-\beta)^{ -\mathbf{f}^i_1} 
\det_{i,j=1}^r \left[ H_{{\theta_2},n}( x^i+\mathbf{f}^i, x^j ) \right].
\end{align}
Since $\alpha=1$, by using the definition of $K_{00}(x^i,x^i+\mathbf{f}^i)$ we can write the final result as
\begin{align} \label{eq:dformbc}
\other{P}_{{\theta_2},n}  \left[\prod_{i=1}^r R_{x^i}^{\mathbf{f}^i} \right] = 
\det_{i,j=1}^r \left[ (-1)^{\mathrm{1}\{ \mathbf{f}^i = \mathbf{e}^1 \}} \gamma^{\mathrm{1}\{ \mathbf{f}^i = \mathbf{e}^2 \}} \delta^{\mathrm{1}\{ \mathbf{f}^i = -\mathbf{e}^2 \}} 
H_{{\theta_2},n}( x^i+\mathbf{f}^i, x^j ) \right].
\end{align}
Concluding the proof using \eqref{eq:nobarcorrstructure} we have
\begin{align} \label{eq:corrstructurelimit}
P_{{\theta_2},n}\left( \bigcap_{i=1}^k \{ \sigma x^i = x^i + \mathbf{f}^i \} \right) &:= \lim_{m \to \infty} P_{{\theta_2},(m,n)}\left( \bigcap_{i=1}^k \{ \sigma x^i = x^i + \mathbf{f}^i \} \right) \nonumber \\
&=  \sum_{\kappa \in K} a_\kappa \lim_{m \to \infty} \sum_{\theta_1 \in \{0,1\} } \mu_{\theta,(m,n)} \other{P}_{\theta,(m,n)} \left[\prod_{i=1}^{r_\kappa} R_{x^{\kappa,i}}^{\mathbf{f}^{\kappa,i}}  \right].
\end{align}
Now from Lemma~\ref{lem:sgnCdetK} and recalling the definition of $\mu_{\theta,(m,n)}$ in \eqref{eq:barsum}, it follows that $\mu_{\theta,(m,n)}$ is positive for all $\theta \in \{0,1\}^2$. Note also that the limit in \eqref{eq:dformb} does not depend on whether $\theta_1 = 0$ or $\theta_1 = 1$. Using these facts together with \eqref{eq:dformbc} and $\sum_{\theta_1 \in\{0,1\}} \mu_{\theta,(m,n)} = 1$ we obtain the result.

\end{proof}

Finally, we use Lemma \ref{lem:arc} to afford a more intrinsic perspective on the limiting measure occurring in Theorem \ref{thm:m1infinity2}, namely, the result in Theorem~\ref{thm:m1infinity}.

\begin{proof}[Proof of Theorem~\ref{thm:m1infinity}]
For this proof, we split into 3 cases.

Consider first $0 < \ell < n$. Take $\theta_2 \in \{0,1\}$ such that $\theta_2 \equiv n - \ell + 1 \mod{2}$. Choose a generic $\beta$ such that the set $\{ w: w^{n} = (-1)^{{\theta_2}} , |1 + \gamma w + \delta w^{-1}| < \beta\}$ has exactly $\ell$ elements. Note that by Lemma~\ref{lem:arc}, $\mathcal{L}$ is precisely this set and $\mathcal{R} = \{ w: w^{n} = (-1)^{{\theta_2}} , |1 + \gamma w + \delta w^{-1}| \geq \beta\}$. Now recalling Theorem~\ref{thm:m1infinity2}, set $P_{\ell,n}^{\gamma, \delta} = P_{\theta_2,n}$ and $\other{P}_{\ell,n}^{\gamma, \delta} = \other{P}_{\theta_2,n}$ and we are done.

Now consider $\ell = 0$. Again, take $\theta_2 \in \{0,1\}$ such that $\theta_2 \equiv n - \ell + 1 \equiv n+1 \mod{2}$. Then by Lemma~\ref{lem:sgnCdetK}, when $\beta < 1 - \gamma - \delta$, the expression $C_{\theta,(m,n)}\det(K_{\theta,(m,n)})$ is positive. Hence, so is $\mu_{\theta,(m,n)}$. So we can run the same argument as in Theorem~\ref{thm:m1infinity2} and conclude in the same way as the first paragraph of this proof, but by picking an arbitrary $\beta < 1 - \gamma - \delta$.

Similarly, for $\ell = n$, take $\theta_2 = n - \ell + 1 = 1$. By Lemma~\ref{lem:sgnCdetK}, when $\beta < 1 + \gamma + \delta$, the expression $C_{\theta,(m,n)}\det(K_{\theta,(m,n)})$ is positive. Hence, so is $\mu_{\theta,(m,n)}$. So again, we can run the same argument as in Theorem~\ref{thm:m1infinity2} and conclude in the same way as the first paragraph of this proof, but by picking an arbitrary $\beta > 1 + \gamma + \delta$.

%To prove that we have $L(\sigma) = \ell$ almost-surely under $P_{\ell,n}$, let $0 \leq h_1, \ldots, h_k$ be elements of $\{0,1,\ldots,m_2-1\}$, let $\mathbf{f}^1=\ldots=\mathbf{f}^k=\mathbf{e}^1$ and consider setting $x^1=(0,h^1),\ldots,x^k=(0,h^k)$. Recalling from \eqref{eq:ev3b} that $\prod_{i=1}^k T_{x^i}^{\mathbf{f}^i} = \prod_{i=1}^k R_{x^i}^{\mathbf{f}^i}$ by \eqref{eq:dform2b} with $x^i = (0,h^i)$ we have 
%\begin{align*}
%P_{\ell,n} \left( \bigcap_{i=1}^k \{ \sigma x^i = x^i + \mathbf{e}^1 \} \right) = %\det_{i,j=1}^k \left( - \frac{1}{n} \sum_{w \in \mathcal{R}_\ell} w^{-(h^i_2-h^j_2)} %\right).
%\end{align*}
%We now draw on an idea from Lemma 6.3 of \cite{johnston}. 
%\color{black}
\end{proof}

It is possible to show that the probability measure $P_{\ell,n}^{\gamma, \delta}$ is supported on pure snake configurations on $\mathbb{Z} \times \{0,1,\ldots,n-1\}$ whose occupation number $L(\sigma)$ is equal to $\ell$ almost surely. We omit the proof of this fact, but note that the argument is the same as the similar result in the sequel; see Lemma \ref{lem: cardinalityl}.

% We note an important fact about this process before continuing our analysis.

In fact, it is also possible to show that this probability measure describes a Markov process in a certain sense.

\begin{lemma}\label{lem:markov}
    Let $\sigma$ be a pure snake configuration generated from law $P_{\ell, n}^{\gamma, \delta}$ with $4\gamma \delta \leq 1$. For each $s \in \mathbb{Z}$, let
    \begin{align*}
        U_s = \{h \in [n]: \sigma(s,h) = (s+1,h)\}.
    \end{align*}
    Then {$(U_s)_{s \in \mathbb{Z}}$} is a Markov process with respect to $P_{\ell, n}^{\gamma, \delta}$.
\end{lemma}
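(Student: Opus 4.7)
The plan is to establish the Markov property by exhibiting a column-wise product decomposition of the weight $w(\sigma)\,J(\sigma)$ underlying $P_{\ell,n}^{\gamma,\delta}$, from which the standard factorisation criterion for Markov chains will yield the claim directly.

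First, I would observe that any pure snake configuration $\sigma$ on $\mathbb{Z}\times[n]$ is uniquely encoded by the sequence $(U_s)_{s\in\mathbb{Z}}$ of right-move heights together with, for each $s$, an internal bijection $\phi_s\colon [n]\setminus U_s \to [n]\setminus U_{s-1}$ with $\phi_s(h)\in\{h-1,h,h+1\}$ mod $n$, which records the up, down and fixed moves taking place within column $s$. The domain of $\phi_s$ excludes $U_s$ because those heights send their outgoing arrow rightward; the codomain excludes $U_{s-1}$ because those heights are already the targets of incoming right arrows from column $s-1$; and the purity of $\sigma$ excludes local configurations with $\phi_s(h)=h+1$ and $\phi_s(h+1)=h$ simultaneously.

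Second, and crucially, $w(\sigma)\,J(\sigma)$ splits across columns:
\[
w(\sigma)\,J(\sigma) \;=\; \prod_{s\in\mathbb{Z}} \beta^{|U_s|}\, W_s(\phi_s),
\]
where $W_s(\phi_s)$ records the $\alpha$-, $\gamma$- and $\delta$-weights of the fixed, up and down moves of $\phi_s$, multiplied by the local Fibonacci factor $\prod_{g} f_{|g|}(-\gamma\delta/\alpha^2)$ taken over the vertical gaps of $\sigma$ lying inside column $s$. The factorisation of $w(\sigma)$ is immediate from its cell-by-cell definition; the factorisation of $J(\sigma)$ follows from Lemma~\ref{lem: Jformula}, since by definition each vertical gap is a maximal run of fixed heights within a single column. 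Summing out the $(\phi_s)_{s\in\mathbb{Z}}$, which are mutually independent given $(U_s)_{s}$, produces a two-point transition weight
\[
\Psi(U,V) \;:=\; \beta^{|V|} \sum_{\phi\colon [n]\setminus V \to [n]\setminus U} W(\phi),
\]
depending only on the ordered pair $(U,V)$ of $\ell$-subsets of $[n]$.

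Third, the Markov property falls out as follows. On the torus $\mathbb{T}_{m,n}$, the measure $P_{\theta_2,(m,n)}$ restricted to the event $\{L(\sigma)=\ell\}$ is by the above a stationary cyclic Markov chain in $(U_s)_{s\in [m]}$ with transition weight $\Psi$: any finite-dimensional joint law $(U_{s_0},\ldots,U_{s_0+k})$ factorises as $\prod_{i=1}^{k}\Psi(U_{s_0+i-1},U_{s_0+i})$ times a factor depending only on the endpoints $(U_{s_0},U_{s_0+k})$, which is exactly the Markov factorisation criterion conditional on the endpoints. By the distributional convergence in Theorem~\ref{thm:m1infinity2}, this factorisation passes to the $m\to\infty$ limit on any fixed window, so that $P_{\ell,n}^{\gamma,\delta}$ is itself Markov in $(U_s)_{s\in\mathbb{Z}}$. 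The main technical step to pin down is the verification that $\sigma\leftrightarrow((U_s)_s,(\phi_s)_s)$ is a genuine bijection onto the pure snake configurations and that summing over admissible internal bijections within one column indeed reconstructs the Fibonacci weighting there; this last combinatorial identity is precisely Lemma~\ref{lem: Jformula} applied slice by slice, noting that the vertical gaps of $\sigma$ within column $s$ are read off directly from the fixed points of $\phi_s$.
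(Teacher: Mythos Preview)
Your column decomposition and the factorisation $w(\sigma)J(\sigma)=\prod_s \beta^{\ell}W_s(\phi_s)$ are correct, and the resulting nearest-neighbour Gibbs structure with transfer weight $\Psi$ is a clean and natural way to see what is going on. It is, however, a genuinely different route from the paper's: there the Markov property is extracted directly from the limiting determinantal correlation formula \eqref{eq:simpledetformula} by the rank argument of Theorem~\ref{thm:MarkovProperty} and Lemma~\ref{lem: MarkovDependence}, writing the joint law of $(U_{s_1},\ldots,U_{s_{k+1}})$ as one large determinant and observing that the block coupling past to future factors as a product of an $k\ell\times\ell$ and an $\ell\times 2\ell$ matrix, so that row operations split the determinant into two independent pieces. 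The paper never passes back through the torus.

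Your argument has a real gap in the final step. On the finite torus the induced law of $(U_s)_{s\in[m]}$ is \emph{not} a Markov chain: the window law is proportional to
\[
\prod_{i=1}^{k}\Psi(U_{s_0+i-1},U_{s_0+i})\cdot(\Psi^{m-k})(U_{s_0+k},U_{s_0}),
\]
and the wrap-around factor $(\Psi^{m-k})(U_{s_0+k},U_{s_0})$ couples the two ends, so that $P_m(U_{s_0+k}\mid U_{s_0+k-1},\ldots,U_{s_0})$ genuinely depends on $U_{s_0}$. What you call ``the Markov factorisation criterion conditional on the endpoints'' is only the Markov \emph{bridge} property, and a limit of Markov bridges is not automatically an unconditional Markov chain: you still need the limiting boundary factor to split as a product $g_L(U_{s_0})g_R(U_{s_0+k})$. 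This is exactly what Perron--Frobenius gives you, since the $\ell$-block of $\Psi$ is non-negative and primitive (note $\Psi(U,U)>0$ via $\phi=\mathrm{id}$, using $f_k(-\gamma\delta)>0$ in the probabilistic regime), whence $(\Psi^{m-k})(V,U)\sim\lambda^{m-k}r(V)l(U)$ and the limit is a genuine stationary Markov chain with transition kernel $\Psi(U,V)r(V)/\lambda r(U)$. Once you insert this step the argument is complete; without it, the passage from ``Gibbs on the cycle'' to ``Markov on $\mathbb{Z}$'' is asserted rather than proved.
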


We will omit the proof of this result but similarly note that it goes in much the same way as in the proof of Theorem~\ref{thm:MarkovProperty}. Briefly note that the formulae given for $P^{\gamma,\delta}_{\ell,n}$ describe the finite dimensional distributions of $(U_s)$, i.e., by Kolmogorov's extension theorem, they are sufficient to uniquely define the law of $(U_s)$ in the first place.

\subsection{\texorpdfstring{The integrable snake model on $\mathbb{Z}^2$}{The integrable snake model on Z2}} \label{subsec:m1m2infinity}
In this brief subsection, we take a subsequent $n \to \infty$ scaling limit of the integrable snake model on $\mathbb{Z} \times [n]$. In doing so, we obtain the claim of Theorem~\ref{thm:m1m2infinity}.

\begin{proof}
The proof amounts to showing that with $H_{\theta_2,n}(x,y)$ as in \eqref{eq:dform3} we have $\lim_{n \to \infty}H_{\theta_2,n}(x,y) = H_\tau^{\delta,\gamma}(x,y)$, where $H_\tau^{\delta,\gamma}(x,y)$ is as in \eqref{eq:dform6}. To do this, simply approximate the sum in \eqref{eq:dform3} using a contour integral and use Remark~\ref{rem:arc} to see that there is some value of $\beta$ such that we obtain $w_\tau$ in the limit of the integral, as desired. 

One can remark that this proof only allows us to use $\beta$ which is generic for 
all $n$. Nonetheless, this argument holds on a dense set of possible values for $\beta > 0$ and so by continuity of $\beta \mapsto t_\beta$ and $\tau \mapsto P_{\gamma,\delta}^{\tau}$, we have the required probability measure for all $0 \leq \tau \leq 1$.
\end{proof}

In the special case $\mathbf{f}^i = \mathbf{e}^1$ for each $i$, using \eqref{eq:wordyspecial} in the setting of Theorem \ref{thm:m1m2infinity} we have
\begin{align}\label{eq:Z^2rightmove}
\mathbf{P} \left( \bigcap_{i=1}^k \{ \sigma x^i = x^i + \mathbf{e}^1 \} \right) = \det_{i,j=1}^k \left( - \int_{\overline{w_{\tau}} }^{w_{\tau}} \frac{\mathrm{d}w}{2\pi \iota w } \frac{ w^{ - (x^j_2-x^i_2)}}{ (1 + \gamma w + \delta w^{-1})^{ x^j_1 - x^i_1}} \right).
\end{align}
Of course, setting $\delta = 0$, we obtain \eqref{eq:lozcorr} from the introduction.

As a final remark, we again note that formula \eqref{eq:Z^2rightmove} can be thought of as describing the finite dimensional distributions of the process $(U_s)_{s \in \mathbb{Z}}$ given by $$U_s = \{h \in \mathbb{Z}: \sigma(s,h) = (s+1,h)\}.$$ So this formula genuinely and uniquely defines a probability measure for a discrete Markov process, by looking at the positions of the rightward moves.

%%%%%%%%%%%%%%%%%%%%%%%%%%%
%%%%%%%%%%%%%%%%%%%%%%%%%%%
%%%%%%%%%%%%%%%%%%%%%%%%%%%
\section{The Exclusion Process on the Ring} \label{sec:noncoll}
%%%%%%%%%%%%%%%%%%%%%%%%%%%
%%%%%%%%%%%%%%%%%%%%%%%%%%%
%%%%%%%%%%%%%%%%%%%%%%%%%%%

\subsection{Preliminary results}

In this section we study the asymptotic behaviour of the probability measure $P^{\gamma, \delta}_{\ell,n}$ (abbreviated to $P_{\ell,n}$) occurring in Theorem \ref{thm:m1infinity} under the scaling regime 
\begin{align*}
\gamma = \varepsilon T \qquad \text{and} \qquad \delta = \varepsilon T',
\end{align*}
as $\varepsilon \downarrow 0$. In order to see the correlations on the correct scale, it is also necessary to rescale the horizontal distances between points. Thus, we will consider scaled points $x^{\varepsilon,i} = (\lfloor s^i/\varepsilon \rfloor, h^i)$, where $s^i \in \mathbb{R}$ and $h^i \in [n]$. Here $\lfloor s^i/ \varepsilon \rfloor$ denotes the largest integer not greater than $s^i/\varepsilon$. 

The following result describes the asymptotics of the correlations under this regime.

\begin{proposition} \label{prop:longer}
Let $(s^1,h^1),\ldots,(s^k,h^k)$ be elements of $\mathbb{R} \times [n]$. Let $\mathbf{f}^1,\ldots,\mathbf{f}^k$ be elements of $\{0,\mathbf{e}^1,\mathbf{e}^2,-\mathbf{e}^2\}$. Let $N := \# \{ i : \mathbf{f}^i = \mathbf{e}^2 \text{ or }\mathbf{f}^i = -\mathbf{e}^2 \}$. Let $x^{\varepsilon,i} = ( \lfloor s^i/\varepsilon \rfloor, h^i)$. Then we have 
\begin{align*}
\lim_{\varepsilon \downarrow 0} \varepsilon^{-N} P_{\ell,n} \left( \bigcap_{i=1}^k \{ \sigma x^{\varepsilon,i}  = x^{\varepsilon,i}  + \mathbf{f}^i \} \right) = \det_{i,j=1}^r \left[ (-1)^{\mathrm{1}\{ \mathbf{f}^i = \mathbf{e}^1 \} }  T^{\mathrm{1}\{ \mathbf{f}^i = \mathbf{e}^2 \}}{T'}^{\mathrm{1}\{ \mathbf{f}^i = -\mathbf{e}^2 \}} 
H_{\ell,n}(\mathbf{f}^i, (s^i,h^i), (s^j,h^j) )\right] 
\end{align*}
where
\begin{align} \label{eq:dform3bii2}
    H_{\ell,n}(\mathbf{f},(s,h),(s',h')) =
    \begin{cases}
         \frac{1}{n} \sum_{w \in \mathcal{R}_{\ell,n}} w^{-(h' - h + \mathrm{1}\{\mathbf{f} = -\mathbf{e}^2\} -\mathrm{1}\{\mathbf{f} = \mathbf{e}^2\})} e^{ - (Tw + Tw^{-1})(s'-s)} \qquad &\text{if $s' \succeq s + \mathbf{f}$}\\
-\frac{1}{n} \sum_{w \in \mathcal{L}_{\ell,n}} w^{-(h' - h + \mathrm{1}\{\mathbf{f} = -\mathbf{e}^2\} -\mathrm{1}\{\mathbf{f} = \mathbf{e}^2\})} e^{ - (Tw + Tw^{-1})(s'-s)}  &\text{if $s' \prec s + \mathbf{f}$}.         
    \end{cases}
    \end{align}
    Here we have $\{ s' \succeq s + \mathbf{f}\} = \{ s' > s \} \cup \{ s' = s, \mathbf{f} \neq \mathbf{e}^1 \}$ and $\{ s' \prec s + \mathbf{f}\}$ is simply the complement of this event. The sets $\mathcal{R}_{\ell,n}$ and $\mathcal{L}_{\ell,n}$ are $\mathcal{R}$ and $\mathcal{L}$ in Theorem \ref{thm:m1infinity}.
\end{proposition}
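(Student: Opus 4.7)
The plan is to apply Theorem~\ref{thm:m1infinity} with $\gamma = \varepsilon T$ and $\delta = \varepsilon T'$, expand via the identities \eqref{eq:ev1b}--\eqref{eq:ev4b} and identify precisely which terms survive multiplication by $\varepsilon^{-N}$. First I expand
$\prod_{i=1}^k T^{\mathbf{f}^i}_{x^{\varepsilon,i}} = \sum_{\kappa \in \mathcal{K}} a_\kappa \prod_{i=1}^{r_\kappa} R^{\mathbf{f}^{\kappa,i}}_{x^{\kappa,i}}$, so that Theorem~\ref{thm:m1infinity} yields
\begin{align*}
P_{\ell,n}\left(\bigcap_{i=1}^k\{\sigma x^{\varepsilon,i} = x^{\varepsilon,i} + \mathbf{f}^i\}\right) = \sum_{\kappa \in \mathcal{K}} a_\kappa \det_{i,j=1}^{r_\kappa}\left[c^{\kappa}_i\, H^{\varepsilon T,\varepsilon T'}_{\ell,n}(x^{\kappa,i}+\mathbf{f}^{\kappa,i}, x^{\kappa,j})\right],
\end{align*}
with $c^{\kappa}_i := (-1)^{\mathrm{1}\{\mathbf{f}^{\kappa,i}=\mathbf{e}^1\}}(\varepsilon T)^{\mathrm{1}\{\mathbf{f}^{\kappa,i}=\mathbf{e}^2\}}(\varepsilon T')^{\mathrm{1}\{\mathbf{f}^{\kappa,i}=-\mathbf{e}^2\}}$. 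Setting $N_\kappa := \#\{i : \mathbf{f}^{\kappa,i} \in \{\pm\mathbf{e}^2\}\}$, each determinant carries an overall factor $\varepsilon^{N_\kappa}$. Inspection of \eqref{eq:ev1b}--\eqref{eq:ev4b} shows $N_\kappa \geq N$, with equality only for the unique ``leading'' index $\kappa_*$ satisfying $\mathbf{f}^{\kappa_*,i} = \mathbf{f}^i$, $x^{\kappa_*,i} = x^{\varepsilon,i}$, $r_{\kappa_*} = k$ and $a_{\kappa_*} = 1$; all other indices obey $N_\kappa \geq N+1$.

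Next I establish pointwise convergence of the kernel entries at the rescaled arguments. Fixing $w$ in $\mathcal{L}_{\ell,n}$ or $\mathcal{R}_{\ell,n}$ and writing $u := \lfloor s^j/\varepsilon\rfloor - \lfloor s^i/\varepsilon\rfloor - \mathbf{f}^{\kappa,i}_1 + 1$, one has $\varepsilon u \to s^j - s^i$, whence
\begin{align*}
(1 + \varepsilon(Tw + T'w^{-1}))^{-u} \;\xrightarrow{\varepsilon \downarrow 0}\; e^{-(Tw + T'w^{-1})(s^j - s^i)}
\end{align*}
uniformly in the finite root set. The indicator separating the $\mathcal{R}_{\ell,n}$-sum from the $\mathcal{L}_{\ell,n}$-sum in \eqref{eq:dform3bi} is determined for small $\varepsilon$ by the sign of $s^j - s^i$ when $s^j \neq s^i$; and when $s^j = s^i$ it reduces to $\mathrm{1}\{\mathbf{f}^{\kappa,i} \neq \mathbf{e}^1\}$, which is precisely the convention $s^j \succeq s^i + \mathbf{f}^{\kappa,i}$ used in the statement. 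The factor $w^{-(y_2-x_2)} = w^{-(h^j - h^i - \mathbf{f}^{\kappa,i}_2)}$ does not depend on $\varepsilon$ and matches the statement on substituting $\mathbf{f}_2 = \mathrm{1}\{\mathbf{f}=\mathbf{e}^2\} - \mathrm{1}\{\mathbf{f}=-\mathbf{e}^2\}$.

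Combining these ingredients, the $\kappa_*$-contribution equals $\varepsilon^N$ times a $k \times k$ determinant which converges entrywise, and hence in value, to the determinant on the right-hand side of the proposition. Each other $\kappa$ contributes $\varepsilon^{N_\kappa}$ (with $N_\kappa \geq N+1$) multiplied by a bounded determinant, since the kernel entries $H^{\varepsilon T, \varepsilon T'}_{\ell,n}$ remain uniformly bounded in $\varepsilon$ and $r_\kappa$ is bounded by a constant depending only on $k$. Therefore, after multiplication by $\varepsilon^{-N}$, all non-leading contributions are $O(\varepsilon)$ and vanish in the limit, producing the claimed formula.

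The main obstacle is precisely the subleading control: I must ensure no hidden $\varepsilon^{-1}$ blow-up arises in the new kernel entries produced by substitutions such as $T^{\mathbf{e}^2}_x \mapsto -R^{\mathbf{e}^2}_x R^{-\mathbf{e}^2}_{x+\mathbf{e}^2}$ or $T^0_x \mapsto R^{\mathbf{e}^2}_x R^{-\mathbf{e}^2}_{x+\mathbf{e}^2}$, whose arguments sit at lattice distance $O(1)$ rather than $O(1/\varepsilon)$. On those short-range pairs one has $\varepsilon u = O(\varepsilon)$, so the denominators $(1+\varepsilon(Tw+T'w^{-1}))^{u}$ tend to $1$ and the entries remain $O(1)$; this confirms the $\varepsilon^{N_\kappa}$ scaling is sharp and completes the argument.
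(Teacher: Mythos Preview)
Your proof is correct and follows essentially the same approach as the paper: apply Theorem~\ref{thm:m1infinity} with $\gamma=\varepsilon T$, $\delta=\varepsilon T'$, expand via \eqref{eq:ev1b}--\eqref{eq:ev4b}, identify the unique word $\prod_i R^{\mathbf{f}^i}_{x^{\varepsilon,i}}$ as the only term with $N_\kappa=N$, take the entrywise limit of the kernel using $(1+\varepsilon(Tw+T'w^{-1}))^{-u}\to e^{-(Tw+T'w^{-1})(s^j-s^i)}$, and discard the subleading words by boundedness. Your final paragraph on the short-range entries generated by the snakelet substitutions is a nice explicit check of a point the paper dispatches more tersely by simply invoking boundedness of $H^{\gamma,\delta}_{\ell,n}$.
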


\begin{proof}
Suppose we have a word decomposition $\prod_{i=1}^k T_{x^i}^{\mathbf{f}^i} = \sum_{\kappa \in \mathcal{K}} a_\kappa \prod_{i=1}^{r_\kappa} R_{x^{\kappa,i}}^{\mathbf{f}^{\kappa,i}}$. Then using $\gamma = \varepsilon T$ and $\delta = \varepsilon T'$ together with Theorem \ref{thm:m1infinity} we have
\begin{align} \label{eq:longer1}
P_{\ell,n} \left( \bigcap_{i=1}^k \{ \sigma x^i = x^i + \mathbf{f}^i \} \right) := \sum_{\kappa \in K} a_\kappa \other{P}_{\ell,n} \left[\prod_{i=1}^{r_\kappa} R_{x^{\kappa,i}}^{\mathbf{f}^{\kappa,i}}  \right],
\end{align}
where
  \begin{align} \label{eq:longer2}
 \other{P}_{\ell,n}  \left[\prod_{i=1}^r R_{x^i}^{\mathbf{f}^i} \right] = \varepsilon^{ \# \{ i : \mathbf{f}^i = \pm \mathbf{e}^2 \} } \det_{i,j=1}^r \left[ (-1)^{\mathrm{1}\{ \mathbf{f}^i = \mathbf{e}^1 \}} T^{\mathrm{1}\{ \mathbf{f}^i = \mathbf{e}^2 \}} {T'}^{\mathrm{1}\{ \mathbf{f}^i = -\mathbf{e}^2 \}} 
H^{\gamma,\delta}_{\ell,n}( x^i+\mathbf{f}^i, x^j ) \right],
\end{align}
and where $H_{\ell,n}^{\gamma,\delta}$ is given in \eqref{eq:dform3bi}. Since the kernel $H_{\ell,n}^{\gamma,\delta}:(\mathbb{Z} \times [n]) \times (\mathbb{Z} \times [n]) \to \mathbb{C}$ is bounded, it follows from \eqref{eq:longer2} that the sum in \eqref{eq:longer1} gets its leading order contribution as $\varepsilon \downarrow 0$ from words $\prod_{i=1}^{r_\kappa} R_{x^{\kappa,i}}^{\mathbf{f}^{\kappa,i}}$ that minimise $N_\kappa := \# \{ 1 \leq i \leq r_\kappa : \mathbf{f}^{\kappa,i} = \pm \mathbf{e}^2 \}$. Using \eqref{eq:ev1b}-\eqref{eq:ev4b}, it is clear that we can write
\begin{align*}
    \prod_{i=1}^k T_{x^i}^{\mathbf{f}^i} = \prod_{i=1}^k R_{x^i}^{\mathbf{f}^i} + \sum_{\kappa \in \mathcal{K}'} a_\kappa \prod_{i=1}^{r_\kappa} R_{x^{\kappa,i}}^{\mathbf{f}^{\kappa,i}},
\end{align*}
where for every $\kappa \in \mathcal{K}'$, $N_\kappa > N := \{ 1 \leq i \leq k : \mathbf{f}^i = \pm \mathbf{e}^2 \}$. In particular, by the boundedness of $H^{\gamma,\delta}_{\ell,n}$ it follows that as $\varepsilon \downarrow 0$ we have 
\begin{align*}
P_{\ell,n} \left( \bigcap_{i=1}^k \{ \sigma x^i = x^i + \mathbf{f}^i \} \right) = \varepsilon^N\det_{i,j=1}^r \left[ (-1)^{\mathrm{1}\{ \mathbf{f}^i = \mathbf{e}^1 \}} T^{\mathrm{1}\{ \mathbf{f}^i = \mathbf{e}^2 \}} {T'}^{\mathrm{1}\{ \mathbf{f}^i = -\mathbf{e}^2 \}} 
H^{\gamma,\delta}_{\ell,n}( x^{\varepsilon,i} +\mathbf{f}^i, x^{\varepsilon,i}  ) \right] + O(\varepsilon^{N+1}).
\end{align*}
In particular, it follows that 
\begin{align} \label{eq:longer3}
\lim_{\varepsilon \downarrow 0} \varepsilon^{-N} P_{\ell,n} \left( \bigcap_{i=1}^k \{ \sigma x^{\varepsilon,i}  = x^{\varepsilon,i}  + \mathbf{f}^i \} \right) = \det_{i,j=1}^r \left[ (-1)^{\mathrm{1}\{ \mathbf{f}^i = \mathbf{e}^1 \} }  T^{\mathrm{1}\{ \mathbf{f}^i = \mathbf{e}^2 \}}{T'}^{\mathrm{1}\{ \mathbf{f}^i = -\mathbf{e}^2 \}} 
\lim_{\varepsilon \downarrow 0} H^{\gamma,\delta}_{\ell,n}(  x^{\varepsilon,i}  + \mathbf{f}^i , x^{\varepsilon,j} ) \right] .
\end{align}
Thus it remains to study $\lim_{\varepsilon \downarrow 0} H_{\ell,n}( (\lfloor s/\varepsilon \rfloor , h ) + \mathbf{f} , (\lfloor s'/\varepsilon \rfloor , h') )$. In this direction, with \eqref{eq:dform3bi} in mind we note that if $y^\varepsilon =(\lfloor s'/\varepsilon \rfloor , h')  $ and $x^\varepsilon = (\lfloor s/\varepsilon \rfloor , h )$ we have 
\begin{align} \label{eq:longer4}
\lim_{\varepsilon \downarrow 0} \frac{w^{-(y^\varepsilon_2 - x^\varepsilon_2)}}{(1 + \varepsilon T w + \varepsilon T' w^{-1})^{y^\varepsilon_1 - x^\varepsilon_1 + 1}} =w^{-(h' - h)} e^{ - (Tw + Tw^{-1})(s'-s)}.
\end{align}
We note also that we have the limit of indicator functions $\lim_{\varepsilon \to 0} \mathrm{1} \{ y^\varepsilon_1 \geq x^\varepsilon_1 \} = \mathrm{1}\{ s' \succeq s + \mathbf{f} \}$. Thus using \eqref{eq:longer4} in \eqref{eq:longer3} together with \eqref{eq:dform3bii2} we obtain the result.
\end{proof}

The previous results let us derive a continuous-time stochastic process occurring as a limit. Recall from Lemma~\ref{lem:markov} that we can define $(U_s)_{s \in \mathbb{Z}}$ to be a discrete-time Markov process by generating $\sigma$ with law $P_{\ell,n}$ and defining
\begin{align*}
U_s := \{ h \in [n] : \sigma(s,h) = (s+1,h) \}.
\end{align*}
% Namely, suppose that $\sigma:\mathbb{Z} \times \mathbb{Z}_n$ is a bijection with the property that $\sigma(x) \in \{ x, x+\mathbf{e}^1, x+\mathbf{e}^2, x-\mathbf{e}^2 \}$ for each $x \in \mathbb{Z} \times \mathbb{Z}_n$. For $s \in \mathbb{Z}$, define the subset $U_s$ of $\mathbb{Z}_n$ by 
% \begin{align*}
% U_s := \{ h \in \mathbb{Z}_n : \sigma(s,h) = (s+1,h) \}.
% \end{align*}
% As noted above, there exists an integer $0 \leq \ell \leq n$ such that
% \begin{align*}
% \# U_s = \ell.
% \end{align*}
% In other words, the subsets $(U_s)_{ s \in \mathbb{Z}}$ have the same cardinality. By the Gibbs property of the snake configurations, $(U_s)_{s \in \mathbb{Z}}$ is a discrete-time Markov process. 
Given $\varepsilon > 0$, consider the continuous time stochastic process $(X^\varepsilon_t)_{t \in \mathbb{R}}$ defined by setting
\begin{align*}
X^\varepsilon_t := U_{\lfloor t/\varepsilon \rfloor},
\end{align*}
where $(U_s)_{s \in \mathbb{Z}}$ is governed by the probability measure $P_{\ell,n}$ with $\gamma = \varepsilon T$ and $\delta = \varepsilon T'$. The previous result states that the stochastic process $X^\varepsilon_t$ converges in distribution as $\varepsilon \downarrow 0$. Let us denote the limiting stochastic process by $(X_t)_{t\in \mathbb{R}}$, and note that 
% This limiting stochastic process $(X_t)_{t \in \mathbb{R}}$ inherits the Markov property from the Markov property on $P_{\ell,n}$. 
it takes values in the collection of subsets of $[n]$. In fact, the cardinality of this set is constant and, noting that our limiting process is right-continuous, every discontinuity time $t$ of $X_t$ takes the form
\begin{align*}
X_t = X_{t-} \cup \{h \pm 1\} - \{h\},
\end{align*}
(considered with looping, a convention we will, in future, use without comment) for some $h \in X_{t-}$ and some $h\pm 1 \notin X_{t-}$. In other words, this limiting stochastic process has dynamics similar to the asymmetric exclusion process: we have a fixed number of particles occupying distinct positions in $[n]$ considered as a ring and these particles may undertake jumps to unoccupied neighbouring sites at certain rates.

Proposition \ref{prop:longer} provides an explicit description of the correlations of $(X_t)_{t \in \mathbb{R}}$. To describe these, for $h \in [n]$, $t \in \mathbb{R}$, $\mathbf{f} \in \{0,\mathbf{e}^1,\mathbf{e}^2,-\mathbf{e}^2\}$ define the events
\begin{align*}
E(t,h,0) &:= \{ h \notin X_t \}\\
E(t,h,\mathbf{e}^1) &:= \{ h \in X_t\}\\
E(t,h,\pm \mathbf{e}^2) &:= \{ \text{A particle jumps from $h$ to $h \pm 1$ in $[t,t+\mathrm{d}t)$} \}.
\end{align*}
If at time $t$ a particle jumps from $h$ to $h+1$ (resp.\ from $h$ to $h-1$), we say that the location $(t,h)$ in $\mathbb{R} \times [n]$ is an upbead (resp.\ downbead). Then $E(t,h,\mathbf{e}^2)$ (resp.\ $E(t,h,-\mathbf{e}^2)$) denotes the event that the interval $[t,t+\mathrm{d}t) \times h$ contains an upbead (resp.\ downbead).

Then, defining $P^{\ell,n}$ as the law of $(X_t)_{t \in \mathbb{R}}$, Proposition \ref{prop:longer} gives 
\begin{align} \label{eq:light2}
 P^{\ell,n} \left( \bigcap_{i=1}^k E(t^i,h^i,\mathbf{f}^i) \right) = \det_{i,j=1}^r \left[ (-1)^{\mathrm{1}\{ \mathbf{f}^i = \mathbf{e}^1 \} }  T^{\mathrm{1}\{ \mathbf{f}^i = \mathbf{e}^2 \}}{T'}^{\mathrm{1}\{ \mathbf{f}^i = -\mathbf{e}^2 \}} 
H_{\ell,n}(\mathbf{f}^i, (t^i,h^i), (t^j,h^j)) \right] \prod_{i : \mathbf{f}^i= \pm \mathbf{e}^2 } \mathrm{d}t^i.
\end{align}
In the next section we begin discussing the basic properties of $P^{\ell,n}$ in earnest.

\subsection{\texorpdfstring{Basic properties of $P^{\ell,n}$}{Basic properties of Pn,l}}

In this section we study basic properties of the determinantal process $(X_t)_{t \in \mathbb{R}}$ under the probability laws $P^{\ell,n}$. We seek to show that the process is Markov and calculate the stationary distribution of the process, its transition rates and some basic densities. The calculations in this subsection are generalisations of work in Section 6 of \cite{johnston}.

Our following result says that the size of the set $X_t$ (which has constant cardinality as $t$ ranges across $\mathbb{R}$) is almost surely equal to $\ell$ under $P^{\ell,n}$.

\begin{lemma}\label{lem: cardinalityl}
Let $(X_t)_{t \in \mathbb{R}}$ have law $P^{\ell,n}$. Then for any $t \in \mathbb{R}$ we have 
\begin{align*}
P^{\ell,n}( \# X_t = p ) = \delta_{p,\ell}.
\end{align*}
 \end{lemma}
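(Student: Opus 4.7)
The plan is to identify $X_t$ as a determinantal point process on the finite set $[n]$ whose correlation kernel is an orthogonal projection of rank $\ell$, and then to invoke the standard fact that such DPPs have cardinality equal to the rank of the projection almost surely.

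First, I specialise \eqref{eq:light2} to the case $t^i = t$, $\mathbf{f}^i = \mathbf{e}^1$ for all $i$. For distinct $h^1,\dots,h^k \in [n]$ this yields
\begin{equation*}
    P^{\ell,n}\!\left(\{h^1,\dots,h^k\} \subset X_t\right) = \det_{i,j=1}^{k} \tilde K(h^i,h^j),
\end{equation*}
where $\tilde K(h,h') := -H_{\ell,n}(\mathbf{e}^1,(t,h),(t,h'))$. With $s = s' = t$ and $\mathbf{f} = \mathbf{e}^1$, the ordering convention $s' \succeq s + \mathbf{f}$ of \eqref{eq:dform3bii2} places us in the second branch, the exponential $e^{-(Tw+Tw^{-1})(s'-s)}$ collapses to $1$, and we read off
\begin{equation*}
    \tilde K(h,h') = \frac{1}{n} \sum_{w \in \mathcal{L}_{\ell,n}} w^{h-h'}.
\end{equation*}

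The main step is to show that $\tilde K$, viewed as an $n \times n$ matrix, is the orthogonal projection onto an $\ell$-dimensional subspace of $\mathbb{C}^n$. Set $\phi_w(h) := w^h/\sqrt{n}$ for $w \in \mathcal{L}_{\ell,n}$. By the definition \eqref{eq:rootset1}, every $w \in \mathcal{L}_{\ell,n}$ satisfies $w^n = (-1)^{n-\ell+1}$, so $(w/w')^n = 1$ for any pair $w,w' \in \mathcal{L}_{\ell,n}$. A standard geometric-sum computation then gives $\langle \phi_w, \phi_{w'}\rangle = \delta_{w,w'}$, so $\{\phi_w : w \in \mathcal{L}_{\ell,n}\}$ is an orthonormal set of size $\ell$. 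Since $\tilde K(h,h') = \sum_{w \in \mathcal{L}_{\ell,n}} \phi_w(h)\overline{\phi_w(h')}$, this identifies $\tilde K$ as the orthogonal projection onto $\mathrm{span}\{\phi_w : w \in \mathcal{L}_{\ell,n}\}$, which has dimension $\ell$.

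Finally I appeal to the standard characterisation (see \cite{HKPV, soshnikov}) of cardinality in a determinantal point process on a finite set: the number of points is distributed as a sum of independent Bernoulli random variables with parameters equal to the eigenvalues of the kernel. For our rank-$\ell$ projection $\tilde K$ these eigenvalues are $\ell$ ones and $n-\ell$ zeros, so $\#X_t = \ell$ deterministically, giving $P^{\ell,n}(\#X_t = p) = \delta_{p,\ell}$ for every $t$. The only computation with any meat is the projection identity $\tilde K^2 = \tilde K$, which reduces via a Fourier sum over $[n]$ to the algebraic relation $(w/w')^n = 1$; I do not foresee any substantive obstacle beyond this verification.
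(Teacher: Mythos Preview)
Your proof is correct and takes a genuinely different route from the paper's. The paper computes, for each subset $E = \{h_1,\ldots,h_p\}$ of size $p$, the full probability $P^{\ell,n}(X_t = E)$ by using all $n$ coordinates in \eqref{eq:light2} (with $\mathbf{f}^i = \mathbf{e}^1$ for $i \le p$ and $\mathbf{f}^i = 0$ for $i > p$), obtaining an $n \times n$ determinant which it then factors as $K = BC$; the matrix $B$ is block-diagonal with a $p \times \ell$ block and an $(n-p)\times(n-\ell)$ block, so $K$ is singular unless $p = \ell$. You instead observe that the one-time correlation kernel $\tilde K(h,h') = \frac{1}{n}\sum_{w \in \mathcal{L}_{\ell,n}} w^{h-h'}$ is exactly the orthogonal projection onto the $\ell$-dimensional span of the characters $\phi_w$, and then invoke the standard fact that a determinantal point process with a projection kernel has cardinality equal to the rank almost surely. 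Both arguments ultimately rest on the same rank-$\ell$ factorisation, but yours is cleaner and more conceptual, at the cost of citing an external DPP result; the paper's version is more self-contained and, as a by-product, sets up the matrix $A^{\mathbf{h}}$ used in the subsequent Lemma~\ref{lem: vandermonde} and Corollary~\ref{cor: stateprobability}.
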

\begin{proof}
Let $E = \{ h_1,\ldots,h_p\}$ be a subset of $[n]$ of cardinality $p$. 
    Enumerate the roots $\mathcal{L}_{\ell,n} = \{z_1,\dots,z_\ell\}$ and $\mathcal{R}_{\ell,n} = \{z_{\ell + 1},\dots,z_n\}$ and extend $E$ to an enumeration $[n] = \{h_1,\dots,h_n\}$. Then by \eqref{eq:light2}, we have
    \begin{align*}
        P^{\ell,n}(X_t = \{h_1,\dots,h_p\}) &= P^{\ell,n} \left( \bigcap_{i=1}^n E(t,h^i,\mathbf{f}^i) \right) 
        \end{align*}
        where $\mathbf{f}^1 = \ldots =\mathbf{f}^p = \mathbf{e}^1$ and $\mathbf{f}^{p+1} = \ldots = \mathbf{f}^n = 0$. Using \eqref{eq:light2} we have $P^{\ell,n}(X_t = \{h_1,\dots,h_p\}) = \det_{j,k = 1}^n K_{j,k}$ where
    \begin{align*}
        K_{j,k} &= \mathrm{1}_{j \leq p}\frac{1}{n}\sum_{z \in \mathcal{L}_{\ell,n}}z^{(h_k - h_j)} + \mathrm{1}_{j > p}\frac{1}{n}\sum_{z \in \mathcal{R}_{\ell,n}}z^{(h_k - h_j)}\\
        &= \frac{1}{n}\sum_{i = 1}^{n}(\mathrm{1}_{j \leq p, i \leq \ell} + \mathrm{1}_{j > p, i > \ell})z_i^{(h_k - h_j)}
    \end{align*}
    But then, defining
    \begin{equation*}
        B_{j,k} = (\mathrm{1}_{j \leq p, k \leq \ell} + \mathrm{1}_{j > p, k > \ell})\frac{z_k^{h_j}}{\sqrt{n}} \text{ and } C_{j,k} = \frac{1}{\sqrt{n}}z_{j}^{-h_k},
    \end{equation*}
    we note that $K = BC$ as matrices and that $B$ only has full rank when $\ell = p$. Together, this implies that $\det_{j,k = 1}^n K_{j,k} \neq 0$ iff $\ell = p$.
\end{proof}

Let us retain our enumerations of $\mathcal{L}_{\ell,n}$ and $\mathcal{R}_{\ell,n}$ and retain the notation for $z_k$ moving forward. For $\mathbf{h} = (h_1,\dots,h_\ell)$ an $\ell$-tuple in $[n]$, an important matrix to understand in the upcoming discussion is
\begin{equation}\label{defA}
    A_{j,k}^{\mathbf{h}} \coloneqq z_k^{h_j}, \hspace{0.5cm}\text{ for }\hspace{0.5cm} 1 \leq j,k \leq \ell.
\end{equation}
In particular, we have the following key result.
\begin{lemma}\label{lem: vandermonde}
    Write $[n]^{(\ell)}$ for the set of $\ell$-tuples of distinct elements of $[n]$. Also define the function $\Delta \colon [n]^{(\ell)} \to [0,\infty)$ by 
    \begin{equation*}
        \Delta(\mathbf{h}) \coloneqq \prod_{1 \leq j < k \leq \ell}\left|\omega_n^{h_k} - \omega_n^{h_j}\right|
    \end{equation*}
    where $\omega_n \coloneqq e^{\frac{2\pi \iota}{n}}$ and $\mathbf{h} = (h_1,\dots,h_\ell)$. Then,
    \begin{equation*}
        \det_{j,k = 1}^\ell A_{j,k}^{\mathbf{h}} = \sgn(\sigma_\mathbf{h})(-1)^{\sum_{r = 1}^\ell h_r}\iota^{\ell(\ell -1)/2}\Delta(\mathbf{h}),
    \end{equation*}
    where $\sigma_{\mathbf{h}}$ is the unique permutation ordering the elements of $\mathbf{h}$.
\end{lemma}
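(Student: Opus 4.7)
The plan is to choose an explicit parametrization of $\mathcal{L}_{\ell,n}$ that displays $A^{\mathbf{h}}$ as a generalized Vandermonde matrix, and then to extract the claimed phase by polarizing each factor $\omega_n^{h_k}-\omega_n^{h_j}$.

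Set $\zeta := e^{i\pi(n-\ell-1)/n}$, which satisfies $\zeta^n = (-1)^{n-\ell-1} = (-1)^{n-\ell+1}$. The full set of $n$-th roots of $(-1)^{n-\ell+1}$ is then $\{\zeta\omega_n^k : k \in \mathbb{Z}/n\mathbb{Z}\}$, and the $\ell$ of these whose arguments cluster around $\pi$ — hence have least real part — are $z_k := \zeta\omega_n^k$ for $k=1,\ldots,\ell$; this gives the enumeration of $\mathcal{L}_{\ell,n}$ I would work with. With this choice,
\begin{equation*}
A^{\mathbf{h}}_{j,k} = z_k^{h_j} = \zeta^{h_j}(\omega_n^{h_j})^k,
\end{equation*}
so pulling $\zeta^{h_j}$ out of row $j$ and setting $y_j := \omega_n^{h_j}$ yields a shifted Vandermonde:
\begin{equation*}
\det A^{\mathbf{h}} = \zeta^{\sum_r h_r}\,\Big(\prod_r y_r\Big)\,\prod_{1\le j<k\le\ell}(y_k - y_j).
\end{equation*}

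The next step is to apply the polar identity
\begin{equation*}
y_k - y_j = \omega_n^{h_k} - \omega_n^{h_j} = 2i\,e^{i\pi(h_j+h_k)/n}\sin(\pi(h_k-h_j)/n),
\end{equation*}
which, since the $h_j \in \{0,\ldots,n-1\}$ are distinct, becomes $y_k - y_j = i\,\sgn(h_k-h_j)\,e^{i\pi(h_j+h_k)/n}|y_k-y_j|$. Multiplying over $j<k$ contributes four pieces: the imaginary units combine to $\iota^{\ell(\ell-1)/2}$, the signs to $\prod_{j<k}\sgn(h_k-h_j) = \sgn(\sigma_{\mathbf{h}})$, the moduli to $\Delta(\mathbf{h})$, and — noting that each $h_r$ appears in exactly $\ell-1$ pair-sums $h_j+h_k$ — a phase $e^{i\pi(\ell-1)\sum_r h_r/n}$.

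The only remaining task, and the one requiring care, is phase bookkeeping. Using $\zeta^{\sum_r h_r} = e^{i\pi(n-\ell-1)\sum_r h_r/n}$ and $\prod_r y_r = e^{2\pi i \sum_r h_r/n}$, the three accumulated phases collapse as
\begin{equation*}
\exp\!\left(i\pi\sum_r h_r\cdot\frac{(n-\ell-1)+2+(\ell-1)}{n}\right) = e^{i\pi\sum_r h_r} = (-1)^{\sum_r h_r},
\end{equation*}
which assembles to give the stated identity. The cancellation is not accidental: the numerator $(n-\ell-1)+2+(\ell-1) = n$ reflects precisely that $\mathcal{L}_{\ell,n}$ was chosen symmetrically about $-1$, so the obstruction sign would otherwise spoil the formula. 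I expect this phase accounting, together with committing to the above enumeration of $\mathcal{L}_{\ell,n}$, to be the only nontrivial part; everything else is routine.
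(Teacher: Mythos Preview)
Your proof is correct and follows essentially the same route as the paper: the paper also parametrizes $z_k = \omega_n^{(n-\ell+1)/2+k-1}$ (which equals your $\zeta\omega_n^k$), factors out the row-dependent phase to reduce to a Vandermonde in $\xi_j=\omega_n^{h_j}$, and then uses the same polar decomposition $\xi_k-\xi_j = (-1)^{\mathrm{1}\{h_j>h_k\}}\,\iota\,e^{\pi\iota(h_j+h_k)/n}\,|\xi_k-\xi_j|$ to extract the sign and modulus. Your phase bookkeeping is slightly more explicit than the paper's, but the argument is the same.
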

\begin{proof}
    We begin by noting that we can take $z_k = \omega_n^{\frac{n-\ell + 1}{2} + k-1}$. Let us also define $\xi_j = \omega_n^{h_j}$. Then 
    \begin{align*}
        \det_{j,k = 1}^{\ell} A_{j,k}^{\mathbf{h}} &= \det_{j,k = 1}^{\ell} \left(\omega_n^{\frac{n-\ell + 1}{2} + k-1}\right)^{h_j}\\
        &= \omega_n^{\frac{n-\ell + 1}{2} \sum_{r=1}^{\ell}h_r}\det_{j,k = 1}^{\ell} \xi_j^{k-1}
    \end{align*}
    and by the Vandermonde determinant formula, we can write the determinant in this expression as 
    \begin{align*}
        \qquad &= \omega_n^{\frac{n-\ell + 1}{2} \sum_{r=1}^{\ell}h_r} \prod_{1 \leq j < k \leq \ell}\left(\xi_k - \xi_j\right)
    \end{align*}
    Finally, noting that
    \begin{align*}
        \xi_k - \xi_j &= e^{\frac{2\pi \iota}{n}h_k} - e^{\frac{2\pi \iota}{n}h_j}\\
        &= (-1)^{\mathrm{1}_{h_j > h_k}}\iota^{\frac{\pi \iota}{n}(h_k + h_j)}\left|\xi_k - \xi_j\right|
    \end{align*}
    and that $\sgn(\sigma_{\mathbf{h}}) = \prod_{1 \leq j < k \leq \ell} (-1)^{\mathrm{1}_{h_j > h_k}}$ gives the required result upon simplifying.
\end{proof}
\begin{cor}\label{cor: stateprobability}
    For any $t \in \mathbb{R}$ and $h_1,\dots,h_\ell$ distinct elements of $[n]$ and $\mathbf{h} = (h_1,\dots,h_\ell)$, we have
    \begin{equation*}
        P^{\ell,n}(X_t = \{h_1,\dots,h_\ell\}) = n^{-\ell}\Delta(\mathbf{h})^2
    \end{equation*}
\end{cor}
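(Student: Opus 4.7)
The plan is to combine Lemma~\ref{lem: cardinalityl} with the determinantal correlation formula \eqref{eq:light2} to express $P^{\ell,n}(X_t = \{h_1,\ldots,h_\ell\})$ as an $\ell \times \ell$ determinant, then recognize the resulting kernel as a Gram matrix of the form $\Phi\Phi^*$ whose determinant is evaluable via Lemma~\ref{lem: vandermonde}.

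The first step is the reduction to a containment event. Since Lemma~\ref{lem: cardinalityl} forces $|X_t| = \ell$ almost surely, for any $\ell$-element subset $S = \{h_1,\ldots,h_\ell\}$ of $[n]$ the events $\{X_t = S\}$ and $\bigcap_{i=1}^\ell E(t,h_i,\mathbf{e}^1) = \{S \subset X_t\}$ agree almost surely. Next, I would apply \eqref{eq:light2} with $\mathbf{f}^i = \mathbf{e}^1$ and $t^i = t$ for every $i$; since this choice introduces no $\mathrm{d}t^i$ factors and we have $s^j = s^i$ with $\mathbf{f}^i = \mathbf{e}^1$, we fall into the lower branch of \eqref{eq:dform3bii}, giving $H_{\ell,n}(\mathbf{e}^1,(t,h_i),(t,h_j)) = -\frac{1}{n}\sum_{w \in \mathcal{L}_{\ell,n}} w^{h_i - h_j}$. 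The prefactor $(-1)^{\mathrm{1}\{\mathbf{f}^i = \mathbf{e}^1\}} = -1$ cancels the leading minus sign on each row, leaving
\begin{equation*}
P^{\ell,n}(X_t = \{h_1,\ldots,h_\ell\}) = \det_{i,j=1}^\ell K(h_i,h_j), \qquad K(h,h') := \frac{1}{n}\sum_{w \in \mathcal{L}_{\ell,n}} w^{h-h'}.
\end{equation*}

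The final step is to evaluate this determinant. Using the enumeration $\mathcal{L}_{\ell,n} = \{z_1,\ldots,z_\ell\}$ from Lemma~\ref{lem: vandermonde} together with $|z_k|=1$, one writes $K(h_i,h_j) = \sum_{k=1}^\ell \Phi_{i,k} \overline{\Phi_{j,k}} = (\Phi\Phi^*)_{i,j}$ where $\Phi_{i,k} := \frac{1}{\sqrt{n}}\, z_k^{h_i} = \frac{1}{\sqrt{n}}\, A^{\mathbf{h}}_{i,k}$. Since $\Phi$ is square we have $\det(\Phi\Phi^*) = |\det \Phi|^2 = n^{-\ell}|\det A^{\mathbf{h}}|^2$, and Lemma~\ref{lem: vandermonde} yields $|\det A^{\mathbf{h}}| = \Delta(\mathbf{h})$ (the sign, the power of $-1$, and the power of $\iota$ each have unit modulus). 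Combining these gives $P^{\ell,n}(X_t = \{h_1,\ldots,h_\ell\}) = n^{-\ell}\Delta(\mathbf{h})^2$. The main obstacle is simply careful sign and conjugation bookkeeping: the interplay of the prefactor in \eqref{eq:light2}, the branch choice in \eqref{eq:dform3bii}, and the identification $\bar z_k = z_k^{-1}$ must all conspire to produce a Hermitian positive-semidefinite kernel whose Gram factorization is manifest, after which the rest is immediate.
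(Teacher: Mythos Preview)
Your proposal is correct and follows essentially the same approach as the paper: reduce to the containment event via Lemma~\ref{lem: cardinalityl}, apply the determinantal formula \eqref{eq:light2} with all $\mathbf{f}^i = \mathbf{e}^1$ at a common time (landing in the $\mathcal{L}_{\ell,n}$ branch of \eqref{eq:dform3bii}), factor the resulting kernel as $n^{-\ell}A^{\mathbf{h}}(A^{\mathbf{h}})^*$, and invoke Lemma~\ref{lem: vandermonde}. Your Gram-matrix phrasing $\Phi\Phi^*$ is exactly the paper's factorization into $\det A^{\mathbf{h}}\cdot\overline{\det A^{\mathbf{h}}}$, and your sign and branch bookkeeping is accurate.
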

\begin{proof}
    By Lemma \ref{lem: cardinalityl}, the events $\{X_t = \{h_1,\dots,h_\ell\}\}$ and $\{X_t \supset \{h_1,\dots,h_\ell\}\}$ are equivalent in $P^{\ell,n}$-measure. Hence, making use of \eqref{eq:light2},
    \begin{align*}
        P^{\ell,n}(X_t = \{h_1,\dots,h_\ell\}) &= P^{\ell,n}(X_t \supset \{h_1,\dots,h_\ell\})\\
        &= \det_{j,k = 1}^\ell H^{\ell,n}(0,h_k - h_j)\\
        &= \det_{j,k = 1}^\ell \frac{1}{n}\sum_{z \in \mathcal{L}_{\ell,n}} z^{-h_k + h_j}\\
        &= n^{-\ell}\left(\det_{j,r = 1}^{\ell}z_r^{h_j}\right)\overline{\left(\det_{r,k = 1}^{\ell}z_r^{h_k}\right)}\\
        &= n^{-\ell}\left(\det_{j,r = 1}^{\ell}A_{j,r}^{\mathbf{h}}\right)\overline{\left(\det_{r,k = 1}^{\ell}A_{r,k}^{\mathbf{h}}\right)}
    \end{align*}
    which gives the result upon using Lemma \ref{lem: vandermonde}.
\end{proof}
%\subsection{Basic density calculations}
We briefly perform some simple calculations as sanity checks. E.g. we can find the probability any particular position is occupied,
\begin{equation*}
    P^{\ell,n}(x \text{ is occupied}) = -H_{\ell,n}(\mathbf{e}^1,0,0) = \frac{\ell}{n}
\end{equation*}
which reflects the fact that $\ell$ of the $n$ strings are occupied at time $t$. 

We also note how the determinantal formulas give rise to translation invariance in $t$, which shows that the process is stationary.

We can similarly compute the density of up and down beads on each string. E.g. for up beads,
\begin{equation*}
    \mathbb{P}^{\ell,n}(dx \text{ contains an up bead}) = T H_{\ell,n}(\mathbf{e}^2,0,0) \; dx = \frac{T}{n}\sum_{z\in \mathcal{R}_{\ell,n}}z \; dx = -\frac{T}{n} \sum_{r = 1}^{\ell} z_r \; dx = \frac{T}{n} \frac{\sin\left(\frac{\pi \ell}{n}\right)}{\sin\left(\frac{\pi}{n}\right)} \; dx
\end{equation*}
and similarly, we get $\frac{T'}{n} \frac{\sin\left(\frac{\pi \ell}{n}\right)}{\sin\left(\frac{\pi}{n}\right)} \; dx$ for down beads. % which confirms the densities as stated in Theorem \ref{thm:noncoll2}.

Now we show that the process $(X_t)_{t\in\mathbb{R}}$ is Markov - that is to say that for $t_1 < \dots < t_{k} < t_{k+1}$ and $x_1,\dots,x_{k+1} \in [n]^{(\ell)}$,
\begin{equation}\label{eq:MarkovProperty}
    P^{\ell,n}\left(X_{t_{k+1}} = x_{{k+1}} \mid X_{t_{k}} = x_{{k}},\dots, X_{t_{1}} = x_{{1}} \right) = P^{\ell,n}\left(X_{t_{k+1}} = x_{{k+1}} \mid X_{t_{k}} = x_{{k}} \right).
\end{equation} This work can be replicated to prove Lemma~\ref{lem:markov}. First, let us make some definitions for notational ease.

It will prove only necessary to consider the formula in \eqref{eq:light2} for events $E(t^i,h^i,\mathbf{f}^i)$ where $\mathbf{f}^i = \mathbf{e}^1$. So the aforementioned formula simplifies to
\begin{align} \label{eq:light3}
 P^{\ell,n} \left( \bigcap_{i=1}^k E(t^i,h^i,\mathbf{e}^1) \right) = \det_{i,j=1}^r \left[ -H_{\ell,n}\left((t^i,h^i), (t^j,h^j)\right) \right]
\end{align}
where 
\begin{align} \label{eq:dform3bii3}
    H_{\ell,n}((t,h),(t',h')) =
    \begin{cases}
         \frac{1}{n} \sum_{w \in \mathcal{R}_{\ell,n}} w^{-(h' - h)} e^{ - T(w)(t'-t)} \qquad &\text{if $t' > t$}\\
-\frac{1}{n} \sum_{w \in \mathcal{L}_{\ell,n}} w^{-(h' - h)} e^{ - T(w)(t'-t)}  &\text{if $t' \leq t$}        
    \end{cases}
    \end{align}
    and $T(w) = Tw + T'w^{-1}$.

Define the events \begin{align*}
    C(x_{k+1}) &= \{X_{t_{k+1}} = x_{{k+1}}\},\\
    B(x_{k}) &= \{X_{t_{k}} = x_{{k}}\}, \\ A(x_1,\dots,x_{k-1}) &= \{X_{t_{k-1}} = x_{{k-1}},\dots, X_{t_{1}} = x_{{1}}\}.
\end{align*} Note that we have suppressed the time variable in the events, since they will remain fixed in our proof of the Markov property. Then we have the following lemma.
\begin{lemma}\label{lem: MarkovDependence}
    To show \eqref{eq:MarkovProperty}, it is enough to show that, for all $t_1 < \dots < t_{k+1}$ and $x_1,\dots, x_{k+1} \in [n]^{(\ell)}$,
    \begin{equation}\label{eq:MarkovDependence}
    P^{\ell,n}\left(C(x_{k+1}),B(x_{k}),A(x_1,\dots,x_{k-1})\right) = P^{\ell,n}(C(x_{k+1}),B(x_{k}))f(x_1,\dots,x_k)
    \end{equation}
\end{lemma}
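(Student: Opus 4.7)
The plan is to show that \eqref{eq:MarkovDependence} implies the Markov property \eqref{eq:MarkovProperty} by using it to identify the unknown function $f$ and then rearranging.

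First I would sum both sides of \eqref{eq:MarkovDependence} over $x_{k+1} \in [n]^{(\ell)}$. Since $\#X_t = \ell$ almost surely by Lemma~\ref{lem: cardinalityl}, the events $\{C(x_{k+1})\}_{x_{k+1} \in [n]^{(\ell)}}$ partition the probability space up to a null set. Hence summing the left-hand side gives $P^{\ell,n}(B(x_k),A(x_1,\ldots,x_{k-1}))$, while summing the right-hand side gives $P^{\ell,n}(B(x_k)) f(x_1,\ldots,x_k)$. Since the stationary distribution computed in Corollary~\ref{cor: stateprobability} gives $P^{\ell,n}(B(x_k)) = n^{-\ell}\Delta(\mathbf{x}_k)^2 > 0$ for distinct $\mathbf{x}_k \in [n]^{(\ell)}$, we can solve to obtain
\begin{equation*}
f(x_1,\ldots,x_k) = \frac{P^{\ell,n}(B(x_k),A(x_1,\ldots,x_{k-1}))}{P^{\ell,n}(B(x_k))}.
\end{equation*}

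Next I would substitute this expression back into \eqref{eq:MarkovDependence} to obtain
\begin{equation*}
P^{\ell,n}(C(x_{k+1}),B(x_k),A(x_1,\ldots,x_{k-1})) = \frac{P^{\ell,n}(C(x_{k+1}),B(x_k)) \, P^{\ell,n}(B(x_k),A(x_1,\ldots,x_{k-1}))}{P^{\ell,n}(B(x_k))}.
\end{equation*}
Dividing both sides by $P^{\ell,n}(B(x_k),A(x_1,\ldots,x_{k-1}))$ (which is positive whenever the conditioning event in \eqref{eq:MarkovProperty} has positive probability; otherwise the conditional statement is vacuous) yields
\begin{equation*}
P^{\ell,n}(C(x_{k+1}) \mid B(x_k), A(x_1,\ldots,x_{k-1})) = \frac{P^{\ell,n}(C(x_{k+1}),B(x_k))}{P^{\ell,n}(B(x_k))} = P^{\ell,n}(C(x_{k+1}) \mid B(x_k)),
\end{equation*}
which is precisely \eqref{eq:MarkovProperty}.

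There is no real obstacle here beyond ensuring the denominators are nonzero; the content of the lemma is essentially tautological, as the existence of any factorization of the joint probability into a product of a function of $(x_k,x_{k+1})$ and a function of $(x_1,\ldots,x_k)$ automatically forces the factors to be the appropriate marginals, up to normalization. The real work --- verifying that the determinantal formula \eqref{eq:light3}--\eqref{eq:dform3bii3} does produce such a factorization --- is deferred to the actual proof of \eqref{eq:MarkovDependence}, which presumably proceeds by exploiting the block structure of the kernel $H_{\ell,n}$ that separates the past event $A$ from the future event $C$ across the present $B$.
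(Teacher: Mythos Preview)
Your proof is correct and takes essentially the same approach as the paper: both arguments sum over $x_{k+1}$ to pin down the unknown factor and then rearrange to the conditional form. The only cosmetic difference is the order of operations---you sum first to identify $f$ explicitly as $P^{\ell,n}(B,A)/P^{\ell,n}(B)$ and then substitute, whereas the paper first passes to the conditional probability $P^{\ell,n}(C\mid B,A)=P^{\ell,n}(C\mid B)\,g(x_1,\dots,x_k)$ and then sums over $x_{k+1}$ to force $g\equiv 1$.
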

where $f$ is an arbitrary function, independent of $x_{k+1}$.

\begin{proof}
    From \eqref{eq:MarkovDependence}, we have that
    \begin{align}\label{eq: conditioning}
        P^{\ell,n}\left(C(x_{k+1})\mid B(x_{k}),A(x_1,\dots,x_{k-1})\right) &= \frac{P^{\ell,n}\left(C(x_{k+1}), B(x_{k})\right)}{P^{\ell,n}\left(B(x_{k}), A(x_1,\dots,x_{k-1})\right)}f(x_1,\dots,x_k)\nonumber\\
        &= \frac{P^{\ell,n}\left(C(x_{k+1})\mid B(x_{k})\right)}{P^{\ell,n}\left(A(x_1,\dots,x_{k-1})\mid B(x_{k})\right)}f(x_1,\dots,x_k)\nonumber\\
        &= P^{\ell,n}\left(C(x_{k+1})\mid B(x_{k})\right)g(x_1,\dots,x_k)
    \end{align}
    where $g(x_1,\dots,x_k) = \frac{f(x_1,\dots,x_k)}{P^{\ell,n}\left(A(x_1,\dots,x_{k-1})\mid B(x_{k})\right)}$. But then summing \eqref{eq: conditioning} over $x_{k+1} \in [n]^{(\ell)}$ gives that $g(x_1,\dots,x_k) \equiv 1$ and hence gives the required result.
\end{proof}

Lemma~\ref{lem: MarkovDependence}'s utility is apparent for our determinantal process.

\begin{thm}\label{thm:MarkovProperty}
    $(X_t)_{t \in \mathbb{R}}$ satisfies the Markov Property.
\end{thm}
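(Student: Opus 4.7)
The plan is to apply Lemma \ref{lem: MarkovDependence}, which reduces the Markov property to the multiplicative identity $P^{\ell,n}(A,B,C) \cdot P^{\ell,n}(B) = P^{\ell,n}(A,B) \cdot P^{\ell,n}(B,C)$ (when the relevant probabilities are positive). By \eqref{eq:light3} each probability is a block determinant of the matrix $M := [-H_{\ell,n}((t_a,h^a_r),(t_b,h^b_s))]$, with row/column blocks indexed by the ordered times $t_1 < \ldots < t_{k+1}$. Applying the Schur complement formula to $\det M(A,B,C)$ with $M(AB,AB)$ as pivot, and to $\det M(B,C)$ with $M(B,B)$ as pivot, the multiplicative identity reduces to the matrix equality
\begin{align*}
M(C,AB)\,M(AB,AB)^{-1}\,M(AB,C) = M(C,B)\,M(B,B)^{-1}\,M(B,C).
\end{align*}

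The key structural observation is a rank-$\ell$ factorisation of each block arising from the spectral decomposition of the kernel. Defining the semigroups $P^{(L)}_t(h,h') := \frac{1}{n}\sum_{w \in \mathcal{L}} w^{-(h'-h)} e^{-T(w)t}$ and $P^{(R)}_t$ analogously (where $T(w) = Tw + T'w^{-1}$), and writing their diagonalisations $P^{(L)}_t = U_L e^{-t D_L} U_L^{*}$, $P^{(R)}_t = U_R e^{-t D_R} U_R^{*}$ (with $U_L$ of size $n \times \ell$, $U_R$ of size $n \times (n-\ell)$, and $D_L, D_R$ diagonal with eigenvalues $T(w)$), each block of $M$ admits the factorisation
\begin{align*}
M_{ab} = \begin{cases} \alpha_a\beta_b & \text{if } a \geq b, \\ -\gamma_a\delta_b & \text{if } a < b, \end{cases}
\end{align*}
where $\alpha_a := (U_L)_{x_a} e^{t_a D_L}$, $\beta_b := e^{-t_b D_L}((U_L)_{x_b})^{*}$, $\gamma_a := (U_R)_{x_a} e^{t_a D_R}$, $\delta_b := e^{-t_b D_R}((U_R)_{x_b})^{*}$, and $(V)_y$ denotes the restriction of a matrix $V$ to rows indexed by $y$. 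A Vandermonde argument analogous to Lemma~\ref{lem: vandermonde} ensures that $\alpha_a$ and $\beta_b$ are invertible $\ell \times \ell$ matrices.

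Under this factorisation one computes $M(C,AB) = \alpha_{k+1}[\beta_1,\ldots,\beta_k]$, $M(AB,C) = -[\gamma_1;\ldots;\gamma_k]\delta_{k+1}$, $M(C,B) = \alpha_{k+1}\beta_k$, $M(B,C) = -\gamma_k\delta_{k+1}$, and $M_{kk} = \alpha_k\beta_k$. The right-hand side of the above matrix equality then simplifies to $-\alpha_{k+1}\alpha_k^{-1}\gamma_k\delta_{k+1}$, so the identity reduces to showing $[\beta_1,\ldots,\beta_k]\,M(AB,AB)^{-1}\,[\gamma_1;\ldots;\gamma_k] = \alpha_k^{-1}\gamma_k$. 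Let $u$ denote the block-vector solution to $M(AB,AB) u = [\gamma_1;\ldots;\gamma_k]$, and set $p_a := \sum_{b \leq a}\beta_b u_b$ and $q_a := \sum_{b>a}\delta_b u_b$. The $a$-th block equation becomes $\alpha_a p_a - \gamma_a q_a = \gamma_a$, and at $a = k$ the boundary condition $q_k = 0$ immediately yields $p_k = \alpha_k^{-1}\gamma_k$. Since $[\beta_1,\ldots,\beta_k] u = p_k$, the required identity follows.

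The main subtlety is recognising the appropriate rank-$\ell$ factorisation of the blocks of $M$ and choosing the right Schur reduction; once this structure is in hand, the Markov identity collapses to a telescoping linear system forced at the last time step by the boundary condition $q_k = 0$.
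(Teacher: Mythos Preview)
Your proof is correct. Both your argument and the paper's rest on the same structural insight: the rank-$\ell$ factorisation of the kernel blocks, arising because for $t_a \geq t_b$ the entry $-H_{\ell,n}$ is a sum over the $\ell$ roots in $\mathcal{L}$ and hence factors through an $\ell$-dimensional space. The paper exploits this more directly: it observes that the $k\ell \times 2\ell$ submatrix $\tilde{\mathbf{H}}$ consisting of the columns for times $t_{k+1},t_k$ and rows for times $t_1,\ldots,t_k$ factors as a product of a $k\ell \times \ell$ matrix and an $\ell \times 2\ell$ matrix, hence has rank $\ell$; row operations then put the full matrix into block-triangular form, immediately yielding the factored form $P(C,B,A) = P(C,B)\det(\mathcal{G})$ required by Lemma~\ref{lem: MarkovDependence}. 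Your route instead passes through Schur complements and an explicit linear system, proving the stronger matrix identity $M(C,AB)\,M(AB,AB)^{-1}\,M(AB,C) = M(C,B)\,M(B,B)^{-1}\,M(B,C)$ via the telescoping observation $p_k = \alpha_k^{-1}\gamma_k$. The paper's approach is shorter and avoids inverting $M(AB,AB)$; yours proves an equality at the matrix level rather than merely the determinantal one, and makes the algebraic mechanism (the boundary condition $q_k = 0$ forcing the identity) more explicit. One small remark: your opening sentence slightly mischaracterises Lemma~\ref{lem: MarkovDependence}, whose hypothesis is the factored form $P(C,B,A) = P(C,B)f(x_1,\ldots,x_k)$ rather than the multiplicative identity you state; but since your multiplicative identity is exactly this with $f = P(A,B)/P(B)$, no harm is done.
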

\begin{proof}
    For $1 \leq r \leq k+1$, write $x_{r} = (x_{r,1},\dots,x_{r,\ell})$ where $x_{r,1} < \dots < x_{r,\ell}$. Using \eqref{eq:light3} with \eqref{eq:MarkovDependence}, we see that 
    \begin{equation}\label{eq:bigmatrix}
        P^{\ell,n}\left(C(x_{k+1}),B(x_{k}),A(x_1,\dots,x_{k-1})\right) = \det\begin{bmatrix}
            \mathcal{H}_{k+1,k+1} & \mathcal{H}_{k+1,k} & \dots &\mathcal{H}_{k+1,1}\\
            \mathcal{H}_{k,k+1} & \mathcal{H}_{k,k} & \dots &\mathcal{H}_{k,1}\\
            \vdots&\vdots&\ddots&\vdots\\
            \mathcal{H}_{1,k+1} & \mathcal{H}_{1,k} & \dots & \mathcal{H}_{1,1}
        \end{bmatrix} \eqqcolon \det(\mathbf{H})
    \end{equation}
    where each $\mathcal{H}_{r,s}$ is an $\ell \times \ell$ block matrix, given by 
    \begin{equation}\label{eq:mathcalHprecise}
        (\mathcal{H}_{r,s})_{i,j} = -H_{\ell,n}\left((t_r,x_{r,i}),(t_s,x_{s,j})\right)
    \end{equation}
    Now observing the top-left of the matrix $\mathbf{H}$ in \eqref{eq:bigmatrix}, we note the appearance of the sub-matrix $\begin{bmatrix}
            \mathcal{H}_{k+1,k+1} & \mathcal{H}_{k+1,k}\\
            \mathcal{H}_{k,k+1} & \mathcal{H}_{k,k}
        \end{bmatrix}$, which has determinant $P^{\ell,n}\left(C(x_{k+1}),B(x_{k})\right)$. Thus, it would suffice to show that the sub-matrix
        \begin{equation}\label{eq:defHtilde}
            \tilde{\mathbf{H}} = \begin{bmatrix}
            
            \mathcal{H}_{k,k+1} & \mathcal{H}_{k,k}\\
            \vdots&\vdots\\
            \mathcal{H}_{1,k+1} & \mathcal{H}_{1,k}
        \end{bmatrix},
        \end{equation}
        is a matrix of rank $\ell$ and that the first $\ell$ rows are linearly independent. To see this, note that this would imply that the last $(k-1)\ell$ rows in $\tilde{\mathbf{H}}$ can be written as a linear combination of the first $\ell$ rows. So there exists row operations involving subtracting multiples of the first $\ell$ rows of $\tilde{\mathbf{H}}$ from the last $(k-1)\ell$ rows that sets all the entries of these last $(k-1)\ell$ rows to $0$. By performing the corresponding row operations in $\mathbf{H}$, the determinant in \eqref{eq:bigmatrix} can be equivalently written as
        \begin{align*}
            P^{\ell,n}\left(C(x_{k+1}),B(x_{k}),A(x_1,\dots,x_{k-1})\right) &= \det\left[
            \begin{array}{cc|c}
            \mathcal{H}_{k+1,k+1} & \mathcal{H}_{k+1,k} & \cdot \\
            \mathcal{H}_{k,k+1} & \mathcal{H}_{k,k} & \cdot \\
            \hline
            \multicolumn{2}{c|}{\mathbf{0}} & \mathcal{G} \\
            \end{array}
            \right]\\ &= \det\begin{bmatrix}
            \mathcal{H}_{k+1,k+1} & \mathcal{H}_{k+1,k}\\
            \mathcal{H}_{k,k+1} & \mathcal{H}_{k,k}
        \end{bmatrix} \det(\mathcal{G})
        \end{align*}
        where $\mathcal{G}$ is the resulting $(k-1)\ell \times (k-1)\ell$ block matrix formed by these row operations. By construction, $\mathcal{G}$ is independent of $x_{k+1}$ and so by Lemma~\ref{lem: MarkovDependence}, the proof would be complete.

        Now let us re-examine $\tilde{\mathbf{H}}$ in \eqref{eq:defHtilde}. Since $\det(\mathcal{H}_{k,k}) = P^{\ell,n}(B(x_k)) \neq 0$, the first $\ell$ rows are certainly linearly independent. Furthermore, from \eqref{eq:mathcalHprecise} and \eqref{eq:dform3bii3}, we can explicitly write down that, for $1 \leq r \leq k$, $s = k$ or $k+1$,
        \begin{equation}
            (\mathcal{H}_{r,s})_{i,j} = \frac{1}{n}\sum_{w \in \mathcal{L}_{\ell,n}}w^{x_{r,i} - x_{s,j}}e^{T(w)(t_r-t_s)}
        \end{equation}
        where we have used that $t_r \leq t_s$ for our selection of $r$ and $s$. But then, enumerating the elements of $\mathcal{L}_{\ell,n} = \{w_1,\dots,w_\ell\}$ it follows that $\tilde{\mathbf{H}}$ admits the factorisation
        $\tilde{\mathbf{H}} = \tilde{\mathbf{A}} \tilde{\mathbf{B}}^{\mathbf{T}}$, where $\tilde{\mathbf{A}}$ has dimensions $k\ell \times \ell$, $\tilde{\mathbf{B}}$ has dimensions $2\ell \times \ell$ and are given by
        \begin{equation}
            \tilde{\mathbf{A}} = \begin{bmatrix}
                \frac{1}{n}w_1^{x_{k,1}}e^{T(w_1)t_k}& \dots & \frac{1}{n}w_\ell^{x_{k,1}}e^{T(w_\ell)t_k}\\
                \frac{1}{n}w_1^{x_{k,2}}e^{T(w_1)t_k}& \dots & \frac{1}{n}w_\ell^{x_{k,2}}e^{T(w_\ell)t_k}\\
                \vdots & \vdots & \vdots\\
                \frac{1}{n}w_1^{x_{1,\ell}}e^{T(w_1)t_1}& \dots & \frac{1}{n}w_\ell^{x_{1,\ell}}e^{T(w_\ell)t_1}\\
            \end{bmatrix}
        \end{equation}
        and
        \begin{equation}
            \tilde{\mathbf{B}} = \begin{bmatrix}
                w_1^{-x_{k+1,1}}e^{-T(w_1)t_{k+1}}& \dots & w_\ell^{-x_{k+1,1}}e^{-T(w_\ell)t_{k+1}}\\
                w_1^{-x_{k+1,2}}e^{-T(w_1)t_{k+1}}& \dots & w_\ell^{-x_{k+1,2}}e^{-T(w_\ell)t_{k+1}}\\
                \vdots & \vdots & \vdots\\
                w_1^{-x_{k,\ell}}e^{-T(w_1)t_{k}}& \dots & w_\ell^{-x_{k,\ell}}e^{-T(w_\ell)t_{k}}\\
            \end{bmatrix}.
        \end{equation}
        It immediately follows that $\tilde{\mathbf{H}}$ has rank $\ell$.
\end{proof}

%\subsection{Transition probabilities}
Now we investigate the transition rates of the Markov process $(X_t)$. We will write $\mathbf{e}_j$ for the $\ell$-tuple with all components $0$, except for the $j$-th component being $1$. 

% For the rest of this section, we write $\mathbf{h} \pm \mathbf{e}_j$ with the understanding that, if $h_j = n-1$, then $\mathbf{h} + \mathbf{e}_j$ has $j$th component = 0, or similarly, if $h_j = 0$, then $\mathbf{h} - \mathbf{e}_j$ has $j$th component = $n-1$, i.e. it loops. We cannot assume the components of $\mathbf{h}$ lie in $\mathbb{Z}_n$, as $z_k^{h_j}$ is not the same for different integer representatives of $h_j$.

\begin{lemma}\label{lem: transitionrates}
    Let $\mathbf{h} = (h_1,\dots,h_\ell)$ and $\mathbf{h}' = (h_1',\dots,h_\ell')$ be elements of $[n]^{(\ell)}$ with $\mathbf{h} + \mathbf{e}_\ell = \mathbf{h}'$.
    % \begin{equation*}
    %     h_j' \equiv h_j \bmod{n} \text{ for } j\leq \ell-1 \hspace{0.5cm} \text{and} \hspace{0.5cm} h_\ell' \equiv h_\ell + 1 \bmod{n}
    % \end{equation*}
    Then,
    \begin{equation}\label{transitionrateprobup}
        \lim_{t \downarrow 0} \frac{1}{t} P^{\ell,n}(X_t = \{h_1',\dots,h_\ell'\}| X_0 = \{h_1,\dots,h_\ell\}) = T\frac{\Delta(\mathbf{h}')}{\Delta(\mathbf{h})}.
    \end{equation}
    If instead we suppose that $\mathbf{h}$ and $\mathbf{h}'$ are such that
    \begin{equation*}
        h_j' \equiv h_j \bmod{n} \text{ for } j\leq \ell-1 \hspace{0.5cm} \text{and} \hspace{0.5cm} h_\ell' \equiv h_\ell - 1 \bmod{n},
    \end{equation*}
    we find that 
    \begin{equation}\label{transitionrateprobdown}P^{\ell,n}(X_t = \{h_1',\dots,h_\ell'\}| X_0 = \{h_1,\dots,h_\ell\}) = T'\frac{\Delta(\mathbf{h}')}{\Delta(\mathbf{h})}.
    \end{equation}
\end{lemma}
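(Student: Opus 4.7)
The plan is to use the Markov property to reduce the transition rate to a joint density involving a single bead event, evaluate that density via the determinantal formula \eqref{eq:light2}, and then recognise the resulting matrix as a product of the Vandermonde-like matrix $A^{\mathbf{h}}$ from Lemma~\ref{lem: vandermonde} with its conjugate cousin at $\mathbf{h}'$. First, by Theorem~\ref{thm:MarkovProperty} and Bayes' rule, combined with Corollary~\ref{cor: stateprobability} for the stationary distribution,
\[
\frac{1}{t} P^{\ell,n}(X_t = \mathbf{h}' \mid X_0 = \mathbf{h}) \;=\; \frac{n^\ell}{\Delta(\mathbf{h})^2} \cdot \frac{1}{t} P^{\ell,n}(X_0 = \mathbf{h},\, X_t = \mathbf{h}').
\]
For small $t$, any contribution from two or more bead events in $[0,t)$ is $O(t^2)$, so the numerator reduces, to leading order in $t$, to $t$ times the density at time zero of the event consisting of $\ell-1$ occupation events $\{h_i \in X_0\}$ for $i < \ell$ combined with an up-bead event at $(0, h_\ell)$; by Lemma~\ref{lem: cardinalityl} this joint event agrees almost surely with $\{X_0 = \mathbf{h}\}$ combined with that up-bead.

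Next, I would apply \eqref{eq:light2} to express this density as $\det M$, where $M$ is the $\ell \times \ell$ matrix with entries $M_{ij} = \frac{1}{n}\sum_{w \in \mathcal{L}} w^{h_i - h_j}$ for $i < \ell$, and $M_{\ell j} = \frac{T}{n}\sum_{w \in \mathcal{R}} w^{h_\ell + 1 - h_j}$. The key manipulation is the observation that, since $\mathbf{h}' \in [n]^{(\ell)}$ forces the integer $h_\ell + 1 - h_j$ never to be divisible by $n$, the orthogonality $\sum_{z^n = \pm 1} z^k = 0$ for $k \not\equiv 0 \pmod n$ permits rewriting $M_{\ell j} = -\frac{T}{n} \sum_{w \in \mathcal{L}} w^{h'_\ell - h_j}$. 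With every row now a sum over $\mathcal{L}$, $M$ factors cleanly as $M = n^{-1} D\, A^{\mathbf{h}'} (A^{\mathbf{h}})^*$, where $D = \mathrm{diag}(1, \ldots, 1, -T)$ absorbs the sign and rate in row $\ell$ and $(A^{\mathbf{h}})^*$ denotes the conjugate transpose of the Vandermonde-like matrix.

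Taking determinants then yields $\det M = -T n^{-\ell}\, \det A^{\mathbf{h}'}\, \overline{\det A^{\mathbf{h}}}$, and Lemma~\ref{lem: vandermonde} evaluates both factors. The $\iota^{\ell(\ell-1)/2}$ phase factors cancel under conjugation; crucially, incrementing $h_\ell$ by $1$ does not alter the sort ordering (because $h_\ell + 1 \notin \mathbf{h}$), so $\sigma_{\mathbf{h}} = \sigma_{\mathbf{h}'}$ and the permutation signs cancel, while the parity factor $(-1)^{\sum h_r + \sum h'_r} = -1$ absorbs the explicit minus sign. The conclusion is $\det M = T n^{-\ell}\Delta(\mathbf{h})\Delta(\mathbf{h}')$, which after division by $n^{-\ell}\Delta(\mathbf{h})^2$ gives the rate in \eqref{transitionrateprobup}. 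The down-move case \eqref{transitionrateprobdown} follows by the same recipe with $T$ replaced by $T'$ and the up-bead event replaced by a down-bead event; the wrap-around case $h_\ell = 0,\ h'_\ell = n-1$ introduces an additional factor $(-1)^{\theta_2}$ upon converting $w^{h_\ell - 1 - h_j}$ into $w^{h'_\ell - h_j}$, but this is exactly cancelled by the sign change in $\sgn(\sigma_{\mathbf{h}})\sgn(\sigma_{\mathbf{h}'})$---now differing by an $\ell$-cycle contributing $(-1)^{\ell-1}$---together with the accompanying parity shift in $\sum h_r + \sum h'_r$. The hard part will be this sign bookkeeping, which though elementary requires careful case analysis to confirm that all the phase and parity contributions conspire to give an overall positive rate.
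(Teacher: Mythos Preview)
Your proof is correct and follows the same overall arc as the paper's: reduce the transition probability to a determinantal density via \eqref{eq:light2}, factor the resulting matrix as $n^{-1}D\,A^{\mathbf{h}'}(A^{\mathbf{h}})^*$, and invoke Lemma~\ref{lem: vandermonde} for the sign bookkeeping (including the wrap-around case, which you handle just as the paper does).

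The one place you diverge is in how you land on the $\ell\times\ell$ matrix. The paper keeps all $\ell$ occupation events at time~$0$ \emph{and} the bead event at $h_\ell$, giving an $(\ell+1)\times(\ell+1)$ determinant; it then observes that the two columns indexed by $h_\ell$ differ only in a single entry and collapses the determinant by a column operation. You instead drop the redundant occupation event at $h_\ell$ up front---justified since the bead density at $(0,h_\ell)$ already forces $h_\ell\in X_0$, whence Lemma~\ref{lem: cardinalityl} pins down $X_0=\mathbf{h}$---and then use the root-of-unity orthogonality $\sum_{w\in\mathcal{R}}w^k=-\sum_{w\in\mathcal{L}}w^k$ (valid since $h'_\ell-h_j\not\equiv 0\pmod n$) to convert the bead row from an $\mathcal{R}$-sum to an $\mathcal{L}$-sum. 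Your route is more economical: it avoids the $(\ell+1)$-dimensional detour entirely and makes explicit the orthogonality step that the paper's proof leaves implicit when it writes $J_{\ell+1,j}$ directly as an $\mathcal{L}$-sum.
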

\begin{proof}
    Write $E = \{h_1,\dots,h_\ell\}$ and $E' = \{h_1,\dots,h_\ell+1\}$. Consider the event $\{X_t = E', X_0 = E\}$. Any event involving more than 1 bead has probability $o(t)$ and the only possible event involving 1 bead is that $(0,h_1),\dots,(0,h_\ell)$ are occupied and the interval $[0,t)$ on row $h_\ell$ contains an up bead. So by Theorem \ref{prop:longer}, we have that
    \begin{align}\label{finalform}
        P^{\ell,n}(X_t = E', X_0 = E) &= P^{\ell,n}(y_1,\dots,y_\ell \text{ occupied}, [h_\ell \mathbf{e}^2, h_\ell \mathbf{e}^2 + t\mathbf{e}^1)) + o(t) \nonumber\\
        &= t\det_{i,j = 1}^{\ell + 1} H_{\ell,n}(\mathbf{f}^i,y_i,y_j) + o(t)
    \end{align}
    where 
    \begin{equation*}
        y_i = (0,h_i), \mathbf{f}^i = \mathbf{e}^1 \text{ for all }1 \leq j \leq \ell \text{ and } y_{\ell + 1} = (0,h_\ell), \mathbf{f}^i = \mathbf{e}.
    \end{equation*}
    Honing in on the determinant term, we have
    \begin{equation}\label{blockmatrix}
        \det_{i,j = 1}^{\ell + 1} H_{\ell,n}(\mathbf{f}^i,y_i, y_j) = \det \begin{pmatrix}
            K_{i,j} & K_{i,\ell} & K_{i,\ell + 1}\\
            K_{\ell,j} & K_{\ell,\ell} & K_{\ell,\ell + 1}\\
            J_{\ell + 1,j} & J_{\ell + 1,\ell} & J_{\ell + 1,\ell + 1}
        \end{pmatrix}
    \end{equation}
    where $K_{i,j} = H_{\ell,n}(\mathbf{e}^1,y_i, y_j)$, $J_{i,j} = H_{\ell,n}(\mathbf{e}^2,y_i, y_j)$ and Equation \eqref{blockmatrix} depicts an $(\ell - 1 + 1 + 1) \times (\ell - 1 + 1 + 1)$ block matrix. 

    We now leverage that the last two columns are similar. It is not hard to confirm that, as $t \downarrow 0$,
    \begin{align*}
        K_{i,\ell+1} &= K_{i,\ell}\\
        K_{\ell,\ell + 1} &= K_{\ell,\ell} - 1\\
        J_{\ell + 1,\ell + 1}&= J_{\ell + 1,\ell} + o(1)
    \end{align*}
    and so Equation \eqref{blockmatrix} simplifies to
    \begin{align*}
        \det_{i,j = 1}^{\ell + 1} H_{\ell,n}(\mathbf{f}^i,y_i, y_j) &= \det \begin{pmatrix}
            K_{i,j} & K_{i,\ell} & K_{i,\ell}\\
            K_{\ell,j} & K_{\ell,\ell} & K_{\ell,\ell} - 1\\
            J_{\ell + 1,j} & J_{\ell + 1,\ell} & J_{\ell + 1,\ell}
        \end{pmatrix} + o(1)\\
        &= \det \begin{pmatrix}
            K_{i,j} & K_{i,\ell} & 0\\
            K_{\ell,j} & K_{\ell,\ell} &- 1\\
            J_{\ell + 1,j} & J_{\ell + 1,\ell} & 0
        \end{pmatrix} + o(1)\\
        &= \det \begin{pmatrix}
            K_{i,j} & K_{i,\ell}\\
            J_{\ell + 1,j} & J_{\ell + 1,\ell}
        \end{pmatrix} + o(1)
    \end{align*}
    where the last line features a determinant of a $(\ell - 1 + 1) \times (\ell - 1 + 1)$ block matrix and we have used column operations and the fact that the -1 lies in position $(\ell + 1,\ell)$ to see that the determinant does not change sign upon expanding down the $(\ell + 1)$-th column.

    Now examining the matrix we have, we realise we can write it as $Q_{i,j}$ for $1 \leq i,j \leq \ell$, where
    \begin{align*}
        Q_{i,j} &= \begin{cases}
            K_{i,j} = \frac{1}{n} \sum_{z \in \mathcal{L}_{\ell,n}} z^{h_i - h_j} & \text{ for } 1 \leq i \leq \ell -1, 1 \leq j \leq \ell\\
            J_{\ell + 1, j} = -\frac{T}{n} \sum_{z \in \mathcal{L}_{\ell,n}} z^{1 + h_\ell - h_j} + o(1) & \text{ for } i = \ell, 1 \leq j \leq \ell
        \end{cases}\\
        &= (-T)^{\mathrm{1}\{i = \ell\}}\frac{1}{n} \sum_{z \in \mathcal{L}_{\ell,n}} z^{h'_i + n\mathrm{1}\{h_\ell = n-1, i = \ell\} - h_j}  + o(1)\hspace{0.5cm}\text{ for } 1 \leq i,j \leq \ell\\
        &= \left((-1)^{1 + \theta \mathrm{1}\{h_\ell = n-1\}}T\right)^{\mathrm{1}\{i = \ell\}}\frac{1}{n} \sum_{z \in \mathcal{L}_{\ell,n}} z^{h'_i - h_j} + o(1) \hspace{0.5cm}\text{ for } 1 \leq i,j \leq \ell
    \end{align*}
    and so we can further simplify Equation \eqref{blockmatrix} to
    \begin{align*}
        \det_{i,j = 1}^{\ell + 1} H_{\ell,n}(\mathbf{f}^i,y_i, y_j) &= \det_{i,j = 1}^{\ell} Q_{i,j} + o(1)\\
        &= (-1)^{1 + \theta \mathrm{1}\{h_\ell = n-1\}}T n^{-\ell} \left(\det_{i,r = 1}^{\ell}A_{i,r}^{\mathbf{h}'}\right)\overline{\left(\det_{r,j = 1}^{\ell}A_{r,j}^{\mathbf{h}}\right)} + o(1).
    \end{align*}
    To simplify this, we refer back to Lemma \ref{lem: vandermonde}, but with care, since there are two cases. 
    
    If $h_\ell \neq n-1$, we see that $\sgn(\sigma_\mathbf{h}) = \sgn(\sigma_\mathbf{h}')$, but that $(-1)^{\sum_{r=1}^{\ell} h'_r} = -(-1)^{\sum_{r=1}^{\ell} h_r}$. So ignoring the complex moduli, $\det_{i,r = 1}^{\ell}A_{i,r}^{\mathbf{h}'}$ is one sign flip different from $\det_{i,r = 1}^{\ell}A_{i,r}^{\mathbf{h}}$.
    
    If $h_\ell = n-1$, then $h_\ell' = 0$, so $\sgn(\sigma_{\mathbf{h}'}) = (-1)^{\ell-1}\sgn(\sigma_{\mathbf{h}})$ and $(-1)^{\sum_{r=1}^{\ell} h'_r}(-1)^{n-1} =(-1)^{\sum_{r=1}^{\ell} h_r}$. So ignoring the complex moduli again, $\det_{i,r = 1}^{\ell}A_{i,r}^{\mathbf{h}'}$ is $n-\ell$ sign flips different from $\det_{i,r = 1}^{\ell}A_{i,r}^{\mathbf{h}}$.

    Recalling that $n - \ell + 1 \equiv \theta \pmod 2$, this means that in either case, we have that 
    \begin{equation*}
        \det_{i,j = 1}^{\ell + 1} H_{\ell,n}(\mathbf{f}^i,y_i, y_j) = Tn^{-\ell} \Delta(\mathbf{h}')\Delta(\mathbf{h}) + o(1)
    \end{equation*}
    Substituting this back into Equation~\eqref{finalform} and using Corollary~\ref{cor: stateprobability} gives the result in Equation~\eqref{transitionrateprobup}.

    The proof for Equation~\eqref{transitionrateprobdown} is argued in the same fashion.
    \end{proof}

\section{\texorpdfstring{The measures $P^{\ell,n}$ as the law of non-colliding walkers in the ring}{The measures Pn,l as the law of non-colliding walkers in the ring} }
\label{sec:equiv}
In this section we work towards a proof of the following result.

\begin{thm} \label{thm:equiv}
The probability measure $P^{\ell,n}$ is precisely the stationary law of $\ell$ independent Poisson walkers on the ring $\mathbb{Z}_n$, that jump from $h$ to $h+1$ at rate $T$, from $h$ to $h-1$ at rate $T'$ and are conditioned to never collide. 
\end{thm}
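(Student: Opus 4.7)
The plan is to identify both $P^{\ell,n}$ and the non-colliding walker law $\mathbf{P}^{\ell,n}$ as continuous-time Markov chains on the state space $[n]^{(\ell)}$ of ordered $\ell$-tuples of distinct elements of $\mathbb{Z}_n$, and to match them by comparing jump rates and stationary distributions. We already have from Theorem~\ref{thm:MarkovProperty} that $(X_t)_{t \in \mathbb{R}}$ under $P^{\ell,n}$ is Markov, from Lemma~\ref{lem: transitionrates} that its non-zero jump rates out of $\mathbf{h}$ are
\begin{equation*}
q(\mathbf{h},\mathbf{h}+\mathbf{e}_j) = T\,\frac{\Delta(\mathbf{h}+\mathbf{e}_j)}{\Delta(\mathbf{h})}, \qquad q(\mathbf{h},\mathbf{h}-\mathbf{e}_j) = T'\,\frac{\Delta(\mathbf{h}-\mathbf{e}_j)}{\Delta(\mathbf{h})}
\end{equation*}
whenever the target tuple lies in $[n]^{(\ell)}$, and from Corollary~\ref{cor: stateprobability} that its stationary distribution is $\Delta(\mathbf{h})^2/n^\ell$. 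It therefore suffices to show that $\mathbf{P}^{\ell,n}$ is a Markov chain on $[n]^{(\ell)}$ with these same rates and stationary distribution.

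The first step is to establish the cyclic Karlin--McGregor formula in the spirit of Liechty--Wang \cite{LW} and Fulmek \cite{fulmek}: an explicit determinantal expression for the killed transition probability $P_x(Y_t = \mathbf{y}, \tau > t)$, where $(Y_t)$ is $\ell$ independent $(T,T')$-Poisson walkers on $\mathbb{Z}_n$ and $\tau$ is the first collision time. On the ring this requires collapsing the usual planar Karlin--McGregor determinant over all possible winding numbers of the non-colliding paths; the resulting determinant has entries that are the cyclic transition kernels of the single asymmetric Poisson walker on $\mathbb{Z}_n$, which diagonalise against the characters $h \mapsto w^h$ for $w^n = 1$ with eigenvalues $\exp(-(T+T')t + Twt + T'w^{-1}t)$.

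The second step is to extract the principal eigenfunction. Diagonalising the killed generator on antisymmetric functions of $\mathbf{h} \in [n]^{(\ell)}$ in the Fourier basis on $\mathbb{Z}_n$, one sees that the spectrum consists of elementary symmetric functions in $Tw + T'w^{-1}$ over $\ell$-subsets of $n$-th roots of unity, and the dominant exponential decay as $t\to\infty$ is controlled by the subset with largest real part of $\sum (Tw+T'w^{-1})$. This gives a principal eigenvalue $\lambda_0 = (T+T')c_{\ell,n}$ with $c_{\ell,n} = \ell - \sin(\pi \ell/n)/\sin(\pi/n)$, and its eigenfunction (up to sign) is the Vandermonde-type function $h(\mathbf{h}) = \Delta(\mathbf{h})$, which vanishes on collisions. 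Invoking standard Doob $h$-transform theory then identifies $\mathbf{P}_x^{\ell,n}$ as the $h$-transform of the killed walker process by $\Delta$; in particular it is a Markov chain with rates $\hat q(\mathbf{h},\mathbf{h}') = q_{\mathrm{free}}(\mathbf{h},\mathbf{h}') \Delta(\mathbf{h}')/\Delta(\mathbf{h})$, which substituting the free Poisson rates $q_{\mathrm{free}}(\mathbf{h},\mathbf{h}\pm\mathbf{e}_j) \in \{T,T'\}$ recovers exactly the rates of Lemma~\ref{lem: transitionrates}. Finally, the stationary distribution of an $h$-transformed chain obtained from a reversible process with uniform reference measure is proportional to $h^2$, matching $\Delta(\mathbf{h})^2/n^\ell$ from Corollary~\ref{cor: stateprobability}.

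The hard part will be the first step: the cyclic Karlin--McGregor identity in the asymmetric setting and the matching of the resulting determinantal kernel to $H_{\ell,n}$ with the correct sign conventions and choice of root sets $\mathcal{L}_{\ell,n}, \mathcal{R}_{\ell,n}$. The sign cancellations between winding sectors must align with the $(-1)^{n-\ell+1}$ parity convention in \eqref{eq:rootset1}--\eqref{eq:rootset2} and with the cycle parity from Proposition~\ref{prop:decomp}. Once this identification is secured, the spectral decomposition, the identification of $\Delta$ as the principal eigenfunction, and the comparison of rates and stationary distributions are direct computations.
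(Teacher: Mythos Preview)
Your approach is essentially the same as the paper's: both establish the cyclic Karlin--McGregor formula, extract the leading eigenvalue and eigenfunction $\Delta$ from the large-$t$ asymptotics of the killed determinant, and then match the jump rates of the conditioned process against those computed in Lemma~\ref{lem: transitionrates}. The paper carries out the $h$-transform computation by hand (Lemma~\ref{lem:nocol2} through the explicit formula \eqref{eq:Qtran} for $\mathbf{P}_x^{\ell,n}(X_t=y)$, then a small-$t$ expansion of the determinant), whereas you invoke the abstract Doob theory; the content is the same.

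One slip: the free $\ell$-walker process on the ring is \emph{not} reversible when $T \neq T'$, so your stated justification for the stationary distribution of the $h$-transform being proportional to $\Delta^2$ is incorrect. The conclusion is nonetheless right, because $\Delta$ is the principal eigenfunction of both the killed generator and its adjoint (the root set $\mathcal{L}_{\ell,n}$ is closed under conjugation, so swapping $T \leftrightarrow T'$ leaves the leading eigenspace unchanged). More to the point, this step is redundant: once the jump rates agree, the two irreducible finite-state Markov chains have the same law and hence automatically the same stationary distribution. The paper's proof accordingly only compares rates.
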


\subsection{The cyclic Karlin-McGregor formula}

Before proving Theorem \ref{thm:equiv}, we make a brief digression on the Karlin-McGregor formula. Namely, we state and prove a version of the Karlin-McGregor formula \cite{KM} involving Kasteleyn weightings that is suitable for processes on this torus. We emphasise that this weighted version of the formula is not new, with the idea appearing in work by Liechty and Wang \cite{LW} and by Fulmek \cite{fulmek}. 

We briefly state the Karlin-McGregor formula in its simplest incarnation. Suppose we have a right-continuous Markov process $X :=(X_t)_{t \geq 0}$ taking values in $\mathbb{Z}$ and with the property that $X$ experiences jumps of size at most one. For $x,y \in \mathbb{Z}$, let $p_t(x,y) := P_x(X_t = y)$ be the transition kernel of the process, where $P_x$ denotes the law of the process starting from $x$. The Karlin-McGregor formula states that if under $P_{x_1,\ldots,x_\ell}$, $X_1(t),\ldots,X_\ell(t)$ are independent copies of this Markov process starting at distinct spatial locations $x_1< \ldots < x_\ell$ in $\mathbb{Z}$, then for any $y_1 < \ldots < y_\ell$ in $\mathbb{Z}$ we have 
\begin{align}\label{eq:KMspecific}
P_{x_1,\ldots,x_\ell} \left( X_1(t)=y_1,\ldots,X_\ell(t) = y_\ell, \text{ no collisions} \right) = \det_{i,j =1}^\ell p_t(x_i,y_j), 
\end{align}
where the event $\{ \text{no collisions} \} := \{ \forall s \in [0,t] : X_i(s) \neq X_j(s) , i \neq j \}$ refers to the event that no two copies of the process occupy the same spatial location at the same time. Karlin and McGregor note in their original paper \cite{KM} that more broadly, letting $\mathcal{S}_\ell$ represent the group of permutations on $\ell$ elements, for independent copies $X_1(t),\ldots,X_\ell(t)$ of $X(t)$, a Strong Markov process (continuous or otherwise) and $A_1,\dots,A_\ell$ Borel sets, we have 
\begin{equation} \label{eq:KMgen}
\sum_{ \sigma \in \mathcal{S}_\ell} \mathrm{sgn}(\sigma) P_{x_1,\ldots,x_\ell}( X_1(t) \in A_{\sigma(1)},\ldots,X_\ell(t) \in A_{\sigma(\ell)} , \text{ no collisions})  = \det_{i,j =1}^\ell p_t(x_i,y_j).
\end{equation}
%where $A_\sigma := \{ X_1(t) = y_{\sigma(1)},\ldots,X_\ell(t) = y_{\sigma(k)} , \text{ no collisions} \}$. 
It is simple to see that $P_{x_1,\dots,x_\ell}$ is a Feller process, so is therefore Strongly Markovian. So \eqref{eq:KMspecific} follows from \eqref{eq:KMgen} since no collisions implies that $X_1(t) < \dots < X_\ell(t)$ for all $t$.
Consider now the setting where $E = [n]$ is a discrete torus and the individual copies of the Markov chain undergo jumps of size one. Then $P_{x_1,\ldots,x_\ell}(A_\sigma)$ may only be non-zero when $\sigma$ is a power of the cyclic permutation $(12\ldots \ell)$. Then since $(12\ldots \ell)$ has sign $(-1)^{\ell+1}$, \eqref{eq:KMgen} reads 
\begin{equation} \label{eq:KMcyclic}
\sum_{j=0}^{\ell-1} (-1)^{(\ell+1)j} P_{x_1,\ldots,x_\ell}( X_1(t) = y_{1+j},\ldots,X_\ell(t) = y_{\ell+j} , \text{ no collisions})  = \det_{i,j =1}^\ell p_t(x_i,y_j)
\end{equation}
where $y_i := y_{i-\ell}$ for $i \geq \ell+1$. 

When $\ell$ is odd, the powers of $(-1)$ on the left-hand side of \eqref{eq:KMgen} are all $1$ and accordingly the left-hand side has the satisfying interpretation of measuring the probability that the $\ell$ independent particles occupy the set $\{y_1,\ldots,y_\ell\}$ in any configuration, without having collided. 

When $\ell$ is even however, the left-hand side of \eqref{eq:KMcyclic} has no natural probabilistic interpretation.
It transpires, however, that one can introduce something analogous to a Kasteleyn weighting to the Karlin-McGregor formula to tackle the problem of even $k$. With this in mind, given a stochastic process $(X_t)_{t \geq 0}$ on the torus $[n]$, we define the winding number process to be
\begin{align*}
W(t) := \# \{ \text{Jumps in $[0,t]$ from $n-1$ to $0$} \} -  \# \{ \text{Jumps in $[0,t]$ from $0$ to $n-1$} \}.
\end{align*}
With this in hand, we now give a cyclic Karlin-McGregor formula suitable for even $\ell$.

\begin{thm}[Cyclic Karlin-McGregor formula \cite{fulmek}, \cite{LW}] \label{thm:cyclicKM}
Let $X_1(t),\ldots,X_\ell(t)$ be independent copies of a Markov chain on $[n]$ undergoing jumps of size at most one and starting from distinct spatial locations $x_1,\ldots,x_\ell$ under $P_{x_1,\ldots,x_\ell}$. Define the kernel with Kasteleyn weightings 
\begin{align*}
p_t^\ell(x,y) := E_x [ \mathrm{1}\{X(t) = y \}(-1)^{(\ell+1) W(t) } ]
\end{align*}
where $(W(t))_{t \geq 0}$ is the winding number process associated with $(X(t))_{t \geq 0}$. Then
\begin{equation} \label{eq:KMcyclic2}
P_{x_1,\ldots,x_\ell}( \{ X_1(t),\ldots,X_\ell(t)\} = \{ y_1,\ldots,y_\ell \} , \text{ no collisions})  = \det_{i,j =1}^\ell p^\ell_t(x_i,y_j)
\end{equation}
\end{thm}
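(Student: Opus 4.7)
The plan is to reduce the cyclic identity to the standard Karlin--McGregor formula on the universal cover $\mathbb{Z}$ of the ring, and then use a careful sign analysis to isolate the torus non-colliding configurations. I would begin by lifting each walker $X_i$ to an integer-valued process $\tilde{X}_i$ started at $x_i$, where each jump from $n-1$ to $0$ on the torus becomes a $+1$ jump on the lift and each jump from $0$ to $n-1$ becomes a $-1$ jump. With this convention $W_i(t) = (\tilde{X}_i(t) - X_i(t))/n$, so writing $\tilde{p}_t$ for the lift kernel,
\begin{equation*}
    p_t^\ell(x, y) = \sum_{k \in \mathbb{Z}} (-1)^{(\ell+1)k}\, \tilde{p}_t(x, y + kn).
\end{equation*}
Expanding $\det_{i,j=1}^\ell p_t^\ell(x_i, y_j)$ by the Leibniz formula recasts it as the expectation, under the independent lifts, of the quantity $(-1)^{(\ell+1)\sum_i W_i(t)}\,\mathrm{sgn}(\sigma_X)\,\mathbf{1}\{\{X_i(t)\} = \{y_j\}\}$, where $\sigma_X \in \mathcal{S}_\ell$ is defined on this event by $X_i(t) = y_{\sigma_X(i)}$.

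Second, I would apply the standard Karlin--McGregor reflection to the lifts: at the first time two lifts $\tilde{X}_i$, $\tilde{X}_j$ meet on $\mathbb{Z}$, swap their trajectories thereafter. Under this swap the final lift positions exchange, so the individual winding numbers swap and their sum $\sum_k W_k(t)$ is preserved, while $\sigma_X$ is post-composed with the transposition $(ij)$ and hence flips sign. Configurations with a lift collision therefore cancel pairwise, leaving only configurations with $\tilde{X}_1(s) < \tilde{X}_2(s) < \cdots < \tilde{X}_\ell(s)$ for every $s \in [0,t]$.

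On these surviving configurations I would write $\tilde{X}_i(t) = a_i n + r_i$ with $r_i \in [0,n)$, so that the $a_i$ are non-decreasing, and exploit the easily-verified equivalence that torus non-collision is precisely $\tilde{X}_\ell(s) - \tilde{X}_1(s) < n$ for every $s \in [0,t]$ (since a pairwise gap cannot cross a multiple of $n$). This forces $a_\ell - a_1 \in \{0,1\}$, and a direct case analysis in these two possibilities---all $a_i$ equal, giving $\sigma_X = \mathrm{id}$; or a single jump from $a$ to $a+1$ at some index $k$, giving $\sigma_X$ a specific cyclic shift---combined with the parity identity $(\ell+1)\ell \equiv 0 \pmod 2$, shows that $\mathrm{sgn}(\sigma_X)\,(-1)^{(\ell+1)\sum_i W_i(t)} = +1$ uniformly on torus non-colliding paths. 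These paths therefore contribute exactly the probability appearing on the left-hand side of \eqref{eq:KMcyclic2}.

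The remaining, and main, obstacle is to show that no-lift-collision paths which nevertheless have a torus collision (equivalently, paths where $\tilde{X}_\ell(s) - \tilde{X}_1(s)$ reaches some multiple $kn \geq n$) contribute zero. My plan is to build a secondary ``shift-reflection'' involution: at the first time $\tau$ where $\tilde{X}_\ell(s) - \tilde{X}_1(s) = kn$ for some $k \geq 1$, swap $\tilde{X}_1$ with the translate $\tilde{X}_\ell - kn$ from $\tau$ onwards, which again preserves $\sum_i W_i(t)$ while flipping $\mathrm{sgn}(\sigma_X)$. The difficulty is that the naive single shift-reflection can introduce new lift collisions when the gap $\tilde{X}_\ell - \tilde{X}_1$ oscillates across several periods, so the cleanest execution is likely to argue directly from the multilinear expansion
\begin{equation*}
\det_{i,j=1}^\ell p_t^\ell(x_i, y_j) = \sum_{k_1,\ldots,k_\ell \in \mathbb{Z}} (-1)^{(\ell+1)\sum_j k_j}\,\det_{i,j=1}^\ell \tilde{p}_t(x_i, y_j + k_j n),
\end{equation*}
applying classical Karlin--McGregor to each summand after sorting the lift endpoints and tracking the resulting permutation signs, in the spirit of Fulmek \cite{fulmek} and Liechty--Wang \cite{LW}.
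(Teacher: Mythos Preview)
The paper does not give its own proof of this theorem; it simply writes ``For the proof of this result see \cite{fulmek} or \cite{LW}.'' So there is nothing in the paper to compare against beyond those references, and your outline is in fact the argument those references carry out.

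Your strategy is correct, but the two-stage reflection you propose is more complicated than necessary. The detour through ``first lift collision, then shift-reflection for residual torus collisions'' is what creates the difficulty you flag. The cleaner execution is to do a single reflection directly at the first \emph{torus} collision time $\tau$, i.e.\ the first time some $X_i(\tau)=X_j(\tau)$ on $[n]$. At that moment swap the two walkers' future torus increments; this is a measure-preserving involution on colliding path configurations which swaps the post-$\tau$ winding contributions of $i$ and $j$ (hence preserves $\sum_k W_k(t)$) and post-composes $\sigma_X$ with the transposition $(ij)$ (hence flips $\mathrm{sgn}(\sigma_X)$). All torus-colliding paths cancel in one stroke, and you never need to worry about whether new lift collisions are created.

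On the surviving non-colliding paths your case analysis is exactly right: the lift gaps between consecutive particles stay in $(0,n)$, so $\tilde X_\ell(t)-\tilde X_1(t)<n$, forcing $a_\ell-a_1\in\{0,1\}$; the resulting cyclic shift $\sigma_X=(12\cdots\ell)^m$ satisfies $\mathrm{sgn}(\sigma_X)=(-1)^{(\ell+1)m}$ while $\sum_i W_i(t)\equiv m\pmod 2$ when $\ell$ is even (and the sign factor is trivially $1$ when $\ell$ is odd), giving the constant $+1$ you need. The multilinear expansion you write at the end is an equivalent bookkeeping device for the same involution and is indeed how Fulmek organises it.
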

{For the proof of this result see \cite{fulmek} or \cite{LW}.}

We now consider applying the cyclic Karlin-McGregor formula to our setting. Let us begin by noting that the transition density $p_t(x,y)$ of a single particle on $[n]$ that jumps from $h$ to $h+1$ at rate $T$ and from $h$ to $h-1$ at rate $T'$ is given by 
\begin{align*}
p_t(x,y) = \sum_{j \geq 0}  \sum_{k \geq 0} \frac{(Tt)^je^{-Tt}}{j!}  \frac{ (T't)^k e^{-T't}}{k!} \mathrm{1}_{\{ y-x \equiv j-k~ \mathrm{mod} ~n \}}.
\end{align*}
More generally, the twisted transition kernel is given by 
\begin{align} \label{eq:ptth}
p^{\ell}_t(x,y) = \sum_{j \geq 0}  \sum_{k \geq 0} \frac{(Tt)^je^{-(Tt)}}{j!}  \frac{ (T't)^k e^{-(T't)}}{k!} \mathrm{1}_{\{ y - x + k - j \equiv 0~ \mathrm{mod} ~n \}} (-1)^{ (\ell+1)\frac{y-x +k-j}{n} }.
\end{align}
Consider now the identity for $h \in \mathbb{Z}$,
\begin{align} \label{eq:repa}
\frac{1}{n} \sum_{z^n = (-1)^{\theta}} z^{-h} = \mathrm{1}_{\{h \equiv 0 ~\mathrm{mod}~n \}} (-1)^{\theta h/n} .
\end{align} 
Using \eqref{eq:repa} in \eqref{eq:ptth} we have 
\begin{align} \label{eq:ptth2}
p^\ell_t(x,y) &= \sum_{j \geq 0}  \sum_{k \geq 0} \frac{(Tt)^je^{-(Tt)}}{j!}  \frac{ (T't)^k e^{-(T't)}}{k!} \frac{1}{n} \sum_{ z^n = (-1)^{\ell+1}} z^{ -y+x -k+j } \nonumber \\
 &= \frac{1}{n} \sum_{ z^n = (-1)^{\ell + 1} } z^{ - (y-x)} \sum_{j \geq 0}  \sum_{k \geq 0} \frac{(Ttz)^je^{-(Tt)}}{j!}  \frac{ (T'tz^{-1})^k e^{-(T't)}}{k!} \nonumber \\
 &= \frac{1}{n} \sum_{ z^n = (-1)^{\ell+1} } z^{ - (y-x)} {e^{ - (-Tz-T'z^{-1}+T+T')t}}.
\end{align}
Setting $z = -w$ in the end, we have
\begin{align} \label{eq:ptth3}
p^\ell_t(x,y) = \frac{(-1)^{y-x}}{n} \sum_{ w^n = (-1)^{n+\ell+1} } w^{ - (y-x)} {e^{ - (Tw+T'w^{-1}+T+T')t}};
\end{align}
c.f. the kernel in Theorem \ref{thm:noncoll}.

Wrapping together our work in this section, we have established the following:

\begin{lemma} \label{lem:nocol}
Under a probability measure $P_{x_1,\ldots,x_\ell}$, suppose we have $\ell$ particles starting at distinct spacial locations $x_1,\ldots,x_\ell$ in $[n]$ and who independently perform continuous-time random walks where they jump up at rate $T$ and down at rate $T'$. Let  $\tau$ be the first collision time of two of these particles. Then writing $X_i(t)$ for the position of particle $i$ at time $t$ we have 
\begin{align} \label{eq:safetransition}
&P_{x_1,\ldots,x_\ell}( \{ X_1(t),\ldots,X_\ell(t)\} = \{y_1,\ldots,y_\ell\} , \tau > t) = \det_{i,j=1}^\ell \left( p^\ell_t(x_i,y_j) \right),
\end{align}
where $p_t^\ell(x,y)$ is as in \eqref{eq:ptth3}.
\end{lemma}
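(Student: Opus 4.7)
The plan is to recognise this lemma as the immediate specialisation of Theorem~\ref{thm:cyclicKM} to the continuous-time asymmetric Poisson walker on $[n]$, combined with the explicit Fourier-type evaluation of the twisted kernel already carried out in \eqref{eq:ptth}--\eqref{eq:ptth3}. First I would verify the hypotheses of the cyclic Karlin--McGregor formula: the single-particle process is a right-continuous, strong Markov process on the torus $[n]$ whose jumps have size exactly one (either $+1$ or $-1$ mod $n$), so Theorem~\ref{thm:cyclicKM} applies with this choice of underlying chain.

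Applying that theorem to $\ell$ independent copies started at distinct locations $x_1,\ldots,x_\ell$ yields
\begin{equation*}
P_{x_1,\ldots,x_\ell}\bigl(\{X_1(t),\ldots,X_\ell(t)\} = \{y_1,\ldots,y_\ell\},\ \tau > t\bigr) = \det_{i,j=1}^{\ell} p_t^\ell(x_i,y_j),
\end{equation*}
where $p_t^\ell(x,y) = E_x[\mathrm{1}\{X(t)=y\}(-1)^{(\ell+1)W(t)}]$ and $W(t)$ is the winding number of a single walker at time $t$. Here the event that the unordered sets coincide together with $\{\tau > t\}$ forces the induced permutation on indices to be a power of the cyclic permutation $(1\,2\,\cdots\,\ell)$, which is exactly the simplification reflected on the left-hand side of Theorem~\ref{thm:cyclicKM}.

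Next I would identify $p_t^\ell$ with the explicit kernel in \eqref{eq:ptth3}. Conditioning on the numbers $j$ and $k$ of up-jumps and down-jumps in $[0,t]$ (which are independent Poisson variables with means $Tt$ and $T't$ respectively), the winding number is $W(t) = (y-x+k-j)/n$ on the event that the final position equals $y$, giving \eqref{eq:ptth}. Applying the root-of-unity identity \eqref{eq:repa} with $\theta = \ell + 1$ to unfold the mod-$n$ indicator and summing the resulting Poisson series produces \eqref{eq:ptth3} after substituting $w=-z$. This is the calculation already carried out in the preamble to the lemma, so no new work is required.

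There is essentially no obstacle here: the content of the lemma is just the packaging of Theorem~\ref{thm:cyclicKM} with the Fourier computation \eqref{eq:ptth}--\eqref{eq:ptth3}. The only mild point worth highlighting in the write-up is that $\tau > t$ already implies $X_1(t),\ldots,X_\ell(t)$ are distinct, so the unordered-set event on the left-hand side of \eqref{eq:safetransition} matches precisely the left-hand side of \eqref{eq:KMcyclic2}; this is what allows us to drop the signed sum over cyclic shifts.
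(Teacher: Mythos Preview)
Your proposal is correct and matches the paper's approach exactly: the paper's proof is the one-liner ``Combine \eqref{thm:cyclicKM} with \eqref{eq:ptth2},'' and you have spelled out precisely this, verifying the hypotheses of the cyclic Karlin--McGregor formula and then invoking the Fourier computation \eqref{eq:ptth}--\eqref{eq:ptth3} for the twisted kernel. One minor remark: Theorem~\ref{thm:cyclicKM} as stated already delivers the unordered-set probability directly, so the comment about ``dropping the signed sum over cyclic shifts'' is not needed --- that cancellation is built into the theorem's Kasteleyn weighting.
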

One can check that shifting $x_i$ and $y_i$ for $i = 1,\dots, \ell$ all up by $1$ does not change the value of $\det_{i,j = 1}^{\ell}\left(p_t^\ell(x,y)\right)$, even when any $x_i$ or $y_i$ loops.
\begin{proof}
Combine \eqref{thm:cyclicKM} with \eqref{eq:ptth2}.
\end{proof}

\subsection{Processes conditioned {never} to collide}

As in Section \ref{sec:noncoll} results, we now define the process associated with Poissonian walkers conditioned to never collide. 
We begin by studying the large-$T$ asymptotics of the non-collision probabilities in Lemma \ref{lem:nocol}. 

Recall that $\mathcal{L}_{\ell,n}$ and $\mathcal{R}_{\ell,n}$ denote the $\ell$ (resp.\ $n-\ell$) elements of the set $\{w \in \mathbb{C} : w^n = (-1)^{n+\ell+1} \}$ with least (resp.\ greatest) real part. It is a brief calculation to verify that 
\begin{align} \label{eq:munl}
\sum_{w \in \mathcal{L}_{\ell,n}} w = \sum_{w \in \mathcal{L}_{\ell,n}} w^{-1} = - \sin(\pi \ell/n)/\sin(\pi/n) = - \mu_{\ell,n}.
\end{align}
(The quickest way to do this is note that the left-hand side is a negative real number and its modulus is the same as that of the geometric sum $\sum_{j=0}^{\ell-1} e^{2 \pi \iota j /n}$.)

We have the following result

\begin{lemma} \label{lem:nocol2}
Given $x_1 < \ldots < x_\ell$ and $y_1 <\ldots < y_\ell$ in $[n]$ we have 
\begin{align*}
P_{x_1,\ldots,x_\ell}( \{ X_1(t),\ldots,X_\ell(t)\} = \{y_1,\ldots,y_\ell\} , \tau > t) = (1+o(1))n^{-\ell}\Delta(x)\Delta(y)e^{ - (T+T')(\ell - \mu_{\ell,n})t } 
\end{align*}
as $t \to \infty$.
\end{lemma}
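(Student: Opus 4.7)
The plan is to expand the determinant from Lemma \ref{lem:nocol} via Cauchy--Binet, identify the dominant exponential contribution as $t \to \infty$, and evaluate the leading coefficient using the Vandermonde computation of Lemma \ref{lem: vandermonde}.

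First, I would rewrite the twisted kernel \eqref{eq:ptth3} by absorbing the factor $(-1)^{y-x}$ into $w$, obtaining
\[
p^\ell_t(x,y) = \frac{1}{n}\sum_{w^n = (-1)^{n+\ell+1}} (-w)^{x-y}\, e^{-\lambda_w t}, \qquad \lambda_w := T+T'+Tw+T'w^{-1}.
\]
Since $(-w)^{x-y} = (-w)^{x}\cdot (-w)^{-y}$, the matrix $(p^\ell_t(x_i,y_j))_{i,j}$ factors as $\tfrac{1}{n} A B$ with $A_{i,w} := (-w)^{x_i}$ and $B_{w,j} := (-w)^{-y_j}e^{-\lambda_w t}$. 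Cauchy--Binet then yields
\[
\det_{i,j=1}^\ell p^\ell_t(x_i,y_j) = \frac{1}{n^\ell}\sum_{|S|=\ell} \exp\!\Bigl(-t\!\sum_{w \in S}\!\lambda_w\Bigr)\det_{i,\, w\in S}\!((-w)^{x_i})\,\det_{w\in S,\, j}\!((-w)^{-y_j}),
\]
where $S$ ranges over $\ell$-element subsets of $\{w : w^n = (-1)^{n+\ell+1}\}$.

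Next, I would observe that $\mathrm{Re}(\lambda_w) = (T+T')(1+\mathrm{Re}(w))$, so the $\ell$-subset minimising $\sum_{w \in S}\mathrm{Re}(\lambda_w)$ is precisely $\mathcal{L}_{\ell,n}$, the $\ell$ roots of least real part. Every other term in the Cauchy--Binet expansion decays at a strictly faster exponential rate, so the sum is dominated by $S = \mathcal{L}_{\ell,n}$ as $t \to \infty$. Using \eqref{eq:munl}, the exponent of this leading term simplifies to
\[
\sum_{w \in \mathcal{L}_{\ell,n}}\lambda_w = \ell(T+T') + T\!\!\sum_{w\in\mathcal{L}_{\ell,n}}\!\!w + T'\!\!\sum_{w\in\mathcal{L}_{\ell,n}}\!\!w^{-1} = (T+T')(\ell-\mu_{\ell,n}).
\]

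For the remaining Vandermonde prefactor, I would apply Lemma \ref{lem: vandermonde} twice. Pulling the signs $(-1)^{x_i}$ and $(-1)^{-y_j}$ out of rows and columns respectively leaves $(-1)^{\sum x_i}\det_{i,w}(w^{x_i})$ and $(-1)^{\sum y_j}\det_{w,j}(w^{-y_j})$. The first is given directly by the lemma, while for the second I would use that $w^{-y_j} = \overline{w^{y_j}}$ (valid since $|w|=1$), that $\mathcal{L}_{\ell,n}$ is closed under complex conjugation, and that $\det$ is polynomial with real coefficients, in order to identify this determinant as $\overline{\det_{w,j}(w^{y_j})}$. After cancellation of the signs $(-1)^{\sum x_i}, (-1)^{\sum y_j}$ with their doubles, and observing that $\sgn(\sigma_x) = \sgn(\sigma_y) = 1$ (since $x$ and $y$ are increasing) and that $\iota^{\ell(\ell-1)/2}\cdot(-\iota)^{\ell(\ell-1)/2}=1$, the prefactor collapses cleanly to $\Delta(x)\Delta(y)$. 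Combined with the leading exponential this yields the stated asymptotic. The main obstacle is the careful tracking of sign factors emerging from the Vandermonde computation (powers of $\iota$ and of $-1$, parity of $\ell(\ell-1)/2$, plus the identity $w^{-1} = \overline{w}$); beyond this bookkeeping, the argument is a routine application of Cauchy--Binet.
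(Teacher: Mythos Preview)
Your proposal is correct and follows essentially the same approach as the paper. The paper expands the determinant by multilinearity (summing over ordered $\ell$-tuples of roots and noting that repeated roots give zero), whereas you phrase the same step via Cauchy--Binet; after identifying $\mathcal{L}_{\ell,n}$ as the dominant subset, both arguments invoke Lemma~\ref{lem: vandermonde} in the same way, and your sign bookkeeping matches. (The remark that $\mathcal{L}_{\ell,n}$ is closed under conjugation is not actually needed, since $\det(\overline{M})=\overline{\det(M)}$ holds for any complex matrix.)
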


\begin{proof}
Pulling the sum out of the determinant on the right-hand side of \eqref{eq:safetransition} we obtain
\begin{align} \label{eq:safetransition2}
p_t(\vec{x},\vec{y} ) &:= P_{x_1,\ldots,x_\ell}( X_i(t) = y_i \text{ for } i=1,\ldots,\ell, \tau > t) \nonumber \\
 = & n^{-\ell} (-1)^{\sum_{i=1}^\ell (y_i-x_i) } e^{ - \ell(T+T')t }  \sum_{w_1,\ldots,w_\ell} e^{ - ( T\sum_{i=1}^\ell w_i +  T' \sum_{i=1}^\ell w_i^{-1}) t  }  \prod_{i=1}^\ell w_i^{x_i} \det_{i,j=1}^\ell \left(  w_i^{ - y_j} \right),
\end{align}
where the outer sum is over all combinations $w_1,\ldots,w_\ell$ of $n^{\text{th}}$ roots of $(-1)^{n+\ell+1}$. The determinant $\det_{i,j=1}^\ell(  w_i^{ - y_j})$ is zero if two such $w_i$ are the same. Thus, the sum is concentrated on distinct $w_1,\ldots,w_\ell$ minimising the real part of $T\sum_{i=1}^\ell w_i +  T' \sum_{i=1}^\ell w_i^{-1}$. Thus the leading order comes from when $\{w_1,\ldots,w_\ell \} = \mathcal{L}_{\ell,n}$, so that with $\mu_{\ell,n}$ as in \eqref{eq:munl} we have
\begin{align} \label{eq:safetransition3}
p_t(\vec{x},\vec{y} )= (1+o(1)) n^{-\ell} (-1)^{\sum_{i=1}^\ell (y_i-x_i) } e^{ - (\ell-\mu_{\ell,n})(T+T')t }  \sum'_{w_1,\ldots,w_\ell} \prod_{i=1}^\ell w_i^{x_i} \det_{i,j=1}^\ell \left(  w_i^{ - y_j} \right),
\end{align}
where the sum $ \sum'_{w_1,\ldots,w_\ell}$ is taken over all orderings $(w_1,\ldots,w_\ell)$ of the elements of $\mathcal{L}_{\ell,n}$. Write $z_j := e^{ \frac{ 2 \pi \iota }{ n} \left( \frac{n-\ell+1}{2} + k + 1 \right) }$, so that $\{z_1,\ldots,z_\ell\} = \mathcal{L}_{\ell,n}$. Indexing these orderings using permutations $\sigma$ of $\mathcal{S}_\ell$ to obtain the first equality below and then subsequently using $\det_{i,j=1}^r (A_{\sigma(i),j}) = \mathrm{sgn}(\sigma)\det_{i,j=1}^r(A_{i,j})$  and then packaging the remaining terms into another determinant to obtain the second, we obtain  
\begin{align} \label{eq:safetransition4}
p_t(\vec{x},\vec{y} ) = (1+o(1)) n^{-\ell} (-1)^{\sum_{i=1}^\ell (y_i-x_i) } e^{ - (\ell-\mu_{\ell,n})(T+T')t }  \sum_{ \sigma \in \mathcal{S}_\ell} \prod_{i=1}^\ell z_{\sigma(i)}^{x_i} \det_{i,j=1}^\ell \left(  z_{\sigma(i)}^{ - y_j} \right) \nonumber \\
 = (1+o(1)) n^{-\ell} (-1)^{\sum_{i=1}^\ell (y_i-x_i) } e^{ - (\ell-\mu_{\ell,n})(T+T')t }  \det_{i,j=1}^\ell \left(  z_{i}^{ - y_j} \right) \det_{i,j=1}^\ell \left(  z_{i}^{ x_j} \right).
\end{align}
The result now follows by applying Lemma \ref{lem: vandermonde} to \eqref{eq:safetransition4}.
\end{proof}

Summing over all $0 \leq y_1 < \ldots < y_\ell \leq n-1$ we obtain the following corollary.
\begin{cor} \label{cor:safe}
Given $x_1 < \ldots < x_\ell$ in $[n]$ we have 
\begin{align*}
P_{x_1,\ldots,x_\ell}( \tau > t) = (1+o(1))C_{\ell,n}\Delta(x)e^{ - (T+T')(\ell - \mu_{\ell,n})t } 
\end{align*}
as $t \to \infty$. Here
\begin{align*}
C_{\ell,n} := n^{-\ell} \sum_{0 \leq y_1 < \ldots < y_\ell \leq n-1} \Delta(y).
\end{align*}

\end{cor}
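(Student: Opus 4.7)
The plan is to obtain the corollary by summing the asymptotic estimate from Lemma \ref{lem:nocol2} over all unordered target sets of size $\ell$. The event $\{\tau > t\}$ decomposes as a disjoint union
\[
\{\tau > t\} = \bigsqcup_{0 \leq y_1 < \ldots < y_\ell \leq n-1} \{ \{X_1(t),\ldots,X_\ell(t)\} = \{y_1,\ldots,y_\ell\}, \tau > t\},
\]
since if no collision has occurred by time $t$, the walkers occupy $\ell$ distinct sites of $[n]$. So the starting point will simply be to sum the identity from Lemma \ref{lem:nocol2} over the $\binom{n}{\ell}$ possible ordered tuples $y_1 < \ldots < y_\ell$.

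Next, I would pull out the factors that do not depend on $y = (y_1,\ldots,y_\ell)$ from the asymptotic in Lemma \ref{lem:nocol2}, namely $n^{-\ell} \Delta(x) e^{-(T+T')(\ell-\mu_{\ell,n})t}$, leaving the sum $\sum_{y_1 < \ldots < y_\ell} \Delta(y)$ inside. By definition of $C_{\ell,n}$ in the statement, this sum is exactly $n^\ell C_{\ell,n}$, so after combining with the factor $n^{-\ell}$ we recover the claimed $C_{\ell,n} \Delta(x) e^{-(T+T')(\ell-\mu_{\ell,n})t}$.

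The only point requiring a moment of care is the handling of the $(1+o(1))$ error terms: a priori, Lemma \ref{lem:nocol2} gives a pointwise-in-$y$ asymptotic, whereas here we are summing finitely many such estimates. This is however immediate because the sum is finite (at most $\binom{n}{\ell}$ terms, a constant independent of $t$), so taking a maximum over $y$ of the relative error gives a uniform $o(1)$ that survives the summation unchanged. There is no genuine obstacle in this step; the corollary is essentially a one-line consequence of Lemma \ref{lem:nocol2} together with the disjoint decomposition above.
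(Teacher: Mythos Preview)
Your proposal is correct and matches the paper's approach exactly: the paper simply states that the corollary follows by summing the asymptotic of Lemma \ref{lem:nocol2} over all $0 \leq y_1 < \ldots < y_\ell \leq n-1$, and your write-up just makes explicit the disjoint decomposition and the (trivial, finite-sum) handling of the $o(1)$ terms.
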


 For distinct $x = (x_1,\ldots,x_\ell)$ where $0 \leq x_1 < \ldots < x_\ell \leq n-1$, let $(\mathcal{F}_t)_{t\geq 0}$ be the natural filtration to the process $(X_1(t),\dots,X_\ell(t))$ with law $P_x$ and $\mathcal{F}_{\infty} \coloneqq \sigma\left(\bigcup_{t \geq 0} \mathcal{F}_{t}\right)$. Then define a new probability measure $\mathbf{P}^{\ell,n}_x$ governing a Markov process $(X_t)_{t \geq 0}$ by setting
\begin{align*}
\mathbf{P}^{\ell,n}_x(A) := \lim_{s \uparrow \infty} P_x(A | \tau > s).
\end{align*}
 One can check that this limit is valid for events $A \in \bigcup_{t \geq 0} \mathcal{F}_t$ and so by Carath\'eodory's extension theorem, indeed uniquely defines a valid probability measure on $\mathcal{F}_\infty$. Furthermore, one can verify that it inherits the Markov property from $P_x$. We call $\mathbf{P}^{\ell,n}_x$ the law of $\ell$ walkers conditioned to never collide. Our next result provides an explicit expression for the transition probabilities of this Markov process.

\begin{lemma}
We have
\begin{align} \label{eq:Qtran}
\mathbf{P}^{\ell,n}_x(X_t = y) = \frac{\Delta(y)}{\Delta(x)} e^{ (\ell-\mu_{\ell,n})(T+T')t }\det_{i,j=1}^\ell p^\ell_t(x_i,y_j), 
\end{align}
where $p^\ell_t(x,y)$ is given in \eqref{eq:ptth3}.
\end{lemma}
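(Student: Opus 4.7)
The plan is to compute the conditional probability directly from its definition as a limit, using the Markov property to split the numerator, and then plugging in the exact formula from Lemma \ref{lem:nocol} together with the large-$s$ asymptotics from Corollary \ref{cor:safe}. Specifically, I would begin with
\begin{align*}
\mathbf{P}^{\ell,n}_x(X_t = y) = \lim_{s \to \infty} \frac{P_x(X_t = y, \tau > s)}{P_x(\tau > s)},
\end{align*}
where throughout the proof $X_t$ is viewed as the unordered set of particle positions and $y = \{y_1,\ldots,y_\ell\}$.

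The key algebraic step is the factorisation
\begin{align*}
P_x(X_t = y, \tau > s) = P_x(X_t = y, \tau > t) \cdot P_y(\tau > s - t),
\end{align*}
valid for $s > t$. Once this is in hand, the first factor is evaluated exactly by Lemma \ref{lem:nocol} as $\det_{i,j=1}^\ell p^\ell_t(x_i,y_j)$, while the asymptotic behaviour of $P_y(\tau > s-t)$ and $P_x(\tau > s)$ given by Corollary \ref{cor:safe} yields
\begin{align*}
\frac{P_y(\tau > s-t)}{P_x(\tau > s)} \longrightarrow \frac{\Delta(y)}{\Delta(x)}\, e^{(T+T')(\ell-\mu_{\ell,n})t}
\end{align*}
as $s \to \infty$, since both the prefactor $C_{\ell,n}$ and the common exponential rate $e^{-(T+T')(\ell-\mu_{\ell,n})s}$ cancel. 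Combining these gives \eqref{eq:Qtran} immediately.

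The one subtlety that requires justification is the factorisation step, because on the ring $[n]$ the set $\{X_1(t),\ldots,X_\ell(t)\}=y$ does not determine the individual positions $X_i(t)$ uniquely; under $\tau>t$, the cyclic ordering of the particles is preserved but there are still $\ell$ possible assignments of labels to the $\ell$ points of $y$. I would resolve this by observing that the $\ell$ walkers are independent and indistinguishable, so that conditionally on any specific assignment $(X_1(t),\ldots,X_\ell(t)) = (y_{\pi(1)},\ldots,y_{\pi(\ell)})$ with $\tau>t$, the strong Markov property gives that the future trajectory $(\{X_1(s),\ldots,X_\ell(s)\})_{s\geq t}$ has the same law as the set of positions of $\ell$ independent walkers started from $y_1,\ldots,y_\ell$, regardless of $\pi$. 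Averaging over $\pi$ (or equivalently summing over the finitely many orderings compatible with $X_t = y$ and $\tau > t$) yields the stated factorisation. This is the main point that needs care; everything else is direct substitution.
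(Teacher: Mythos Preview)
Your proof is correct and follows essentially the same approach as the paper, which simply states that ``a brief calculation using the definition of conditional expectation and the Markov property'' gives the factorisation $\mathbf{P}^{\ell,n}_x(X_t = y) = P_x(X_t=y, \tau>t) \lim_{s \uparrow \infty} \frac{P_y(\tau > s - t)}{P_x(\tau > s)}$ and then invokes Corollary~\ref{cor:safe}. Your additional discussion of the cyclic-ordering subtlety in the factorisation step is a useful elaboration of a point the paper leaves implicit.
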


\begin{proof}
A brief calculation using the definition of conditional expectation and the Markov property tells us that
\begin{align*}
\mathbf{P}^{\ell,n}_x(X_t = y) = P_x(X_t=y, \text{ no collision}) \lim_{s \uparrow \infty} \frac{P_y( \tau > s - t) }{ P_x( \tau > s) }.
\end{align*}
Now use Corollary \ref{cor:safe}.
\end{proof}

We are now equipped to prove Theorem \ref{thm:equiv}.

\begin{proof}[Proof of Theorem \ref{thm:equiv}]
Since $P^{\ell,n}$ and $\mathbf{P}^{\ell,n}_x$ govern Markov processes, it is sufficient to establish they have the same transition probabilities. In other words, in light of \eqref{transitionrateprobup}, \eqref{transitionrateprobdown} and \eqref{eq:Qtran} it is sufficient to establish that if $y= (y_1,\ldots,y_\ell)$ is obtained from $x = (x_1,\ldots,x_\ell)$ by setting $y_{i_0} = x_{i_0} + 1$ and $y_i = x_i$ for $i \neq i_0$, then
\begin{align} \label{eq:lower}
\lim_{t \downarrow 0}  \frac{1}{t} \frac{\Delta(y)}{\Delta(x)} e^{ (\ell-\mu_{\ell,n})(T+T')t }\det_{i,j=1}^\ell p^\ell_t(x_i,y_j) = T \frac{\Delta(y)}{\Delta(x)},
\end{align}
and that the analogous equation holds with $T'$ in place of $T$ if we instead have $y_{i_0} = x_{i_0} - 1$. 

We now establish that \eqref{eq:lower} indeed holds. For the sake of simplicity we rule out the case where $i_0 = 1, x_1 = 0$ and $x_1$ jumps down, or when $i_0 = \ell, x_\ell = n-1$ and $x_\ell$ jumps up. Regardless, these cases follow by the toroidal invariance of $\det_{i,j=1}^\ell p^\ell_t(x_i,y_j)$. 

By the definition of $p_t^\ell(x,y)$ given in \eqref{eq:ptth}, the small-$t$ asymptotics of this transition kernel are given by 
\begin{align*}
p_t^\ell(x,y) = (1 - (T+T')t) \mathrm{1}\{ x= y\} + T \mathrm{1}\{ y = x+1 \} + T' \mathrm{1} \{ y = x-1\} + o(t).
\end{align*}
It is now easily seen that 
\begin{align} \label{eq:smalldet}
\lim_{t \downarrow 0}\frac{1}{t} \det_{i,j=1}^\ell p^\ell_t(x_i,y_j) = T.
\end{align}
Using \eqref{eq:smalldet}, we establish \eqref{eq:lower}.

\end{proof}

\begin{proof}[Proof of Theorem~\ref{thm:noncoll}]
    Theorem~\ref{prop:longer} (or more accurately, Equation~\eqref{eq:light2}) provides the formula and Theorem~\ref{thm:equiv} gives the equivalence between $P^{\ell,n}$ and $\mathbf{P}^{\ell,n}$ with the stationary initial distribution.
\end{proof}

%%%%%%%%%%%%%%%%%%%%%%%%%%%%%%%%
%%%%%%%%%%%%%%%%%%%%%%%%%%%%%%%%
\section{The traffic representation for ASEP}
\label{sec:traffic}
%%%%%%%%%%%%%%%%%%%%%%%%%%%%%%%%
%%%%%%%%%%%%%%%%%%%%%%%%%%%%%%%%

In this section we prove Theorem \ref{thm:traffic}.

    \subsection{Relationship to ASEP}
    We have seen that we may understand $P^{\ell, n} = \mathbf{P}^{\ell,n}$ as the law of a continuous Markov process $(X_t)_{t \in \mathbb{R}}$ in $\binom{[n]}{\ell}$ (set of subsets of $[n]$ of size $\ell$) with transition rates given by Lemma~\ref{lem: transitionrates}. To be more explicit, we can associate the set of functions $\left\{f\colon \binom{[n]}{\ell} \to \mathbb{R} \right\}$ with $\{f \colon [n]^{(\ell)} \to  \mathbb{R}, f \text{ symmetric}\}$ ($f$ is symmetric if for any permutation $\sigma$, $f(h_1,\dots,h_\ell) = f(h_{\sigma(1)},\dots, h_{\sigma(\ell)})$) and we can write the infinitesimal generator as the linear operator given by
    \begin{equation*}
        \mathcal{G}^{\ell,n}f(\mathbf{h}) = T\sum_{j: h_j + 1 \neq h_k \forall k} \frac{\Delta(\mathbf{h} + \mathbf{e}_j)}{\Delta(\mathbf{h})} (f(\mathbf{h} + \mathbf{e}_j) - f(\mathbf{h})) + T'\sum_{j: h_j - 1 \neq h_k \forall k} \frac{\Delta(\mathbf{h} - \mathbf{e}_j)}{\Delta(\mathbf{h})} (f(\mathbf{h} - \mathbf{e}_j) - f(\mathbf{h})) 
    \end{equation*}
    for any such symmetric function $f$. We will show its relation to ASEP with certain parameters, the continuous Markov process with infinitesimal generator given by the linear operator
    \begin{equation}\label{generatorASEP}
        \mathcal{G}^{\text{ASEP},\ell,n}f(\mathbf{h}) =  T\sum_{j: h_j + 1 \neq h_k \forall k} \left[f(\mathbf{h} + \mathbf{e}_j) - f(\mathbf{h})\right] \;+\; T'\sum_{j: h_j - 1 \neq h_k \forall k} \left[f(\mathbf{h} - \mathbf{e}_j) - f(\mathbf{h})\right].
    \end{equation}
    with corresponding laws $\mathbf{P}^{\text{ASEP},\ell,n}_x$ for initial states $x \in \binom{[n]}{\ell}$. Informally, this is the simple exclusion process with rate $T$ for up moves and $T'$ for down moves.

    For later, we will need some results and some definitions. We begin by defining the \textbf{traffic} of a configuration $\mathbf{h} \in [n]^{(\ell)}$, as
    \begin{equation*}
        \operatorname{Traffic}(\mathbf{h}) \coloneqq \#\{(j,k) : 1 \leq j, k \leq \ell, h_k = h_j + 1\}
    \end{equation*}
    which we note is equal to $\#\{(j,k) : 1 \leq j, k \leq \ell, h_k = h_j - 1\}$ and is a symmetric function, so can be analogously defined for $E \in \binom{[n]}{\ell}$.

    A sequence of results now. For a general continuous Markov process $(X_t)_{t \geq 0}$ on finite state space $V$ with infinitesimal generator $\mathcal{G}$ and distributions $\{\mathbf{P}_x: x \in V\}$, suppose we have a positive function $\Delta\colon V \to (0,\infty)$ and another function $p\colon V \to \mathbb{R}$ such that
    \begin{equation*}
        \mathcal{G}\Delta(x) = p(x) \Delta(x)
    \end{equation*}
    Then the stochastic process 
    \begin{equation*}
        M_t \coloneqq \frac{\Delta(X_t)}{\Delta(X_0)}\exp\left(-\int_{0}^{t} p(X_s) \; ds\right)
    \end{equation*}
    is a positive, unit-mean $\mathbf{P}_x$-martingale for each $x\in V$. Thus, we can define a new set of probability distributions $\{\mathbf{P}_x^{\Delta}: x \in V\}$ defined using the Radon-Nikodym martingale change of measure given by
    \begin{equation*}
        \left.\frac{d\mathbf{P}_x^{\Delta}}{d\mathbf{P}_x}\right|_{\mathcal{F}_t} = M_t
    \end{equation*}
    where $(\mathcal{F}_t)_{t\geq 0}$ is the natural filtration. The measures $\mathbf{P}_x^{\Delta}$ and $\mathbf{P}_x$ are absolutely continuous with respect to each other. So it also follows that, for $x \neq y$
    \begin{equation}
        \lim_{t\downarrow 0} \frac{1}{t} \mathbf{P}_x^{\Delta}(X_t = y) = \lim_{t \downarrow 0} \frac{\Delta(y)}{\Delta(x)} \frac{1}{t} \mathbf{P}_x(X_t = y)
    \end{equation}
    This will be the tool that allows us to derive the change of measure to ASEP.

    Now, a computational lemma.
    \begin{lemma}\label{lem: computationallemma}
        For $\mathbf{h} \in [n]^{(\ell)}$, define $\phi(\mathbf{h}) = \det_{j,k = 1}^{\ell} A_{j,k}^{\mathbf{h}}$. Then 
        \begin{equation*}
            \sum_{j = 1}^\ell (-1)^{\theta\mathrm{1}_{h_j = n-1}}\phi(\mathbf{h} + \mathbf{e}_j) = -\mu_{\ell,n}\phi(\mathbf{h})
        \end{equation*}
        and
        \begin{equation*}
            \sum_{j = 1}^\ell (-1)^{\theta\mathrm{1}_{h_j = 0}}\phi(\mathbf{h} - \mathbf{e}_j) = -\mu_{\ell,n}\phi(\mathbf{h}).
        \end{equation*}
        where $\mu_{\ell,n} = \frac{\sin(\pi \ell / n)}{\sin(\pi / n)}$.
    \end{lemma}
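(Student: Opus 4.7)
The plan is to reduce both identities to the single relation $\sum_{w \in \mathcal{L}_{\ell,n}} w = \sum_{w \in \mathcal{L}_{\ell,n}} w^{-1} = -\mu_{\ell,n}$ from \eqref{eq:munl}, via a multilinear expansion of the determinant $\phi(\mathbf{h})$. The signs $(-1)^{\theta \mathrm{1}_{h_j=n-1}}$ are precisely what is needed to absorb the wrap-around when $\mathbf{h}+\mathbf{e}_j$ is reduced modulo $n$.

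First I would compare the matrices $A^{\mathbf{h}+\mathbf{e}_j}$ and $A^{\mathbf{h}}$ row by row. For $i\ne j$ the rows coincide, while the $j$-th row of $A^{\mathbf{h}+\mathbf{e}_j}$ is $(z_k^{(h_j+1)\mathrm{mod}\,n})_{k=1}^\ell$. Since every $z_k \in \mathcal{L}_{\ell,n}$ satisfies $z_k^n = (-1)^{n-\ell+1} = (-1)^\theta$, a one-line case check (on whether $h_j = n-1$ or not) gives
\begin{equation*}
    (-1)^{\theta\mathrm{1}_{h_j = n-1}} A^{\mathbf{h}+\mathbf{e}_j}_{j,k} \;=\; z_k\, A^{\mathbf{h}}_{j,k}\qquad (1\le k\le \ell),
\end{equation*}
while rows $i\ne j$ are unchanged. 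The analogous identity for $\mathbf{h}-\mathbf{e}_j$, with $(-1)^{\theta \mathrm{1}_{h_j=0}}$ and $z_k^{-1}$ in place of $z_k$, follows identically.

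Next I would expand the determinant using the Leibniz formula. Multiplying row $j$ of $A^{\mathbf{h}}$ entrywise by $(z_k)_{k=1}^\ell$ yields
\begin{equation*}
    (-1)^{\theta\mathrm{1}_{h_j = n-1}}\phi(\mathbf{h}+\mathbf{e}_j) = \sum_{\sigma\in\mathcal{S}_\ell} \mathrm{sgn}(\sigma)\, z_{\sigma(j)} \prod_{i=1}^\ell A^{\mathbf{h}}_{i,\sigma(i)}.
\end{equation*}
Summing over $j$ and interchanging the two sums gives
\begin{equation*}
    \sum_{j=1}^\ell (-1)^{\theta\mathrm{1}_{h_j = n-1}}\phi(\mathbf{h}+\mathbf{e}_j) = \left(\sum_{k=1}^\ell z_k\right)\sum_{\sigma\in\mathcal{S}_\ell} \mathrm{sgn}(\sigma) \prod_{i=1}^\ell A^{\mathbf{h}}_{i,\sigma(i)} = \Bigl(\sum_{w\in\mathcal{L}_{\ell,n}} w\Bigr)\phi(\mathbf{h}) = -\mu_{\ell,n}\,\phi(\mathbf{h}),
\end{equation*}
where the final equality is \eqref{eq:munl}. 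The second identity is obtained in exactly the same way, with $z_k$ replaced by $z_k^{-1}$ throughout and using $\sum_{w\in\mathcal{L}_{\ell,n}} w^{-1}=-\mu_{\ell,n}$.

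The only step that requires any care is the sign bookkeeping in the wrap-around case ($h_j=n-1$ for the first identity, $h_j=0$ for the second); everything else is a routine application of multilinearity of the determinant in its rows.
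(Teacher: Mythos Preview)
Your proof is correct and follows essentially the same approach as the paper's: absorb the wrap-around sign using $z_k^n=(-1)^\theta$ so that the $j$-th row of $A^{\mathbf{h}+\mathbf{e}_j}$ becomes $z_k\,A^{\mathbf{h}}_{j,k}$, then expand via Leibniz, swap the sums, and pull out $\sum_{k} z_k = -\mu_{\ell,n}$. Your write-up is slightly more explicit about the sign bookkeeping, but the argument is the same.
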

    \begin{proof}
        By direct computation,
        \begin{align*}
        \sum_{r = 1}^\ell (-1)^{\theta\mathrm{1}_{h_r = n-1}} \phi(\mathbf{h} + \mathbf{e}_r) &=  \sum_{r = 1}^\ell (-1)^{\theta\mathrm{1}_{h_r = n-1}} \det_{j,k = 1}^\ell \left(z_{k}^{h_j + \mathrm{1}_{j = r}(1 - n\mathrm{1}_{h_j = n-1})}\right) \\
        &= \sum_{r = 1}^\ell \det_{j,k = 1}^\ell \left(z_{k}^{h_j + \mathrm{1}_{j=r}}\right)\\
        &= \sum_{r = 1}^\ell \sum_{\sigma} \sgn(\sigma)\prod_{j = 1}^{\ell} z_{\sigma(j)}^{h_j + \mathrm{1}_{j = r}}\\
        &= \sum_{\sigma} \sgn(\sigma) \left(\sum_{r = 1}^\ell z_{\sigma(r)}\right)\prod_{j = 1}^{\ell} z_{\sigma(j)}^{h_j}\\
        &= \left(\sum_{z \in \mathcal{L}_{\ell,n}} z\right) \phi(\mathbf{h})
        \end{align*}
        which gives the required result by noting again that the sum is geometric. The other result is true by the same method. 
    \end{proof}
%%%%%%%%%%%%%%%%%%%%%%%%%%%%%%%%%%%
    The main result can now be shown.
    
    \begin{proof}[Proof of Theorem~\ref{thm:traffic}]
        Set again $E = \{h_1,\dots,h_\ell\}$. Recalling Equation~\eqref{generatorASEP}, we have that
        \begin{align}\label{generatorequation}
        \mathcal{G}^{\text{ASEP},\ell,n}\Delta(\mathbf{h}) &= T\sum_{j: h_j + 1 \neq h_k \forall k} \left[\Delta(\mathbf{h} + \mathbf{e}_j) - \Delta(\mathbf{h})\right] \;+\; T'\sum_{j: h_j - 1 \neq h_k \forall k} \left[\Delta(\mathbf{h} - \mathbf{e}_j) - \Delta(\mathbf{h})\right] \nonumber\\
        &= \sum_{j = 1}^{\ell} \left[T \Delta(\mathbf{h} + \mathbf{e}_j) + T' \Delta(\mathbf{h} - \mathbf{e}_j)\right] - T(\ell - \operatorname{Traffic}(\mathbf{h}))\Delta(\mathbf{h}) - T'(\ell - \operatorname{Traffic}(\mathbf{h}))\Delta(\mathbf{h})
        \end{align}
        Now examine the sum in the RHS of the last expression. Using Lemma~\ref{lem: vandermonde} and recalling the definition of $\phi$,
        \begin{equation*}
            \sum_{j=1}^\ell \Delta(\mathbf{h} + \mathbf{e}_j) = \sum_{j=1}^\ell \frac{\phi(\mathbf{h} + \mathbf{e}_j)}{\sgn(\sigma_{\mathbf{h} + \mathbf{e}_j})(-1)^{\sum_{k = 1}^{\ell}(h_k + \mathrm{1}_{j = k} - n\mathrm{1}_{j=k, h_j = n-1})} i^{\ell(\ell - 1)/2}}
        \end{equation*}
        Now when $h_j \neq n-1$, $\sgn(\sigma_{\mathbf{h} + \mathbf{e}_j}) = \sgn(\sigma_{\mathbf{h}})$ and $(-1)^{\sum_{k = 1}^{\ell}(h_k + \mathrm{1}_{j = k}- n\mathrm{1}_{j=k, h_j = n-1}))} = -(-1)^{\sum_{k = 1}^{\ell}h_k}$. Or instead, when $h_j = n-1$, $\sgn(\sigma_{\mathbf{h} + \mathbf{e}_j}) = (-1)^{\ell - 1}\sgn(\sigma_{\mathbf{h}})$ and $(-1)^{\sum_{k = 1}^{\ell}(h_k + \mathrm{1}_{j = k}- n\mathrm{1}_{j=k, h_j = n-1}))} = (-1)^{n-1}(-1)^{\sum_{k = 1}^{\ell}h_k}$. So we can use Lemma~\ref{lem: computationallemma} and write
        \begin{align*}
            \sum_{j=1}^\ell \Delta(\mathbf{h} + \mathbf{e}_j) &= -\sgn(\sigma_{\mathbf{h}})(-1)^{\sum_{k = 1}^{\ell}h_k} (-i)^{\ell(\ell -1)/2}\sum_{j=1}^\ell(-1)^{\theta\mathrm{1}_{h_j = n-1}}\phi(\mathbf{h} + \mathbf{e}_j)\\
            &= \sgn(\sigma_{\mathbf{h}})(-1)^{\sum_{k = 1}^{\ell}h_k} (-i)^{\ell(\ell -1)/2} \mu_{\ell,n} \phi(\mathbf{h})\\
            &= \mu_{\ell,n} \Delta(\mathbf{h})
        \end{align*}
        Similarly, we can deduce that
        \begin{align*}
            \sum_{j=1}^\ell \Delta(\mathbf{h} - \mathbf{e}_j) 
            &= \sum_{j=1}^\ell \frac{\phi(\mathbf{h} - \mathbf{e}_j)}{\sgn(\sigma_{\mathbf{h} - \mathbf{e}_j})(-1)^{\sum_{k = 1}^{\ell}(h_k - \mathrm{1}_{j = k} + n\mathrm{1}_{j=k, h_j = 0}))} i^{\ell(\ell - 1)/2}}\\
            &= -\sgn(\sigma_{\mathbf{h}})(-1)^{\sum_{k = 1}^{\ell}h_k} (-i)^{\ell(\ell -1)/2}\sum_{j=1}^\ell(-1)^{\theta\mathrm{1}_{h_j = 0}}\phi(\mathbf{h} - \mathbf{e}_j)\\
            &= \sgn(\sigma_{\mathbf{h}})(-1)^{\sum_{k = 1}^{\ell}h_k} (-i)^{\ell(\ell -1)/2} \mu_{\ell,n} \phi(\mathbf{h})\\
            &= \mu_{\ell,n} \Delta(\mathbf{h})
        \end{align*}
        and so, returning to Equation~\eqref{generatorequation}, 
        \begin{align*}
            \mathcal{G}^{\text{ASEP},\ell,n}\Delta(\mathbf{h}) &= (T+T')[\mu_{\ell,n} - \ell + \operatorname{Traffic}(\mathbf{h})]\Delta(\mathbf{h}).
        \end{align*}
        So by our discussion preceding Lemma~\ref{lem: computationallemma}, $M_t = \frac{\Delta(X_t)}{\Delta(X_0)} \exp\left(- (T+T')\int_{0}^t \mu_{\ell,n} - \ell + \operatorname{Traffic}(X_s) \;ds \right)$ is a unit-mean $\mathbf{P}^{\text{ASEP},\ell,n}$-martingale, which defines a martingale change of measure 
        \begin{equation*}
            \left.\frac{d\mathbf{P}^{\ell,n}}{d\mathbf{P}^{\text{ASEP},\ell,n}}\right|_{\mathcal{F}_t} = M_t
        \end{equation*}
        and tells us that
        \begin{equation*}
            \lim_{t \downarrow 0} \frac{1}{t} \mathbf{P}^{\ell,n}(X_t = E'|X_0 = E) = \frac{\Delta(E')}{\Delta(E)} \lim_{t \downarrow 0} \frac{1}{t} \mathbf{P}^{\text{ASEP},\ell,n}(X_t = E'|X_0 = E)
        \end{equation*}
        which coincides with our definition of our infinitesimal generator for the URD Continuous Snake Process. Reciprocating the change of measure completes the proof.
    \end{proof}


\begin{thebibliography}{99}
\bibitem{ACJ}
\textsc{Ayyer, A., Chhita, S., \& Johansson, K.} (2023). GOE fluctuations for the maximum of the top path in alternating sign matrices. \emph{Duke Math. J.}, 172(10), 1961-2014.

\bibitem{AR}    
\textsc{Ayyer, A., \& Romik, D.} (2013). New enumeration formulas for alternating sign matrices and square ice partition functions. \emph{Adv. Math.}, 235, 161-186.

\bibitem{assiotis}
\textsc{Assiotis, T.} (2023). On some integrable models in inhomogeneous space. arXiv preprint arXiv:2310.18055.

\bibitem{BGC}
\textsc{Barraquand, G., Borodin, A., \& Corwin, I.} (2020, January). Half-space Macdonald processes. %In 
\emph{Forum Math. Pi} (Vol. 8, p. e11). Cambridge University Press.

\bibitem{baxter}
\textsc{Baxter, R. J.} (2016). \emph{Exactly solved models in statistical mechanics.} Elsevier. %journal required?

\bibitem{BC}
\textsc{Borodin, A., \& Corwin, I.} (2014). Macdonald processes. \emph{Probab. Theory Related Fields}, 158(1), 225-400. %should macdonald processes also be emphasised?

\bibitem{BCF}
\textsc{Borodin, A., Corwin, I., \& Ferrari, P.} (2014). Free energy fluctuations for directed polymers in random media in 1+ 1 dimension. \emph{Comm. Pure Appl. Math}, 67(7), 1129-1214.

\bibitem{BCG}
\textsc{Borodin, A., Corwin, I., \& Gorin, V.} (2016). Stochastic six-vertex model.
\emph{Duke Math. J.}, 165(3), 563-624

\bibitem{BGlectures}
\textsc{Borodin, A., \& Gorin, V.} (2016). Lectures on integrable probability. \emph{Probability and statistical physics in St. Petersburg}, 91, 155-214.

\bibitem{BOO}
\textsc{Borodin, A., Okounkov, A., \& Olshanski, G.} (2000). Asymptotics of Plancherel measures for symmetric groups. \emph{J. Amer. Math. Soc.}, 13(3), 481-515.

\bibitem{boutillier}
\textsc{Boutillier, C.} (2009).
The bead model and limit behaviors of dimer models,
\emph{Ann.\ Probab.}, 37(1), 107--142.

\bibitem{BG}
\textsc{Bufetov, A. and Gorin, V.} (2019). 
Fourier transform on high-dimensional unitary groups with applications to random tilings
\emph{Duke Math. J.} 168(13), 2559-2649


\bibitem{BDJ}
\textsc{Baik, J., Deift, P., \& Johansson, K.} (1999). On the distribution of the length of the longest increasing subsequence of random permutations. \emph{J. Amer. Math. Soc.}, 12(4), 1119-1178.


\bibitem{CJY}
\textsc{Chhita, S., Johansson, K., \& Young, B.} (2015). Asymptotic domino statistics in the Aztec diamond. \emph{Ann. Appl. Probab.}, 1232-1278.

\bibitem{CEP}
\textsc{Cohn, H., Elkies, N. and Propp, J.} (1996).
Local statistics for random domino tilings of the Aztec diamond, \emph{Duke Math. J.}, 85(1), 117-166 

\bibitem{CKP}
\textsc{Cohn, H., Kenyon, R. and Propp, J.} (2001). A variational principle for domino tilings, \emph{J. Amer. Math. Soc.}, 14(2), 297–346.

\bibitem{COSZ}
\textsc{Corwin, I., O’Connell, N., Seppäläinen, T., \& Zygouras, N.} (2014). Tropical combinatorics and Whittaker functions. \emph{Duke Math. J.}, 163(3), 513.

\bibitem{CF}
\textsc{Crowell, R. H., and Fox, R. H.} (2012). Introduction to knot theory (Vol. 57). Springer Science \& Business Media.

\bibitem{D-CS}
\textsc{Duminil-Copin, H., Smirnov, S.} (2012). The connective constant of the honeycomb lattice equals $\sqrt{2+\sqrt{2}}$.
\emph{Ann. of Math. (2)}, 175(3), 1653-1665

\bibitem{EKLP}
\textsc{Elkies, N., Kuperberg, G., Larsen, M., \& Propp, J.} (1992). Alternating-sign matrices and domino tilings (Part I). \emph{J. Algebraic Combin.}, 1, 111-132.


\bibitem{fulmek}
\textsc{Fulmek, M.} (2004). Nonintersecting lattice paths on the cylinder. \emph{S\'em. Lothar. Combin.} 52 16 pp. (electronic).

\bibitem{gordenko}
\textsc{Gordenko, A.} (2020). Limit shapes of large skew Young tableaux and a modification of the TASEP process. arXiv preprint arXiv:2009.10480.

\bibitem{gorin}
\textsc{Gorin, V.} (2021). \emph{Lectures on random lozenge tilings} (Vol. 193). Cambridge University Press.

\bibitem{gorinASM}
\textsc{Gorin, V.} (2014). From alternating sign matrices to the Gaussian unitary ensemble. \emph{Comm. Math. Phys.}, 332(1), 437-447


\bibitem{GP}
\textsc{Gorin, V., \& Petrov, L.} (2019). Universality of local statistics for noncolliding random walks. \emph{Ann. Probab.}, 47(5), 2686-2753.

\bibitem{HJ}
\textsc{Horn, R. A., \& Johnson, C. R.} (2012). \emph{Matrix analysis.} Cambridge university press.

\bibitem{HKPV}
\textsc{Hough, J. B., Krishnapur, M., Peres, Y., \& Virág, B.} (2006). Determinantal processes and independence. \emph{Probab. Surv.}, 206-229.

\bibitem{JPS}
\textsc{Jockusch, W., Propp, J., \& Shor, P.} (1998). Random domino tilings and the arctic circle theorem. arXiv preprint math/9801068.

\bibitem{johannsonR}
\textsc{Johansson, K.} (2002). Non-intersecting paths, random tilings and random matrices. \emph{Probab. Theory Related Fields}, 123(2), 225-280.

\bibitem{johnston}
\textsc{Johnston, S. G. G.} (2022). Continuous Kasteleyn theory for the bead model. arXiv preprint arXiv:2207.13538.

\bibitem{KM}
\textsc{Karlin, S., \& McGregor, J.} (1959). Coincidence probabilities. \emph{Pacific J. Math.}: 1141-1164.

\bibitem{kasteleyn}
\textsc{Kasteleyn, P. W.} (1961). The statistics of dimers on a lattice: I. The number of dimer arrangements on a quadratic lattice. \emph{Physica}, 27(12), 1209-1225.

\bibitem{KO}
\textsc{Kenyon, R. \& Okounkov, A.} (2005). What is... a dimer?, \emph{Notices Amer. Math. Soc.}, 52(3).

\bibitem{KOS}
\textsc{Kenyon, R., Okounkov, A., \& Sheffield, S.} (2006). Dimers and amoebae. \emph{Ann. of Math. (2)}, 1019-1056.

\bibitem{LS}
\textsc{Lyons, R., \& Steif, J. E.} (2003). Stationary determinantal processes: phase multiplicity, Bernoullicity, entropy, and domination. \emph{Duke. J. Math.}, 515-575.

\bibitem{LW}
\textsc{Liechty, K., \& Wang, D.} (2016). Nonintersecting Brownian motions on the unit circle. \emph{Ann. Probab.}, 44(2), 1134-1211

\bibitem{macmahon} 
\textsc{MacMahon, P. A.} (2001). \emph{Combinatory analysis, volumes I and II} (Vol. 137). American Mathematical Society.

\bibitem{murasugi}
\textsc{Murasugi, K.} (1996). \emph{Knot Theory and its Applications}. Birkhäuser.

\bibitem{OSZ}
\textsc{O’Connell, N., Seppäläinen, T., \& Zygouras, N.} (2014). Geometric RSK correspondence, Whittaker functions and symmetrized random polymers. \emph{Invent. Math.}, 197(2), 361-416.

\bibitem{OR}
\textsc{Okounkov, A., \& Reshetikhin, N. (2003).} Correlation function of Schur process with application to local geometry of a random 3-dimensional Young diagram. \emph{J. Amer. Math. Soc.}, 16(3), 581-603.

\bibitem{okounkov}
\textsc{Okounkov, A.} (2001). Infinite wedge and random partitions. \emph{Selecta Math. (N.S.)}, 7(1), 57.

\bibitem{pemantle}
\textsc{Pemantle, R.} (1991). Choosing a spanning tree for the integer lattice uniformly. \emph{Ann. Probab.}, 1559-1574.

\bibitem{petrov}
\textsc{Petrov, L.} (2014). Asymptotics of random lozenge tilings via Gelfand–Tsetlin schemes. \emph{Probab. Theory Related Fields}, 160(3), 429-487.

\bibitem{romik}
\textsc{Romik, D.} (2015). \emph{The surprising mathematics of longest increasing subsequences} (No. 4). Cambridge University Press.

\bibitem{schramm}
\textsc{Schramm, O.} (2000). Scaling limits of loop-erased random walks and uniform spanning trees. \emph{Israel J. Math.}, 118(1), 221-288

\bibitem{soshnikov}
\textsc{Soshnikov, A.} (2000). Determinantal random point fields. \emph{Russian Math. Surveys}, 55(5), 923.

\bibitem{TF}
\textsc{Temperley, H. N., \& Fisher, M. E.} (1961). Dimer problem in statistical mechanics-an exact result. \emph{Philos. Mag.}, 6(68), 1061-1063.


    \end{thebibliography}
\end{document}